\documentclass[twoside,16pt]{elsarticle}
\usepackage{amssymb,amsmath,natbib,graphicx,lineno,setspace,amsthm,float}
\usepackage{setspace}
\usepackage{rotating}
\usepackage[ruled]{algorithm2e}
\usepackage{threeparttable}
\biboptions{numbers,sort&compress}


\usepackage[font=footnotesize]{caption}
\usepackage{booktabs}
\usepackage{nccmath}
\newcommand*{\boldone}{\text{\usefont{U}{bbold}{m}{n}1}}

\doublespacing
\newtheorem{theorem}{Theorem}[section]
\newtheorem{definition}{Definition}[section]
\newtheorem{lemma}{Lemma}[section]
\newtheorem{remark}{Remark}[section]

\newtheorem{corollary}{Corollary}[section]
\newtheorem{proposition}{Proposition}[section]

\usepackage[CJKbookmarks,colorlinks,driverfallback=dvipdfm,linkcolor=red]{hyperref}
\usepackage{mathrsfs}
\journal{Journal of Differential Equations}

\setlength{\textheight}{9.223in}
\setlength{\textwidth}{6.675in}
\setlength{\columnsep}{0.3125in}
\setlength{\topmargin}{0in}
\setlength{\headheight}{0in}
\setlength{\headsep}{0in}
\setlength{\parindent}{1pc}
\setlength{\oddsidemargin}{-.1975in}

\numberwithin{equation}{section}
\numberwithin{theorem}{section}
\numberwithin{lemma}{section}
\numberwithin{definition}{section}
\numberwithin{example}{section}
\numberwithin{assumption}{section}
\linespread{1.2}

\usepackage{amsopn}

\makeatletter
\renewenvironment{proof}[1][\proofname]{\par
  \pushQED{\qed}%
  \normalfont \topsep6\p@\@plus6\p@\relax
  \trivlist
  \item[\hskip\labelsep
        \bfseries
    #1\@addpunct{.}]\ignorespaces
}{%
  \popQED\endtrivlist\@endpefalse
}
\makeatother
\renewcommand{\proofname}{Proof}
\usepackage[utf8]{inputenc}

\begin{document}

\begin{frontmatter}
    \title{Existence and asymptotic stability of a generic Lotka-Volterra system with nonlinear spatially heterogeneous cross-diffusion}
    \author[a,b] {Tianxu Wang}
    \author[a] {Jiwoon Sim}
    \author[a,b] {Hao Wang\corref{cor1}}
	\ead{hao8@ualberta.ca}
	\cortext[cor1]{Corresponding author.}
  \address[a] {Department of Mathematical and Statistical Sciences, University of Alberta, Edmonton, Alberta T6G 2G1, Canada}
        \address[b] {Interdisciplinary Lab for Mathematical Ecology \& Epidemiology, University of Alberta, Edmonton, Alberta T6G 2G1, Canada}
       
    \renewcommand{\abstractname}{Abstract (Part 1)}    
    \begin{abstract}
        This article considers a class of Lotka-Volterra systems with multiple nonlinear cross-diffusion, commonly known as prey-taxis models. The existence and stability of classic solutions for such systems with spatially homogeneous sources and taxis have been studied in one- or two-dimensional space, however, the proof is non-trivial for a more general setting with spatially heterogeneous predation functions and taxis coefficient functions in arbitrary dimensions. This study introduces a new weighted \(L_\epsilon^p\)-norm and extends some classical inequalities within this normed space. Coupled energy estimates are employed to establish initial bounds, followed by applying heat kernel properties and an advanced bootstrap process to enhance solution regularity. For stability analysis, we extend LaSalle's invariance principle to a general \( L^\infty \) setting and utilize it alongside Lyapunov functions to analyze the stability of each possible constant equilibrium. All results are achieved without introducing an extra logistic growth term for predators or imposing smallness conditions on taxis coefficients.


    \end{abstract}

    \begin{keyword}
        Lotka-Volterra; Prey-taxis; Global existence; Boundedness; Stability
    \end{keyword}	
\end{frontmatter}


\section{Introduction}

In predator-prey interactions, alarm pheromones are crucial in communication between different species. Predators are attracted by signals generated by prey, which may manifest as chemical, acoustic, visual, or other detectable changes \cite{chivers1996evolution}. In ecosystems, non-random foraging strategies are often observed, with populations of predators moving towards areas with rapidly increasing prey density, guided by these signals \cite{braucker1994graviresponses}. Such movement is commonly referred to as ``prey-taxis". 

Kareiva and Odell \cite{kareiva1987swarms} first proposed the following prototypical Lotka-Volterra system with prey-taxis: 
\begin{equation}
    \label{eq: introduction model 1}
    \begin{aligned}
        &\partial_t X = r(X) - f(X,Y) + \delta_X \Delta X, && (x, t) \in \Omega \times (0, \infty), \\
        &\partial_t Y = e f(X,Y) - d Y -\nabla \left( H(X, Y) \nabla X \right) + \delta_Y \Delta Y, && (x, t) \in \Omega \times (0, \infty), \\
        &\partial_\nu X(x, t) = \partial_\nu Y(x, t)=0, && (x, t) \in \partial\Omega \times (0, \infty), \\
        &(X(x, 0), Y(x, 0)) = (X_0(x), Y_{0}(x)), && x \in \Omega,
    \end{aligned}
\end{equation}
where $X$ and $Y$ represent the population density of prey and predators, respectively. This model has been widely studied in recent years.
Here, $\Omega \subset \mathbb{R}^n$ is a bounded domain, and $\partial_\nu$ is the outer normal derivative. The function $r(X)$ denotes the growth function for prey, $f(X, Y)$ represents predation on prey, and $H(X, Y)$ characterizes attraction towards food sources or the inclination towards prey for predators. The parameter $e$ is the production efficiency, $\delta_X$, $\delta_Y$ are diffusion coefficients, and $d$ is the death rate.

The global existence of solutions to \eqref{eq: introduction model 1} has been studied under different assumptions regarding the taxis sensitivity function $H$. For example, when $H(X, Y) = h(X)$ and $h$ has bounded support, \cite{ainseba2008reaction} established the existence and uniqueness of weak solutions using the Schauder fixed point theorem and a duality technique (see also \cite{bendahmane2008analysis} for multi-species case). Classical solutions were further considered in \cite{tao2010global} for $n \leq 3$ via the contraction mapping principle, alongside $L^p$ and Schauder estimates. On the other hand, the form $H(X, Y) = \chi h(Y)$ is also widely used, where $\chi$ is a constant and $h(Y) \leq CY$. With additional logistic growth terms for predators, Wang and Wang \cite{wang2019global} established global existence for $n = 2$. When $n \geq 3$, the result holds as long as the carrying capacity for predators and the taxis coefficient $\chi$ are sufficiently small. Wu et al. \cite{wu2016global} proved global existence and uniform persistence in arbitrary dimensions under a smallness condition on the taxis coefficient. Jin and Wang \cite{jin2017global} achieved global existence and stability of the coexistence steady state without requiring additional logistic growth terms or imposing the aforementioned smallness condition. However, their findings were limited to the case where $n = 2$.

As we have mentioned, the global existence of solutions under $H(X, Y) = \chi Y$ in \eqref{eq: introduction model 1} needs extra assumptions, such as introducing logistic growth term on predators, imposing smallness condition on parameters or restricting the dimension $n$. Without these assumptions, blow-up behavior does occur. Existing results along this line mainly focus on the classical Keller-Segel system, which shares the same cross-diffusion term with system \eqref{eq: introduction model 1}:
\begin{equation}
   \label{eq: introdution model 2}
   \begin{alignedat}{2}
        &\partial_t X = \Delta X - X + Y , && (x, t) \in \Omega \times (0, \infty), \\
        &\partial_t Y = \Delta Y - \nabla \left( \chi Y \nabla X \right), \qquad && (x, t) \in \Omega \times (0, \infty).
    \end{alignedat}
\end{equation}
Nagai \cite{nagai1995blowup} proved the existence of blow-up solutions in two dimensions when $||Y_0||_{L^1(\Omega)}$ is above a threshold. This is also supported in \cite{jin2016boundedness} and generalized into higher dimensions in \cite{winkler2010aggregation}. To prevent the occurrence of blow-up solutions, sublinear taxis sensitivity functions are considered, namely, the term $\chi Y$ is replaced by $\chi h(Y)$ in \eqref{eq: introdution model 2}, where $h(z) = O(z^\alpha)$ as $z \to \infty$ and $\alpha \leq 1$. Horstmann and Winkler \cite{horstmann2005boundedness} proved the global existence of the solution when $\alpha < \frac{2}{n}$, and for the case $\alpha > \frac{2}{n}$, blow-up solutions were found with $L^1$ bounded initial values. When the initial values are allowed to be $C^1$ functions, the global existence was established when $\alpha < \frac{4}{n+2}$ \cite{tao2013global}. It remains open whether blow-up solutions exist for $\alpha > \frac{4}{n+2}$.

Typically, a three-compartment prey-taxis model is more challenging compared to a one-prey, one-predator model. Guo et al. \cite{guo2019dynamics} studied a relatively simple case where two predators consume a single prey species, with the taxis term depending solely on the gradient of prey. They used semigroup theory to investigate the global existence and boundedness of solutions under the condition of small taxis coefficients. There are also numerous studies focusing on the chemical signal model with one prey taxis term \cite{ahn2020global,li2023positive,geng2024double}.

A more commonly used three-compartment model in ecology is the trophic cascade model which can be seen as an extension of the two-species Lotka-Volterra system described in \eqref{eq: introduction model 1}. Compared to the prey-taxis systems explored in previous studies, the trophic cascade model exhibits more complex coupling structures. 
The basic predator-prey dynamics across three trophic levels with a general setting where the predation strategy is allowed to connect with spatial heterogeneity, can be described by the following equations on $\Omega\times (0, \infty)$:

\begin{equation}
    \label{eq: general model}
    \begin{aligned}
        &\partial_t X = r(X) - f(X,Y,x)+\delta_X \Delta X,\\
        &\partial_t Y = e_1 f(X,Y,x) - g(Y,Z,x) - d_1 Y - \nabla \left(\chi_1(x) h(Y)\nabla X\right) +\delta_{Y}\Delta Y,\\
        &\partial_t Z = e_2 g(Y,Z,x) - d_2 Z - \nabla (\chi_2(x) h(Z) \nabla Y) + \delta_Z \Delta Z,
    \end{aligned}
\end{equation}
with boundary and initial conditions
\begin{equation}
    \label{eq: bc ic}
    \begin{aligned}
        &\partial_\nu X(x,t) = \partial_\nu Y(x,t) = \partial_\nu Z(x,t) = 0, \qquad (x, t) \in \partial\Omega \times (0, \infty), \\
        &(X(x, 0), Y(x, 0), Z(x, 0)) = (X_0(x), Y_{0}(x), Z_0(x)), \qquad x \in \Omega.
    \end{aligned}
\end{equation}
Here, $X$, $Y$, and $Z$ represent population densities for producers, consumers, and predators. A typical example in marine ecosystems is a trophic cascade involving phytoplankton, zooplankton, and fish species. 

The region $\Omega \subset \mathbb{R}^n$ is a bounded set with a smooth boundary, representing the living environment for all three species.
$d_1$ and $d_2$ represent death rates for consumers and predators, respectively. $\delta_i$ represents diffusion rates for the three species, and $e_i$ denotes maximal production efficiencies. Logistic growth is commonly used to model the growth of producers, namely,
\begin{equation*}
    r(X) = rX \left( 1 - \frac{X}{K} \right),
\end{equation*}
where $r$ is the growth rate and $K$ is the maximal carrying capacity for producers. The taxis sensitivity function $h(z)$ is considered to take sublinear growth when $z \to \infty$ in this paper.

In model \eqref{eq: general model}, the predation functions $f$, $g$, and the taxis coefficients $\chi_i$ depend on the space variable $x$. Previous studies have all assumed that the taxis coefficients are constants. However, the resource distribution exhibits high heterogeneity in space. Predation behaviors vary according to spatial influences. For example, within sheltered zones where consumers can evade predators, predators have fewer opportunities and less motivation to search for prey. Therefore, it is necessary to model the taxis coefficient as a function $\chi_i(x)$ rather than a constant. 

The functions $f(X, Y,x)$ and $g(Y, Z,x)$ represent predation on producers by consumers and predation on consumers by predators, respectively. These predation functions are inversely proportional to handling time. Since handling time varies significantly across the region, these functions should also take spatial heterogeneity into consideration. Here, $\tau_1(x)$ indicates the handling time for consumers on producers, and $\tau_2(x)$ represents the handling time for predators on consumers. In this study, $f$ and $g$ are considered as Holling type I and II functions with the following forms
\begin{equation*}
    f(X,Y,x)=\frac{\mu_1 X Y}{1 + \tau_1(x) X}, \quad \text{and} \quad g(Y,Z,x)=\frac{\mu_2 Y Z}{1 + \tau_2(x) Y}, 
\end{equation*}
where $\mu_1$ and $\mu_2$ denote encounter rates between producers and consumers, and between consumers and predators, respectively. $\tau_i(x)$ are allowed to be nonnegative; in particular, when $\tau_i(x) \equiv 0$, then $f$ or $g$ becomes Holling type I functional response. 

It is assumed that the following properties hold whenever we discuss system \eqref{eq: general model}.

\begin{itemize}
    \item[($A_I$)] The initial values $X_0$, $Y_0$, $Z_0$ satisfy 
    \begin{equation*}
         (X_0,Y_0, Z_0) \in [W^{2,\infty}(\Omega)]^3,\quad   X_0, Y_0, Z_0 \gneqq 0.
    \end{equation*}

    \item[($A_C$)] The parameters $r$, $K$, $e_1$, $e_2$, $d_1$, $d_2$, $\mu_1$, $\mu_2$, $\delta_X$, $\delta_{Y}$, $\delta_Z$ are positive constants.

    \item[($A_\chi$)] $\chi_i \in C^2 (\overline{\Omega}; (0, \infty))$, with maximum $\chi_{iM}$ and minimum $\chi_{im}$.
    
    \item[($A_\tau$)] $\tau_i \in C^1 (\overline{\Omega}; [0, \infty))$, with maximum $\tau_{iM}$.

    \item[($A_h$)] $h \in C^1 \big( [0, \infty); [0, \infty) \big)$, whose further properties are discussed below.
\end{itemize}

We slightly abuse the notation $f \sim g$ as $z \to z_0$ to indicate
\begin{equation*}
    0 < \liminf_{z \to z_0} \left| \frac{f(z)}{g(z)} \right| \leq \limsup_{z \to z_0} \left| \frac{f(z)}{g(z)} \right| < \infty.
\end{equation*}
Table \ref{table: h properties} lists the asymptotic behaviors of the function $h$ and some related functions. In addition, we require that the derivative $h'$ satisfies
\begin{equation*}
    h'(z) \geq 0 \ \text{and} \ 
    h'(z) = O(1) \ \text{as} \ z \to 0.
\end{equation*}
Notice that $H$ is increasing and $\mathcal{H}$ is a positive function. 

\renewcommand{\arraystretch}{1.5}
\begin{table}[t]
    \centering
    \begin{tabular}{|c|c|c|}
        \hline
        Definition & $z \to 0$ & $z \to \infty$ \\ \hline
        $h(z)$ & $\sim z^{\beta}$, $\beta \in [1, 2)$ & $\sim z^{\alpha}$, $\alpha \in [0, \frac{4}{n + 2}) \cap [0, 1]$ \\ \hline
        $H(z) = \int_1^z h^{-1} (s) \mathrm d s$ &
        $\begin{cases}
            \sim \ln z,      \ \beta = 1, \\
            \sim z^{1 - \beta}, \ \beta > 1.
        \end{cases}$ & 
        $\begin{cases}
            \sim z^{1 - \alpha}, \ \alpha < 1, \\
            \sim \ln z, \ \alpha = 1
        \end{cases}$ \\ \hline
        $\mathcal{H} (z) = \int_1^z H(s) \mathrm d s + 1$ & $\sim 1$ & 
        $\begin{cases}
            \sim z^{2 - \alpha}, \ \alpha < 1, \\
            \sim z \ln z, \ \alpha = 1
        \end{cases}$ \\ \hline
        $\widetilde{H} (z) = \int_0^z h^{-1/2} (s) \mathrm d s$ & $\sim z^{1 - \beta / 2}$ & $\sim z^{1 - \alpha / 2}$ \\ \hline
    \end{tabular}
    \caption{Properties of the function $h$ and some related functions}
    \label{table: h properties}
\end{table}
\renewcommand{\arraystretch}{1}

\begin{remark}
    For \( n \in \mathbb{N}\), typical examples of the function $h$ include constant functions $h(z) = c$, saturated type \( h(z) = \frac{z}{1 + \epsilon z^m} \) and Ricker type \( h(z) = z e^{-\epsilon z} \), where $c > 0$, $m > \max \left\{ \frac{n - 2}{n + 2}, 0 \right\}$ and $\epsilon >0$.
\end{remark}

The analysis of existence and stability in spatial trophic cascade models presents significant challenges. To establish the $L^\infty$-bound of $Z$, it is necessary to obtain an a priori bound for $||\nabla Y||_{L^\infty}$. However, the estimation of this bound depends on both $Y$ itself and $X$. There are only a few results considering this model with spacially homogeneous predation functions $f$, $g$, and taxis coefficients $\chi_i$. Jin \cite{jin2022global} proved global existence and stability of solutions, in which they took $n = 2$, $h(Y) = Y$ and Holling Type I predation function. These findings were further supported by Jin and Wang \cite{jin2023global}, who extended the analysis to encompass more general predation and taxis strength functions. However, this latter study introduced an additional logistic growth term for predators.

In this article, we explore the global existence and stability of solutions to system \eqref{eq: general model} -- \eqref{eq: bc ic}. Most existing work on prey taxis models is limited to spaces of fewer than three dimensions. This study extends the current literature by considering arbitrary dimensions and incorporating space-dependent predation functions and taxis coefficients, notably, all without introducing a logistic growth term for predators or imposing a smallness condition on taxis coefficients. The proof utilizes coupled energy estimates to provide initial bounds on $X$, $Y$, and $Z$, together with maximal regularity property \cite{matthias1997heat} and the $L^p$-$L^q$ estimates for heat kernel {\cite{winkler2010aggregation}} to improve the integrability and regularity of the solution. In contrast, in \cite{wang2019global,jin2023global}, where all coefficients are homogeneous in space, the quadratic decay terms in the equation for $Y$ provide a bound for $||Y||_{L^2 (\Omega) L^2 (t_1, t_2)}$. However, their method cannot be directly applied to our case. Our situation requires more complex energy estimates to establish an initial bound on $Y$ and necessitates careful handling of the additional terms introduced by the space dependency of $\chi$, $f$, and $g$. Since the space dimension is considered to be arbitrary, we adopt a bootstrap argument to raise the integrability of the solution until $L^\infty$. To achieve this, we extend some classic inequalities under a newly introduced weighted $L^p$-norm, which plays a crucial role in obtaining the global-in-time boundedness of the solution.

Our main result on the global existence and boundedness of classical solutions of system \eqref{eq: general model} -- \eqref{eq: bc ic} is stated as follows:
\begin{theorem}[Global boundedness]
    \label{th: global existence}
    Let $\Omega\subset \mathbb{R}^n$ $(n\geq 1)$ be a bounded domain with smooth boundary. Under Assumptions $(A_I)$ -- $(A_h)$, equation \eqref{eq: general model} -- \eqref{eq: bc ic} has a unique maximal solution $U = (X, Y, Z) \in C \big( \overline{\Omega} \times [0, \infty); \mathbb R_{\geq 0}^3 \big) \cap C^{2, 1} \big( \overline{\Omega} \times (0, \infty); \mathbb R_+^3 \big)$. Furthermore, there exists a constant $C>0$ independent of t such that 
    \begin{equation*}
        \|X(\cdot, t)\|_{W^{1,\infty}} + \|Y(\cdot, t)\|_{W^{1,\infty}} + \|Z(\cdot, t)\|_{L^\infty} \leq C, \qquad \forall t \in \left[0, \infty \right).
    \end{equation*}
\end{theorem}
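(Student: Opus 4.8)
The plan is to construct a local classical solution and then reduce the theorem to a family of $t$-uniform a priori bounds. Casting \eqref{eq: general model} as a quasilinear triangular parabolic system (cross-diffusion appearing only in the lower two equations), Amann-type fixed-point/semigroup theory yields a unique maximal classical solution $U=(X,Y,Z)$ on $\overline\Omega\times[0,T_{\max})$ together with the extensibility criterion that $T_{\max}<\infty$ forces $\|X(\cdot,t)\|_{W^{1,\infty}}+\|Y(\cdot,t)\|_{W^{1,\infty}}+\|Z(\cdot,t)\|_{L^\infty}\to\infty$ as $t\uparrow T_{\max}$; the (strong) maximum principle gives $X,Y,Z\ge 0$ and strict positivity for $t>0$. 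Two elementary bounds come first: since $f\ge0$ and $r(X)=rX(1-X/K)$, the $X$-equation gives $\partial_t X\le rX(1-X/K)+\delta_X\Delta X$, so comparison with the logistic ODE yields $0\le X\le X_M:=\max\{K,\|X_0\|_{L^\infty}\}$ for all $t$; and integrating $W:=X+e_1^{-1}Y+(e_1e_2)^{-1}Z$ over $\Omega$ makes the predation terms telescope, leaving $\tfrac{d}{dt}\int_\Omega W=\int_\Omega r(X)-\tfrac{d_1}{e_1}\int_\Omega Y-\tfrac{d_2}{e_1e_2}\int_\Omega Z\le C_1-C_2\int_\Omega W$, so that $\|Y(\cdot,t)\|_{L^1}+\|Z(\cdot,t)\|_{L^1}\le C$ uniformly in $t$. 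It thus suffices to bound the extensibility quantity uniformly in $t$ and $T_{\max}$.

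The core difficulty is upgrading the $L^1$-information on $Y$ to $L^\infty$. I would run a coupled energy estimate for the $(X,Y)$-subsystem, combining the entropy $\int_\Omega\mathcal H(Y)$ with a heterogeneity-weighted Dirichlet energy $\int_\Omega\chi_1(x)|\nabla X|^2$: since $\mathcal H'=H$ and $H'=1/h$, differentiating $\int_\Omega\mathcal H(Y)$ produces the dissipation $\int_\Omega|\nabla\widetilde H(Y)|^2=\int_\Omega|\nabla Y|^2/h(Y)$ and, from the cross-diffusion term, a contribution $\int_\Omega\chi_1\nabla Y\cdot\nabla X$, while differentiating $\int_\Omega\chi_1|\nabla X|^2$ along the $X$-equation produces $\int_\Omega\chi_1|\Delta X|^2$ (up to $\nabla\chi_1$-terms) and $\int_\Omega f\,\Delta X$; all of these are absorbed by Young's inequality and a Gagliardo–Nirenberg estimate using $X\le X_M$, the $L^1$-bounds, and the sublinearity $h(z)\sim z^\alpha$. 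Compared with the spatially homogeneous treatments of \cite{wang2019global,jin2023global}, the genuinely new terms are those carrying $\nabla\chi_1$, $h'(Y)$ and $\nabla\tau_1$ that arise on expanding $\nabla(\chi_1h(Y)\nabla X)$ and $\nabla f$; these must be absorbed strictly within $\int_\Omega|\nabla\widetilde H(Y)|^2+\int_\Omega\chi_1|\Delta X|^2$, with the degeneracy of $1/h$ and of $h'$ at the origin handled by passing to the shifted weighted space. This yields an initial bound $Y\in L^\infty_{\mathrm{loc}}([0,\infty);L^{p_0})$ for some $p_0>1$ (heuristically $p_0=2-\alpha$, matching $\mathcal H(z)\sim z^{2-\alpha}$), $\nabla X\in L^\infty([0,\infty);L^2)$, and $\sup_t\int_t^{t+1}\big(\int_\Omega|\nabla\widetilde H(Y)|^2+\int_\Omega|\Delta X|^2\big)\le C$.

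From here I would bootstrap. Feeding $Y\in L^p$ into the $X$-equation and using the $L^p$–$L^q$ heat-semigroup estimates \cite{winkler2010aggregation} and maximal Sobolev regularity \cite{matthias1997heat} gains integrability for $\nabla X$; inserting this into an $L^p$-test of the $Y$-equation (discarding the favorable $-g\le0$, bounding $f\le\mu_1X_MY$), the cross term $\int_\Omega h(Y)Y^{p-1}\nabla Y\cdot\nabla X$ is controlled by Hölder and Gagliardo–Nirenberg and absorbed by the diffusion dissipation — and it is precisely here that the new weighted $L^p_\epsilon$-norm and the interpolation inequalities established for it close the iteration $p_0\mapsto p_1\mapsto\cdots$, the hypothesis $\alpha<\tfrac{4}{n+2}$ keeping the relevant exponents subcritical so that $p_k\to\infty$ and $Y\in L^\infty([0,\infty);L^\infty)$. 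Parabolic smoothing (using exponential decay of the Neumann semigroup on the mean-zero subspace for $t$-uniformity) then gives $X\in L^\infty([0,\infty);W^{1,\infty})$; the same two-step scheme applied to the $Z$-equation — with $g\le\mu_2YZ$ controlled by $Y\in L^\infty$, and the cross term $\int_\Omega h(Z)Z^{p-1}\nabla Z\cdot\nabla Y$ handled after first producing enough integrability of $\nabla Y$ from the $Y$-equation — gives $Z\in L^\infty([0,\infty);L^\infty)$, and a final smoothing step yields $Y\in L^\infty([0,\infty);W^{1,\infty})$. The extensibility quantity is then bounded, so $T_{\max}=\infty$ and the asserted uniform bound holds.

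I expect the main obstacle to be the coupled energy estimate for $Y$ and the ensuing bootstrap under spatial heterogeneity: without any smallness condition on the $\chi_i$ and in arbitrary dimension one must simultaneously arrange the cancellation of the cross-diffusion terms and absorb all the error terms generated by $\nabla\chi_i$, $\nabla\tau_i$ and $h'$ within the available dissipation — which is exactly what motivates the tailored weighted $L^p_\epsilon$-inequalities — and one must break the apparent circularity whereby the gradient bound for $Y$ is needed to bound $Z$ while $Z$ seems needed to bound $\nabla Y$, which is resolved by driving $Y$ to $L^\infty$ using only the favorable sign of the $-g$ term before $\nabla Y$ and $Z$ are estimated.
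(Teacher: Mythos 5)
Your overall architecture matches the paper's: local existence and positivity via Amann theory and the maximum principle, $X\le\max\{K,\|X_0\|_{L^\infty}\}$, $L^1$ control of $Y,Z$ from the telescoping combination $e_1e_2X+e_2Y+Z$, a coupled entropy/energy estimate for the $(X,Y)$ pair, a Duhamel bootstrap to $Y,\nabla X\in L^\infty$, the same two-stage scheme for $(Y,Z)$, and conclusion via the extensibility criterion. However, two of your concrete choices would not close as written. First, the coupled energy functional: you pair the unweighted entropy $\int_\Omega\mathcal H(Y)$ with $\int_\Omega\chi_1(x)|\nabla X|^2$ and say the cross terms are ``absorbed by Young's inequality.'' They cannot be absorbed: the only $\nabla Y$-dissipation available is the degenerate $\int_\Omega|\nabla Y|^2/h(Y)$, which does not control $\int_\Omega|\nabla Y|^2$, so the two $\nabla X\cdot\nabla Y$ cross terms must cancel \emph{exactly}. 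With your weights they do not — the entropy side produces $\int_\Omega\chi_1\nabla X\cdot\nabla Y$ while the $\chi_1$-weighted Dirichlet energy produces a term with the $x$- and $X$-dependent coefficient $\chi_1\mu_1X/(1+\tau_1X)$. The paper engineers the cancellation by testing the $X$-equation against $-\Delta X\,(1/X+\tau_1)$, so that $f\cdot(1/X+\tau_1)=\mu_1Y$ exactly and the cross term becomes the constant-coefficient $-\mu_1\int_\Omega\nabla X\nabla Y$, and by weighting the entropy as $\int_\Omega\chi_1^{-1}\mathcal H(Y)$ so that its cross term is exactly $+\int_\Omega\nabla X\nabla Y$ (the $\chi_1$ from the flux cancels against $\nabla(\chi_1^{-1}H(Y))$). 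This choice of energy $\int_\Omega(|\nabla X|^2/X+\tau_1|\nabla X|^2)$, rather than $\int_\Omega\chi_1|\nabla X|^2$, is the missing idea; it also brings in the $D^2\ln X$ identity and a boundary-trace estimate to recover the full dissipation $\int_\Omega(|D^2X|^2/X+|\nabla X|^4/X^3)$ that absorbs the error terms you list.

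Second, you misplace the role of the weighted $L^p_\epsilon$-norms. In the paper the $Y$-bootstrap (Lemmas 4.6--4.7) is a plain spatial Duhamel iteration with no $L^p_\epsilon$ machinery; the weighted-in-time norms are needed only at the $Z$-stage, and the reason is a term you omit entirely: bounding $\nabla Y$ in higher $L^q$ requires controlling $\nabla(\chi_1h(Y)\nabla X)$, which contains $\Delta X$, and $\Delta X$ is available only in the space-time norm $L^p(\Omega)L^q_\epsilon(0,t)$ via maximal regularity (Lemma 4.8) — it has no pointwise-in-time bound. This forces the entire $(\nabla Y,Z)$ iteration to run in mixed $L^{q_k}(\Omega)L^p_\epsilon(0,t)$ norms, with a second energy estimate on $Y$ (multiplier $-\Delta Y(1/Y+\tau_2)$) coupled to the $Z$-entropy $\int_\Omega\chi_2^{-1}\mathcal H(Z)$ to seed it. Your sketch of the $Z$-stage as ``the same two-step scheme'' with ``enough integrability of $\nabla Y$'' does not confront this, and it is precisely where the argument is hardest to close.
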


In this study, we also conduct a stability analysis of the system \eqref{eq: general model}. When $\tau_1$ and $\tau_2$ are constant functions, this system has four types of homogeneous (constant) steady states $U^*$.

\begin{itemize}
    \item Extinction steady state: \((0,0,0)\);
    \item Prey-only steady state: \((K,0,0)\);
    \item Semi-coexistence steady state: \((X^*, Y^*, 0)\), where \(X^* = \frac{d_1}{\mu_1 e_1 - \tau_1 d_1}\) and \(Y^* = \frac{e_1 r}{d_1} X^* \left(1 - \frac{X^*}{K}\right)\);
    \item Coexistence steady states: \((X_1^*, Y_1^*, Z_1^*)\) and \((X_2^*, Y_2^*, Z_2^*)\), where:
        \begin{itemize}
            \item \(Y_1^* = Y_2^* = \frac{d_2}{\mu_2 e_2 - \tau_2 d_2}\),
            \item \(X_1^*\) and \(X_2^*\) are the roots of the equation \(\frac{\tau_1}{K} X^2 + \left(\frac{1}{K} - \tau_1\right) X + \frac{\mu_1}{r} Y^* - 1 = 0\),
            \item \(Z_i^* = \frac{r e_1 e_2}{d_2} X_i^* \left(1 - \frac{X_i^*}{K}\right) - \frac{e_2 d_1}{\mu_2 e_2 - \tau_2 d_2}\), \ for \(i = 1, 2\).
        \end{itemize}
\end{itemize}

Most existing work on prey-taxis systems \cite{jin2017global,jin2022global,jin2023global,ahn2020global} employs Lyapunov functions to find some conditions for global stability of constant equilibria. However, the general system \eqref{eq: general model} may exhibit multi-stability, which represents a common ecosystem structure in population dynamics. Our work extends these analyses by considering all scenarios of instability, local stability, and global stability. For the trivial and semi-trivial equilibria, these three cases of stability are discussed under different conditions. The need for a general treatment of local and global stability motivate us to generalize the LaSalle's invariance principle to the $L^\infty$ setting, given in Proposition \ref{prop: lyapunov to L^infty}. Supported by its applications in Lemmas \ref{lem: stability for prey only} to \ref{lem: stability of coexistence}, this turns out to be a powerful tool to establish the stability of equilibria.

Now we state the main result on stability. For the exact definition of each type of stability, please refer to Lemmas \ref{lem: stability for prey only}, \ref{lem: stability of semicoexistence},  \ref{lem: stability of coexistence} and Remark \ref{rem: LaSalle}.

\begin{theorem}[Stability]
\label{th: stability}
    Let $\Omega\subset \mathbb{R}^n$ $(n\geq 1)$ be a bounded domain with smooth boundary, and $U(t)$ be the solution to system \eqref{eq: general model} -- \eqref{eq: bc ic}. Suppose Assumptions $(A_I)$ -- $(A_h)$ hold and $\tau_1$, $\tau_2$ are constant functions. 
    \begin{itemize}
    \item[(i)] Extinction steady state: 
    $(0,0,0)$ is unstable.
    
    \item[(ii)] Prey-only steady state: 
    \begin{enumerate}
    \item[(a)]  If \(d_1 > K \left(\mu_1 e_1 - d_1 \tau_1\right)\), then $(K,0,0)$ is locally asymptotically stable.
    \item[(b)] If \(d_1 \geq K \mu_1 e_1 \), then $(K,0,0)$ is globally asymptotically stable.
    \item[(c)]  If 
    \(d_1 \leq K \left(\mu_1 e_1 - d_1 \tau_1\right)\),
    then $(K,0,0)$ is unstable.
    \end{enumerate}

    \item[(iii)] Semi-coexistence steady state: Assume \( Y^* < \frac{d_2}{e_2 \mu_2} \) and $ \left\|\frac{\chi_1(x)h(Y) X}{Y}\right\|_{L^\infty}^2 < \frac{4 e_1 \delta_X \delta_Y X^*}{\left(1+\tau_1 X^*\right) Y^*}$.
\begin{enumerate}
    \item[(a)]  If \( X^* > \frac{1}{2}\left(K - \frac{1}{\tau_1}\right) \), then $(X^*,Y^*,0)$ is locally asymptotically stable.
    
    \item[(b)] If \( X^* > K - \frac{1}{\tau_1} \), then $(X^*,Y^*,0)$ is globally asymptotically stable.
    \item[(c)]  If 
    \( X^* \leq \frac{1}{2}\left(K - \frac{1}{\tau_1}\right)\),
    then \((X^*,Y^*,0)\) is unstable.
\end{enumerate}
    \item[(iv)] Coexistence steady state: Let $\Phi$ be the right hand side of system \eqref{eq: general model}, given in equation \eqref{eq: vector Phi}.
    \begin{itemize}
        \item[(a)] If 
        \begin{equation*}
            \frac{1}{\delta_X} \chi_{1M}^2 h^2(Y^*) + \frac{1}{\delta_Z} \chi_{2M}^2 h^2(Z^*) < 4 \delta_Y
        \end{equation*}
        and $D\Phi (U^*)$ is negative definite, then the equilibrium $U^*$ is locally asymptotically stable.
        \item[(b)] If $D\Phi (U^*)$ has an eigenvalue with positive real part, then the equilibrium $U^*$ is unstable.
        \end{itemize}
    \end{itemize}
\end{theorem}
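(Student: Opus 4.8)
The plan is to prove Theorem~\ref{th: stability} equilibrium by equilibrium, splitting the work into \emph{instability} claims, obtained by linearization, and \emph{local/global asymptotic stability} claims, obtained by constructing Lyapunov functionals and invoking the generalized invariance principle of Proposition~\ref{prop: lyapunov to L^infty}. The common prerequisite is Theorem~\ref{th: global existence}: it supplies a global classical solution with the uniform bound $\|X(\cdot,t)\|_{W^{1,\infty}}+\|Y(\cdot,t)\|_{W^{1,\infty}}+\|Z(\cdot,t)\|_{L^\infty}\le C$, and feeding this into standard parabolic $L^p$ and Schauder estimates on the cylinders $\Omega\times[t,t+1]$ gives a uniform $C^{2+\alpha}(\overline{\Omega})$ bound for $t\ge 1$, so by Arzel\`a--Ascoli the orbit $\{U(\cdot,t):t\ge 1\}$ is precompact in $C(\overline{\Omega};\mathbb R^3)$. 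This precompactness is precisely what makes $\omega$-limit sets nonempty, compact and invariant, hence makes Proposition~\ref{prop: lyapunov to L^infty} applicable; the detailed statements are then collected in Lemmas~\ref{lem: stability for prey only}--\ref{lem: stability of coexistence}, of which Theorem~\ref{th: stability} is the summary.

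For the instability claims (parts (i), (ii)(c), (iii)(c), (iv)(b)) I would linearize \eqref{eq: general model}--\eqref{eq: bc ic} at the constant state $U^*$. A spatially homogeneous perturbation annihilates every Laplacian and every cross-diffusion term, so the zeroth spatial mode obeys the kinetic linearization $V'=D\Phi(U^*)V$ with $\Phi$ as in \eqref{eq: vector Phi}; hence, if $D\Phi(U^*)$ has an eigenvalue with positive real part, the full linearized parabolic operator inherits it (via the constant eigenfunction), its analytic semigroup has a nontrivial unstable subspace, and $U^*$ is unstable in $L^\infty$, which is exactly (iv)(b). The algebraic thresholds in (i), (ii) and (iii) come from computing $D\Phi$ explicitly: at $(0,0,0)$ the Jacobian is block-triangular with diagonal entry $r'(0)=r>0$, giving (i); at $(K,0,0)$ the $Y$-row contributes the scalar growth rate $\tfrac{\mu_1 e_1 K}{1+\tau_1 K}-d_1$, whose sign, after clearing the denominator, is that of $K(\mu_1e_1-d_1\tau_1)-d_1$, giving (ii)(a)/(c); and at $(X^*,Y^*,0)$ the decoupled $Z$-direction has growth rate $\tfrac{\mu_2 e_2 Y^*}{1+\tau_2 Y^*}-d_2<0$ under the standing assumption $Y^*<d_2/(e_2\mu_2)$, while the stability of the $(X,Y)$-block is governed by the position of $X^*$ relative to the hump $\tfrac12(K-1/\tau_1)$ of the prey nullcline $Y=\tfrac r{\mu_1}(1-X/K)(1+\tau_1X)$, which is the threshold in (iii)(a)/(c). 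At the borderline equalities a leading eigenvalue vanishes, and I expect a slightly finer argument (that the bifurcating branch still carries a growing mode, or a direct sub/supersolution construction) to be needed to keep the clean conclusion of instability.

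For the asymptotic-stability claims I would, for each admissible $U^*=(X^*,Y^*,Z^*)$, build a Volterra-type functional $V=\sum_i a_i\int_\Omega \phi(W_i,W_i^*)\,dx$, where $\phi(w,w^*)=w-w^*-w^*\ln(w/w^*)$ for a component positive at $U^*$ and $\phi(w,0)=w$ for a component vanishing there, and the weights $a_i>0$ are chosen to cancel the linear cross terms produced by $f$ and $g$ (for the prey-only state, e.g., $V=\int_\Omega\phi(X,K)+\tfrac1{e_1}\int_\Omega Y+\tfrac1{e_1e_2}\int_\Omega Z$). Differentiating along the solution and integrating by parts with the Neumann conditions, the result splits into three pieces: the reaction piece, which near $U^*$ equals the quadratic form $(V,D\Phi(U^*)V)$ up to higher order and is negative under the hypothesis that $D\Phi(U^*)$ is negative definite (this is the source of the word ``locally'' in (ii)(a), (iii)(a), (iv)(a)); the self-diffusion piece, contributing the manifestly nonpositive $-\delta_X a_1X^*\int|\nabla X|^2/X^2$ and its analogues; and the cross-diffusion piece, indefinite integrals $\int_\Omega\chi_i h(\cdot)\nabla(\cdot)\cdot\nabla(\cdot)$ which are absorbed into the self-diffusion terms by Young's inequality. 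The absorption is feasible exactly when the associated $3\times3$ (or $2\times2$) gradient-coefficient matrix is positive definite, and positivity of its determinant is precisely the condition $\tfrac1{\delta_X}\chi_{1M}^2h^2(Y^*)+\tfrac1{\delta_Z}\chi_{2M}^2h^2(Z^*)<4\delta_Y$ of (iv)(a), respectively $\bigl\|\tfrac{\chi_1h(Y)X}{Y}\bigr\|_{L^\infty}^2<\tfrac{4e_1\delta_X\delta_YX^*}{(1+\tau_1X^*)Y^*}$ of (iii) (for the semi-coexistence state the $\chi_2$-term is absent because the $Z$-weight is constant, and $Y^*<d_2/(e_2\mu_2)$ keeps the $Z$-reaction dissipative). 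Once $\tfrac{d}{dt}V\le 0$ with equality only at $U^*$, Proposition~\ref{prop: lyapunov to L^infty} forces the $\omega$-limit set to be $\{U^*\}$, and coercivity of $V$ near $U^*$ upgrades this to Lyapunov stability, hence local asymptotic stability.

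For the global claims (ii)(b) and (iii)(b) I would use the \emph{same} functionals but exploit the stronger hypotheses $d_1\ge K\mu_1e_1$ (resp.\ $X^*>K-1/\tau_1$): these make the reaction piece of $\tfrac{d}{dt}V$ nonpositive on the whole bounded positive phase space --- for the prey-only state one checks directly that after cancellation the surviving reaction terms equal $-\tfrac rK\int(X-K)^2+\bigl(K\mu_1-\tfrac{d_1}{e_1}\bigr)\int Y+(\text{nonpositive }Z\text{ term})$, which is $\le 0$ exactly when $d_1\ge K\mu_1e_1$ --- so $\tfrac{d}{dt}V\le 0$ globally, the largest invariant subset of $\{\tfrac{d}{dt}V=0\}$ is the singleton $\{U^*\}$, and a second application of Proposition~\ref{prop: lyapunov to L^infty} yields convergence of every solution to $U^*$. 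I expect the main obstacle to be the bookkeeping of the cross-diffusion terms in $\tfrac{d}{dt}V$: one must simultaneously choose the entropy weights and split the Young inequalities so that \emph{all} taxis cross terms --- including the extra lower-order contributions created by the spatial dependence of $\chi_i$ (and of $f$, $g$) --- are absorbed under exactly the stated thresholds while $V$ stays bounded below; the asymptotic profiles of $h$, $H$, $\mathcal H$ near $0$ and $\infty$ in Table~\ref{table: h properties} are what guarantee the entropies are well defined and coercive, and the uniform bounds from Theorem~\ref{th: global existence} re-enter here to make $\|\chi_1h(Y)X/Y\|_{L^\infty}$ a genuine finite constant.
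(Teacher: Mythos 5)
Your Lyapunov-functional strategy for the stability claims matches the paper's: for the prey-only and semi-coexistence states the paper uses exactly the Volterra entropies you describe (with weights $\tfrac{X^*}{1+\tau_1X^*}$, $\tfrac{Y^*}{e_1}$, $\tfrac{1}{e_1e_2}$), absorbs the taxis cross term via positive definiteness of a $2\times2$ gradient matrix (which is where \eqref{eq: stability for semitrivial} comes from), reads off the sign of the reaction piece from the product $J(X,X^*)=(f_1(X)-f_1(X^*))(f_2(X)-f_2(X^*))$ with $f_2$ the prey nullcline, and then invokes Proposition \ref{prop: lyapunov to L^infty}. One substantive difference: for the coexistence state the paper does \emph{not} use a Volterra entropy but the plain functional $\mathcal{L}_3=\tfrac12\|U-U^*\|_{L^2}^2$, precisely so that the gradient quadratic form is governed by the unweighted symmetric part of $\mathbb A(x,U^*)$ and the absorption condition comes out exactly as $\tfrac1{\delta_X}\chi_{1M}^2h^2(Y^*)+\tfrac1{\delta_Z}\chi_{2M}^2h^2(Z^*)<4\delta_Y$; with your weighted entropy the corresponding matrix carries the weights and the denominators $Y$, $Z$, and you would not recover the stated threshold without extra work. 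Your route to precompactness (Schauder estimates plus Arzel\`a--Ascoli) is heavier than needed; the paper gets it from the uniform $W^{1,\infty}$ bound and compact Sobolev embedding inside Proposition \ref{prop: lyapunov to L^infty}(a).

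The genuine gap is in the instability claims. First, the theorem asserts instability under the \emph{non-strict} inequalities $d_1\le K(\mu_1e_1-d_1\tau_1)$ and $X^*\le\tfrac12(K-1/\tau_1)$; at equality your linearization produces a zero leading eigenvalue and yields nothing, as you yourself note, and "a slightly finer argument" is not supplied. The paper closes these cases without any spectral analysis: it restricts to spatially homogeneous orbits, on which the system reduces to an ODE, and shows the same Lyapunov functional is \emph{strictly increasing} along suitable such orbits arbitrarily close to $U^*$ (e.g.\ homogeneous data with $X_0<X^*$ when $X^*=\tfrac12(K-1/\tau_1)$, using $J(X,X^*)>0$ there), contradicting convergence. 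Second, even off the borderline, your inference ``$D\Phi(U^*)$ has an unstable eigenvalue $\Rightarrow$ the quasilinear PDE is nonlinearly unstable in $L^\infty$'' is asserted but not justified; passing from linear to nonlinear instability for a quasilinear cross-diffusion system is not automatic. The paper again sidesteps this entirely in (iv)(b) by observing that spatially homogeneous solutions solve $\tfrac{\mathrm d U}{\mathrm dt}=\Phi(U)$ exactly, so ODE instability of $U^*$ immediately furnishes the escaping orbits required by the definition of instability. You should replace your spectral argument for (ii)(c), (iii)(c) and (iv)(b) by this homogeneous-solution reduction (or supply the missing nonlinear-instability theorem and a separate treatment of the degenerate equality cases).
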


\begin{remark}
    It follows from Table \ref{table: h properties} that \( \left\|\frac{h(Y)}{Y}\right\|_{L^\infty} \) is bounded. We will prove in Lemma \ref{lem: X bounded} that \(\|X\|_{L^\infty} \leq \max\{ \|X_0\|_{L^\infty}, K\}\). Therefore, the condition for the semi-coexistence steady state can be satisfied when $\chi_{1M}$ is sufficiently small.
\end{remark}

The remainder of this article is structured as follows. Section \ref{sec: Preliminaries} covers some preliminaries, including the definition and properties of the weighted $L^p$ norm, and some useful inequalities. Section \ref{sec: Local} derives the local existence and positivity of the solution. In Section \ref{sec: Global existence}, we establish global existence. Finally, in Section \ref{sec: Stability}, we further improve the regularity of solutions and use Lyapunov functions to prove the stability.

\section{Preliminaries}
\label{sec: Preliminaries}
We begin this section by proving various properties related to the function $h$. Throughout this article, we abbreviate the norm $||\cdot||_{L^p(\Omega)}$ and $||\cdot||_{W^{k, p}(\Omega)}$ into $||\cdot||_{L^p}$ and $||\cdot||_{W^{k, p}}$. We also drop $\mathrm{d}x$ when we perform integration in $\Omega$. 

\begin{lemma}
    \label{lemma: norms of h(psi)}
    Let $p \in [1, \infty)$. For any measurable function $\phi$ on $\Omega$ and any $\nu > 0$, there exist constants $C$, $C'$ such that
    \begin{equation}
        \label{eq: norms of h(psi) 1}
        \begin{aligned}
            & \int_\Omega h(\phi)^p 
            \leq  C\int_\Omega |\phi|^{p \alpha}  + C , \\
            & \int_\Omega |h(\phi) H(\phi)|^p 
            \leq C \int_\Omega |\phi|^{p + \nu}  + C, \\
            & \int_\Omega |\phi H(\phi)|^p  \leq C  \int_\Omega |\phi|^{p(2 - \alpha)}  + C\int_\Omega |\phi|^{p + \nu}  + C, \\
            & C' \int_\Omega |\phi|^{p(2-\alpha)}  
            \leq \int_\Omega \mathcal{H} (\phi)^p  
            \leq C  \int_\Omega |\phi|^{p(2 - \alpha)}  + C \int_\Omega |\phi|^{p + \nu}  + C.
        \end{aligned}
    \end{equation}
\end{lemma}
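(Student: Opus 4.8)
The plan is to derive each inequality from the asymptotic behaviors tabulated in Table \ref{table: h properties}, splitting $\Omega$ into the region where $|\phi|$ is small and the region where $|\phi|$ is large, and using continuity (hence boundedness) of the relevant functions on compact sets for the small region. Concretely, fix a threshold $M \geq 1$. On the set $\{|\phi| \leq M\}$ every function appearing ($h$, $hH$, $\phi H$, $\mathcal H$) is continuous on $[0,M]$ and hence bounded by a constant depending only on $M$; integrating over $\Omega$ (which has finite measure) contributes only to the additive constant $C$. So in each case it suffices to produce a pointwise bound valid on $\{|\phi| > M\}$ for $M$ large enough.

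For the large-value estimates I would argue as follows. From Table \ref{table: h properties}, $h(z) \sim z^\alpha$ as $z \to \infty$, so for $M$ large $h(z)^p \leq C z^{p\alpha}$ when $z > M$, which gives the first inequality. For the second, $h(z) \sim z^\alpha$ and $H(z) \sim z^{1-\alpha}$ (when $\alpha < 1$) or $\sim \ln z$ (when $\alpha = 1$); in the former case $h(z)H(z) \sim z$, and in the latter $h(z)H(z) \sim z\ln z = O(z^{1+\nu})$ for any $\nu > 0$. Either way $|h(z)H(z)|^p \leq C z^{p+\nu} + C$, giving the second line. For the third line, write $\phi H(\phi) = \phi \cdot H(\phi)$; since $H(z) \sim z^{1-\alpha}$ (resp. $\ln z$), we get $|\phi H(\phi)| \sim |\phi|^{2-\alpha}$ (resp. $|\phi|\ln|\phi| = O(|\phi|^{2-\alpha+\nu'})$), and combining with $|\phi|^{2-\alpha} \leq |\phi|^{p(2-\alpha)/p}$-type bookkeeping — more precisely bounding $|\phi|^{2-\alpha+\nu'} \leq |\phi|^{2-\alpha} + |\phi|^{q}$ for a suitable $q$ and then using Young's inequality or simply $a \leq 1 + a^s$ for $s \geq 1$ to absorb the logarithmic correction into a $|\phi|^{p+\nu}$ term — yields the stated bound. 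The fourth line is the same computation for $\mathcal H(z) \sim z^{2-\alpha}$ (resp. $z\ln z$) as $z \to \infty$ and $\mathcal H(z) \sim 1$ as $z \to 0$; here the two-sided bound is available because $\mathcal H \geq 1 > 0$ everywhere (so the lower bound $\mathcal H(\phi)^p \geq C'|\phi|^{p(2-\alpha)}$ needs only the large-$\phi$ asymptotics plus the fact that $|\phi|^{p(2-\alpha)}$ is itself bounded on $\{|\phi| \leq M\}$, which would force the constant $C'$ to be chosen appropriately or the lower bound to carry its own additive constant — I would check the precise statement, since as written it has no additive constant on the left).

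The one genuinely delicate point, and the step I expect to be the main obstacle, is the handling of the logarithmic borderline case $\alpha = 1$: there $h(z)H(z) \sim z\ln z$ and $\mathcal H(z) \sim z\ln z$, which are \emph{not} bounded by any fixed power $z^{1}$ or $z^{2-\alpha} = z^{1}$, and one genuinely needs the slack $+\nu$ in the exponent. The mechanism is that for any $\nu > 0$ there is a constant $C_\nu$ with $\ln z \leq C_\nu z^{\nu}$ for $z \geq 1$, so $z \ln z \leq C_\nu z^{1+\nu}$; raising to the $p$-th power and absorbing constants gives the result. I would state this $\ln z \leq C_\nu z^\nu$ fact explicitly as it is used repeatedly. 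Beyond that, the proof is a routine exercise in splitting the domain and applying the definition of $\sim$ from the excerpt (which gives two-sided comparability of $f$ and $g$ near $z_0$, hence pointwise bounds $c_1 g(z) \leq f(z) \leq c_2 g(z)$ for $z$ near $z_0$); the only care needed is to make sure every ``$\sim$'' is converted into honest inequalities valid on the relevant range and that all the leftover bounded pieces are swept into the additive constant.
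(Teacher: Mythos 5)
Your proposal is correct and follows essentially the same route as the paper: split $\Omega$ at a threshold, use boundedness of $h$, $hH$, $\phi H$, $\mathcal H$ on the bounded range (via the $z\to 0$ asymptotics in Table \ref{table: h properties}), use the $z\to\infty$ asymptotics on the complement, and absorb the $\alpha=1$ logarithm via $\ln z\le C_\nu z^\nu$; the paper writes this out only for $\mathcal H$ and invokes the same argument for the rest. Your worry about the lower bound is resolved exactly as you suspected: since $\mathcal H$ is bounded below by a positive constant and $|\phi|^{p(2-\alpha)}\le 1$ on $\{|\phi|<1\}$, the estimate $\int_{\{|\phi|<1\}}\mathcal H(\phi)^p\ge c\,|\{|\phi|<1\}|\ge c\int_{\{|\phi|<1\}}|\phi|^{p(2-\alpha)}$ holds with no additive constant.
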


\begin{proof}
    We prove the inequality on $\mathcal{H}(\phi)$ only; other inequalities can be proved by a similar argument. By Table \ref{table: h properties}, $\mathcal{H}(z) \sim 1$ as $ z\to 0$, hence on the set $\Omega_1=\{|\phi| < 1\}$, we have 
    \begin{equation*}
        \int_{\Omega_1} \mathcal{H}(\phi)^p \leq  c_1 |\Omega|,
        \end{equation*}
        and
    \begin{equation*}
        \int_{\Omega_1} \mathcal{H} (\phi)^p  
        \geq c_2 |\Omega_1|
        \geq c_2 \int_{\Omega_1} |\phi|^{p(2-\alpha)} .
    \end{equation*}
    On the other hand, on the set $\Omega_2 = \{ |\phi| \geq 1 \}$ we have
    \begin{equation*}
        \begin{aligned}
            \int_{\Omega_2} \mathcal{H}(\phi)^p  &\leq 
            \begin{cases}
            c_3 \int_{\Omega_2} |\phi|^{p(2-\alpha)} , \ \alpha < 1 \\
            c_3 \int_{\Omega_2} |\phi \ln \phi|^p , \ \alpha = 1
        \end{cases}
        \leq
        \begin{cases}
            c_3 \int_{\Omega_2} |\phi|^{p(2-\alpha)} , \ \alpha < 1, \\
            c_4 \int_{\Omega_2} |\phi| ^{p +\nu} , \ \alpha = 1,
        \end{cases}
        \end{aligned}
    \end{equation*}
    and
    \begin{equation*}
        \begin{aligned}
            \int_{\Omega_2} \mathcal{H} (\phi)^p  
            \geq c_5 \int_{\Omega_2} |\phi|^{p(2-\alpha)} .
        \end{aligned}
    \end{equation*}
    By summing above equations, one can readily verify the last inequality of \eqref{eq: norms of h(psi) 1} holds.
\end{proof}

\begin{lemma}
    \label{lemma: G-N inequality on psi}
    For any measurable function $\phi$ on $\Omega$ and any $\epsilon > 0$, there exists a constant $C$ dependent on $||\phi||_{L^1 (\Omega)}$ such that
    \begin{equation*}
        \int_\Omega \left(\phi |H(\phi)| + |h(\phi) H(\phi)|^{4/3} + \mathcal{H} (\phi) \right) 
        \leq \epsilon \int_\Omega \frac{|\nabla \phi|^2}{h(\phi)}  + C.
    \end{equation*}
\end{lemma}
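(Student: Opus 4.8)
The plan is to reduce the three terms on the left to pure powers of $\phi$ via Lemma~\ref{lemma: norms of h(psi)}, then change the unknown so that the right-hand side becomes a Dirichlet energy, and finally absorb the powers by Gagliardo--Nirenberg together with Young's inequality. The inequality is vacuous unless $\int_\Omega |\nabla\phi|^2/h(\phi)<\infty$ and $\|\phi\|_{L^1}<\infty$, so I would assume both are finite. Set $w:=\widetilde H(\phi)=\int_0^\phi h^{-1/2}(s)\,\mathrm ds$, which is finite because $h(s)\sim s^\beta$ with $\beta<2$ makes $h^{-1/2}$ integrable at $0$; since $\nabla w=h^{-1/2}(\phi)\,\nabla\phi$, the right-hand side is exactly $\epsilon\|\nabla w\|_{L^2}^2+C$. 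Applying Lemma~\ref{lemma: norms of h(psi)} with $p=1$ to $\phi|H(\phi)|$ and $\mathcal H(\phi)$, and with $p=4/3$ to $|h(\phi)H(\phi)|^{4/3}$, the left-hand side is dominated by $C\int_\Omega|\phi|^{2-\alpha}+C\int_\Omega|\phi|^{1+\nu}+C\int_\Omega|\phi|^{4/3+\nu}+C$ for every $\nu>0$. Hence it suffices to prove that for each $q\in\{\,2-\alpha,\ 1+\nu,\ 4/3+\nu\,\}$ one has $\int_\Omega|\phi|^q\le \tfrac{\epsilon}{3}\|\nabla w\|_{L^2}^2+C$ with $C=C(\|\phi\|_{L^1})$, where $\nu>0$ will be fixed small.

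The second step translates powers of $\phi$ into powers of $w$. Let $s_0:=\tfrac{2}{2-\alpha}\in[1,2]$. Using the asymptotics of $\widetilde H$ in Table~\ref{table: h properties} ($\widetilde H(z)\sim z^{1-\alpha/2}$ as $z\to\infty$, $\widetilde H(z)\sim z^{1-\beta/2}$ as $z\to0$, and $\widetilde H$ bounded on bounded sets), and the identity $s_0(1-\alpha/2)=1$, one gets the two pointwise comparisons $w^{s_0}\le C(\phi+1)$ and $\phi\le C(w^{s_0}+1)$. The first gives $\|w\|_{L^{s_0}}^{s_0}\le C(\|\phi\|_{L^1}+|\Omega|)=:M$, controlled by $\|\phi\|_{L^1}$; the second gives, for $q\ge1$, $\int_\Omega|\phi|^q\le C\int_\Omega w^{q s_0}+C$. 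So the task reduces to bounding $\int_\Omega w^{r}$ with $r=qs_0$.

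The third step is the interpolation. On the bounded smooth domain $\Omega$, the Gagliardo--Nirenberg inequality yields $\|w\|_{L^r}\le C\|\nabla w\|_{L^2}^{a}\|w\|_{L^{s_0}}^{1-a}+C\|w\|_{L^{s_0}}$ with $a\in[0,1]$ determined by $\tfrac1r=a\big(\tfrac12-\tfrac1n\big)+(1-a)\tfrac1{s_0}$ (in dimensions $n\le2$ one uses the corresponding subcritical interpolation, which is less restrictive). Raising to the power $r$, $\int_\Omega w^{r}\le C\|\nabla w\|_{L^2}^{ar}M^{(1-a)r/s_0}+CM^{r/s_0}$, and Young's inequality absorbs the first term into $\tfrac{\epsilon}{3}\|\nabla w\|_{L^2}^2+C(\epsilon,M)$ \emph{provided} $ar<2$. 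A computation from the scaling relation shows $ar<2$ is equivalent to $r<\tfrac{2s_0}{n}+2$, i.e.\ to $q<2-\alpha+\tfrac2n$. This is immediate for $q=2-\alpha$, holds for small $\nu$ when $q=1+\nu$, and holds for small $\nu$ when $q=4/3+\nu$ because $\alpha<\tfrac{4}{n+2}<\tfrac23+\tfrac2n$ (equivalently $2n^2-2n+12>0$). Summing the three estimates, and noting every constant depends only on $\|\phi\|_{L^1}$, $|\Omega|$ and the fixed data of $h$, completes the proof.

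I expect the only real obstacle to be the exponent bookkeeping in the last step: one must check that $ar<2$ survives in \emph{all} space dimensions, and this is precisely where the standing hypothesis $\alpha<\tfrac{4}{n+2}$ is used, with the $|h(\phi)H(\phi)|^{4/3}$ term providing the binding constraint. The reduction through Lemma~\ref{lemma: norms of h(psi)} and the pointwise comparisons between $\widetilde H(\phi)$ and powers of $\phi$ are routine.
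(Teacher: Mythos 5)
Your proposal is correct and follows essentially the same route as the paper's proof: reduce the left-hand side to powers of $\phi$ via Lemma \ref{lemma: norms of h(psi)}, pass to $w=\widetilde H(\phi)$ so that the right-hand side becomes $\epsilon\|\nabla w\|_{L^2}^2$, and close with Gagliardo--Nirenberg plus Young under the exponent condition $q<2-\alpha+\tfrac2n$, which is exactly the paper's condition and is where $\alpha<\tfrac23+\tfrac2n$ (hence $\alpha<\tfrac{4}{n+2}$) enters. The only cosmetic difference is that you handle small values of $\phi$ by the pointwise comparisons $w^{s_0}\le C(\phi+1)$, $\phi\le C(w^{s_0}+1)$, whereas the paper splits off the set $\{|\phi|<1\}$ before rewriting in terms of $\widetilde H$.
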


\begin{proof}
    We prove this lemma under the condition
    \begin{equation*}
        \alpha \in \Big[ 0, \frac{2}{n} + \frac{2}{3} \Big) \cap [0, 1],
    \end{equation*}
    which contains the current range $[0, \frac{4}{n + 2}) \cap [0, 1]$ of $\alpha$. By Lemma \ref{lemma: norms of h(psi)}, for any $\nu > 0$ we have
    \begin{equation*}
        \int_\Omega \left(\phi |H(\phi)| + |h(\phi) H(\phi)|^{4/3} + \mathcal{H} (\phi)\right)  
        \leq ||\phi||_{L^{2 - \alpha}}^{2 - \alpha} + ||\phi||_{L^{4/3 + \nu}}^{4/3 + \nu} + c_1.
    \end{equation*}
    Let $\Omega_1 = \{ x \in \Omega: |\phi(x)| \geq 1 \}$, then
    \begin{equation*}
        \int_\Omega \left(\phi |H(\phi)| + |h(\phi) H(\phi)|^{4/3} + \mathcal{H} (\phi) \right) 
        \leq ||\phi||_{L^{2 - \alpha} (\Omega_1)}^{2 - \alpha} + ||\phi||_{L^{4/3 + \nu} (\Omega_1)}^{4/3 + \nu} + c_1 + 2|\Omega|.
    \end{equation*}
    We claim that for any $\epsilon > 0$ and $q \in [1, 2 - \alpha] \cup [1, \frac{4}{3} + \nu_0)$, where $\nu_0 = \frac{2}{3} + \frac{2}{n} - \alpha > 0$, we have 
    \begin{equation*}
        \label{eq: proof 1 G-N inequality on psi}
        ||\phi||_{L^q (\Omega_1)}^q \leq \epsilon \int_\Omega \frac{|\nabla \phi|^2}{h(\phi)}  + C,
    \end{equation*}
    where $C$ depends on $||\phi||_{L^1 (\Omega)}$. Based on the properties of $\widetilde{H}$ in Table \ref{table: h properties}, we rewrite equation \eqref{eq: proof 1 G-N inequality on psi} into
    \begin{equation*}
        \label{eq: proof 2 G-N inequality on psi}
        ||\widetilde{H} (\phi)||_{L^\frac{2q}{2 - \alpha} (\Omega_1)}^\frac{2q}{2 - \alpha} 
        \leq \epsilon ||\nabla \widetilde{H} (\phi)||_{L^2(\Omega)}^2 + C,
    \end{equation*}
    where $C$ depends on $||\widetilde{H} (\phi)||_{L^\frac{2}{2 - \alpha} (\Omega)}$.
    
    We proceed via Gagliardo-Nirenberg inequality. Take
    \begin{equation*}
        \theta = \frac{n (q - 1) (2 - \alpha)}{2q + nq (1 - \alpha)} \geq 0,
    \end{equation*}
    then $\theta < 1$ is equivalent to 
    \begin{equation*}
        \left( 1 - \frac{2}{n} \right) q < 2 - \alpha.
    \end{equation*}
    The above inequality holds when $n \leq 2$ or when $1 \leq q \leq 2 - \alpha$. When $n \geq 3$, we have
    \begin{equation*}
        q < \frac{2 - \alpha}{1 - \frac{2}{n}}
        = \frac{4}{3} + \frac{\frac{2}{3} + \frac{2}{n} - \alpha}{1 - \frac{2}{n}} + \frac{2}{3n - 6}.
    \end{equation*}
    We observe that the above inequality holds for $1 \leq q < \frac{4}{3} + \nu_0$. Now the Gagliardo-Nirenberg inequality implies
    \begin{equation*}
        ||\widetilde{H} (\phi)||_{L^\frac{2q}{2 - \alpha} (\Omega_1)}^\frac{2q}{2 - \alpha} 
        \leq c_2 ||\nabla \widetilde{H} (\phi)||_{L^2 (\Omega_1)}^{\frac{2q}{2 - \alpha} \theta} ||\widetilde{H} (\phi)||_{L^\frac{2}{2 - \alpha} (\Omega_1)}^{\frac{2q}{2 - \alpha} (1 - \theta)} + c_2 ||\widetilde{H} (\phi)||_{L^{\frac{2}{2 - \alpha}} (\Omega_1)}^{\frac{2q}{2 - \alpha}}.
    \end{equation*}
    By direct calculation, the relation
    \begin{equation*}
        \frac{2q}{2 - \alpha} \theta = \frac{2n (q - 1)}{2 + n (1 - \alpha)} < 2
    \end{equation*}
    is equivalent to
    \begin{equation*}
        q < \frac{2}{n} + (2 - \alpha) = \frac{4}{3} + \left( \frac{2}{3} + \frac{2}{n} - \alpha \right).
    \end{equation*}
    This is true whenever $q \in [1, 2 - \alpha] \cup [1, \frac{4}{3} + \nu_0)$. Therefore, by Young's inequality, there exists $\theta' > 0$ such that
    \begin{equation*}
        ||\widetilde{H} (\phi)||_{L^\frac{2q}{2 - \alpha} (\Omega_1)}^\frac{2q}{2 - \alpha} 
        \leq \epsilon ||\nabla \widetilde{H} (\phi)||_{L^2 (\Omega_1)}^2 + c_3 ||\widetilde{H} (\phi)||_{L^\frac{2}{2 - \alpha} (\Omega_1)}^{\theta'} + c_2 ||\widetilde{H} (\phi)||_{L^{\frac{2}{2 - \alpha}} (\Omega_1)}^{\frac{2q}{2 - \alpha}}.
    \end{equation*}
    This implies equation \eqref{eq: proof 2 G-N inequality on psi}, and we conclude the proof of the lemma.
\end{proof}

Now we introduce a family of weighted $L^p$ norms, which appears naturally in the a priori estimates.

\begin{definition}
    \label{def: L^p_epsilon}
    For $p \in [1, \infty]$, $t \in [0, \infty)$ and $\epsilon > 0$, define $L^p_\epsilon (0, t)$ to be the Banach space consisting of all functions in $L^p (0, t)$ and equipped with the norm
    \begin{equation*}
        ||f||_{L^p_{\epsilon}(0,t)} := e^{-\epsilon t} ||e^{\epsilon \cdot} f||_{L^p (0,t)}.
    \end{equation*}
    For $t = \infty$, we define 
    \begin{equation*}
         ||f||_{L^p_{\epsilon} (0,\infty)}:= \limsup \limits_{t \to \infty} ||f||_{L^p_\epsilon (0,t) }
    \end{equation*}
   and set 
    \begin{equation*}
        L^p_\epsilon(0,\infty) = \left\{ f \in L^1_{\text{loc}} (0, \infty): ||f||_{L^p_\epsilon (0,\infty)} < \infty \right\}.
    \end{equation*}
\end{definition}

\begin{remark}
\label{rem: holder for L_p_epsilon}
It's easy to verify the H\"older's inequality for $L_\epsilon^p$ norm, i.e. 
    \begin{equation*}
        ||f g ||_{L^p_\epsilon(0,t)} \leq ||f||_{L^{p_1}_{\epsilon_1}(0,t)}||g||_{L^{p_2}_{\epsilon_2}(0,t)},
    \end{equation*}
    where $\epsilon=\epsilon_1 + \epsilon_2$, $\frac{1}{p} = \frac{1}{p_1} + \frac{1}{p_2}$, and $t \in [0 ,\infty]$.
\end{remark}
    
We now show the boundedness of a class of convolution operators on the space $L^p_\epsilon (0, t)$ based on Young's convolution inequality.

\begin{proposition}
    \label{prop: Young's inequality on L^p_epsilon}
    Define an operator $T_{\rho, \lambda}$ by
    \begin{equation}
        \label{eq: operator T_rho,lambda}
        T_{\rho, \lambda} \phi (t) = \int_0^t (1 + (t - s)^{-\rho}) e^{-\lambda (t - s)} \phi(s) \mathrm d s,
    \end{equation}
    where $\rho \in (0, 1)$ and $\lambda > 0$. Suppose
    \begin{equation}
        \label{eq: prop condition on p q}
        1 \leq p \leq q \leq \infty
        \quad \text{and } \quad
        \frac{1}{p} + \rho < \frac{1}{q} + 1,
    \end{equation}
    then for any $t \in [0, \infty]$ and $0 < \epsilon < \lambda$, $T_{\rho, \lambda}$ is a bounded operator from $L_\epsilon^p (0, t)$ to $L^q_\epsilon (0,t)$, and the bound is uniform in $t$.
\end{proposition}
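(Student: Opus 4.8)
The plan is to reduce the claim to the classical Young convolution inequality by passing through the exponential weights in the definition of the $L^p_\epsilon$ norm. First I would write $g(t) = e^{\epsilon t}\phi(t)$ so that, by definition, $\|\phi\|_{L^p_\epsilon(0,t)} = e^{-\epsilon t}\|g\|_{L^p(0,t)}$; then I would compute
\begin{equation*}
    e^{\epsilon t} T_{\rho,\lambda}\phi(t) = \int_0^t \bigl(1 + (t-s)^{-\rho}\bigr) e^{-(\lambda - \epsilon)(t-s)} g(s)\,\mathrm{d}s =: (K_\epsilon * g)(t),
\end{equation*}
where $K_\epsilon(u) = (1 + u^{-\rho}) e^{-(\lambda-\epsilon)u}$ for $u > 0$ and the convolution is understood on $(0,t)$ (equivalently, extend $g$ by zero outside $(0,t)$). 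Thus the operator $T_{\rho,\lambda}$ on weighted spaces is conjugate, via multiplication by $e^{\pm\epsilon t}$, to convolution by the fixed kernel $K_\epsilon$ on unweighted $L^p$–$L^q$.

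Next I would invoke Young's convolution inequality: if $\frac{1}{r} + 1 = \frac{1}{p} + \frac{1}{q'}$ — equivalently $\frac{1}{r} = 1 - \frac1p + \frac1q$ when we want to map $L^p\to L^q$ (so the required Lebesgue exponent of the kernel is $r$ with $\frac1r = 1 + \frac1q - \frac1p \ge$ the value forced by $p\le q$) — then $\|K_\epsilon * g\|_{L^q} \le \|K_\epsilon\|_{L^r}\|g\|_{L^p}$. The point is that $K_\epsilon \in L^r(0,\infty)$: near $u=0$ the singular part behaves like $u^{-\rho}$, which is $r$-integrable precisely when $\rho r < 1$, i.e. $\rho < \frac1r = 1 + \frac1q - \frac1p$, which is exactly the hypothesis \eqref{eq: prop condition on p q}; near $u = \infty$ the exponential factor $e^{-(\lambda-\epsilon)u}$ (finite because $\epsilon < \lambda$) guarantees integrability of both the constant and singular pieces for every $r\ge 1$. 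Hence $\|K_\epsilon\|_{L^r(0,\infty)} =: M < \infty$ is a finite constant depending only on $\rho, \lambda, \epsilon, p, q$ but \emph{not} on $t$. Combining,
\begin{equation*}
    \|T_{\rho,\lambda}\phi\|_{L^q_\epsilon(0,t)} = e^{-\epsilon t}\|K_\epsilon * g\|_{L^q(0,t)} \le e^{-\epsilon t} M \|g\|_{L^p(0,t)} = M\|\phi\|_{L^p_\epsilon(0,t)},
\end{equation*}
which is the desired uniform-in-$t$ bound for every finite $t$. For $t = \infty$ I would take $\limsup_{t\to\infty}$ of both sides and use Definition \ref{def: L^p_epsilon} directly, noting $M$ is independent of $t$.

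A small technical point to handle carefully is the extension-by-zero when restricting to $(0,t)$: writing $g_t = g\cdot\mathbf 1_{(0,t)}$ one has $(K_\epsilon * g)(\tau) = (K_\epsilon * g_t)(\tau)$ for all $\tau \in (0,t)$ since $K_\epsilon$ is supported on $(0,\infty)$ and we only integrate over $s\in(0,\tau)\subset(0,t)$; so Young's inequality applied to $K_\epsilon * g_t$ on all of $\mathbb{R}$ (or $(0,\infty)$) gives the $L^q(0,t)$ bound with $\|g_t\|_{L^p(\mathbb R)} = \|g\|_{L^p(0,t)}$. The main — though still routine — obstacle is simply verifying the kernel integrability $K_\epsilon \in L^r$ with the sharp exponent condition, i.e. checking that the hypothesis $\frac1p + \rho < \frac1q + 1$ is precisely what makes $\int_0^1 u^{-\rho r}\,\mathrm du < \infty$; everything else is bookkeeping with the weight $e^{\epsilon t}$. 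One should also record that when $p = q$ (so $\frac1r = 1$, $r=1$) the condition reads $\rho < 1$, which holds by assumption, and the constant $M = \|K_\epsilon\|_{L^1} = |\Omega|$-free, depending only on $\rho$ and $\lambda - \epsilon$.
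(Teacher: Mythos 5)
Your proof is correct and follows essentially the same route as the paper's: both conjugate by the weight $e^{\epsilon t}$ to turn $T_{\rho,\lambda}$ on $L^p_\epsilon$ into convolution with the kernel $(1+u^{-\rho})e^{-(\lambda-\epsilon)u}$ on unweighted spaces (the paper phrases this as $\|T_{\rho,\lambda}\phi\|_{L^q_\epsilon(0,t)}=e^{-\epsilon t}\|T_{\rho,\lambda-\epsilon}(e^{\epsilon\cdot}\phi)\|_{L^q(0,t)}$), then apply Young's convolution inequality with $\tfrac1r = 1 + \tfrac1q - \tfrac1p$ and observe that $\rho r<1$ is exactly hypothesis \eqref{eq: prop condition on p q}. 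The only blemish is the first form of the exponent relation you wrote, $\tfrac1r + 1 = \tfrac1p + \tfrac1{q'}$, which is not equivalent to the correct identity you actually use; otherwise the argument, including the extension-by-zero remark and the $\limsup$ passage to $t=\infty$, matches the paper.
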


\begin{proof}
    We first prove that the operator $T_{\rho, \lambda}$ is uniformly bounded from $L^p (0, t)$ to $L^q (0, t)$. Fix $t \in [0, \infty]$ and $\phi \in L^p (0, t)$. For $s \in \mathbb R$, define $\widetilde{\phi} (s) = \phi (s) \boldone_{(0, t)} (s)$ and $\widetilde{\psi} (s) = (1 + s^{-\rho}) e^{-\lambda s} \boldone_{(0, t)} (s)$. Then we have $T_{\rho, \lambda} \phi (s) = \widetilde{\phi} * \widetilde{\psi} (s)$ for $s \in (0, t)$. Let $r$ be a real number such that
    \begin{equation*}
        \frac{1}{p} + \frac{1}{r} = \frac{1}{q} + 1,
    \end{equation*}
    then from \eqref{eq: prop condition on p q} we know that $1 \leq r < \frac{1}{\rho}$. It follows that
    \begin{equation*}
    \begin{aligned}
        \| \widetilde{\psi} \|_{L^r (\mathbb R)}
        & \leq \| e^{-\lambda s} \|_{L^r (0, t)} + \| s^{-\rho} e^{-\lambda s} \|_{L^r (0, t)}
        \\
        &\leq (r \lambda)^{-\frac{1}{r}} + (r \lambda)^{\rho - \frac{1}{r}} \Gamma(1 - \rho r)^\frac{1}{r}
        =: c < \infty.
    \end{aligned}
    \end{equation*}
    By Young's convolution inequality, we get
    \begin{equation}
        \label{eq: Lp Lq boundedness of T_rho lambda}
        \| T_{\rho, \lambda} \phi \|_{L^q (0, t)}
        \leq \| \widetilde{\phi} * \widetilde{\psi} \|_{L^q (\mathbb R)}
        \leq \| \widetilde{\phi} \|_{L^p (\mathbb R)} \| \widetilde{\psi} \|_{L^r (\mathbb R)}
        \leq c \| \phi \|_{L^p (0, t)}.
    \end{equation}

    Now, suppose $t < \infty$ and $\phi \in L^p_\epsilon (0, t)$. By definitions of the operator $T_{\rho, \lambda}$ and $L^q_\epsilon$ norm, we have
    \begin{equation*}
    \begin{aligned}
        ||T_{\rho,\lambda} \phi||_{L^q_\epsilon (0,t)} 
        & = e^{-\epsilon t} \left\| e^{\epsilon r} \int_0^r (1 + (r-s)^{-\rho}) e^{-\lambda (r-s)} \phi(s) \mathrm{d}s \right\|_{L^q (0, t)} \\
        & = e^{-\epsilon t} \left\| \int_0^r (1 + (r-s)^{-\rho}) e^{-(\lambda - \epsilon) (r-s)} e^{\epsilon s} \phi(s) \mathrm{d}s \right\|_{L^q (0, t)} \\
        & = e^{-\epsilon t} ||T_{\rho, \lambda - \epsilon} (e^{\epsilon \cdot} \phi)||_{L^q (0,t)}.
    \end{aligned}
    \end{equation*}
    By \eqref{eq: Lp Lq boundedness of T_rho lambda}, we obtain that
    \begin{equation}
        \label{eq: t < infty, Young's inequality on L^p_epsilon}
        ||T_{\rho,\lambda} \phi||_{L^q_\epsilon (0,t)} 
        \leq c e^{-\epsilon t} ||e^{\epsilon \cdot} \phi||_{L^p (0,t)} 
        = c ||\phi||_{L^p_\epsilon(0,t)}.
    \end{equation}
    Finally, we take the upper limit of $t \to \infty$ in equation \eqref{eq: t < infty, Young's inequality on L^p_epsilon} and deduce that \eqref{eq: t < infty, Young's inequality on L^p_epsilon} is also true for $t = \infty$.
\end{proof}

Next, we prove some connections between the original $L^p$ norm and the new norm in this article.

\begin{proposition}
\label{prop: L epsilon infinity norm}
    For any $t \in [0, \infty]$ and $\epsilon > 0$, we have 
    \begin{equation}
    \label{eq: L epsilon infinity norm}
        ||\phi||_{L^\infty_\epsilon (0, t)} 
        \leq ||\phi||_{L^\infty (0, t)} 
        \leq \sup_{s \in (0, t)} ||\phi||_{L^\infty_\epsilon (0, s)}.
    \end{equation}
\end{proposition}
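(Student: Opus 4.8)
The plan is to unwind the definition of the weighted norm and reduce everything to elementary manipulations of essential suprema, the only real content being a measure‑theoretic localization argument for the second inequality. For finite $t$ the definition gives
\begin{equation*}
    \|\phi\|_{L^\infty_\epsilon(0,t)} = \operatorname*{ess\,sup}_{s\in(0,t)} e^{-\epsilon(t-s)}\,|\phi(s)|,
\end{equation*}
and since $e^{-\epsilon(t-s)}\le 1$ for $s\in(0,t)$, the left inequality $\|\phi\|_{L^\infty_\epsilon(0,t)}\le\|\phi\|_{L^\infty(0,t)}$ is immediate. For $t=\infty$ it follows by taking $\limsup_{t\to\infty}$ in the finite‑$t$ inequality, using that $t\mapsto\|\phi\|_{L^\infty(0,t)}$ is nondecreasing, so that its $\limsup$ equals $\sup_{t<\infty}\|\phi\|_{L^\infty(0,t)}=\|\phi\|_{L^\infty(0,\infty)}$.

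For the right inequality, which is the substantive part, first take $t<\infty$; we may assume every $\|\phi\|_{L^\infty_\epsilon(0,s)}$ with $s\in(0,t)$ is finite, the claim being trivial otherwise. Set $h(s):=\operatorname*{ess\,sup}_{\sigma\in(0,s)} e^{\epsilon\sigma}|\phi(\sigma)|$, which is nondecreasing in $s$ and satisfies $\|\phi\|_{L^\infty_\epsilon(0,s)}=e^{-\epsilon s}h(s)$. For each fixed $s$ one has $e^{\epsilon\sigma}|\phi(\sigma)|\le h(s)$ for a.e. $\sigma\in(0,s)$; running $s$ over the rationals in $(0,t)$ and invoking monotonicity of $h$ yields one null set $N$ such that for every $\sigma\in(0,t)\setminus N$ and every $s\in(\sigma,t)$,
\begin{equation*}
    |\phi(\sigma)| \le e^{-\epsilon\sigma}h(s) = e^{\epsilon(s-\sigma)}\,\|\phi\|_{L^\infty_\epsilon(0,s)} \le e^{\epsilon(s-\sigma)}\sup_{s'\in(0,t)}\|\phi\|_{L^\infty_\epsilon(0,s')}.
\end{equation*}
Letting $s\downarrow\sigma$ makes the factor $e^{\epsilon(s-\sigma)}$ tend to $1$, so $|\phi(\sigma)|\le\sup_{s'\in(0,t)}\|\phi\|_{L^\infty_\epsilon(0,s')}$ for a.e. $\sigma$; taking the essential supremum over $\sigma$ gives $\|\phi\|_{L^\infty(0,t)}\le\sup_{s'\in(0,t)}\|\phi\|_{L^\infty_\epsilon(0,s')}$. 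The case $t=\infty$ follows by taking the supremum over finite $T$ on both sides, since $\|\phi\|_{L^\infty(0,\infty)}=\sup_{T<\infty}\|\phi\|_{L^\infty(0,T)}$ and the right‑hand suprema are nested.

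The main obstacle is exactly the bookkeeping just described: the pointwise bound $e^{-\epsilon(s-\sigma)}|\phi(\sigma)|\le\|\phi\|_{L^\infty_\epsilon(0,s)}$ holds only off an $s$‑dependent null set, so one must upgrade it to a bound valid on a single null set and for all $s>\sigma$ before the localization $s\downarrow\sigma$ is legitimate. Monotonicity of $h$ together with a countable dense set of parameters $s$ is what makes this work; one could alternatively localize at Lebesgue density points of the super‑level sets $\{|\phi|>\|\phi\|_{L^\infty(0,t)}-\eta\}$, but the monotonicity device is cleaner and avoids that machinery.
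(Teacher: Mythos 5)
Your proof is correct. It rests on the same core observation as the paper's: near the right endpoint $s$ of the window $(0,s)$ the weight $e^{\epsilon(\cdot-s)}$ is close to $1$, so the weighted norm over $(0,s)$ nearly sees the unweighted values of $\phi$ just to the left of $s$. The execution differs, though. The paper stays at the level of norms over subintervals: it partitions $(0,t)$ into $N$ pieces of length $r=t/N$, uses $\|\phi\|_{L^\infty_\epsilon(0,s)}\geq e^{-\epsilon r}\|\phi\|_{L^\infty(s-r,s)}$ at the endpoints $s=kr$, takes the supremum over $k$ to recover $e^{-\epsilon r}\|\phi\|_{L^\infty(0,t)}$, and lets $N\to\infty$; this sidesteps all null-set issues because every step is an inequality between essential suprema over sets. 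You instead run a pointwise a.e.\ argument, sending $s\downarrow\sigma$ for almost every $\sigma$, which forces the countable-dense-parameter device to consolidate the $s$-dependent null sets into one before the localization is legitimate — and you correctly identify and handle that as the only delicate point, using monotonicity of $h(s)=\|e^{\epsilon\cdot}\phi\|_{L^\infty(0,s)}$ to pass from rational to arbitrary $s$. Your treatment of the $t=\infty$ case (taking suprema over finite $T$ and using that the right-hand suprema are nested) is also fine and slightly more explicit than the paper's ``take $t\to\infty$.'' The paper's partition argument is shorter and cleaner; yours makes the measure-theoretic content fully explicit at the cost of the extra bookkeeping you describe.
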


\begin{proof}
We first consider $t < \infty$. The first inequality is obvious as
\begin{equation*}
    \begin{aligned}
        ||\phi||_{L^\infty_\epsilon(0,t)} & =  \left\|e^{\epsilon (\cdot - t)} \phi \right\|_{L^\infty(0,t)} \leq \left\| \phi\right\|_{L^\infty(0,t)}.
    \end{aligned}
\end{equation*}
For any $0 < r \leq s < t$, we have
\begin{equation*}
    \begin{aligned}
     \left\| \phi \right\|_{L^\infty_\epsilon (0,s)} & = e^{-\epsilon s} \left\|e^{\epsilon \cdot} \phi \right\|_{L^\infty(0,s)} \\
     & \geq e^{-\epsilon s} \left\|e^{\epsilon \cdot} \phi \right\|_{L^\infty(s-r,s)}
     \\
     & \geq e^{-\epsilon r} \left\| \phi \right\|_{L^\infty(s-r,s)}.
    \end{aligned}
\end{equation*}
Choose $r = t / N$ for $N \in \mathbb N$, then we have
\begin{equation*}
\begin{aligned}
    \sup_{s \in (0, t)} ||\phi||_{L^\infty_\epsilon (0, s)}
    & \geq \sup_{1 \leq k \leq N} ||\phi||_{L^\infty_\epsilon (0, k r)} \\
    & \geq \sup_{1 \leq k \leq N} e^{-\epsilon r} \left\| \phi \right\|_{L^\infty ((k - 1) r, kr)}\\
    & = e^{-\epsilon r} \left\| \phi \right\|_{L^\infty (0, t)}.
    \end{aligned}
\end{equation*}
Now we let $N \to \infty$ to prove the second inequality in \eqref{eq: L epsilon infinity norm}. Finally, we take $t \to \infty$ to deduce that \eqref{eq: L epsilon infinity norm} is also true for $t = \infty$.
\end{proof}

\begin{proposition}
    \label{prop: comparison of different norms}
    Let $1 \leq p \leq q \leq \infty$, $t \in [0, \infty]$ and $\epsilon > 0$. Then there exists a constant $c$ independent of $t$ so that
    \begin{equation}
        \label{eq: comparison of different norms}
        ||\phi||_{L^p_\epsilon (0, t)} \leq c \left\| \phi \right\|_{L^q (0, t)}.
    \end{equation}
\end{proposition}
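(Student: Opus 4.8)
The plan is to derive the estimate directly from an application of H\"older's inequality in the time variable, using the fact that the factor $e^{-\epsilon t}$ built into the definition of $\|\cdot\|_{L^p_\epsilon(0,t)}$ is precisely what cancels the exponential growth of $\|e^{\epsilon\cdot}\|_{L^r(0,t)}$, leaving a constant that is independent of $t$.

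First I would treat $t < \infty$. Writing $\|\phi\|_{L^p_\epsilon(0,t)} = e^{-\epsilon t}\|e^{\epsilon\cdot}\phi\|_{L^p(0,t)}$ and applying the ordinary H\"older inequality on $(0,t)$ to the product $e^{\epsilon s}\cdot\phi(s)$ with exponents $r$ and $q$, where $r \in [1,\infty]$ is determined by $\tfrac1r = \tfrac1p - \tfrac1q \geq 0$, gives
\begin{equation*}
    \|e^{\epsilon\cdot}\phi\|_{L^p(0,t)} \leq \|e^{\epsilon\cdot}\|_{L^r(0,t)}\,\|\phi\|_{L^q(0,t)}.
\end{equation*}
Now one computes the exponential factor explicitly: if $p < q$ then $r < \infty$ and $\|e^{\epsilon\cdot}\|_{L^r(0,t)} = \big((e^{r\epsilon t}-1)/(r\epsilon)\big)^{1/r} \leq (r\epsilon)^{-1/r}\,e^{\epsilon t}$; if $p = q$ then $r = \infty$ and $\|e^{\epsilon\cdot}\|_{L^\infty(0,t)} = e^{\epsilon t}$. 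In both cases $\|e^{\epsilon\cdot}\|_{L^r(0,t)} \leq c\,e^{\epsilon t}$ with $c := \max\{1,(r\epsilon)^{-1/r}\}$ depending only on $p$, $q$, $\epsilon$, so multiplying through by $e^{-\epsilon t}$ cancels the exponential and yields \eqref{eq: comparison of different norms} for every finite $t$ with a bound uniform in $t$.

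For $t = \infty$ I would simply invoke Definition \ref{def: L^p_\epsilon}: since $t \mapsto \|\phi\|_{L^q(0,t)}$ is nondecreasing, $\limsup_{t\to\infty}\|\phi\|_{L^q(0,t)} = \|\phi\|_{L^q(0,\infty)}$, and hence
\begin{equation*}
    \|\phi\|_{L^p_\epsilon(0,\infty)} = \limsup_{t\to\infty}\|\phi\|_{L^p_\epsilon(0,t)} \leq c\,\limsup_{t\to\infty}\|\phi\|_{L^q(0,t)} = c\,\|\phi\|_{L^q(0,\infty)}.
\end{equation*}
There is no deep obstacle in this argument; the only points demanding a little care are the degenerate endpoint $p = q$ (where the H\"older exponent $r$ collapses to $\infty$ and the computation of $\|e^{\epsilon\cdot}\|_{L^r}$ must be done separately) and, more importantly, verifying that the constant $c$ genuinely has no $t$-dependence — which is the whole content of the proposition and the reason the $\limsup$ passage to $t=\infty$ is legitimate.
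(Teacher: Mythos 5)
Your proof is correct and follows essentially the same route as the paper's: both apply H\"older's inequality in time with the conjugate exponent $r = \frac{pq}{q-p}$ and observe that $\|e^{\epsilon(\cdot - t)}\|_{L^r(0,t)} \leq \|e^{-\epsilon\cdot}\|_{L^r(0,\infty)} = (r\epsilon)^{-1/r}$ is independent of $t$, then pass to $t=\infty$ via the $\limsup$ definition. Your separate treatment of the endpoint $p=q$ is a minor extra care the paper leaves implicit, but the argument is the same.
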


\begin{proof}
    Assume that $\phi \in L^q (0, t)$, otherwise the inequality is trivial.
    By H\"older's inequality,
    \begin{equation*}
        ||\phi||_{L^p_\epsilon (0, t)} 
        = ||e^{\epsilon (\cdot - t)} \phi||_{L^p (0,t)} 
        \leq ||e^{\epsilon (\cdot - t)}||_{L^\frac{pq}{q-p} (0,t)} ||\phi||_{L^q (0,t)} = c ||\phi||_{L^q (0,t)},
    \end{equation*}
    where
    \begin{equation*}
        c = ||e^{-\epsilon \cdot}||_{L^\frac{pq}{q-p} (0, \infty)} = \left(\frac{q-p}{\epsilon pq}  \right)^{\frac{q-p}{pq}}.
    \end{equation*}
    Finally we take $t \to \infty$ to deduce that \eqref{eq: comparison of different norms} is also true for $t = \infty$.
\end{proof}

\begin{remark}
    Fix $t < \infty$, then for any $1 \leq p \leq \infty$ and $\epsilon > 0$, the norm $\| \cdot \|_{L^p_\epsilon (0, t)}$ is equivalent to $\| \cdot \|_{L^p (0, t)}$. However, it is necessary to introduce the new norm since we are interested in the uniform boundedness of the norm for $t \in [0,\infty]$.
\end{remark}

Finally we gather some useful tools for our subsequent analysis.

\begin{lemma}{\cite{winkler2010aggregation}}
    \label{lem: L^p-L^q estimate}
    Let $\left\{ e^{td\Delta} \right\}_{t\geq 0} $ be the Newmann heat semigroup in $\Omega$, and let $\lambda>0$ denote the first nonzero eigenvalue of $-\Delta$ in $\Omega$ under Neumann boundary conditions, where $d$ is a positive constant. Then for all $t>0$, there exist some constant $C$ depending only on $\Omega$ such that 
    \begin{itemize}
        \item[(i)] If $2\leq p < \infty$, then
        \begin{equation}
        \label{eq: lemma Lp Lq estimate 1}
            ||\nabla e^{td\Delta}\phi||_{L^p (\Omega)} \leq C \left(1+t^{-\frac{n}{2}\left(\frac{1}{q}-\frac{1}{p}\right)} \right) e^{-d\lambda t}||\nabla \phi||_{L^q (\Omega)}
        \end{equation}
        for all $\phi \in W^{1,q}(\Omega)$.
        \item[(ii)] If $1 \leq q \leq p \leq \infty$, then
        \begin{equation}
        \label{eq: lemma Lp Lq estimate 2}
            ||\nabla e^{td\Delta}\phi||_{L^p (\Omega)} \leq C \left(1+t^{ - \frac{1}{2} -\frac{n}{2}\left(\frac{1}{q}-\frac{1}{p}\right)} \right) e^{-d\lambda t}||\phi||_{L^q (\Omega)}
        \end{equation}
        for all $\phi \in L^q(\Omega)$.
        \item[(iii)] If $1\leq q\leq p\leq \infty$, then 
        \begin{equation}
        \label{eq: lemma Lp Lq estimate 3}
            ||e^{td\Delta}\phi||_{L^p (\Omega)} \leq C \left(1+t^{ -\frac{n}{2}\left(\frac{1}{q}-\frac{1}{p}\right)} \right) e^{-d\lambda t}
            ||\phi||_{L^q (\Omega)}
        \end{equation}
        for all $\phi \in L^q(\Omega)$.
        \item[(iv)] If $1\leq q\leq p\leq \infty$, then 
        \begin{equation}
        \label{eq: lemma Lp Lq estimate 4}
            ||e^{td\Delta}\nabla \cdot \phi||_{L^p (\Omega)} \leq C \left(1+t^{- \frac{1}{2} -\frac{n}{2}\left(\frac{1}{q}-\frac{1}{p}\right)} \right) e^{-d\lambda t}||\phi||_{L^q (\Omega)}
        \end{equation}
        for all $\phi \in \left(C_0^\infty (\Omega)\right)^n $.
    \end{itemize}
\end{lemma}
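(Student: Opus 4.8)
The plan is to prove these smoothing estimates in the classical way: combine the short-time ultracontractivity of the Neumann heat semigroup with a long-time spectral-gap decay, and then interpolate to reach every admissible pair $(p,q)$. The only structural facts needed are the following three, all standard for the bounded smooth domain $\Omega$. First, $\{e^{td\Delta}\}_{t\ge 0}$ is positivity preserving and conserves the spatial mean $\overline{\phi} := \frac{1}{|\Omega|}\int_\Omega \phi$, hence is a contraction on $L^r(\Omega)$ for every $r\in[1,\infty]$. Second, it is ultracontractive, $\|e^{td\Delta}\phi\|_{L^\infty}\le C t^{-n/2}\|\phi\|_{L^1}$ for $0<t\le 1$, which follows from the Sobolev inequality on $\Omega$ via a Moser/Davies iteration, or equivalently from the Gaussian upper bound $0\le p_d(x,y,t)\le C t^{-n/2}e^{-|x-y|^2/(Ct)}$ on the Neumann heat kernel together with the companion gradient bound $|\nabla_x p_d(x,y,t)|\le C t^{-(n+1)/2}e^{-|x-y|^2/(Ct)}$. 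Third, the Poincaré inequality gives a spectral gap: for mean-zero $\widetilde{\phi}$ the energy identity yields $\frac{d}{dt}\|e^{td\Delta}\widetilde{\phi}\|_{L^2}^2=-2d\|\nabla e^{td\Delta}\widetilde{\phi}\|_{L^2}^2\le -2d\lambda\|e^{td\Delta}\widetilde{\phi}\|_{L^2}^2$, so $\|e^{td\Delta}\widetilde{\phi}\|_{L^2}\le e^{-d\lambda t}\|\widetilde{\phi}\|_{L^2}$.

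On the short-time interval $0<t\le 1$ I would first interpolate the contraction property with ultracontractivity by Riesz--Thorin to get $\|e^{td\Delta}\phi\|_{L^p}\le C t^{-\frac n2(\frac1q-\frac1p)}\|\phi\|_{L^q}$ for $1\le q\le p\le\infty$, which is (iii). The gradient statements (i), (ii) and the divergence statement (iv) follow on $0<t\le1$ by the same scheme applied to the kernels $\nabla_x p_d$ (for (i), (ii)) and $\nabla_y p_d$ (for (iv)) via Young's convolution inequality on $\Omega$; the power $-\frac m2-\frac n2(\frac1q-\frac1p)$, with $m$ the net number of derivatives ($m=0$ in (i) and (iii), $m=1$ in (ii) and (iv)), comes out of the Gaussian scaling. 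Alternatively, (i)--(iv) can be obtained without kernel bounds from analyticity of $-\Delta_N$ on $L^p$, $1<p<\infty$, together with the identification $\|\nabla\psi\|_{L^p}\sim\|(-\Delta_N)^{1/2}\psi\|_{L^p}$, by factoring $\nabla e^{td\Delta}=\big(\nabla(-\Delta_N)^{-1/2}\big)\,(-\Delta_N)^{1/2}e^{td\Delta}$ so that the last factor carries the $t^{-1/2}$ loss; (iv) is then the $L^{q'}\to L^{p'}$ dual of (ii), the formal adjoint of $\phi\mapsto e^{td\Delta}\nabla\cdot\phi$ being $\psi\mapsto-\nabla e^{td\Delta}\psi$.

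On the long-time interval $t\ge 1$ I would extract the factor $e^{-d\lambda t}$. Since $e^{td\Delta}$ fixes constants, $\nabla e^{td\Delta}\phi=\nabla e^{td\Delta}\widetilde{\phi}$ with $\widetilde{\phi}=\phi-\overline{\phi}$ mean-zero, and in (iii) --- where one assumes $\overline{\phi}=0$ --- also $e^{td\Delta}\phi=e^{td\Delta}\widetilde{\phi}$. Factoring $e^{td\Delta}\widetilde{\phi}=e^{d\Delta}\big(e^{(t-2)d\Delta}(e^{d\Delta}\widetilde{\phi})\big)$ for $t\ge 2$: the inner factor $e^{d\Delta}\colon L^q\to L^2$ is bounded (by the short-time estimate when $q\le 2$, and by the finite-measure embedding $L^q\hookrightarrow L^2$ when $q\ge 2$), the middle factor contracts by $e^{-d\lambda(t-2)}$ on mean-zero $L^2$ data by the third fact above, and the outer $e^{d\Delta}$ or $\nabla e^{d\Delta}$ maps $L^2\to L^p$ boundedly by the short-time estimates and the finite-measure embedding. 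Absorbing $e^{2d\lambda}$ into the constant gives the $e^{-d\lambda t}$ factor; combining with the short-time estimates and using $t^{-\gamma}\le 1+t^{-\gamma}$ (the polynomial weight being harmless for $t\ge 1$) produces the stated form $C(1+t^{-\gamma})e^{-d\lambda t}$ in all four cases.

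The step I expect to be the main obstacle is the short-time smoothing with the \emph{sharp} exponents on an arbitrary bounded smooth domain. Ultracontractivity and the bound $t^{-\frac n2(\frac1q-\frac1p)}$ are routine from Sobolev embedding, but pinning the gradient loss at $t^{-1/2}$ rather than the $t^{-1}$ that one gets by naively smoothing and then differentiating requires either the Gaussian bound on $\nabla_x p_d$ --- which genuinely uses the regularity of $\partial\Omega$ and the Neumann condition --- or the $L^p$-boundedness ($1<p<\infty$) of the Riesz transform $\nabla(-\Delta_N)^{-1/2}$, plus some extra care at the endpoints $q=1$, $p=\infty$ in the interpolation and at $p=\infty$ in (i). The remaining ingredients --- the Poincaré decay, the duality giving (iv), and the three-factor splitting for large $t$ --- are essentially bookkeeping.
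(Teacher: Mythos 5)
The paper does not prove this lemma at all: it is quoted verbatim from the cited reference \cite{winkler2010aggregation} (Winkler's Lemma 1.3), whose proof is exactly the scheme you outline --- short-time smoothing from heat-kernel/ultracontractivity bounds, the Poincar\'e spectral gap applied to the mean-zero part for the $e^{-d\lambda t}$ factor, and duality between (ii) and (iv). Your outline is therefore essentially the same approach as the source, and your two flagged caveats are the right ones: the sharp $t^{-1/2}$ gradient loss does require either the Gaussian bound on $\nabla_x p_d$ or Riesz-transform boundedness (with the equivalence $\|\nabla\psi\|_{L^p}\sim\|(-\Delta_N)^{1/2}\psi\|_{L^p}$ valid only modulo constants), and the decay factor in (iii) genuinely requires $\int_\Omega\phi=0$, a hypothesis present in the original lemma but silently dropped in the paper's restatement.
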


\begin{lemma}{\cite{jin2022global}}
    \label{lem: D2 ln phi}
    Let $\phi \in C^2(\overline{\Omega})$ be a positive function satisfying $\partial_\nu \phi = 0$ on $\partial \Omega$. Then there exists a constant $c > 0$ such that
    \begin{equation*}
        c \left( \int_\Omega \frac{|D^2 \phi|^2}{\phi} + \int_\Omega \frac{|\nabla \phi|^4}{\phi^3} \right)   \leq \int_\Omega \phi |D^2 \ln \phi|^2 .
    \end{equation*}
\end{lemma}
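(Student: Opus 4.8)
The plan is to pass to the auxiliary function $w := \ln\phi$, which is legitimate and $C^2(\overline\Omega)$ because $\phi$ is positive and continuous on the compact set $\overline\Omega$, hence bounded below by a positive constant; in particular every integral below is finite. Under this substitution the Neumann condition on $\phi$ becomes $\partial_\nu w = 0$ on $\partial\Omega$, while a direct computation gives $D^2\phi = \phi\,(D^2 w + \nabla w\otimes\nabla w)$, so that
\[
\frac{|D^2\phi|^2}{\phi} = e^w\,|D^2 w + \nabla w\otimes\nabla w|^2, \qquad \frac{|\nabla\phi|^4}{\phi^3} = e^w|\nabla w|^4, \qquad \phi\,|D^2\ln\phi|^2 = e^w|D^2 w|^2 .
\]
Thus the lemma reduces to the scalar estimate $\int_\Omega e^w|\nabla w|^4 \le (2+\sqrt n)^2 \int_\Omega e^w|D^2 w|^2$: once this is known, $\int_\Omega \phi^{-1}|D^2\phi|^2 \le 2\int_\Omega e^w|D^2 w|^2 + 2\int_\Omega e^w|\nabla w|^4$ is controlled too, and one may take $c = \big(2 + 3(2+\sqrt n)^2\big)^{-1}$.

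To prove the scalar estimate I would write $e^w|\nabla w|^4 = |\nabla w|^2\,\nabla w\cdot\nabla e^w$ and integrate by parts. The boundary term $\int_{\partial\Omega} e^w|\nabla w|^2\,\partial_\nu w$ vanishes by the Neumann condition on $w$; this is the one place the hypothesis enters, and performing the logarithmic change of variables first is exactly the device that makes the boundary contribution disappear with no convexity assumption on $\Omega$. The interior term is $-\int_\Omega e^w\,\nabla\!\cdot\!\big(|\nabla w|^2\nabla w\big) = -\int_\Omega e^w\big(2\,\langle D^2 w\,\nabla w,\nabla w\rangle + |\nabla w|^2\Delta w\big)$, using $\nabla|\nabla w|^2 = 2\,D^2 w\,\nabla w$. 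The pointwise bounds $|\langle D^2 w\,\nabla w,\nabla w\rangle| \le |D^2 w|\,|\nabla w|^2$ and $|\Delta w| \le \sqrt n\,|D^2 w|$ then give, via Cauchy--Schwarz,
\[
\int_\Omega e^w|\nabla w|^4 \;\le\; (2+\sqrt n)\int_\Omega e^w|D^2 w|\,|\nabla w|^2 \;\le\; (2+\sqrt n)\Big(\int_\Omega e^w|D^2 w|^2\Big)^{1/2}\Big(\int_\Omega e^w|\nabla w|^4\Big)^{1/2}.
\]
With $A := \int_\Omega e^w|\nabla w|^4$ and $B := \int_\Omega e^w|D^2 w|^2$ this reads $A \le (2+\sqrt n)\sqrt{AB}$, whence $A \le (2+\sqrt n)^2 B$.

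I do not expect a genuine obstacle; the argument is short. The only step requiring care is the self-improving passage from $A \le (2+\sqrt n)\sqrt{AB}$ to $A \le (2+\sqrt n)^2 B$, which is legitimate precisely because $A < \infty$ (and trivial when $A = 0$) --- this is where the regularity hypothesis $\phi \in C^2(\overline\Omega)$, $\phi > 0$, is really used. One sanity check that the logarithmic substitution is the right move: integrating by parts directly in $\phi$ would instead lead to a boundary term of the form $\int_{\partial\Omega}\phi^{-1}\partial_\nu|\nabla\phi|^2$, which is not obviously signed on a non-convex domain and would drag in the second fundamental form of $\partial\Omega$; passing to $w = \ln\phi$ is exactly what turns this into the harmless vanishing term above.
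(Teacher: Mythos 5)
Your proof is correct. Note first that the paper does not actually prove this lemma --- it imports it verbatim from \cite{jin2022global} --- so there is no in-paper argument to compare against; what you have written is a genuine, self-contained derivation with an explicit constant. The logarithmic substitution $w=\ln\phi$ is sound: the identities $D^2\phi=\phi(D^2w+\nabla w\otimes\nabla w)$, $|\nabla\phi|^4/\phi^3=e^w|\nabla w|^4$ and $\phi|D^2\ln\phi|^2=e^w|D^2w|^2$ are exact, the reduction of the lemma to the single estimate $\int_\Omega e^w|\nabla w|^4\le(2+\sqrt n)^2\int_\Omega e^w|D^2w|^2$ via $|D^2w+\nabla w\otimes\nabla w|^2\le 2|D^2w|^2+2|\nabla w|^4$ is valid, and the resulting constant $c=\bigl(2+3(2+\sqrt n)^2\bigr)^{-1}$ checks out. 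The integration by parts is the right move: writing $e^w|\nabla w|^4=|\nabla w|^2\nabla w\cdot\nabla e^w$ makes the boundary contribution $\int_{\partial\Omega}e^w|\nabla w|^2\,\partial_\nu w$ vanish outright from $\partial_\nu\phi=0$, so no curvature bound of the type of Lemma~2.6 (\cite{mizoguchi2014nondegeneracy}) or trace inequality is needed --- in contrast to the route the paper takes elsewhere (e.g.\ in the proof of Lemma~4.3), where the identity (3.8) of \cite{jin2022global} leaves a boundary term $\int_{\partial\Omega}\phi^{-1}\partial_\nu|\nabla\phi|^2$ that must be absorbed using exactly those tools. Your pointwise bounds $|\langle D^2w\,\nabla w,\nabla w\rangle|\le|D^2w|\,|\nabla w|^2$ and $|\Delta w|\le\sqrt n\,|D^2w|$ hold for the Frobenius norm, which is the convention consistent with the statement, and the self-improving step $A\le(2+\sqrt n)\sqrt{AB}\Rightarrow A\le(2+\sqrt n)^2B$ is justified since $A<\infty$ follows from $\phi\in C^2(\overline\Omega)$ and $\phi>0$ on the compact set $\overline\Omega$. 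This is, if anything, cleaner than the standard derivations in the chemotaxis literature.
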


\begin{lemma}{\cite{mizoguchi2014nondegeneracy}}
    \label{lem: boundary term}
    Assume that $\Omega$ is a bounded domain, and let $\phi \in C^2(\overline{\Omega})$ satisfying $\partial_\nu \phi = 0$ on $\partial \Omega$. Then we have 
    \begin{equation*}
        \partial_\nu |\nabla \phi|^2 \leq 2 \kappa |\nabla \phi|^2,
    \end{equation*}
    where $\kappa=\kappa(\Omega)$ is an upper bound of the of the curvatures of $\partial \Omega$.
\end{lemma}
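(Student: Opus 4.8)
The inequality is a pointwise statement on $\partial\Omega$, so I would fix an arbitrary $x_0\in\partial\Omega$ and prove it there. The starting point is the identity $\nabla(|\nabla\phi|^2)=2\,D^2\phi\,\nabla\phi$, which gives
\[
  \partial_\nu|\nabla\phi|^2(x_0)=2\,(D^2\phi)(\nabla\phi,\nu)\big|_{x_0},
\]
so the whole problem reduces to bounding $(D^2\phi)(\nabla\phi,\nu)$ at $x_0$ from above. The Neumann condition $\partial_\nu\phi=0$ on $\partial\Omega$ says precisely that $\nabla\phi$ is tangent to $\partial\Omega$ at each boundary point; if $\nabla\phi(x_0)=0$ the claimed inequality is trivial, so assume $\nabla\phi(x_0)\neq 0$.

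To exploit the boundary condition I would extend the outward unit normal. Let $d$ be the signed distance to $\partial\Omega$ (say, positive outside $\Omega$); since $\partial\Omega$ is $C^2$, $d$ is $C^2$ in a neighborhood of $\partial\Omega$, the field $N:=\nabla d$ agrees with $\nu$ on $\partial\Omega$, and $|N|\equiv 1$ there. On $\partial\Omega$ we have the identity $\nabla\phi\cdot N\equiv 0$. Differentiating it along a curve $\gamma\subset\partial\Omega$ with $\gamma(0)=x_0$ and $\dot\gamma(0)=\nabla\phi(x_0)$ (such a curve exists because $\nabla\phi(x_0)$ is tangential), and using $DN(x_0)=D^2d(x_0)$, yields
\[
  (D^2\phi)(\nabla\phi,\nu)\big|_{x_0}=-\big\langle\nabla\phi(x_0),\,D^2d(x_0)\,\nabla\phi(x_0)\big\rangle .
\]
Together with the previous display, $\partial_\nu|\nabla\phi|^2(x_0)=-2\langle\nabla\phi(x_0),D^2d(x_0)\nabla\phi(x_0)\rangle$. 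Finally, $D^2d(x_0)$ is symmetric and, by differentiating $|\nabla d|^2\equiv 1$, annihilates $\nu(x_0)$; hence it leaves the tangent space $T_{x_0}\partial\Omega$ invariant, and its eigenvalues on that subspace are the principal curvatures of $\partial\Omega$ at $x_0$, all bounded in absolute value by $\kappa$. Since $\nabla\phi(x_0)$ is tangential, $\langle\nabla\phi(x_0),D^2d(x_0)\nabla\phi(x_0)\rangle\geq -\kappa|\nabla\phi(x_0)|^2$, and therefore $\partial_\nu|\nabla\phi|^2(x_0)\leq 2\kappa|\nabla\phi(x_0)|^2$. As $x_0$ was arbitrary, the lemma follows.

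The only ingredient that is not pure calculus is the identification of the eigenvalues of $D^2d(x_0)$ on $T_{x_0}\partial\Omega$ with the principal curvatures of $\partial\Omega$ (equivalently, that the operator norm of $D^2d(x_0)$ on $T_{x_0}\partial\Omega$ is at most $\kappa$). I would either invoke this as a standard property of the distance function to a $C^2$ hypersurface, or verify it directly by passing to a boundary chart in which $\partial\Omega$ is a graph $x_n=\psi(x')$ with $\psi(0)=0$ and $\nabla\psi(0)=0$, where the second derivatives of $\psi$ at the origin are exactly the entries of the second fundamental form. The remaining care is just bookkeeping of sign conventions (outward versus inward normal, and the corresponding sign of the curvatures), but since the target is the one-sided bound with $\kappa$ an upper bound for the curvatures of $\partial\Omega$, the conclusion is insensitive to those choices.
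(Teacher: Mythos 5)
The paper does not prove this lemma---it is quoted directly from the cited reference---so there is no internal proof to compare against; your argument is the standard one and it is correct. The reduction $\partial_\nu|\nabla\phi|^2 = 2(D^2\phi)(\nabla\phi,\nu)$, the tangential differentiation of $\nabla\phi\cdot N\equiv 0$ along $\partial\Omega$ to trade $(D^2\phi)(\nabla\phi,\nu)$ for $-\langle\nabla\phi, D^2 d\,\nabla\phi\rangle$, and the identification of $D^2 d$ restricted to $T_{x_0}\partial\Omega$ with the second fundamental form are all sound, and the only outsourced ingredient (the eigenvalues of $D^2 d$ on the tangent space being the principal curvatures, bounded by $\kappa$) is indeed a standard property of the distance function to a $C^2$ hypersurface.
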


\section{Local existence and positivity}
\label{sec: Local}

In this section we aim to establish local existence of the solution of equation \eqref{eq: general model} with nonnegative initial values. First we prove the positivity of the equation \eqref{eq: general model} -- \eqref{eq: bc ic}. We further generalize the situation into the following partial differential equation on $\Omega \times (0, \infty) \subset \mathbb{R}^n \times \mathbb R$:
    \begin{equation}
        \label{eq: PDE u^k}
        \begin{aligned}
            & \partial_t u^k = \sum_{1 \leq i, j \leq n} a_{ij}^k (x, t) \partial_{x_i x_j} u^k + \sum_{i = 1}^n b_i^k (U, D U, x, t) \partial_{x_i} u^k + f^k (U, DU^{(k)}, D^2 U^{(k)}, x, t) u^k, \\
            &\partial_\nu u^k(x,t) = 0, \ (x, t) \in \partial\Omega \times (0, \infty), \\
            &u^k(x, 0) = u^k_0(x), \ x \in \Omega,
        \end{aligned}
    \end{equation}
    where $k = 1, 2, \ldots, m$, $a_{ij}^k$, $b_i^k$, $\phi^k$ and $u_0^k$ are continuous functions in their arguments,
    \begin{equation*}    
        U = (u^1, u^2, \ldots, u^m) \in C \big( \overline{\Omega} \times [0, T_\text{max}); \mathbb R^m \big) \cap C^{2, 1} \big( \overline{\Omega} \times (0, T_\text{max}); \mathbb R^m \big)
    \end{equation*}
    is a classical solution and $U^{(k)}$ is a vector formed by deleting $u^k$ from $U$.

\begin{lemma}
    \label{lem: general positivity}
    Assume that equation \eqref{eq: PDE u^k} is uniformly positive definite, i.e., there exists a constant $C > 0$ such that
    \begin{equation*}
        \xi^T (a_{ij}^k (x, t))_{1 \leq i, j \leq n} \, \xi \geq C |\xi|^2, \ \forall k = 1, 2, \ldots, m, \ \xi \in \mathbb{R}^n, \ (x, t) \in \overline{\Omega} \times [0, \infty).
    \end{equation*}
    If the initial values satisfy
    \begin{equation*}
        u_0^k(x) \geq 0 \text{ and } u_0^k(x) \not\equiv 0, \ \forall k = 1, 2, \ldots, m,
    \end{equation*}
    then 
    \begin{equation*}
        u^k(x, t) > 0, \ \forall k = 1, 2, \ldots, m \text{ and } (x, t) \in \overline{\Omega} \times (0, \widetilde{T}_\text{max}).
    \end{equation*}
\end{lemma}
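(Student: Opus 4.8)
The plan is to prove positivity of each component by a maximum-principle argument applied to a single scalar parabolic equation, exploiting the fact that the right-hand side of \eqref{eq: PDE u^k} is written in the factored form $(\text{linear elliptic part}) + f^k \cdot u^k$, so that the zeroth-order coefficient multiplies $u^k$ itself. Since $U$ is a given classical solution on $\overline{\Omega} \times (0, T_{\max})$, all the coefficients $a_{ij}^k$, $b_i^k$ and $f^k$, once evaluated along $U$, become fixed continuous functions of $(x,t)$; on every compact time slab $\overline{\Omega} \times [0, T]$ with $T < \widetilde T_{\max}$ they are bounded, and by uniform positive definiteness the operator is uniformly parabolic there. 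Thus for each fixed $k$, $u^k$ solves a scalar linear uniformly parabolic equation $\partial_t u^k = L^k u^k + c^k(x,t) u^k$ with bounded coefficients and homogeneous Neumann boundary data, and nonnegative, not identically zero, initial data.

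First I would reduce to the case of a bounded zeroth-order coefficient with a sign by the standard change of unknown $v^k = e^{-\Lambda t} u^k$, where $\Lambda \geq \sup |f^k|$ on the slab; then $v^k$ satisfies the same type of equation with zeroth-order coefficient $c^k - \Lambda \le 0$. Applying the weak maximum principle for uniformly parabolic operators under Neumann boundary conditions (the boundary term $\partial_\nu v^k = 0$ is handled by Hopf's lemma, which forbids a negative interior infimum from being attained on the boundary), one concludes $v^k \geq 0$, hence $u^k \geq 0$, on the whole slab; letting $T \uparrow \widetilde T_{\max}$ gives $u^k \ge 0$ on $\overline\Omega \times [0,\widetilde T_{\max})$. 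Next, to upgrade nonnegativity to strict positivity I would invoke the strong maximum principle (parabolic Harnack inequality): since $u^k \ge 0$ solves a linear parabolic equation and $u^k_0 \not\equiv 0$, the function cannot vanish at any interior point for $t > 0$ without vanishing identically; and the Hopf boundary-point lemma extends this to $x \in \partial\Omega$, so in fact $u^k(x,t) > 0$ for all $(x,t) \in \overline\Omega \times (0, \widetilde T_{\max})$.

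The main technical obstacle is justifying that the maximum principle genuinely applies here: the coefficients $b_i^k$ depend on $DU$ and $f^k$ depends on $DU^{(k)}$ and $D^2 U^{(k)}$, so a priori they need only be continuous, not bounded, near $t = 0$ where $U \in C(\overline\Omega \times [0,T_{\max}))$ but second derivatives may not extend continuously to $t=0$. I would handle this by working on slabs $[\sigma, T]$ with $0 < \sigma < T < \widetilde T_{\max}$, where $U \in C^{2,1}$ makes all coefficients continuous hence bounded on the compact set $\overline\Omega \times [\sigma,T]$, obtaining $u^k > 0$ there, and then separately arguing continuity up to $t = 0$: since $u^k(\cdot, 0) = u^k_0 \ge 0$ and $u^k$ is continuous on $\overline\Omega \times [0,T_{\max})$, nonnegativity on $(0,T_{\max})$ passes to the closure, and the strict positivity for $t>0$ is exactly what the strong maximum principle on interior slabs delivers. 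A secondary point to verify is that nothing in the argument requires the full coupled system — the factored structure of \eqref{eq: PDE u^k} decouples the positivity question for $u^k$ from the signs of the other components, which is precisely why the lemma is stated for the general single equation rather than for \eqref{eq: general model} directly.
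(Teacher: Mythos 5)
Your argument is essentially identical to the paper's proof: the authors also rescale by $e^{-At}$ with $A$ large enough that the zeroth-order coefficient $f^k-A$ becomes negative on a compact slab $\overline\Omega\times[0,T_1]$, apply the strong maximum principle together with the Neumann/Hopf boundary argument to rule out a nonpositive minimum, and then let $T_1\to T_{\max}$. Your extra digression about working on slabs $[\sigma,T]$ is not needed (and as written is slightly circular, since you would need the sign of $u^k(\cdot,\sigma)$ to start there); the paper simply works on $[0,T_1]$ directly, as you do in your first two paragraphs.
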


\begin{proof}
    For each $A > 0$, define $U_A(x, t) = e^{-At} U(x, t)$, then $U_A$ satisfies
    \begin{equation}
        \label{eq: PDE u_A^k}
        \begin{aligned}
            & \mathcal{L}^k u_A^k = \left( f^k (U, DU^{(k)}, D^2 U^{(k)}, x, t) - A \right) u_A^k, \ (x, t) \in \Omega \times (0, \infty), \\
            &\partial_\nu u_A^k (x,t) = 0, \ (x, t) \in \partial \Omega \times (0, \infty), \\
            &u_A^k(x, 0) = u^k_0(x), \ x \in \Omega,
        \end{aligned}
    \end{equation}
    where
    \begin{equation*}
        \mathcal{L}^k v = \partial_t v - \sum_{1 \leq i, j \leq n} a_{ij}^k (x, t) \partial_{x_i x_j} v - \sum_{i = 1}^n b_i^k (U, DU, x, t) \partial_{x_i} v
    \end{equation*}
    for $U$ satisfying equation \eqref{eq: PDE u^k} and $v \in C \big( \overline{\Omega} \times [0, T_\text{max}) \big) \cap C^{2, 1} \big( \overline{\Omega} \times (0, T_\text{max}) \big)$. For any $T_1 < T_\text{max}$, $U$ is bounded in $\overline{\Omega} \times [0, T_1]$. Hence, we can choose $A > 0$ so that $f^k (U, DU^{(k)}, D^2 U^{(k)}, x, t) - A < 0$ for $k = 1, 2, \ldots, m$ and $(x, t) \in \overline{\Omega} \times [0, T_1]$. Now we apply the strong maximum principle to equation \eqref{eq: PDE u_A^k}. Taking the Neumann boundary condition into account, if $u_A^k$ takes a nonpositive minimum in $\overline{\Omega} \times [0, T_1]$, then $u_A^k$ is a constant. Together with the initial condition, this implies that $u_A^k > 0$ in $\overline{\Omega} \times [0, T_1]$. The same is true for $u^k$ since $u_A^k(x, t) = e^{-At} u^k(x, t)$. Finally, the lemma is proved by letting $T_1 \rightarrow T_\text{max}$.
\end{proof}

It is easy to verify that equation \eqref{eq: general model} -- \eqref{eq: bc ic} can be written into the form of equation \eqref{eq: PDE u^k}, with the coefficients $a_{ij}^k$ given by
\begin{equation*}
    (a_{ij}^k (x, t))_{1 \leq i, j \leq n} = \delta_k I_n
\end{equation*}
and $\delta_k \in \{ \delta_X, \delta_{Y}, \delta_Z \}$. The assumptions in Lemma \ref{lem: general positivity} are also satisfied. Hence, we immediately establish the positivity for equation \eqref{eq: general model} -- \eqref{eq: bc ic}.

\begin{lemma}[Positivity]
    \label{th: positivity}
    The solution to equation \eqref{eq: general model} -- \eqref{eq: bc ic} satisfies 
    \begin{equation*}
        X(x, t), Y(x, t), Z(x, t) > 0, \ \forall (x, t) \in \overline{\Omega} \times (0, T_{\text{max}}).
    \end{equation*}
\end{lemma}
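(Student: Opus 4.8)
The plan is to observe that the Neumann problem \eqref{eq: general model} -- \eqref{eq: bc ic} is a particular instance of the abstract quasilinear system \eqref{eq: PDE u^k} with $m = 3$ and $(u^1, u^2, u^3) = (X, Y, Z)$, and then to invoke Lemma \ref{lem: general positivity} after checking its hypotheses. The only real work is to rewrite the three equations so that, in each of them, every reaction and cross-diffusion contribution proportional to the $k$-th unknown appears as a coefficient $f^k$ in front of $u^k$, while every contribution involving a first derivative of the $k$-th unknown is collected into a drift coefficient $b_i^k$.

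First I would treat the $X$-equation by setting $a_{ij}^1 = \delta_X \delta_{ij}$, $b_i^1 \equiv 0$, and factoring the reaction,
\begin{equation*}
    r(X) - f(X,Y,x) = X \Bigl( r \bigl( 1 - \tfrac{X}{K} \bigr) - \frac{\mu_1 Y}{1 + \tau_1(x) X} \Bigr) =: f^1(U, x)\, X .
\end{equation*}
For the $Y$-equation I would expand the divergence-form cross-diffusion,
\begin{equation*}
    - \nabla \cdot \bigl( \chi_1(x) h(Y) \nabla X \bigr) = - \chi_1(x) h(Y) \Delta X - \chi_1(x) h'(Y)\, \nabla Y \cdot \nabla X - h(Y)\, \nabla \chi_1(x) \cdot \nabla X ,
\end{equation*}
assigning the middle term to $b_i^2$ (its coefficient $-\chi_1(x) h'(Y) \partial_{x_i} X$ depends on $Y$ and on $\nabla X \in DU^{(2)}$), and then writing $h(Y) = \tfrac{h(Y)}{Y}\, Y$ so that $-\chi_1(x) h(Y) \Delta X$, the term $-h(Y)\, \nabla \chi_1(x) \cdot \nabla X$, and the reaction $e_1 f(X,Y,x) - g(Y,Z,x) - d_1 Y = Y \bigl( \tfrac{e_1 \mu_1 X}{1+\tau_1 X} - \tfrac{\mu_2 Z}{1+\tau_2 Y} - d_1 \bigr)$ all combine into a single $f^2(U, DU^{(2)}, D^2 U^{(2)}, x, t)\, Y$. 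The key point here is that $z \mapsto h(z)/z$ extends to a continuous function on $[0,\infty)$: by Table \ref{table: h properties}, $h(z) \sim z^{\beta}$ with $\beta \in [1,2)$ as $z \to 0$, hence $h(0) = 0$, so $h(z)/z \to h'(0)$ as $z \to 0^{+}$, and $h'(0)$ is finite since $h' = O(1)$ near $0$. The $Z$-equation is handled verbatim with $Y$ in the role of $X$: $a_{ij}^3 = \delta_Z \delta_{ij}$, the term $-\chi_2(x) h'(Z)\, \nabla Z \cdot \nabla Y$ goes into $b_i^3$, and $-\chi_2(x) h(Z) \Delta Y$, $-h(Z)\, \nabla \chi_2(x) \cdot \nabla Y$, and $e_2 g(Y,Z,x) - d_2 Z = Z \bigl( \tfrac{e_2 \mu_2 Y}{1+\tau_2 Y} - d_2 \bigr)$ are collected as $f^3(U, DU^{(3)}, D^2 U^{(3)}, x, t)\, Z$.

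Next I would verify the hypotheses of Lemma \ref{lem: general positivity}. Uniform positive definiteness is immediate: $\xi^{T}(a_{ij}^k)\xi = \delta_k |\xi|^2 \ge \min\{\delta_X, \delta_Y, \delta_Z\}\,|\xi|^2$, with $\min\{\delta_X,\delta_Y,\delta_Z\} > 0$ by $(A_C)$. Continuity of $a_{ij}^k$, $b_i^k$, $f^k$ and $u_0^k$ in their arguments follows from $(A_\chi)$, $(A_\tau)$, $(A_h)$ (together with the continuity of $h(\cdot)/\cdot$ just noted), $(A_I)$, and the regularity $U \in C^{2,1}(\overline{\Omega} \times (0, T_{\max}))$ of the local solution, which makes $\nabla X$, $\nabla Y$, $\Delta X$, $\Delta Y$ continuous on $(0, T_{\max})$. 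Finally, $(A_I)$ gives $X_0, Y_0, Z_0 \gneqq 0$, i.e. each is nonnegative and not identically zero. Lemma \ref{lem: general positivity} then yields $X, Y, Z > 0$ on $\overline{\Omega} \times (0, T_{\max})$.

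I expect the only step requiring genuine care to be the absorption of the cross-diffusion divergence terms: one must confirm that $-\chi_i(x) h(\cdot)\,\Delta(\text{the other component})$ is really a zeroth-order term --- a coefficient times $u^k$ --- rather than a true second-order coupling that would destroy the scalar structure on which the strong maximum principle in Lemma \ref{lem: general positivity} relies. This is exactly what the factorization $h(Y) = (h(Y)/Y)\, Y$, valid because of the near-zero behaviour of $h$ recorded in Table \ref{table: h properties}, supplies; everything else is routine bookkeeping.
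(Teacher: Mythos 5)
Your proposal is correct and follows exactly the route the paper takes: the paper's own proof of this lemma consists of the single remark that \eqref{eq: general model}--\eqref{eq: bc ic} ``can be written into the form of equation \eqref{eq: PDE u^k}'' with $a_{ij}^k = \delta_k \delta_{ij}$ and that the hypotheses of Lemma \ref{lem: general positivity} are satisfied, which is precisely your argument. You in fact supply the details the paper leaves implicit --- notably the factorization $h(Y) = \bigl(h(Y)/Y\bigr)Y$ (justified by $h(z)\sim z^{\beta}$, $\beta\geq 1$, and $h'=O(1)$ near $0$) that turns the cross-diffusion terms into zeroth-order coefficients $f^k$ depending on $D^2U^{(k)}$ --- so the verification is sound.
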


Now we rewrite system \eqref{eq: general model} into following form:
\begin{equation*}
    \partial_t U = \nabla \left (\mathbb A(x, U) \nabla U \right ) + \Phi (x, U), \ (x, t) \in \Omega \times (0, \infty), 
\end{equation*}
where $U=(X,Y,Z)$ is the solution, matrix
\begin{equation}
    \label{eq: matrix mathbb A}
    \mathbb A = \mathbb A (x, U) = 
    \begin{pmatrix}
    \delta_X & 0 & 0 \\
    -\chi_1(x) h(Y) & \delta_{Y} & 0 \\
    0 & -\chi_2(x) h(Z) &\delta_Z
\end{pmatrix},
\end{equation}
and function 
\begin{equation}
    \label{eq: vector Phi}
    \Phi (x, U) = \left (
    \begin{aligned}
        & r(X) - f(X,Y,x) \\
        & e_1 f(X,Y,x) - g(Y,Z,x) - d_1 Y \\
        & e_2 g(Y,Z,x) - d_2 Z 
    \end{aligned}
    \right ). 
\end{equation}

\begin{lemma}[Local existence]
    \label{th: local existence}
    Equation \eqref{eq: general model} -- \eqref{eq: bc ic} has a unique maximal solution $U = (X, Y, Z)$ satisfying
    \begin{equation}
        \label{eq: smoothness}
        U \in C \big( \overline{\Omega} \times [0, T_{\max}); \mathbb R_{\geq 0}^3 \big) \cap C^{2, 1} \big( \overline{\Omega} \times (0, T_{\max}); \mathbb R_+^3 \big),
    \end{equation}
    where $T_{\max} \in (0, \infty]$. If $T_{\max} < \infty$, then
    \begin{equation}
        \label{eq: blow up condition}
        \lim_{t \to T_{\max}} \big( ||X (\cdot, t)||_{L^\infty} + ||Y (\cdot, t)||_{L^\infty} + ||Z (\cdot, t)||_{L^\infty} \big) = \infty.
    \end{equation}
\end{lemma}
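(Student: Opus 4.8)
The plan is to set up the problem as an abstract quasilinear parabolic system in divergence form and invoke the standard local existence theory of Amann, which is the natural framework once the system has been rewritten as $\partial_t U = \nabla \cdot (\mathbb A(x,U)\nabla U) + \Phi(x,U)$ with the triangular diffusion matrix $\mathbb A$ from \eqref{eq: matrix mathbb A}. First I would verify the structural hypotheses of Amann's theorem: the diffusion matrix $\mathbb A(x,U)$ is (upper/lower) triangular with constant positive diagonal entries $\delta_X,\delta_Y,\delta_Z$, hence its eigenvalues are exactly these positive constants, so the system is normally elliptic (indeed parabolic) uniformly in $(x,U)$; the off-diagonal entries $-\chi_1(x)h(Y)$ and $-\chi_2(x)h(Z)$ are smooth in their arguments because $\chi_i\in C^2(\overline\Omega)$ by $(A_\chi)$ and $h\in C^1$ by $(A_h)$; the reaction term $\Phi(x,U)$ in \eqref{eq: vector Phi} is smooth in $U$ and continuous in $x$ since $f,g$ are rational in $(X,Y,Z)$ with denominators $1+\tau_i(x)\cdot$ bounded away from zero (using $\tau_i\geq 0$ and $(A_\tau)$), and $r(X)$ is polynomial; and the initial data $(X_0,Y_0,Z_0)\in [W^{2,\infty}(\Omega)]^3$ by $(A_I)$ lies in a suitable fractional Sobolev/Besov space with the homogeneous Neumann compatibility condition built in (or one simply works in the $W^{2,p}$-setting for large $p$). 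Amann's theory then yields a unique maximal classical solution $U$ on a maximal interval $[0,T_{\max})$ with the regularity \eqref{eq: smoothness}; parabolic Schauder/$L^p$ interior estimates upgrade $U$ to $C^{2,1}(\overline\Omega\times(0,T_{\max}))$.

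Next I would address positivity: by Lemma \ref{th: positivity} (already established via Lemma \ref{lem: general positivity}), the solution satisfies $X,Y,Z>0$ on $\overline\Omega\times(0,T_{\max})$, which in particular confirms that the range of $U$ stays in $\mathbb R_+^3$ as asserted in \eqref{eq: smoothness} and keeps the denominators $1+\tau_i(x)X$, $1+\tau_i(x)Y$ in $f,g$ strictly positive so that $\Phi$ remains smooth along the solution.

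Finally, for the blow-up alternative \eqref{eq: blow up condition}, I would use the continuation criterion from Amann's theory: if $T_{\max}<\infty$, the solution must leave every compact subset of the state space in the relevant topology, which for this system reduces to showing that $\|X(\cdot,t)\|_{L^\infty}+\|Y(\cdot,t)\|_{L^\infty}+\|Z(\cdot,t)\|_{L^\infty}$ cannot stay bounded as $t\to T_{\max}$. The argument is the routine one: if these $L^\infty$ norms were bounded on $[0,T_{\max})$, then the coefficients of the system (the off-diagonal terms $\chi_i(x)h(\cdot)$, which are bounded since $h$ is continuous and its argument is bounded, and the reaction $\Phi$) would be bounded, so parabolic regularity (maximal $L^p$-regularity, e.g. \cite{matthias1997heat}, followed by Hölder estimates) would give uniform bounds on $U$ in a higher norm up to $t=T_{\max}$, allowing the solution to be continued past $T_{\max}$ — contradicting maximality. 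Hence \eqref{eq: blow up condition} holds.

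The main obstacle is the bootstrap in the blow-up alternative: one must be careful that an $L^\infty$ bound on $(X,Y,Z)$ alone suffices to control the cross-diffusion flux $\chi_i(x)h(Y)\nabla X$ and close the regularity loop. This is handled by treating the $\nabla\cdot(\chi_1(x)h(Y)\nabla X)$ term in the $Y$-equation as a known divergence-form source once $X$ has been bootstrapped first (its equation $\partial_t X=\delta_X\Delta X+r(X)-f(X,Y,x)$ is a scalar equation with bounded right-hand side, giving $\nabla X$ bounds by standard theory), and similarly $\nabla\cdot(\chi_2(x)h(Z)\nabla Y)$ in the $Z$-equation after $\nabla Y$ is controlled; the triangular structure of $\mathbb A$ is precisely what makes this staged bootstrap work. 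Everything else is a verification of hypotheses and an appeal to cited results.
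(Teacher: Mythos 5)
Your proposal is correct in outline and the first half coincides with the paper's argument: both rewrite the system as $\partial_t U = \nabla(\mathbb A(x,U)\nabla U) + \Phi(x,U)$, check that the triangular matrix $\mathbb A$ has eigenvalues $\delta_X,\delta_Y,\delta_Z>0$ (normal ellipticity), note that $(A_I)$ places the data in the admissible phase space, and invoke Amann's quasilinear theory for local existence and regularity, with positivity supplied by the already-proved Lemma \ref{th: positivity}. The divergence is in the blow-up alternative. The paper does not bootstrap at all: it observes that the solution stays away from the boundary of the state region $G_\epsilon$ (by positivity) and that $\mathbb A$ is lower triangular, and then cites Theorem 6.1 of Amann (1989), which for triangular systems gives the continuation criterion directly in the sup-norm, i.e.\ exactly \eqref{eq: blow up condition}. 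You instead propose to derive the $L^\infty$ criterion by hand from the generic "leave every compact set" alternative, via a staged parabolic-regularity bootstrap ($X$ first, then $Y$, then $Z$). That route is workable and your identification of the key point is accurate --- after expanding $\nabla(\chi_1(x)h(Y)\nabla X)$ one gets a bounded coefficient in front of $\nabla Y$ plus a source containing $\chi_1 h(Y)\Delta X$, so one genuinely needs maximal $L^p$ regularity for the $X$-equation before treating the $Y$-equation, and likewise down the cascade --- but it amounts to reproving, in the local setting, a piece of machinery that Amann's triangular-system theorem already packages. What your approach buys is independence from that specific citation; what the paper's approach buys is brevity and the avoidance of having to carry out the bootstrap carefully near $t=T_{\max}$ (where one must track that all constants stay finite up to, not merely below, $T_{\max}$). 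If you carry out your version, you should make the $\Delta X\in L^p$ step and the subsequent Schauder upgrade explicit rather than gesturing at "standard theory", since that is where the argument could silently fail for a non-triangular coupling.
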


\begin{proof}
    Fix $p > n$, $\epsilon > 0$ and define
    \begin{equation*}
        V_\epsilon = \{ v \in W^{1, p} (\Omega; \mathbb{R}^3): v(x) \in G_\epsilon = (-\epsilon, \infty)^3, \ \forall x \in \overline{\Omega} \}.
    \end{equation*}
    It is easy to see that all eigenvalues of the matrix $\mathbb A$ in equation \eqref{eq: matrix mathbb A} have positive real parts. Moreover, assumption ($A_I$) implies that the initial values belong to $V_{\epsilon}$. Therefore, the local existence of the solution $U$ is guaranteed by (\cite{amann1990dynamic}, p. 17), where $U$ satisfy
    \begin{equation*}
        U \in C \big( [0, T_{\max}), V_\epsilon \big) \cap C^{2, 1} \big( \overline{\Omega} \times (0, T_{\max}), \mathbb R^3 \big)
    \end{equation*}
    and $T_{\max} \in (0, \infty]$ is the lifespan defined by
    \begin{equation*}
        T_{\max} := \sup \{ T > 0: U(\cdot, t) \in V_\epsilon, \ \forall t \in [0, T] \}.
    \end{equation*}
    By Sobolev embedding and the fact that $U(\cdot, t) \in C(\overline{\Omega}; \mathbb{R}^3)$ for all $t \geq 0$, we obtain
    \begin{equation*}
        U \in C \big( [0, T_{\max}), C(\overline{\Omega}; \mathbb{R}^3) \big) = C \big( \overline{\Omega} \times [0, T_{\max}); \mathbb R^3 \big).
    \end{equation*}
    Now \eqref{eq: smoothness} follows from Lemma \ref{th: positivity} and assumption ($A_I$). 
    
    By Lemma \ref{th: positivity}, the solution $U$ takes value away from the boundary of $G_\epsilon$. Moreover, the matrix $\mathbb{A}$ is a lower triangular matrix. Hence, we obtain the blow-up criterion \eqref{eq: blow up condition} by (Theorem 6.1, \cite{amann1989dynamic}).
\end{proof}

\section{Global existence}
\label{sec: Global existence}

To study global existence of the solution $U$, it is enough to show that, for some $0< t^* < T_\text{max}$, the solution $\widetilde{U}$ to equation \eqref{eq: general model} -- \eqref{eq: bc ic} with initial data $\widetilde{U}(0) = U(t^*)$ has a global solution. Since $U$ remains positive for all forward time, without loss of generality, throughout Lemma \ref{lem: X bounded} to Corollary \ref{cor: L^inf boundedness of nabla Z} we assume that the initial value $U(0)$ is strictly positive in $\Omega$.

\subsection{$L^\infty$ boundedness of $Y$}

\begin{lemma}
    \label{lem: X bounded}
    The solution to equation \eqref{eq: general model} -- \eqref{eq: bc ic} satisfies 
    \begin{equation}
        \label{eq: X bounded}
        X(x, t) \leq \max \left\{ ||X_0||_{L^\infty}, K \right\} =: \overline{X}, \quad \forall (x, t) \in \overline{\Omega} \times [0, T_{\max}).
    \end{equation}
\end{lemma}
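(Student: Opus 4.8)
The plan is to establish the $L^\infty$ bound on $X$ via the scalar comparison principle applied to the first equation of \eqref{eq: general model}. The key observation is that the $X$-equation decouples from $Y$ and $Z$ in a favorable way: since $f(X,Y,x) = \frac{\mu_1 XY}{1+\tau_1(x)X} \geq 0$ whenever $X, Y \geq 0$ (which holds by Lemma \ref{th: positivity}), we have
\begin{equation*}
    \partial_t X = r(X) - f(X,Y,x) + \delta_X \Delta X \leq r(X) + \delta_X \Delta X = rX\left(1 - \frac{X}{K}\right) + \delta_X \Delta X.
\end{equation*}
Hence $X$ is a subsolution of the scalar logistic-diffusion equation $\partial_t W = rW(1 - W/K) + \delta_X \Delta W$ with Neumann boundary conditions and initial datum $W(\cdot,0) = X_0$.

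First I would introduce the spatially homogeneous supersolution $\overline{W}(t)$ solving the ODE $\overline{W}' = r\overline{W}(1 - \overline{W}/K)$ with $\overline{W}(0) = \|X_0\|_{L^\infty}$; since a constant in $x$ automatically satisfies the Neumann condition and has zero Laplacian, $\overline{W}$ is a supersolution of the PDE for $W$. The logistic ODE has the elementary monotonicity property that $\overline{W}(t) \leq \max\{\overline{W}(0), K\} = \max\{\|X_0\|_{L^\infty}, K\}$ for all $t \geq 0$: if $\overline{W}(0) \leq K$ the solution stays below $K$, and if $\overline{W}(0) > K$ it decreases monotonically toward $K$. Then I would invoke the comparison principle for scalar parabolic equations (the operator $w \mapsto \partial_t w - \delta_X \Delta w - rw(1-w/K)$ satisfies the requisite quasimonotonicity) to conclude $X(x,t) \leq \overline{W}(t) \leq \overline{X}$ on $\overline{\Omega} \times [0, T_{\max})$.

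The main obstacle — really the only point requiring care — is justifying the comparison step rigorously: one must check that $X$ is genuinely a (weak or classical) subsolution, that $\overline{W}$ is an admissible supersolution, and that the comparison principle applies on the \emph{open} maximal time interval $[0, T_{\max})$ rather than on a fixed compact interval. This is handled by running the comparison argument on $[0, T_1]$ for an arbitrary $T_1 < T_{\max}$ (where $X \in C^{2,1}(\overline{\Omega} \times (0,T_1]) \cap C(\overline{\Omega} \times [0,T_1])$ by Lemma \ref{th: local existence}, so everything is classical) and then letting $T_1 \to T_{\max}$. Alternatively, and perhaps more transparently, one can argue directly with the maximum principle: set $M = \overline{X}$ and note that $v := X - M$ satisfies $\partial_t v - \delta_X \Delta v = r X(1 - X/K) - f(X,Y,x) \leq rX(1-X/K)$, and at any interior positive maximum of $v$ with $X = $ some value $> K$ one would need $rX(1-X/K) \geq 0$, forcing $X \leq K \leq M$, a contradiction; combined with $v(\cdot,0) \leq 0$ and the Neumann condition this yields $v \leq 0$. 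Either route gives \eqref{eq: X bounded}. Since the bound $\overline{X}$ is independent of $t$, this also furnishes the uniform-in-time $L^\infty$ control of $X$ that later estimates will rely on.
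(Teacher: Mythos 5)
Your proposal is correct. The paper's own proof is essentially your second, ``more transparent'' route: it observes that wherever $X \geq K$ the first equation of \eqref{eq: general model} gives $\partial_t X - \delta_X \Delta X \leq r(X) \leq 0$, and then invokes the strong maximum principle together with the Neumann condition to conclude that a maximum value $\geq K$ cannot be attained at positive time unless $X$ is constant. Your primary route --- comparing $X$ with the spatially homogeneous logistic supersolution $\overline{W}(t)$ solving $\overline{W}' = r\overline{W}(1-\overline{W}/K)$, $\overline{W}(0)=\|X_0\|_{L^\infty}$ --- is a standard and equally valid alternative; it even buys slightly more, namely the time-dependent envelope $X(\cdot,t)\leq \overline{W}(t)\to K$, at the cost of citing a comparison theorem for semilinear parabolic problems rather than the bare maximum principle. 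One small point of care in your maximum-principle sketch: at an interior maximum the inequalities $\partial_t v \geq 0$ and $\Delta v \leq 0$ are not strict, so to close the contradiction one should either appeal to the strong maximum principle (as the paper does) or perturb by $\epsilon t$; your write-up glosses over this, but only to the same degree of informality as the paper's own proof.
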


\begin{proof}
    Wherever $X(x, t) \geq K$, the first equation of \eqref{eq: general model} implies
    \begin{equation}
        \label{eq: proof of X bounded}
        \partial_t X - \delta_X \Delta X \leq r(X) \leq 0.
    \end{equation}
    The strong maximum principle can be applied to equation \eqref{eq: proof of X bounded} with boundary condition \eqref{eq: bc ic}. It follows that, if $X$ takes maximum value in $\overline{\Omega} \times (0, T_{\text{max}})$ that is not less than $K$, then $X$ is a constant. Since the initial value is nonnegative by assumption ($A_I$), this proves the bound \eqref{eq: X bounded}.
\end{proof}

\begin{lemma}
    \label{lemma: Y L1 bounded, Z L1 bounded}
    The solution for equation \eqref{eq: general model} -- \eqref{eq: bc ic} satisfies
    \begin{equation}
        \label{eq: Y, Z L1 bounded}
        \left\|Y\right\|_{L^1},\left\|Z\right\|_{L^1} \leq C_1, \quad \forall t \in [0, T_{\max}).
    \end{equation}
\end{lemma}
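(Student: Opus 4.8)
The plan is a conservation-of-mass estimate: integrate the three equations of \eqref{eq: general model} over $\Omega$, noting that the Neumann conditions \eqref{eq: bc ic} together with the divergence theorem annihilate every diffusion term $\delta_i\Delta(\cdot)$ and every cross-diffusion (taxis) term $\nabla\cdot(\chi_i(x)h(\cdot)\nabla(\cdot))$ after integration. This yields
\[
\frac{d}{dt}\int_\Omega X = \int_\Omega r(X) - \int_\Omega f(X,Y,x),
\]
\[
\frac{d}{dt}\int_\Omega Y = e_1\int_\Omega f(X,Y,x) - \int_\Omega g(Y,Z,x) - d_1\int_\Omega Y,
\]
\[
\frac{d}{dt}\int_\Omega Z = e_2\int_\Omega g(Y,Z,x) - d_2\int_\Omega Z.
\]

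Next I would introduce the weighted total mass $W(t) := \int_\Omega\bigl(e_1 X + Y + \tfrac1{e_2}Z\bigr)$. The weights are chosen precisely so that, adding $e_1$ times the first identity, the second identity, and $\tfrac1{e_2}$ times the third, the predation integrals involving $f$ and $g$ cancel identically, leaving
\[
W'(t) = e_1\int_\Omega r(X) - d_1\int_\Omega Y - \frac{d_2}{e_2}\int_\Omega Z.
\]
By Lemma~\ref{lem: X bounded} we have $0 \le X \le \overline{X}$, so $r(X) = rX(1 - X/K) \le rK/4$ and $\int_\Omega X \le \overline{X}\,|\Omega|$; hence the first term is bounded by a data-dependent constant. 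Setting $c := \min\{d_1, d_2\}$ and estimating $-d_1\int_\Omega Y - \tfrac{d_2}{e_2}\int_\Omega Z \le -c\bigl(\int_\Omega Y + \tfrac1{e_2}\int_\Omega Z\bigr) = -c\bigl(W - e_1\int_\Omega X\bigr)$, one arrives at a differential inequality of the form $W'(t) \le C' - cW(t)$ with $C'$ depending only on the parameters, $|\Omega|$ and $\overline{X}$.

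Finally, an ODE comparison (Gronwall) argument gives $W(t) \le \max\{W(0),\, C'/c\}$ for all $t \in [0, T_{\max})$, and $W(0)$ is finite by assumption $(A_I)$. Since $\|Y\|_{L^1} \le W(t)$ and $\|Z\|_{L^1} \le e_2\,W(t)$, the claimed uniform bound \eqref{eq: Y, Z L1 bounded} follows. The argument is essentially routine; the only points needing a little care are invoking the a priori bound on $X$ from Lemma~\ref{lem: X bounded} (so that $e_1\int_\Omega r(X)$ and $e_1\int_\Omega X$ are controlled) and justifying the integrations by parts, which is legitimate because $U$ is a classical solution, smooth on $\overline{\Omega}\times(0,T_{\max})$ and continuous up to $t=0$. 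I do not expect any serious obstacle here.
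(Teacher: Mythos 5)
Your proposal is correct and takes essentially the same route as the paper: the authors use the linear combination $e_1e_2X + e_2Y + Z$ (your $W$ multiplied by $e_2$), cancel the $f$ and $g$ terms, control $\int_\Omega r(X)$ and $\int_\Omega X$ via Lemma \ref{lem: X bounded}, and close with Gr\"onwall exactly as you do.
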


\begin{proof}
We multiply the first equation in \eqref{eq: general model} by $e_1e_2$, the second equation by $e_2$, and then add the results to the third equation. An integration by parts yields
    \begin{equation}
        \label{eq: proof 1 of Z L1 bounded}
        \frac{\mathrm{d}}{\mathrm{d}t} \int_\Omega \left(e_1 e_2 X + e_2 Y + Z \right)  + e_2 d_1\int_\Omega Y  + d_2\int_\Omega Z  = e_1 e_2 \int_\Omega r(X) .
    \end{equation}
    Adding $e_1 e_2 \min \left\{d_1, d_2\right\} \int_\Omega X $ to both sides of \eqref{eq: proof 1 of Z L1 bounded}, along with Lemma \ref{lem: X bounded}, one obtains the inequality
    \begin{equation}
    \label{eq: proof 2 of Z L1 bounded}
        \begin{aligned}
            & \quad \ \frac{\mathrm{d}}{\mathrm{d}t} \int_\Omega (e_1 e_2 X + e_2 Y + Z )  + \min \left\{ d_1, d_2\right\} \int_\Omega \left(e_1 e_2 X + e_2 Y + Z \right)  \\
            & \leq e_1 e_2 \int_\Omega r(X)  + e_1 e_2 \min \left\{d_1,d_2\right\} \int_\Omega X  \\
            & \leq e_1 e_2 \left( \overline{r(X)} + \min \left\{d_1,d_2\right\} \overline{X}\right) |\Omega|,
        \end{aligned}
    \end{equation}
    where
    \begin{equation}
        \label{eq: r(X) bar}
        \overline{r(X)}=\max\left\{r(X): X \in [0, \overline{X}] \right\}.
    \end{equation}
    Applying Gr\"onwall's inequality to \eqref{eq: proof 2 of Z L1 bounded}, we get
    \begin{equation*}
    \begin{aligned}
        &\quad \ \int_\Omega \left(e_1 e_2 X + e_2 Y + Z \right)\leq \max \left\{e_{1} e_{2} \|X_0 \|_{L^1} + e_{2} \| Y_{0}\|_{L^1} + \| Z_0 \|_{L^1}, \ e_1 e_2 \left( \frac{\overline{r(X)}}{\min \{d_1, d_2\}} + \overline{X}\right) |\Omega| \right\}.
    \end{aligned}
    \end{equation*}
    This implies \eqref{eq: Y, Z L1 bounded}.
\end{proof} 

\begin{lemma}
    \label{lem: energy estimate on X}
    There exists a constant $\nu > 0$ such that the solution for equation \eqref{eq: general model} -- \eqref{eq: bc ic} satisfies
    \begin{equation}
        \label{eq: energy estimate on X}
        \left( \frac{\mathrm{d}}{\mathrm{d}t} + 1 \right) \int_\Omega \left( \frac{|\nabla X|^2}{X} + \tau_1 |\nabla X|^2 \right) + \nu \int_\Omega \left( \frac{|D^2 X|^2}{X} + \frac{|\nabla X|^4}{X^3} \right) 
        \leq - 2 \mu_1 \int_\Omega \nabla X \nabla Y  + C_2, \quad \forall t \in [0, T_{\max}).
    \end{equation}
\end{lemma}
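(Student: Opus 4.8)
The plan is to differentiate the functional $\mathcal{E}(t):=\int_\Omega \frac{|\nabla X|^2}{X} + \int_\Omega \tau_1|\nabla X|^2$ along the flow and extract the dissipation coming from the diffusion term $\delta_X\Delta X$ in the first equation of \eqref{eq: general model}. Writing $X_t = \delta_X\Delta X + r(X) - f$ with $f = f(X,Y,x) = \mu_1 XY/(1+\tau_1 X)$, differentiation under the integral together with integration by parts using $\partial_\nu X=0$ yields
\[
  \frac{\mathrm d}{\mathrm dt}\int_\Omega \frac{|\nabla X|^2}{X} = -2\int_\Omega \frac{X_t\,\Delta X}{X} + \int_\Omega \frac{|\nabla X|^2 X_t}{X^2}
\]
and $\frac{\mathrm d}{\mathrm dt}\int_\Omega \tau_1|\nabla X|^2 = -2\int_\Omega X_t\bigl(\tau_1\Delta X + \nabla\tau_1\cdot\nabla X\bigr)$, with the boundary terms involving $\partial_\nu|\nabla X|^2$ retained for later. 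The reason for the specific weight $\frac1X+\tau_1=\frac{1+\tau_1 X}{X}$ is an exact algebraic cancellation: since $\frac{f}{X}+\tau_1 f=\mu_1 Y$, after substituting $X_t$ the predation contributions multiplying $\Delta X$ in the sum of the two identities collapse to $2\mu_1\int_\Omega Y\Delta X = -2\mu_1\int_\Omega \nabla X\cdot\nabla Y$, which is precisely the right-hand side of \eqref{eq: energy estimate on X}.

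It then remains to show that every other term is either $\le 0$ or absorbable. The diffusion part contributes the leading dissipative term $-2\delta_X\int_\Omega\frac{(\Delta X)^2}{X}-2\delta_X\int_\Omega\tau_1(\Delta X)^2$ together with cubic gradient--Hessian terms $\delta_X\int\frac{|\nabla X|^2\Delta X}{X^2}$ and $-2\delta_X\int(\nabla\tau_1\cdot\nabla X)\Delta X$. Combining the first of these with $\delta_X\int\frac{|\nabla X|^2\Delta X}{X^2}$ through integration by parts and the pointwise identity $X(\Delta\ln X)^2 = \frac{(\Delta X)^2}{X} - 2\frac{|\nabla X|^2\Delta X}{X^2} + \frac{|\nabla X|^4}{X^3}$ reduces the principal part, up to boundary integrals of $\frac1X\partial_\nu|\nabla X|^2$, to a strictly negative multiple of $\int_\Omega X|D^2\ln X|^2$; Lemma \ref{lem: D2 ln phi} then dominates this by a negative multiple of $\int_\Omega\frac{|D^2X|^2}{X}+\int_\Omega\frac{|\nabla X|^4}{X^3}$, furnishing the coercive quantity on the left of \eqref{eq: energy estimate on X} with room to spare. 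The boundary integrals are controlled using $\partial_\nu|\nabla X|^2\le 2\kappa|\nabla X|^2$ (Lemma \ref{lem: boundary term}) and a trace--interpolation estimate on $\partial\Omega$, and then absorbed into the coercive terms.

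All remaining interior terms are lower order. By Lemma \ref{lem: X bounded} we have $X\le\overline X$, so $r(X)/X$, $X/(1+\tau_1 X)$ and $\nabla\tau_1$ are bounded; hence the terms generated by $r(X)$, the leftover cubic term $-2\delta_X\int(\nabla\tau_1\cdot\nabla X)\Delta X$, and the extra $+\mathcal{E}(t)$ on the left are all controlled by $\epsilon\bigl(\int\frac{(\Delta X)^2}{X}+\int\frac{|\nabla X|^4}{X^3}\bigr)$ plus a multiple of $\int_\Omega X\le|\Omega|\overline X$. The one surviving $Y$-dependent term, $2\int_\Omega\frac{\mu_1 XY}{1+\tau_1X}\nabla\tau_1\cdot\nabla X$, is matched via Cauchy--Schwarz against the genuinely negative term $-\int_\Omega\frac{\mu_1 Y|\nabla X|^2}{X(1+\tau_1X)}$ produced by $\int\frac{|\nabla X|^2X_t}{X^2}$, leaving only $C\int_\Omega Y\le C C_1$ by Lemma \ref{lemma: Y L1 bounded, Z L1 bounded}. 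Collecting everything and choosing $\epsilon$ small, then $\nu>0$ accordingly, gives \eqref{eq: energy estimate on X}.

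I expect the main obstacle to be the diffusion bookkeeping: tracking every boundary contribution generated by the repeated integrations by parts, dominating them through Lemma \ref{lem: boundary term}, and verifying that the surviving interior terms genuinely combine into a strictly negative multiple of $\int\frac{|D^2X|^2}{X}+\int\frac{|\nabla X|^4}{X^3}$ large enough to absorb all cubic and lower-order remainders. A secondary subtlety is ensuring that no $Y$-dependent contribution survives beyond $-2\mu_1\int\nabla X\cdot\nabla Y$; this rests on the cancellation $\frac{f}{X}+\tau_1 f=\mu_1 Y$ and on pairing the $\nabla\tau_1$-term with the dissipative predation term rather than attempting to control it by the $L^1$ bound on $Y$ alone.
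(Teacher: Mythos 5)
Your proposal is correct and follows essentially the same route as the paper: differentiating $\int_\Omega\big(\frac{|\nabla X|^2}{X}+\tau_1|\nabla X|^2\big)$ is the same computation as the paper's testing of the first equation against $-\Delta X\big(\frac{1}{X}+\tau_1\big)$, the cancellation $\frac{f}{X}+\tau_1 f=\mu_1 Y$ producing $-2\mu_1\int_\Omega\nabla X\cdot\nabla Y$ is exactly the paper's, and the reduction to $\int_\Omega X|D^2\ln X|^2$, the use of Lemma \ref{lem: D2 ln phi}, Lemma \ref{lem: boundary term} with the trace inequality for the boundary terms, and the pairing of the $\nabla\tau_1$-term with the nonpositive term $-\int_\Omega f\,|\nabla X|^2/X^2$ (leaving only $C\int_\Omega Y\leq CC_1$) all coincide with the paper's steps. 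The one imprecision is that converting $-2\delta_X\int_\Omega\frac{(\Delta X)^2}{X}+\delta_X\int_\Omega\frac{|\nabla X|^2\Delta X}{X^2}$ into $-2\delta_X\int_\Omega X|D^2\ln X|^2$ plus boundary integrals requires the integral identity (3.8) of \cite{jin2022global} involving the full Hessian $D^2\ln X$, not merely the pointwise identity for $\Delta\ln X$ that you quote --- but since you already invoke the accompanying integration by parts and land on the correct coercive quantity, this is a matter of bookkeeping rather than a gap.
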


\begin{proof}
    We multiply the first equation of \eqref{eq: general model} by $-\Delta X \left( \frac{1}{X} + \tau_1 \right)$ and integrate by parts, then the left-hand side becomes
    \begin{equation}
        \label{eq: energy estimate on X proof 1}
        \begin{aligned}
            - \int_\Omega X_t \Delta X \left(  \frac{1}{X} + \tau_1 \right)
            &= \int_\Omega \nabla X \nabla(\ln{X})_t  + \int_\Omega \nabla X \nabla (X_t \tau_1)  \\
            &= \int_\Omega \nabla X \left(\frac{\nabla X}{X}\right)_t  + \int_\Omega \nabla X ((\nabla X)_t \tau_1 + X_t \nabla \tau_1)  \\
            &= \frac{\mathrm{d}}{\mathrm{d}t} \int_\Omega \frac{|\nabla X|^2}{X} - \frac{1}{2} \int_\Omega \frac{\left(|\nabla X|^2\right)_t}{X}  + \frac{1}{2} \frac{\mathrm{d}}{\mathrm{d}t} \int_\Omega \tau_1 |\nabla X|^2  + \int_\Omega \nabla \tau_1 \nabla X X_t  \\
            & = \frac{1}{2} \frac{\mathrm{d}}{\mathrm{d}t} \int_\Omega \left( \frac{|\nabla X|^2}{X} + \tau_1 |\nabla X|^2 \right)  - \frac{1}{2}\int_\Omega \frac{|\nabla X|^2}{X^2}X_t  + \int_\Omega \nabla \tau_1 \nabla X X_t .
        \end{aligned}
    \end{equation}
    Using Young's inequality and the first equation of \eqref{eq: general model} again, we get
    \begin{equation}
        \begin{aligned}
            & \quad \ \frac{1}{2} \int_\Omega \frac{|\nabla X|^2}{X^2} X_t  - \int_\Omega \nabla \tau_1 \nabla X X_t  \\
            & = \frac{\delta_X}{2} \int_\Omega \frac{|\nabla X|^2}{X^2}\Delta X  + \frac{r}{2} \int_\Omega \frac{|\nabla X|^2}{X}  - \frac{r}{2K} \int_\Omega |\nabla X|^2  - \frac{1}{2} \int_\Omega \frac{|\nabla X|^2}{X^2} f(X, Y, x)    - \delta_X \int_\Omega \nabla \tau_1 \nabla X \Delta X  \\
            & \quad \ - \int_\Omega \nabla \tau_1 \nabla X r(X)  + \int_\Omega \nabla \tau_1 \nabla X f(X, Y, x)  \\
            & \leq \frac{\delta_X}{2} \int_\Omega \frac{|\nabla X|^2}{X^2}\Delta X  + \frac{r}{2} \int_\Omega \frac{|\nabla X|^2}{X}  - \delta_X \int_\Omega \nabla \tau_1 \nabla X \Delta X  - \int_\Omega \nabla \tau_1 \nabla X r(X)  + \frac{1}{2} \int_\Omega |\nabla \tau_1|^2 X^2 f(X, Y, x) .
        \end{aligned}
    \end{equation}
    In view of $f(X, Y, x) \leq \mu_1 XY$, $X \leq \overline{X}$ \eqref{eq: X bounded}, $||Y||_{L^1} \leq C_1$ \eqref{eq: Y, Z L1 bounded} and assumption ($A_\tau$), we obtain
    \begin{equation*}
        \begin{aligned}
            & \quad \ \frac{1}{2} \int_\Omega \frac{|\nabla X|^2}{X^2} X_t  - \int_\Omega \nabla \tau_1 \nabla X X_t  \\
            & \leq \frac{\delta_X}{2} \int_\Omega \frac{|\nabla X|^2}{X^2}\Delta X  + \frac{r}{2} \int_\Omega \frac{|\nabla X|^2}{X}  + c_1 \int_\Omega \big( |\nabla X| |\Delta X| + |\nabla X| + 1 \big)  \\
            & \leq \frac{\delta_X}{2} \int_\Omega \frac{|\nabla X|^2}{X^2}\Delta X  + c_1 \int_\Omega |\nabla X| |\Delta X|  + c_2 \int_\Omega \frac{|\nabla X|^2}{X}  + c_2.
        \end{aligned}
    \end{equation*}
    In the meantime, when we multiply the first equation of \eqref{eq: general model} by $-\Delta X (\frac{1}{X} + \tau_1)$, the right-hand side becomes
    \begin{equation}
        \label{eq: energy estimate on X proof 10}
        \begin{aligned}
            &\quad \ r \int_\Omega \left( -1 + \left( \frac{1}{K} - \tau_1 \right) X + \frac{\tau_1}{K} X^2 \right) \Delta X + \mu_1 \int_\Omega \Delta X Y  - \delta_X \int_\Omega |\Delta X|^2 \left( \frac{1}{X} + \tau_1 \right)   \\
            &= r \int_\Omega \left( \nabla \tau_1 X \nabla X + \left( \tau_1 - \frac{1}{K} \right) |\nabla X|^2 - \frac{2\tau_1}{K} X |\nabla X|^2 - \frac{1}{K} X^2 \nabla \tau_1 \nabla X \right)  \\
            &\quad \  - \mu_1 \int_\Omega \nabla X \nabla Y  - \delta_X \int_\Omega |\Delta X|^2 \left( \frac{1}{X} + \tau_1 \right)  \\
            &\leq c_3 \int_\Omega \left( |\nabla X| + |\nabla X|^2 \right)  - \mu_1 \int_\Omega \nabla X \nabla Y  - \delta_X \int_\Omega \frac{|\Delta X|^2}{X} \\
            &\leq c_4 \int_\Omega \frac{|\nabla X|^2}{X}  + c_4 - \mu_1 \int_\Omega \nabla X \nabla Y  - \delta_X \int_\Omega \frac{|\Delta X|^2}{X} .
        \end{aligned}
    \end{equation}
    Combining equations \eqref{eq: energy estimate on X proof 1}-\eqref{eq: energy estimate on X proof 10}, we obtain
    \begin{align*}
        &\quad \ \frac{1}{2} \frac{\mathrm{d}}{\mathrm{d}t} \int_\Omega \left( \frac{|\nabla X|^2}{X} + \tau_1 |\nabla X|^2 \right)  + \delta_X \int_\Omega \frac{|\Delta X|^2}{X}  \\&  \leq c_1 \int_\Omega |\nabla X| |\Delta X|  + (c_2 + c_4) \int_\Omega \frac{|\nabla X|^2}{X}  + (c_2 + c_4) - \mu_1 \int_\Omega \nabla X \nabla Y  + \frac{\delta_X}{2} \int_\Omega \frac{|\nabla X|^2}{X^2}\Delta X .
    \end{align*}
    Now we substitute the identity (Equation (3.8), \cite{jin2022global})
    \begin{equation*}
        \int_\Omega \frac{|\nabla X|^2}{X^2} \Delta X  = 2\int_\Omega \frac{|\Delta X|^2}{X}  + \int_{\partial \Omega} \frac{\partial |\nabla X|^2}{\partial \nu} \frac{1}{X} \mathrm{d}S - 2 \int_\Omega X |D^2 \ln X |^2 
    \end{equation*}
    into the previous equation and derive
    \begin{equation*}
        \begin{aligned}
            &\quad \ \frac{1}{2} \frac{\mathrm{d}}{\mathrm{d}t} \int_\Omega \left( \frac{|\nabla X|^2}{X} + \tau_1 |\nabla X|^2 \right)  + \delta_X \int_\Omega X |D^2 \ln X|^2   \\
            & \leq c_1 \int_\Omega |\nabla X| |\Delta X|  + (c_2 + c_4) \int_\Omega \frac{|\nabla X|^2}{X}  + (c_2 + c_4) - \mu_1 \int_\Omega \nabla X \nabla Y  + \frac{\delta_X}{2} \int_{\partial \Omega} \frac{\partial |\nabla X|^2}{\partial \nu} \frac{1}{X} \mathrm{d}S.
        \end{aligned}
    \end{equation*}
    By Lemma \ref{lem: D2 ln phi}, there exists a constant $c > 0$ so that
    \begin{equation}
        \label{eq: energy estimate on X proof 2}
        \begin{aligned}
            &\quad \ \frac{1}{2} \frac{\mathrm{d}}{\mathrm{d}t} \int_\Omega \left( \frac{|\nabla X|^2}{X} + \tau_1 |\nabla X|^2 \right)  + c \int_\Omega \left( \frac{|D^2 X|^2}{X} + \frac{|\nabla X|^4}{X^3} \right) \\ & \leq c_1 \int_\Omega |\nabla X| |\Delta X| + (c_2 + c_4) \int_\Omega \frac{|\nabla X|^2}{X}  + (c_2 + c_4) - \mu_1 \int_\Omega \nabla X \nabla Y  + \frac{\delta_X}{2} \int_{\partial \Omega} \frac{\partial |\nabla X|^2}{\partial \nu} \frac{1}{X} \mathrm{d}S.
        \end{aligned}
    \end{equation}
    Using Lemma \ref{lem: boundary term} and the following trace inequality (\cite{quittner2019superlinear}, Remark 52.9)
    \begin{equation*} 
        ||\psi||_{L^2 (\partial \Omega)} \leq \epsilon ||\nabla \psi||_{L^2 (\Omega)} + C_\epsilon ||\psi||_{L^2 (\Omega)}, \quad \forall \epsilon > 0,
    \end{equation*}
    we derive that
    \begin{equation}
        \label{eq: boundary term}
        \begin{aligned}
            \frac{\delta_X}{2} \int_{\partial \Omega} \frac{\partial |\nabla X|^2}{\partial \nu} \frac{1}{X} \mathrm{d}S
            &\leq \kappa \delta_X \int_{\partial \Omega} \frac{|\nabla X|^2}{X} \mathrm{d}S
            = 4\kappa \delta_X \left\| \nabla (X^{1/2}) \right\|_{L^2 (\partial \Omega)}^2 \\
            &\leq \frac{c}{16} \left\| \frac{D^2 X}{X^{1/2}} - \frac{2|\nabla X|^2}{X^{3/2}} \right\|_{L^2 (\Omega)}^2 + c_5 \int_{\Omega} \frac{|\nabla X|^2}{X}  \\
            & \leq \frac{c}{2} \int_\Omega \left( \frac{|D^2 X|^2}{X} + \frac{|\nabla X|^4}{X^3} \right)  + c_5 \int_{\Omega} \frac{|\nabla X|^2}{X} ,
        \end{aligned}
    \end{equation}
    which, alongside \eqref{eq: energy estimate on X proof 2}, gives
    \begin{equation*}
        \begin{aligned}
            &\quad \ \frac{1}{2} \frac{\mathrm{d}}{\mathrm{d}t} \int_\Omega \left( \frac{|\nabla X|^2}{X} + \tau_1 |\nabla X|^2 \right)  + \frac{c}{2} \int_\Omega \left( \frac{|D^2 X|^2}{X} + \frac{|\nabla X|^4}{X^3} \right)  \\
            &\leq c_1 \int_\Omega |\nabla X| |\Delta X|  + (c_2 + c_4 + c_5) \int_\Omega \frac{|\nabla X|^2}{X}  + (c_2 + c_4) - \mu_1 \int_\Omega \nabla X \nabla Y .
        \end{aligned}
    \end{equation*}
    Using Young’s inequality and the boundedness of $X$, one has
    \begin{align*}
        c_1 \int_\Omega |\nabla X| |\Delta X| 
        &\leq \frac{c}{4} \int_\Omega \frac{|\Delta X|^2}{X}  + \frac{c_1^2}{c} \int_\Omega |\nabla X|^2 X  \\
        &\leq \frac{c}{4} \int_\Omega \frac{|D^2 X|^2}{X}  + \frac{c}{8} \int_\Omega \frac{|\nabla X|^4}{X^3}  + \frac{2 c_1^4}{c^3} \overline{X}^5 |\Omega|
    \end{align*}
    and
    \begin{align*}
        &\quad \ \left( c_2 + c_4 + c_5 \right) \int_\Omega \frac{|\nabla X|^2}{X}  + \frac{1}{2} \int_\Omega \left( \frac{|\nabla X|^2}{X} + \tau_1 |\nabla X|^2 \right)  \leq c_6 \int_\Omega \frac{|\nabla X|^2}{X}  
        \ \leq \frac{c}{8} \int_\Omega \frac{|\nabla X|^4}{X^3}  + \frac{2 c_6^2}{c} \overline{X}|\Omega|.
    \end{align*}
    Therefore,
    \begin{equation*}
        \begin{aligned}
            &\quad \ \frac{1}{2} \left( \frac{\mathrm{d}}{\mathrm{d}t} + 1 \right) \int_\Omega \left( \frac{|\nabla X|^2}{X} + \tau_1 |\nabla X|^2 \right) + \frac{c}{4} \int_\Omega \left( \frac{|D^2 X|^2}{X} + \frac{|\nabla X|^4}{X^3} \right)  \leq - \mu_1 \int_\Omega \nabla X \nabla Y  + c_7,
        \end{aligned}
    \end{equation*}
    which immediately implies \eqref{eq: energy estimate on X}.
\end{proof}

\begin{lemma}
    \label{lem: energy estimate on Y}
    For any $\epsilon > 0$, the solution for equation \eqref{eq: general model} -- \eqref{eq: bc ic} satisfies
    \begin{equation}
        \label{eq: energy estimate on Y}
        \begin{aligned}
            & \quad \left( \frac{\mathrm{d}}{\mathrm{d}t} + 1 \right) \int_\Omega \frac{1}{\chi_1} \mathcal{H}(Y) + \frac{\delta_Y}{2} \int_\Omega \frac{1}{\chi_1} \frac{|\nabla Y|^2}{h(Y)}  \leq \epsilon \int_\Omega \frac{|\nabla X|^4}{X^3}  + \int_\Omega \nabla X \nabla Y  + C_3, \quad \forall t \in [0, T_{\max}).
        \end{aligned}
    \end{equation}
\end{lemma}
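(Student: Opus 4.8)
The plan is to use $\frac{1}{\chi_1}H(Y)$ as a test function for the second equation of \eqref{eq: general model}. This is admissible: for $t>0$ the solution $Y$ is positive and lies in $C^{2,1}$, $h\in C^1$ is positive on $(0,\infty)$, and $\chi_1\in C^2(\overline{\Omega};(0,\infty))$, while the energy $\int_\Omega\frac{1}{\chi_1}\mathcal{H}(Y)$ is finite (and continuous up to $t=0$) since $\mathcal{H}$ is bounded near $0$. Because $\mathcal{H}'=H$, testing yields $\frac{\mathrm{d}}{\mathrm{d}t}\int_\Omega\frac{1}{\chi_1}\mathcal{H}(Y)$ on the left, so the task is to estimate the contributions of the linear diffusion $\delta_Y\Delta Y$, the cross-diffusion $-\nabla\cdot(\chi_1 h(Y)\nabla X)$, and the reaction $e_1 f-g-d_1 Y$.

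First I would treat the diffusion and cross-diffusion. Integrating by parts in the diffusion term (the boundary term vanishes because $\partial_\nu Y=0$) and using $H'=1/h$ produces $-\delta_Y\int_\Omega\frac{1}{\chi_1}\frac{|\nabla Y|^2}{h(Y)}$ plus the heterogeneity remainder $\delta_Y\int_\Omega\frac{H(Y)}{\chi_1^2}\nabla\chi_1\cdot\nabla Y$. A second integration by parts in the cross-diffusion term (again the boundary term drops) gives exactly $\int_\Omega\nabla X\cdot\nabla Y$ — the factors $H'(Y)$ and $h(Y)$ cancel — plus the remainder $-\int_\Omega\frac{H(Y)h(Y)}{\chi_1}\nabla\chi_1\cdot\nabla X$. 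The term $\int_\Omega\nabla X\cdot\nabla Y$ is kept verbatim, being the coupling term designed to cancel against the corresponding one in Lemma \ref{lem: energy estimate on X}; all other pieces are absorbed. By Young's inequality, $\delta_Y\left|\int_\Omega\frac{H(Y)}{\chi_1^2}\nabla\chi_1\cdot\nabla Y\right|\le\eta\int_\Omega\frac{|\nabla Y|^2}{h(Y)}+C_\eta\int_\Omega h(Y)H(Y)^2$; and, writing $|H(Y)h(Y)|\,|\nabla X|=\left(|\nabla X|/X^{3/4}\right)\left(|H(Y)h(Y)|X^{3/4}\right)$, applying Young with exponents $4$ and $4/3$, and invoking $X\le\overline{X}$ from Lemma \ref{lem: X bounded}, $\left|\int_\Omega\frac{H(Y)h(Y)}{\chi_1}\nabla\chi_1\cdot\nabla X\right|\le\epsilon\int_\Omega\frac{|\nabla X|^4}{X^3}+C\int_\Omega|h(Y)H(Y)|^{4/3}$; this remainder is the sole source of the $\int_\Omega\frac{|\nabla X|^4}{X^3}$ term in \eqref{eq: energy estimate on Y}. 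The asymptotics of Table \ref{table: h properties} yield the pointwise bound $h(Y)H(Y)^2\le c\left(\mathcal{H}(Y)+|h(Y)H(Y)|^{4/3}+1\right)$, so both $\int_\Omega h(Y)H(Y)^2$ and $\int_\Omega|h(Y)H(Y)|^{4/3}$ are $\le\eta'\int_\Omega\frac{|\nabla Y|^2}{h(Y)}+C$ by Lemma \ref{lemma: G-N inequality on psi}, whose constant depends on $\|Y\|_{L^1}\le C_1$ via Lemma \ref{lemma: Y L1 bounded, Z L1 bounded}. Thus both remainders fit inside $\epsilon\int_\Omega\frac{|\nabla X|^4}{X^3}+(\text{small})\int_\Omega\frac{|\nabla Y|^2}{h(Y)}+C$.

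For the reaction contribution $\int_\Omega\frac{1}{\chi_1}H(Y)(e_1 f-g-d_1 Y)$, the sign of $H(Y)$ must be tracked since $H<0$ on $\{Y<1\}$. On $\{Y\ge1\}$ we have $H(Y)\ge0$, $e_1 f(X,Y,x)\le e_1\mu_1\overline{X}\,Y$ (as $X/(1+\tau_1 X)\le\overline{X}$), and $-g$, $-d_1Y$ have the favorable sign, so the integrand there is $\le\frac{e_1\mu_1\overline{X}}{\chi_{1m}}H(Y)Y$, whence this part is $\le c\int_\Omega Y|H(Y)|\le\eta''\int_\Omega\frac{|\nabla Y|^2}{h(Y)}+C$ by Lemma \ref{lemma: G-N inequality on psi}. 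On $\{Y<1\}$ the $e_1 f$-term has the favorable sign, while $\frac{1}{\chi_1}|H(Y)|Y\le c$ and $\frac{1}{\chi_1}|H(Y)|g\le c\,|H(Y)|\,YZ\le c'Z$ (using $g\le\mu_2 YZ$ and the boundedness of $|H(Y)|Y$ near $0$ from Table \ref{table: h properties}), so this part is $\le c'\|Z\|_{L^1}+c|\Omega|\le C$ by Lemma \ref{lemma: Y L1 bounded, Z L1 bounded}. Adding the ``$+1$'' contribution puts $\int_\Omega\frac{1}{\chi_1}\mathcal{H}(Y)\le\frac{1}{\chi_{1m}}\int_\Omega\mathcal{H}(Y)\le\eta'''\int_\Omega\frac{|\nabla Y|^2}{h(Y)}+C$ on the right, again by Lemma \ref{lemma: G-N inequality on psi}. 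Collecting all terms, choosing $\eta,\eta',\eta'',\eta'''$ so that their sum times $\chi_{1M}$ is at most $\delta_Y/2$ (using $\int_\Omega\frac{|\nabla Y|^2}{h(Y)}\le\chi_{1M}\int_\Omega\frac{1}{\chi_1}\frac{|\nabla Y|^2}{h(Y)}$) and keeping the prescribed $\epsilon\int_\Omega\frac{|\nabla X|^4}{X^3}$, I obtain \eqref{eq: energy estimate on Y} with $C_3=C_3(\epsilon)$.

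The main obstacle is the bookkeeping of the spatial-heterogeneity remainders. Since $\chi_1=\chi_1(x)$ is non-constant, the two integrations by parts unavoidably generate the $\nabla\chi_1$ terms, and the cross-diffusion one is exactly what forces $\int_\Omega\frac{|\nabla X|^4}{X^3}$ onto the right-hand side — so \eqref{eq: energy estimate on Y} must later be coupled with Lemma \ref{lem: energy estimate on X} — while showing that every other term can be absorbed into a small multiple of $\int_\Omega\frac{1}{\chi_1}\frac{|\nabla Y|^2}{h(Y)}$ plus a constant demands a careful case split at $Y=1$ and repeated use of the asymptotics of Table \ref{table: h properties} through Lemmas \ref{lemma: norms of h(psi)}--\ref{lemma: G-N inequality on psi}. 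A secondary delicate point is that one must use the \emph{sign} of $-g$ on $\{Y\ge1\}$ rather than any pointwise estimate of $Z$, so that only the $L^1$-bound of $Z$ is needed here.
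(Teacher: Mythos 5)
Your proof is correct and follows essentially the same route as the paper: test the second equation with $\frac{1}{\chi_1}H(Y)$, integrate by parts to isolate $-\delta_Y\int_\Omega\frac{1}{\chi_1}\frac{|\nabla Y|^2}{h(Y)}$ and the coupling term $\int_\Omega\nabla X\nabla Y$, generate $\epsilon\int_\Omega\frac{|\nabla X|^4}{X^3}$ from the cross-diffusion remainder via Young's inequality with exponents $4$ and $4/3$, and absorb everything else using Lemma \ref{lemma: G-N inequality on psi} together with the $L^1$ bounds on $Y$ and $Z$ and the bound $X\leq\overline{X}$. The only (harmless) deviation is in the heterogeneity remainder $-\delta_Y\int_\Omega H(Y)\,\nabla\!\left(\tfrac{1}{\chi_1}\right)\cdot\nabla Y$: the paper writes $H(Y)\nabla Y=\nabla\mathcal{H}(Y)$ and integrates by parts once more to obtain $\delta_Y\int_\Omega\Delta\!\left(\tfrac{1}{\chi_1}\right)\mathcal{H}(Y)$ (this is where the $C^2$ regularity in assumption ($A_\chi$) is used), whereas you apply Young's inequality and then a pointwise comparison $h(Y)H(Y)^2\lesssim \mathcal{H}(Y)+|h(Y)H(Y)|^{4/3}+1$, which does check out against Table \ref{table: h properties}; likewise your case split at $Y=1$ for the reaction terms is an equivalent, slightly more laborious version of the paper's global bound $\min_{z\geq 0}zH(z)>-\infty$.
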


\begin{proof}
Multiplying the second equation of \eqref{eq: general model} by $\frac{1}{\chi_1(x)} H(Y)$, we obtain
\begin{equation*}
    \begin{aligned}
        \frac{\mathrm{d}}{\mathrm{d}t} \int_\Omega \frac{1}{\chi_1} \mathcal{H}(Y) 
        &= \int_\Omega Y_t \frac{1}{\chi_1} H(Y)  \\
        &= \int_\Omega (e_1 f(X,Y,x) - g(Y,Z,x) - d_1 Y) \frac{1}{\chi_1} H(Y)  \\
        &\quad \ - \int_\Omega \nabla( \chi_1 h(Y) \nabla X) \frac{1}{\chi_1}H(Y)  + \delta_{Y} \int_\Omega \Delta Y \frac{1}{\chi_1}H(Y) .
    \end{aligned}
\end{equation*} 
Using integration by parts, the second term on the right-hand side becomes
\begin{equation*}
    \begin{aligned}
    -\int_\Omega \nabla( \chi_1 h(Y) \nabla X) \frac{1}{\chi_1}H(Y) 
    &= \int_\Omega \chi_1 h(Y) \nabla X \nabla \left( \frac{1}{\chi_1} H(Y) \right) \\
    & =  \int_\Omega \chi_1 h(Y) \nabla X \left( \nabla\left(\frac{1}{\chi_1}\right) H(Y) + \frac{1}{\chi_1}\frac{\nabla Y}{h(Y)} \right) \\
    &=\int_\Omega \chi_1 \nabla \left(\frac{1}{\chi_1}\right) h(Y)H(Y) \nabla X  + \int_\Omega \nabla X \nabla Y .
    \end{aligned}
\end{equation*}
The third term becomes
\begin{equation*}
    \begin{aligned}
    \delta_{Y} \int_\Omega \Delta Y \frac{1}{\chi_1}H(Y) 
    &= -\delta_Y \int_\Omega \nabla Y \nabla \left(\frac{1}{\chi_1} H(Y) \right)  \\
    &= -\delta_Y \int_\Omega \nabla Y \nabla \left(\frac{1}{\chi_1}\right) H(Y)  - \delta_Y \int_\Omega \frac{|\nabla Y|^2}{h(Y)}\frac{1}{\chi_1} \\
    &= - \delta_Y \int_\Omega \nabla \left(\frac{1}{\chi_1}\right) \nabla \mathcal{H}(Y)  - \delta_Y \int_\Omega \frac{|\nabla Y|^2}{h(Y)} \frac{1}{\chi_1} \\
    &= \delta_Y \int_\Omega \Delta\left(\frac{1}{\chi_1}\right) \mathcal{H}(Y)  - \delta_Y \int_\Omega \frac{|\nabla Y|^2}{h(Y)} \frac{1}{\chi_1} .
    \end{aligned}
\end{equation*}
Therefore,
\begin{equation}
\label{eq: proof: energy estimate on Y}
    \begin{aligned}
        \frac{\mathrm{d}}{\mathrm{d}t} \int_\Omega \frac{1}{\chi_1} \mathcal{H}(Y)  & = \int_\Omega (e_1 f(X,Y,x) - g(Y,Z,x) - d_1 Y) \frac{1}{\chi_1} H(Y)   + \int_\Omega \chi_1 \nabla\left(\frac{1}{\chi_1}\right) h(Y) H(Y) \nabla X  \\
        & \quad \ + \int_\Omega \nabla X \nabla Y   + \delta_Y \int_\Omega \Delta\left(\frac{1}{\chi_1}\right) \mathcal{H}(Y)  - \delta_Y \int_\Omega \frac{|\nabla Y|^2}{h(Y)} \frac{1}{\chi_1} .
    \end{aligned}
\end{equation}
By Table \ref{table: h properties}, $H$ is an increasing function and $z H(z) = 0$ at $z = 0$ and $1$. Therefore, $\min_{z \geq 0} z H(z) := -c_2 > -\infty$. It follows from assumption ($A_\chi$) and the boundedness of $Z$ \eqref{eq: Y, Z L1 bounded} that
\begin{equation*}
    -\int_\Omega g(Y, Z, x) \frac{1}{\chi_1} H(Y)  
    = \int_\Omega \frac{\mu_2 Z}{1 + \tau_2 Y} \frac{1}{\chi_1} (-Y H(Y))  
    \leq \frac{c_2 \mu_2}{\chi_{1m}} C_1.
\end{equation*}
Note that $\chi_1$, $\frac{1}{\chi_1}$, $\nabla \frac{1}{\chi_1}$, $\Delta \frac{1}{\chi_1}$ are bounded by assumption ($A_\chi$). Using Young's inequality and the fact $f(X, Y, x) \leq \mu_1 XY$, for any $\epsilon > 0$ we obtain
\begin{equation*}
    \begin{aligned}
        &\quad \ \left( \frac{\mathrm{d}}{\mathrm{d}t} + 1 \right) \int_\Omega \frac{1}{\chi_1} \mathcal{H}(Y) + \delta_Y \int_\Omega \frac{1}{\chi_1} \frac{|\nabla Y|^2}{h(Y)}  
        \\
        &  \leq -\int_\Omega g(Y,Z,x) \frac{1}{\chi_1}H(Y) + c_1 \int_\Omega (f(X,Y,x) + Y) |H(Y)|   + c_1 \int_\Omega |h(Y) H(Y) \nabla X|  + \int_\Omega \nabla X \nabla Y  + c_1 \int_\Omega \mathcal{H}(Y)  
        \\
        &  \leq \frac{c_2 \mu_2}{\chi_{1m}} C_1 + \epsilon \overline{X}^{-3} \int_\Omega |\nabla X|^4  + c_3 \int_\Omega \left( Y |H(Y)| + |h(Y) H(Y)|^\frac{4}{3} + \mathcal{H} (Y) \right) + \int_\Omega \nabla X \nabla Y  ,
    \end{aligned}
\end{equation*}
where $\overline{X}$ is the maximum of $X$ \eqref{eq: X bounded}. Now Lemma \ref{lemma: G-N inequality on psi} and boundedness of $||Y||_{L^1}$ \eqref{eq: Y, Z L1 bounded} imply
\begin{equation*}
    \begin{aligned}
        & \quad \ \left( \frac{\mathrm{d}}{\mathrm{d}t} + 1 \right) \int_\Omega \frac{1}{\chi_1} \mathcal{H}(Y) + \delta_Y \int_\Omega \frac{1}{\chi_1} \frac{|\nabla Y|^2}{h(Y)}  \\
        & \leq \epsilon \overline{X}^{-3} \int_\Omega |\nabla X|^4  + \frac{\delta_Y}{2 \chi_{1M}} \int_\Omega \frac{|\nabla Y|^2}{h(Y)}  + \int_\Omega \nabla X \nabla Y  + c_4 \\
        & \leq \epsilon \int_\Omega \frac{|\nabla X|^4}{X^3}  + \frac{\delta_Y}{2} \int_\Omega \frac{1}{\chi_1} \frac{|\nabla Y|^2}{h(Y)}  + \int_\Omega \nabla X \nabla Y  + c_4,
    \end{aligned}
\end{equation*}
which implies equation \eqref{eq: energy estimate on Y}.
\end{proof}

\begin{lemma}
    \label{lem: nabla X L2 Y L2-p2}
    The solution for equation \eqref{eq: general model} -- \eqref{eq: bc ic} satisfies
    \begin{equation}
    \label{eq: nabla X L2 Y L2-p2}
        ||\nabla X||_{L^2}, \ ||Y||_{L^{2 - \alpha}} \leq C_4, \quad \forall t \in [0, T_{\max}).
    \end{equation}
\end{lemma}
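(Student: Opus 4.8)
The plan is to eliminate the troublesome cross term $\int_\Omega \nabla X \nabla Y$ by forming a suitable linear combination of the two coupled energy estimates, thereby obtaining a closed differential inequality for a single functional, and then applying Grönwall's inequality.

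Concretely, I would set
\[
    E(t) := \int_\Omega \left( \frac{|\nabla X|^2}{X} + \tau_1 |\nabla X|^2 \right) + 2 \mu_1 \int_\Omega \frac{1}{\chi_1} \mathcal{H}(Y).
\]
This $E$ is nonnegative because $\mathcal{H}$ is a positive function, and $E(0) < \infty$: by Lemma \ref{th: positivity} and continuity, after the reduction made at the beginning of this section the initial datum satisfies $X_0 \geq c > 0$ on $\overline\Omega$ and $(X_0, Y_0, Z_0) \in [W^{2,\infty}(\Omega)]^3$, so $|\nabla X_0|^2 / X_0$ and $\mathcal{H}(Y_0)$ are bounded. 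Adding estimate \eqref{eq: energy estimate on X} of Lemma \ref{lem: energy estimate on X} to $2\mu_1$ times estimate \eqref{eq: energy estimate on Y} of Lemma \ref{lem: energy estimate on Y} (where $\epsilon > 0$ will be chosen below), the terms $-2\mu_1 \int_\Omega \nabla X \nabla Y$ and $+2\mu_1 \int_\Omega \nabla X \nabla Y$ cancel, and one is left with
\[
    \left( \frac{\mathrm{d}}{\mathrm{d}t} + 1 \right) E(t) + \nu \int_\Omega \left( \frac{|D^2 X|^2}{X} + \frac{|\nabla X|^4}{X^3} \right) + \mu_1 \delta_Y \int_\Omega \frac{1}{\chi_1} \frac{|\nabla Y|^2}{h(Y)} \leq 2 \mu_1 \epsilon \int_\Omega \frac{|\nabla X|^4}{X^3} + C_2 + 2\mu_1 C_3.
\]

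Next I would fix $\epsilon$ so small that $2\mu_1 \epsilon \leq \nu / 2$; then the $\int_\Omega |\nabla X|^4 / X^3$ term on the right is absorbed by the one on the left, and discarding the remaining nonnegative dissipation integrals gives the scalar inequality $\frac{\mathrm{d}}{\mathrm{d}t} E + E \leq C_5$ with $C_5 := C_2 + 2\mu_1 C_3$. Grönwall's inequality then yields $E(t) \leq \max\{ E(0), C_5 \}$ for all $t \in [0, T_{\max})$, a bound independent of $t$.

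Finally I would extract the two claimed bounds. Since $X \leq \overline X$ by Lemma \ref{lem: X bounded}, we have $\int_\Omega |\nabla X|^2 \leq \overline X \int_\Omega |\nabla X|^2 / X \leq \overline X \, E(t)$, so $\|\nabla X\|_{L^2} \leq C_4$. For $Y$, since $\chi_1 \leq \chi_{1M}$ we get $\int_\Omega \mathcal{H}(Y) \leq \chi_{1M} \int_\Omega \chi_1^{-1} \mathcal{H}(Y) \leq \frac{\chi_{1M}}{2\mu_1} E(t)$, and the lower estimate $C' \int_\Omega |\phi|^{2 - \alpha} \leq \int_\Omega \mathcal{H}(\phi)$ from Lemma \ref{lemma: norms of h(psi)} (with $p = 1$) gives $\|Y\|_{L^{2-\alpha}} \leq C_4$. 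I do not anticipate a genuine obstacle here — all the hard analysis was carried out in Lemmas \ref{lem: energy estimate on X} and \ref{lem: energy estimate on Y}; the only thing to watch is that the coefficient $2\mu_1$ used in forming $E$ is precisely the one that annihilates the cross term (which is why Lemma \ref{lem: energy estimate on X} was stated with the factor $2\mu_1$ up front), and that $C_5$, and hence $C_4$, depends only on the structural constants and not on $t$.
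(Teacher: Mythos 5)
Your proof is correct and follows essentially the same route as the paper: add the $X$-energy estimate to $2\mu_1$ times the $Y$-energy estimate so the cross terms cancel, choose $\epsilon$ small enough to absorb the quartic gradient term, apply Gr\"onwall to the functional $E(t)$, and extract the two norms from the resulting uniform bound. The only (minor, and in fact favourable) deviation is that you extract $\|\nabla X\|_{L^2}$ from the term $\int_\Omega |\nabla X|^2/X$ via $X \leq \overline{X}$, whereas the paper's write-up uses $\tau_{1m}\|\nabla X\|_{L^2}^2$ --- your variant is more robust, since assumption $(A_\tau)$ permits $\tau_1$ to vanish.
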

     
\begin{proof}
By adding up \eqref{eq: energy estimate on X} and \eqref{eq: energy estimate on Y}, we have
\begin{align*}
    & \quad \left( \frac{\mathrm{d}}{\mathrm{d}t} + 1 \right) \int_\Omega \left( \frac{|\nabla X|^2}{X} + \tau_1 |\nabla X|^2 + \frac{2 \mu_1}{\chi_1} \mathcal{H}(Y) \right) + \nu \int_\Omega \left( \frac{|D^2 X|^2}{X} + \frac{|\nabla X|^4}{X^3} \right) \leq 2 \mu_1 \epsilon \int_\Omega \frac{|\nabla X|^4}{X^3}  + C_2 + 2 \mu_1 C_3.
\end{align*}
Note that $\nu > 0$ is a fixed constant and $\epsilon > 0$ is arbitrary. Utilizing the Gr\"onwall inequality and choosing $0 < \epsilon \leq \frac{\nu}{2 \mu_1}$, we derive
\begin{equation*}
\begin{aligned}
    & \quad \ \int_\Omega \left( \frac{|\nabla X|^2}{X} + \tau_1 |\nabla X|^2 + \frac{2 \mu_1}{\chi_1} \mathcal{H}(Y) \right) \\ 
    & \leq c_1 e^{-t} + \int_0^t (C_2 + 2\mu_1 C_3) e^{-(t-s)} \mathrm{d}s\\
    & \leq c_1 + C_2 + 2\mu_1 C_3.
    \end{aligned}
\end{equation*}
By Lemma \ref{lemma: norms of h(psi)} and assumption ($A_\chi$), we have
\begin{equation*}
    \int_\Omega \frac{2\mu_1}{\chi_1} \mathcal{H} (Y)  
    \geq \frac{2\mu_1}{\chi_{1M}} \int_\Omega \mathcal{H} (Y)  
    \geq c_2 ||Y||_{L^{2 - \alpha}}^{2 - \alpha}.
\end{equation*}
It follows from assumption ($A_\tau$) that
\begin{equation*}
    \tau_{1m} ||\nabla X||_{L^2}^2 + c_2 ||Y||_{L^{2 - \alpha}}^{2 - \alpha}  
    \leq c_1 + C_2 + 2\mu_1 C_3,
\end{equation*}
which implies \eqref{eq: nabla X L2 Y L2-p2}.
\end{proof}

Denote $\eta^* = \frac{2}{n} - \frac{\alpha}{2 - \alpha}$, which is positive due to $\alpha < \frac{4}{n + 2}$, and fix a number $0 < \eta < \min \left\{ \frac{1}{n}, \eta^* \right\}$. Define an increasing sequence of numbers $\{ q_k \}$ by $q_0 = 2 - \alpha$ and $\frac{1}{q_k} = \frac{1}{q_{k - 1}} - \eta$ until $q_k > n$ for some $k = k^*$. In addition, we set $q_{k^* + 1} = \infty$. Using induction, we will prove the boundedness of the solution in $L^{q_k} (\Omega)$ for any $0 \leq k \leq k^* + 1$. 

\begin{lemma}
    \label{lemma: L^qk boundedness of nabla X}
    Suppose the solution for equation \eqref{eq: general model} -- \eqref{eq: bc ic} satisfies
    \begin{equation}
        \label{eq: assumption Y Lqk for nabla X}
        ||Y||_{L^{q_k}} \leq C_{5, k}, \quad \forall t \in [0, T_{\max})
    \end{equation}
    for some $0 \leq k \leq k^*$. Then for any $\overline{q}_{k + 1} \in [1, \infty]$ satisfying
    \begin{equation}
        \label{eq: bar q_k+1}
        \frac{1}{\overline{q}_{k + 1}} > \frac{1}{q_k} - \frac{1}{n},
    \end{equation}
    we have
    \begin{equation}
        \label{eq: L^qk boundedness of nabla X}
        ||\nabla X||_{L^{\overline{q}_{k + 1}}} \leq C_{5, k + 1}, \quad \forall t \in [0, T_{\max}).
    \end{equation}
    In particular, we can set $\overline{q}_{k + 1} = q_{k + 1}$.
\end{lemma}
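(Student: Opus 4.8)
The plan is to view the first equation of \eqref{eq: general model} as a semilinear heat equation whose nonlinear source is controlled in $L^{q_k}(\Omega)$ uniformly in time, and then read off the regularity of $\nabla X$ from the smoothing estimates of Lemma \ref{lem: L^p-L^q estimate}. Writing that equation in Duhamel form and taking the spatial gradient,
\[
\nabla X(\cdot,t) = \nabla e^{t\delta_X\Delta}X_0 + \int_0^t \nabla e^{(t-s)\delta_X\Delta}\big(r(X) - f(X,Y,x)\big)(\cdot,s)\,\mathrm{d}s .
\]
The source is harmless: Lemma \ref{lem: X bounded} gives $0\le X\le\overline X$, so $\|r(X)(\cdot,s)\|_{L^{q_k}}\le c$, while $f(X,Y,x)=\tfrac{\mu_1 XY}{1+\tau_1 X}\le\mu_1\overline X\,Y$ together with the hypothesis \eqref{eq: assumption Y Lqk for nabla X} yields $\|f(X,Y,x)(\cdot,s)\|_{L^{q_k}}\le\mu_1\overline X\,C_{5,k}$. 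Hence $s\mapsto\|(r(X)-f)(\cdot,s)\|_{L^{q_k}}$ is bounded on $[0,T_{\max})$ by a constant independent of $t$.

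Next I would estimate the Duhamel integrand by Lemma \ref{lem: L^p-L^q estimate}(ii) with exponents $q=q_k$ and $p=\overline q_{k+1}$ (assuming first $\overline q_{k+1}\ge q_k$, which is the relevant case; when $\overline q_{k+1}<q_k$ one deduces the estimate from the case $\overline q_{k+1}=q_k$ by H\"older's inequality on the bounded domain $\Omega$). This gives
\[
\big\|\nabla e^{(t-s)\delta_X\Delta}(r(X)-f)(\cdot,s)\big\|_{L^{\overline q_{k+1}}}\le C\big(1+(t-s)^{-\rho}\big)e^{-\delta_X\lambda(t-s)}\big\|(r(X)-f)(\cdot,s)\big\|_{L^{q_k}},
\]
where $\rho=\tfrac12+\tfrac n2\big(\tfrac1{q_k}-\tfrac1{\overline q_{k+1}}\big)$. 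The decisive point is that condition \eqref{eq: bar q_k+1}, i.e. $\tfrac1{q_k}-\tfrac1{\overline q_{k+1}}<\tfrac1n$, forces $\rho<1$, so the singularity at $s=t$ is integrable and $\int_0^t(1+(t-s)^{-\rho})e^{-\delta_X\lambda(t-s)}\,\mathrm{d}s$ is finite \emph{uniformly} in $t\in[0,T_{\max})$. Equivalently, the whole integral is $T_{\rho,\,\delta_X\lambda}$ applied to the bounded function $s\mapsto\|(r(X)-f)(\cdot,s)\|_{L^{q_k}}$, and one invokes Proposition \ref{prop: Young's inequality on L^p_epsilon} with $p=q=\infty$, whose hypothesis $\tfrac1p+\rho<\tfrac1q+1$ is exactly $\rho<1$. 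For the homogeneous term, $\|\nabla e^{t\delta_X\Delta}X_0\|_{L^{\overline q_{k+1}}}$ is bounded uniformly by $c\,\|X_0\|_{W^{1,\infty}}$ via Lemma \ref{lem: L^p-L^q estimate}(i) with $q=p=\overline q_{k+1}$ for $\overline q_{k+1}<\infty$ (and by a limiting, or direct maximum-principle, argument when $\overline q_{k+1}=\infty$), which is finite by assumption $(A_I)$. Adding the two contributions gives \eqref{eq: L^qk boundedness of nabla X}; since $q_{k+1}$ is defined by $\tfrac1{q_{k+1}}=\tfrac1{q_k}-\eta$ with $\eta<\tfrac1n$, the choice $\overline q_{k+1}=q_{k+1}$ is admissible, which is the ``in particular'' claim.

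The argument is essentially a single clean application of parabolic smoothing rather than an iteration, and the only delicate points are bookkeeping: one must track that every constant coming out of Lemma \ref{lem: L^p-L^q estimate} is independent of $t$ (this is where the already-established uniform $L^1$ bound on $Y$, $Z$ and the bound $X\le\overline X$ feed in, through the source estimate), and one should be mildly careful at the endpoint $\overline q_{k+1}=\infty$, where part (i) of Lemma \ref{lem: L^p-L^q estimate} is not directly available for the homogeneous term and one argues by $p\to\infty$ or estimates $|\nabla e^{t\delta_X\Delta}X_0|$ directly. The ``main obstacle'', such as it is, amounts to no more than checking that $\rho<1$ is precisely the content of hypothesis \eqref{eq: bar q_k+1}.
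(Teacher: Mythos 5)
Your proof is correct and follows essentially the same route as the paper's: Duhamel's formula for the first equation, a uniform-in-time $L^{q_k}$ bound on the source $r(X)-f$ coming from $X\le\overline X$ and hypothesis \eqref{eq: assumption Y Lqk for nabla X}, and the gradient smoothing estimates of Lemma \ref{lem: L^p-L^q estimate}, with condition \eqref{eq: bar q_k+1} used exactly to keep the kernel singularity integrable. If anything you are slightly more careful than the paper at the edge cases ($\overline q_{k+1}<q_k$ via H\"older, and the homogeneous term when $\overline q_{k+1}=\infty$, where part (i) of Lemma \ref{lem: L^p-L^q estimate} is stated only for $p<\infty$), which the paper's proof passes over silently.
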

 
\begin{proof}
    Apply the variation of constants formula to the first equation in \eqref{eq: general model} and obtain
    \begin{equation}
        \label{eq: variation of constants X}
        \begin{aligned}
            X(t) & = e^{\delta_X t \Delta} X_0 + \int_0^t e^{\delta_X (t-s)\Delta}\left(r(X) - f(X,Y,x) \right) \mathrm{d}s.
        \end{aligned}
    \end{equation}
    Taking the gradient at each side, we get
    \begin{equation*}
        \begin{aligned}
            \nabla X(t) & = \nabla e^{\delta_X t \Delta} X_0 + \int_0^t \nabla e^{\delta_X (t-s)\Delta} \left(r(X) - f(X,Y,x) \right) \mathrm{d}s.
        \end{aligned}
    \end{equation*}
    Applying \eqref{eq: lemma Lp Lq estimate 1} and \eqref{eq: lemma Lp Lq estimate 2} and setting
    \begin{equation}
        \label{eq: rho}
        \rho = \frac{n}{2}\left(\frac{1}{q_k}-\frac{1}{\overline{q}_{k+1}}\right) \in \Big[ 0, \frac{1}{2} \Big),
    \end{equation}
    we derive
    \begin{equation*}
    \begin{aligned}
        ||\nabla X||_{L^{{\overline{q}}_{k + 1}}} 
        & \leq c_1 e^{-\delta_X \lambda t}||\nabla X_0||_{L^{{\overline{q}}_{k + 1}}} + c_1 \int_0^t \left( 1 + (t-s)^{-\frac{1}{2} - \rho} \right) e^{- \delta_X \lambda (t-s)} \Big( || r(X)||_{L^{q_k}} + || f(X,Y,x) ||_{L^{q_k}} \Big) \mathrm{d}s
        \end{aligned}
    \end{equation*}
    for some $c_1 > 0$. By the boundedness of $\| X \|_{L^{\infty}}$ \eqref{eq: X bounded} and $||Y||_{L^{q_k}}$ \eqref{eq: assumption Y Lqk for nabla X} we obtain
    \begin{equation*}
            || r(X) ||_{L^{q_k}} \leq \overline{r(X)} |\Omega|^{1/q_k}, 
    \end{equation*} 
            and
    \begin{equation*}
            || f(X,Y,x) ||_{L^{q_k}} \leq \mu_1 ||X||_{L^\infty} ||Y||_{L^{q_k}} \leq \mu_1 \overline{X} C_{5, k}.
    \end{equation*}
    Therefore,
    \begin{equation*}
        \begin{aligned}
            ||\nabla X||_{L^{\overline{q}_{k+1}}} 
            & \leq c_1 e^{-\delta_X \lambda t} ||\nabla X_0||_{L^{q_{k+1}}} + c_1 \int_0^t \left( 1 + (t-s)^{-\frac{1}{2} - \rho} \right) e^{- \delta_X \lambda (t-s)} \left( \overline{r(X)} |\Omega|^{1/q_k} + \mu_1 \overline{X} C_{5, k} \right) \mathrm{d}s \\
            & \leq c_1 ||\nabla X_0||_{L^{q_{k+1}}} + c_1 c_2 \left( \overline{r(X)} |\Omega|^{1/q_k} + \mu_1 \overline{X} C_{5, k} \right)  =: C_{5, k+1}.
        \end{aligned}
    \end{equation*}
\end{proof}

\begin{lemma}
    \label{lem: L^qk boundedness of Y}
    Suppose the solution for equation \eqref{eq: general model} -- \eqref{eq: bc ic} satisfies
    \begin{equation}
        \label{eq: assumption L^qk boundedness of Y}
        ||Y||_{L^{q_k}} \leq C_{6, k}, \quad \forall t \in [0, T_{\max})
    \end{equation}
    for some $0 \leq k \leq k^*$, then
    \begin{equation}
        \label{eq: L^qk boundedness of Y}
        ||Y||_{L^{q_{k + 1}}} \leq C_{6, k + 1}, \quad \forall t \in [0, T_{\max}).
    \end{equation}
\end{lemma}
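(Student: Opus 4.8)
The plan is to bootstrap the hypothesis \eqref{eq: assumption L^qk boundedness of Y} through a Duhamel representation for the Neumann heat semigroup $\{e^{\delta_Y t\Delta}\}_{t\geq 0}$, using the smoothing estimates of Lemma~\ref{lem: L^p-L^q estimate}, the $L^{q_{k+1}}$-bound on $\nabla X$ that Lemma~\ref{lemma: L^qk boundedness of nabla X} produces from \eqref{eq: assumption L^qk boundedness of Y}, and the comparison of $h(Y)$ with powers of $Y$ from Lemma~\ref{lemma: norms of h(psi)}. The first move is to discard the predation loss $g$, for which no bound is available at this stage (only $\|Z\|_{L^1}$, from Lemma~\ref{lemma: Y L1 bounded, Z L1 bounded}). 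Since $g(Y,Z,x)\geq 0$ and $f(X,Y,x)\leq\mu_1\overline{X}\,Y$ by Lemma~\ref{lem: X bounded}, the $Y$-equation of \eqref{eq: general model} gives
\[
  \partial_t Y\leq\delta_Y\Delta Y-\nabla\!\cdot\!\bigl(\chi_1 h(Y)\nabla X\bigr)+e_1\mu_1\overline{X}\,Y,\qquad \partial_\nu Y=0.
\]
The Neumann heat semigroup is order preserving and $Y\geq 0$, so Duhamel's formula yields the pointwise bound
\[
  0\leq Y(t)\leq e^{\delta_Y t\Delta}Y_0-\int_0^t e^{\delta_Y(t-s)\Delta}\nabla\!\cdot\!\bigl(\chi_1 h(Y)\nabla X\bigr)(s)\,\mathrm{d}s+e_1\mu_1\overline{X}\int_0^t e^{\delta_Y(t-s)\Delta}Y(s)\,\mathrm{d}s,
\]
and it suffices to bound the $L^{q_{k+1}}$-norm of the three right-hand terms uniformly in $t\in[0,T_{\max})$.

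For the initial datum, $Y_0\in L^\infty(\Omega)$ by assumption $(A_I)$ and $e^{\delta_Y t\Delta}$ contracts $L^{q_{k+1}}$, so that term is bounded by $|\Omega|^{1/q_{k+1}}\|Y_0\|_{L^\infty}$. For the reaction integral I would apply Lemma~\ref{lem: L^p-L^q estimate}(iii) with exponents $q_k\to q_{k+1}$: the temporal kernel is $C\bigl(1+(t-s)^{-\frac n2(\frac1{q_k}-\frac1{q_{k+1}})}\bigr)e^{-\delta_Y\lambda(t-s)}$, and its singularity exponent equals $\tfrac{n\eta}{2}<\tfrac12$ because $\eta<\tfrac1n$ (and, when $q_{k+1}=\infty$, it equals $\tfrac{n}{2q_{k^*}}<\tfrac12$ since $q_{k^*}>n$); hence the $s$-integral converges uniformly in $t$, and together with \eqref{eq: assumption L^qk boundedness of Y} this term is bounded.

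The main work is the cross-diffusion term. Using Lemma~\ref{lem: L^p-L^q estimate}(iv) (after the usual density approximation in $(C_0^\infty(\Omega))^n$) with a source exponent $q\leq q_{k+1}$,
\[
  \bigl\|e^{\delta_Y(t-s)\Delta}\nabla\!\cdot\!(\chi_1 h(Y)\nabla X)(s)\bigr\|_{L^{q_{k+1}}}\leq C\bigl(1+(t-s)^{-\frac12-\frac n2(\frac1q-\frac1{q_{k+1}})}\bigr)e^{-\delta_Y\lambda(t-s)}\,\|\chi_1 h(Y)\nabla X\|_{L^q}.
\]
The factor $\|\chi_1 h(Y)\nabla X\|_{L^q}$ is bounded uniformly in $t$: $\chi_1$ is bounded by $(A_\chi)$; by Hölder's inequality and Lemma~\ref{lemma: norms of h(psi)} one has $\|h(Y)\|_{L^{q_k/\alpha}}\leq C(\|Y\|_{L^{q_k}}^\alpha+1)$, controlled by \eqref{eq: assumption L^qk boundedness of Y}; and Lemma~\ref{lemma: L^qk boundedness of nabla X} bounds $\|\nabla X\|_{L^b}$ for every $b$ with $1/b>1/q_k-1/n$. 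Hence $\|\chi_1 h(Y)\nabla X\|_{L^q}$ is finite for any $q$ with $1/q>\tfrac{1+\alpha}{q_k}-\tfrac1n$, and I would take $q\leq q_{k+1}$ with $1/q$ just above that threshold. What remains is temporal integrability, $\tfrac12+\tfrac n2\bigl(\tfrac1q-\tfrac1{q_{k+1}}\bigr)<1$, which in the worst case reduces exactly to $\tfrac{\alpha}{q_k}+\eta<\tfrac2n$. This is where the construction pays off: $q_k\geq q_0=2-\alpha$ forces $\tfrac{\alpha}{q_k}\leq\tfrac{\alpha}{2-\alpha}$, and $\eta<\eta^*=\tfrac2n-\tfrac{\alpha}{2-\alpha}$ closes the gap. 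So this $s$-integral is finite uniformly in $t$ as well, and summing the three estimates gives \eqref{eq: L^qk boundedness of Y} with a constant $C_{6,k+1}$ independent of $t$.

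The hard part is the balancing act in the cross-diffusion estimate: $q$ must be small enough that $\|\chi_1 h(Y)\nabla X\|_{L^q}$ is controlled by the bounds already in hand for $Y$ and for $\nabla X$, and simultaneously large enough that the smoothing in Lemma~\ref{lem: L^p-L^q estimate}(iv) has a time-integrable singularity. The existence of such a $q$ is precisely the inequality $\alpha/q_k+\eta<2/n$, which is exactly why the step size $\eta$ was chosen below $\eta^*$ and the sequence started at $q_0=2-\alpha$; everything else is routine bookkeeping with Young's convolution inequality and Hölder's inequality.
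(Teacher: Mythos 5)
Your proposal is correct and follows essentially the same route as the paper's proof: a Duhamel representation for $Y$ in which the nonnegative loss term $g$ is discarded by order-preservation of the semigroup, the reaction term is handled with Lemma \ref{lem: L^p-L^q estimate}(iii) and the hypothesis \eqref{eq: assumption L^qk boundedness of Y}, and the cross-diffusion term is split by H\"older's inequality into $\|h(Y)\|_{L^{q_k/\alpha}}$ (via Lemma \ref{lemma: norms of h(psi)}) and $\|\nabla X\|_{L^{\overline{q}_{k+1}}}$ (via Lemma \ref{lemma: L^qk boundedness of nabla X}), with the admissible window for the intermediate exponent $q$ closing precisely because $\alpha/q_k+\eta<2/n$. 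Your identification of this exponent inequality as the crux, including the roles of $q_0=2-\alpha$ and $\eta<\eta^*$, matches the paper's computation \eqref{eq: proof q_k+1 bar satisfies condition} exactly.
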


\begin{proof}
    We consider $k < k^*$ first. Applying the variation of constants formula to the second equation in \eqref{eq: general model}, we obtain
    \begin{equation}
    \label{eq: variation of constants Y}
    \begin{aligned}
        Y(t) &= e^{(\delta_{Y}\Delta -d_1)t}Y_{0} + \int_0^t e^{(\delta_{Y}\Delta - d_1) (t-s)} \left(e_1f(X,Y,x) - g(Y,Z,x) -\nabla \left(\chi_1(x) h(Y)\nabla X\right)\right) \mathrm{d}s \\
        & \leq e^{(\delta_{Y}\Delta -d_1)t}Y_{0} + \int_0^t e^{(\delta_{Y}\Delta - d_1) (t-s)} \left(e_1f(X,Y,x) -\nabla (\chi_1(x) h(Y)\nabla X\right)) \, \mathrm{d}s.
    \end{aligned}
    \end{equation}
    It follows that
    \begin{equation}
        \label{eq: proof 0 L^qk boundedness of Y}
        \begin{aligned}
            ||Y||_{L^{q_{k+1}}}
            &\leq \left\| e^{(\delta_{Y}\Delta -d_1)t} Y_0 \right\|_{L^{q_{k + 1}}} + \int_0^t \Big( e_1 \left\| e^{(\delta_{Y}\Delta - d_1) (t-s)} f(X,Y,x) \right\|_{L^{q_{k + 1}}} \\
            &\quad \ + \left\| e^{(\delta_{Y} \Delta - d_1) (t-s)} \nabla (\chi_1(x) h(Y)\nabla X) \right\|_{L^{q_{k+1}}} \Big) \mathrm{d}s.
        \end{aligned}
    \end{equation}
    Equation \eqref{eq: lemma Lp Lq estimate 3} implies
    \begin{equation}
        \label{eq: proof 1 L^qk boundedness of Y}
        \left\| e^{(\delta_{Y}\Delta -d_1)t} Y_0 \right\|_{L^{q_{k + 1}}}
        \leq c_1 e^{-(\delta_{Y} \lambda + d_1) t} ||Y_0||_{L^{q_{k+1}}},
    \end{equation}
    and
    \begin{equation*}
        \left\| e^{(\delta_{Y}\Delta - d_1) (t-s)} f(X,Y,x) \right\|_{L^{q_{k + 1}}}
        \leq c_2 \left( 1 + (t-s)^{-\rho_1} \right)e^{-(\delta_{Y} \lambda + d_1) (t-s)} ||f(X,Y,x)||_{L^{q_k}},
    \end{equation*}
    where
    \begin{equation}
        \label{eq: rho_1}
        0 < \rho_1 = \frac{n}{2} \left( \frac{1}{q_k} - \frac{1}{q_{k+1}} \right) = \frac{n}{2} \eta < \frac{1}{2}.
    \end{equation}
    By assumption \eqref{eq: assumption L^qk boundedness of Y}, we get
    \begin{equation*}
        ||f(X,Y,x)||_{L^{q_k}} \leq \mu_1 \overline{X} C_{6, k},
    \end{equation*}
    and
    \begin{equation}
        \label{eq: proof 2 L^qk boundedness of Y}
        \left\| e^{(\delta_{Y}\Delta - d_1) (t-s)} f(X,Y,x) \right\|_{L^{q_{k + 1}}}
        \leq \mu_1 \overline{X} C_{6, k} \left( 1 + (t-s)^{-\rho_1} \right)e^{-(\delta_{Y} \lambda + d_1) (t-s)}.
    \end{equation}
    Choose a number $q \leq q_{k+1}$ such that
    \begin{equation}
        \label{eq: proof 1 q_k+1}
        \frac{1}{q_{k+1}} + \frac{1}{n} + \eta - \eta^* < \frac{1}{q} < \frac{1}{q_{k+1}} + \frac{1}{n}.
    \end{equation}
    By \eqref{eq: lemma Lp Lq estimate 4},
    \begin{equation}
        \label{eq: proof 3 L^qk boundedness of Y}
        \begin{aligned}
            &\quad \ \left\| e^{(\delta_{Y} \Delta - d_1) (t-s)} \nabla (\chi_1(x) h(Y)\nabla X) \right\|_{L^{q_{k+1}}}
           \leq c_3 \left(1+ (t-s)^{- \frac{1}{2} - \rho_2} \right)e^{-(\delta_{Y} \lambda + d_1) (t-s)}||\chi_1(x) h(Y)\nabla X||_{L^q},
        \end{aligned}
    \end{equation}
    where
    \begin{equation}
        \label{eq: rho_2}
        0 \leq \rho_2 = \frac{n}{2} \left( \frac{1}{q} - \frac{1}{q_{k+1}} \right) < \frac{1}{2}.
    \end{equation}
    Let $\overline{q}_{k+1}$ satisfy
    \begin{equation}
    \label{eq: proof 2 bar_q_k+1}
        \frac{1}{q} = \frac{1}{\overline{q}_{k+1}} + \frac{\alpha}{q_k}.
    \end{equation}
    Then, on the one hand, we obtain by assumption ($A_\chi$), H\"older's inequality and Lemma \ref{lemma: norms of h(psi)} that
    \begin{equation}
        \label{eq: proof 4 L^qk boundedness of Y}
        \begin{aligned}
            ||\chi_1(x) h(Y) \nabla X||_{L^q} 
            & \leq \chi_{1M} \| \nabla X \|_{L^{\overline{q}_{k+1}}} \left\| h(Y) \right\|_{L^{q_k / \alpha}} \leq c_4 \chi_{1M} \| \nabla X \|_{L^{\overline{q}_{k+1}}} \left( \| Y \|_{L^{q_k}}^{q_k} + 1 \right)^{\alpha / q_k}.
        \end{aligned}
    \end{equation}
    On the other hand, together with \eqref{eq: proof 2 bar_q_k+1}, definition of $\eta^*$ and the fact $q_k \geq q_0 = 2 - \alpha$, we get
    \begin{equation}
        \label{eq: proof q_k+1 bar satisfies condition}
        \frac{1}{\overline{q}_{k+1}}
        > \frac{1}{q_{k+1}} + \frac{1}{n} - \eta - \eta^* - \frac{\alpha}{q_k}
        = \frac{1}{q_k} - \frac{1}{n} + \frac{\alpha}{2 - \alpha} - \frac{\alpha}{q_k}
        \geq \frac{1}{q_k} - \frac{1}{n}.
    \end{equation}
    That is, $\overline{q}_{k+1}$ satisfies condition \eqref{eq: bar q_k+1}. Now, estimates of $||\nabla X||_{L^{\overline{q}_{k+1}}}$ ( Lemma \ref{lemma: L^qk boundedness of nabla X}) and $||Y||_{L^{q_k}}$ \eqref{eq: assumption L^qk boundedness of Y} imply
    \begin{equation}
        \label{eq: proof 6 L^qk boundedness of Y}
        ||\chi_1(x) h(Y) \nabla X||_{L^q} 
        \leq c_4 \chi_{1M} C_{5} \left( C_{6, k}^{q_k} + 1 \right)^{\alpha / q_k} = c_5.
    \end{equation}
    By equations \eqref{eq: proof 3 L^qk boundedness of Y} and \eqref{eq: proof 6 L^qk boundedness of Y} we get
    \begin{equation}
        \label{eq: proof 5 L^qk boundedness of Y}
        \begin{aligned}
            \left\| e^{(\delta_{Y} \Delta - d_1) (t-s)} \nabla (\chi_1(x) h(Y)\nabla X) \right\|_{L^{q_{k+1}}}
            \leq c_3 c_5 \left(1+ (t-s)^{- \frac{1}{2} - \rho_2} \right)e^{-(\delta_{Y} \lambda + d_1) (t-s)}.
        \end{aligned}
    \end{equation}
Combining equations \eqref{eq: proof 0 L^qk boundedness of Y}, \eqref{eq: proof 1 L^qk boundedness of Y}, \eqref{eq: proof 2 L^qk boundedness of Y}, \eqref{eq: proof 5 L^qk boundedness of Y} and noting that $\rho_1, \rho_2 < \frac{1}{2}$, we obtain
\begin{equation*}
    \begin{aligned}
        ||Y(t)||_{L^{q_{k+1}}}
        & \leq c_1 e^{-(\delta_{Y} \lambda + d_1) t} ||Y_{0}||_{L^{q_{k+1}}} \\
        & \quad + e_1 \mu_1 \overline{X} C_{6, k} \int_0^t \left( 1 + (t-s)^{-\rho_1} \right)e^{-(\delta_{Y} \lambda + d_1) (t-s)} \mathrm{d}s \\
        & \quad + c_3 c_5 \int_0^t \left(1+ (t-s)^{- \frac{1}{2} - \rho_2} \right)e^{-(\delta_{Y} \lambda + d_1) (t-s)} \mathrm{d}s =: C_{6, k+1}.
    \end{aligned}
\end{equation*}
When $k = k^*$, we set $q = q_{k^*} > n$, then the corresponding $\rho_1$ and $\rho_2$ are still less than $\frac{1}{2}$. Besides, we set $\overline{q}_{k^*+1} = \infty$, which satisfies \eqref{eq: bar q_k+1}. Now we can repeat the rest of the proof, replacing only the estimate \eqref{eq: proof 4 L^qk boundedness of Y} by
\begin{align*}
    ||\chi_1(x) h(Y) \nabla X||_{L^q} 
    & \leq \chi_{1M} \| \nabla X \|_{L^{\overline{q}_{k^* + 1}}} \left\| h(Y) \right\|_{L^{q_{k^*}}} \\
    & \leq \chi_{1M} |\Omega|^{(1 - \alpha) / q_{k^*}} \| \nabla X \|_{L^{\overline{q}_{k^* + 1}}} \left\| h(Y) \right\|_{L^{q_{k^*} / \alpha}} \\
    & \leq c_4 \chi_{1M} |\Omega|^{(1 - \alpha) / q_{k^*}} \| \nabla X \|_{L^{\overline{q}_{k^* + 1}}} \left( \| Y \|_{L^{q_{k^*}}}^{q_{k^*}} + 1 \right)^{\alpha / q_{k^*}}.
\end{align*}
\end{proof}
Taking $k = k^*$ in previous two estimates, we immediately obtain the following corollary.

\begin{corollary}
    \label{cor: L^inf boundedness of Y nabla X}
    The solution for equation \eqref{eq: general model} -- \eqref{eq: bc ic} satisfies
    \begin{equation}
        \label{eq: L^inf boundedness of Y nabla X}
        ||Y||_{L^\infty}, \ ||\nabla X||_{L^\infty} \leq C_7, \quad \forall t \in [0, T_{\max}).
    \end{equation}
\end{corollary}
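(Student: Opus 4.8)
The proof is a short finite induction that chains together the two preceding lemmas. The plan is to establish, by induction on $k$, the claim
\begin{equation*}
    \|Y(\cdot, t)\|_{L^{q_k}} \leq C_{6, k}, \qquad \forall t \in [0, T_{\max}),
\end{equation*}
for every $0 \leq k \leq k^* + 1$. The base case $k = 0$ is precisely the bound $\|Y\|_{L^{2 - \alpha}} \leq C_4$ furnished by Lemma \ref{lem: nabla X L2 Y L2-p2}, since $q_0 = 2 - \alpha$. For the inductive step, assume the bound at level $k$ for some $0 \leq k \leq k^*$. Then Lemma \ref{lemma: L^qk boundedness of nabla X} applied with this $k$ controls $\|\nabla X\|_{L^{\overline{q}_{k+1}}}$ for the relevant auxiliary exponent (in particular for $\overline{q}_{k+1} = q_{k+1}$), and Lemma \ref{lem: L^qk boundedness of Y} then upgrades the bound to $\|Y\|_{L^{q_{k+1}}} \leq C_{6, k+1}$. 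In fact this chaining of the two lemmas is already internal to the statement and proof of Lemma \ref{lem: L^qk boundedness of Y}, so one only needs to quote that lemma at each step.

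Next I would check that the induction terminates in finitely many steps. Since $\frac{1}{q_k} = \frac{1}{q_0} - k\eta$ with $\eta > 0$ fixed, the sequence $\{1/q_k\}$ decreases linearly, so after finitely many steps $1/q_k < 1/n$, i.e.\ $q_k > n$; this defines the index $k^*$, after which we set $q_{k^*+1} = \infty$. Hence the induction reaches $k = k^* + 1$ in finitely many steps, and at that level it yields $\|Y\|_{L^\infty} \leq C_{6, k^*+1}$, which is the first half of \eqref{eq: L^inf boundedness of Y nabla X}. One point deserving care is the terminal step $k = k^*$: there the passage is directly to the $L^\infty$ exponent, handled inside the proof of Lemma \ref{lem: L^qk boundedness of Y} by taking $q = q_{k^*}$ and $\overline{q}_{k^*+1} = \infty$, and one must confirm that the exponent conditions ($\rho, \rho_1, \rho_2 < \tfrac12$, the choice $0 < \eta < \min\{1/n, \eta^*\}$, and \eqref{eq: bar q_k+1}) all remain valid in that endpoint case; they do, thanks to $q_{k^*} > n$ and the slack built into $\eta^* = \frac{2}{n} - \frac{\alpha}{2-\alpha} > 0$.

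Finally, for $\|\nabla X\|_{L^\infty}$, I would apply Lemma \ref{lemma: L^qk boundedness of nabla X} once more with $k = k^*$ and $\overline{q}_{k^*+1} = \infty$. Condition \eqref{eq: bar q_k+1} then reads $0 > \frac{1}{q_{k^*}} - \frac{1}{n}$, which holds because $q_{k^*} > n$ by construction; combined with the $L^\infty$ bound on $Y$ just obtained, the lemma delivers $\|\nabla X\|_{L^\infty} \leq C_{5, k^*+1}$. Setting $C_7$ to be the larger of the two resulting constants completes the proof. There is no genuine obstacle at this stage: the substantive analytic work (the heat-kernel $L^p$--$L^q$ estimates of Lemma \ref{lem: L^p-L^q estimate} and the $h$-norm inequalities of Lemma \ref{lemma: norms of h(psi)}) has already been expended in proving Lemmas \ref{lemma: L^qk boundedness of nabla X} and \ref{lem: L^qk boundedness of Y}; the only thing to watch here is the exponent bookkeeping at the final step where one jumps to $L^\infty$.
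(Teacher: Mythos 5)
Your proposal is correct and follows exactly the paper's route: the paper obtains the corollary by iterating Lemmas \ref{lemma: L^qk boundedness of nabla X} and \ref{lem: L^qk boundedness of Y} from the base case $q_0 = 2-\alpha$ of Lemma \ref{lem: nabla X L2 Y L2-p2} and taking $k = k^*$ (with $\overline{q}_{k^*+1}=\infty$) in both. Your write-up merely makes explicit the finite induction and the endpoint exponent checks that the paper leaves implicit.
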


\subsection{$L^\infty$ boundedness of $Z$}

\begin{lemma}
    \label{lem: Delta X Lp Lq_eps bounded}
    For all $1 < p, q < \infty$ and $\epsilon > 0$, the solution for equation \eqref{eq: general model} -- \eqref{eq: bc ic} satisfies
    \begin{equation}
        \label{eq: Delta X Lp Lq_eps bounded}
        ||\Delta X||_{L^p (\Omega) L^q_\epsilon (0, t)} \leq C_8, \quad \forall t \in [0, T_{\max}).
    \end{equation}
\end{lemma}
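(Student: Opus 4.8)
The plan is to read the first equation of \eqref{eq: general model} as a scalar inhomogeneous heat equation with an $L^\infty$-bounded source and to apply maximal mixed-norm parabolic regularity, using the exponential weight built into $L^q_\epsilon$ to make the maximal regularity constant uniform in the time horizon $t$ (ordinary maximal regularity on $(0,t)$ gives a constant growing in $t$, which would be useless for global existence). The source is under control: by Lemma \ref{lem: X bounded} we have $0\le X\le\overline X$, and by Corollary \ref{cor: L^inf boundedness of Y nabla X} we have $\|Y(\cdot,t)\|_{L^\infty}\le C_7$ for all $t\in[0,T_{\max})$; since $f(X,Y,x)\le\mu_1 XY$ and $r$ is a fixed quadratic, the function $G:=X+r(X)-f(X,Y,x)$ satisfies $\|G\|_{L^\infty(\Omega\times(0,T_{\max}))}\le C$, hence $\sup_{s}\|G(\cdot,s)\|_{L^p(\Omega)}\le C|\Omega|^{1/p}$ for every $p\in(1,\infty)$. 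Combining this with Proposition \ref{prop: comparison of different norms} (used with time exponent $\infty$) yields $\|G\|_{L^p(\Omega)L^q_\epsilon(0,t)}\le C$, uniformly in $t\in[0,T_{\max})$, for all $p,q\in(1,\infty)$ and $\epsilon>0$.

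Next I would rewrite the $X$-equation as $\partial_t X+AX=G$ with Neumann boundary conditions, where $A:=-\delta_X\Delta+1$; note that a direct attack on $\Delta$ of the Duhamel formula \eqref{eq: variation of constants X} via Lemma \ref{lem: L^p-L^q estimate} fails, since $\Delta e^{s\delta_X\Delta}$ has a non-integrable $s^{-1}$ singularity at $s=0$, so genuine maximal regularity is required. Because the Neumann Laplacian is non-positive, $\sigma(A)\subset\{\operatorname{Re}\zeta\ge 1\}$, so $-A$ generates an exponentially stable analytic semigroup on $L^p(\Omega)$ and $A$ has maximal $L^q$-regularity; for $0<\epsilon<1$ the conjugated operator $A-\epsilon$ is still exponentially stable and enjoys maximal regularity, so the associated estimate on the half-line holds with a constant independent of the horizon. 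Restricting to $(0,t)$ (extend $G$ by zero past time $t$, which leaves $X|_{(0,t)}$ unchanged) and accounting for the initial datum through its trace-space norm — finite because, by the reduction at the start of Section \ref{sec: Global existence}, we may take $X_0\in C^2(\overline\Omega)$ with $\partial_\nu X_0=0$ — the maximal regularity property \cite{matthias1997heat} gives
\[
 \big\|e^{\epsilon\cdot}\,\Delta X\big\|_{L^p(\Omega)L^q(0,t)} \le C\Big(\big\|e^{\epsilon\cdot}G\big\|_{L^p(\Omega)L^q(0,t)}+\|X_0\|_{W^{2,p}(\Omega)}\Big)
\]
with $C$ independent of $t$. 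Multiplying by $e^{-\epsilon t}$ and recalling Definition \ref{def: L^p_epsilon},
\[
 \|\Delta X\|_{L^p(\Omega)L^q_\epsilon(0,t)} \le C\big(\|G\|_{L^p(\Omega)L^q_\epsilon(0,t)}+e^{-\epsilon t}\|X_0\|_{W^{2,p}}\big)\le C ,
\]
which proves \eqref{eq: Delta X Lp Lq_eps bounded} for all $\epsilon\in(0,1)$. For $\epsilon\ge 1$ one uses the monotonicity $\|\phi\|_{L^q_\epsilon(0,t)}\le\|\phi\|_{L^q_{\epsilon'}(0,t)}$ whenever $\epsilon\ge\epsilon'$ — a one-line computation from $e^{q\epsilon s}=e^{q\epsilon' s}e^{q(\epsilon-\epsilon')s}\le e^{q\epsilon' s}e^{q(\epsilon-\epsilon')t}$ on $(0,t)$ — applied with $\epsilon'=\tfrac12$.

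I expect the only genuinely delicate point to be the \emph{$t$-uniformity} of $C_8$: plain parabolic maximal regularity on $(0,t)$ carries a constant that grows in $t$, so no global bound can be read off directly. The remedy is to kill the kernel of the Neumann Laplacian — here by the harmless shift $-\delta_X\Delta\mapsto-\delta_X\Delta+1$, equivalently by projecting onto the mean-free subspace, on which the heat semigroup decays at rate $\delta_X\lambda$ — and then to convert that exponential decay into a horizon-independent estimate by working in $L^q_\epsilon$ instead of $L^q$; this is precisely the purpose for which $L^q_\epsilon$ was introduced in Section \ref{sec: Preliminaries}. A minor accompanying subtlety is that the direct weighted argument only covers $\epsilon$ below the decay rate, the remaining range being absorbed by the $\epsilon$-monotonicity of $\|\cdot\|_{L^q_\epsilon(0,t)}$ noted above.
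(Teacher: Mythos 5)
Your proposal is correct and follows essentially the same route as the paper: both conjugate by $e^{\epsilon t}$, apply the maximal regularity of \cite{matthias1997heat} to an exponentially stable shift of the Neumann Laplacian so that the constant is uniform in the time horizon, and control the resulting source term via the already-established $L^\infty$ bounds on $X$ and $Y$. The only differences are cosmetic: the paper works with $\widetilde X=e^{\epsilon t}(X-X_0)$ so the initial datum vanishes (placing $\delta_X\Delta X_0$ into the source rather than invoking a trace space), and it shifts the operator by $\epsilon$ itself rather than by $1$, which removes the need for your separate monotonicity argument when $\epsilon\ge 1$.
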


\begin{proof}
    Let $\widetilde{X}(t) = e^{\epsilon t} (X(t) - X_0)$, then we obtain by the first equation of \eqref{eq: general model} that
    \begin{equation*}
        \widetilde{X}_t = \delta_X \Delta \widetilde{X} - \epsilon \widetilde{X} + F(x,t),
    \end{equation*}
    where 
    \begin{equation}
        F(x,t)= e^{\epsilon t} \big( \delta_X \Delta X_0 + 2\epsilon (X - X_0) + r(X) - f(X,Y,x) \big).
    \end{equation}
    By maximal regularity property (Theorem 3.1, \cite{matthias1997heat}), for any $t \in [0, T_\text{max})$ we have
    \begin{equation*}
    \begin{aligned}
        \int_0^t \left\| (\delta_X \Delta - \epsilon) \widetilde{X} \right\|_{L^p}^q \mathrm{d}s 
        & \leq c_1 \int_0^t \left\| F(\cdot, s) \right\|_{L^p}^q \mathrm{d}s.
        \end{aligned}
    \end{equation*}
    The boundedness of $X$ \eqref{eq: X bounded} and $Y$ \eqref{eq: L^inf boundedness of Y nabla X} imply
    \begin{align*}
        \int_0^t \| \Delta \widetilde{X} \|_{L^p}^q \mathrm{d}s 
        & \leq c_2 \int_0^t \| \widetilde{X} \|_{L^p}^q \mathrm{d}s + c_2 \int_0^t \left\| F(\cdot ,s) \right\|_{L^p}^q \mathrm{d}s \\
        &\leq c_3 \int_0^t e^{\epsilon q s} \left( ||\Delta X_0||_{L^p}^q + ||X||_{L^p}^q + ||X_0||_{L^p}^q  + \||\overline{r(X)}||_{L^p}^q + ||X||_{L^\infty}^q||Y||_{L^p}^q \right) \mathrm{d}s \\
        & \leq \frac{c_3}{\epsilon q} e^{\epsilon q t} \Big( ||\Delta X_0||_{L^p}^q + ||X_0||_{L^p}^q + c_4 \Big).
    \end{align*}
    Therefore,
    \begin{align*}
        ||\Delta X||_{L^p (\Omega) L^q_\epsilon (0, t)}^q 
        & = e^{-\epsilon q t} \int_0^t \| e^{\epsilon s} \Delta X \|_{L^p}^q \mathrm{d}s \\
        & = 2^{q-1}e^{-\epsilon q t} \int_0^t \left( \| \Delta \widetilde{X} \|_{L^p}^q + e^{\epsilon q s} \| \Delta X_0 \|_{L^p}^q \right) \mathrm{d}s\\
        & \leq 2^{q-1} \frac{c_3 +1}{\epsilon q} \Big( ||\Delta X_0||_{L^p}^q + ||X_0||_{L^p}^q + c_4 \Big) =: C_8^q.
    \end{align*}
\end{proof}

\begin{lemma}
    \label{lem: new energy estimate on Y}
    There exists a constant $\nu > 0$ such that the solution for equation \eqref{eq: general model} -- \eqref{eq: bc ic} satisfies
    \begin{equation}
        \label{eq: new energy estimate on Y}
        \begin{aligned}
            &\quad \left( \frac{\mathrm{d}}{\mathrm{d}t} + d_1 \right) \int_\Omega \left( \frac{|\nabla Y|^2}{Y} + \tau_2 |\nabla Y|^2 \right) + \nu \int_\Omega \left( \frac{|D^2 Y|^2}{Y} + \frac{|\nabla Y|^4}{Y^3} \right) \\
            &\leq C_{9} \left( \int_\Omega |\Delta X|^2  + 1 \right) - 2\mu_2 \int_\Omega \nabla Y \nabla Z , \quad \forall t \in [0, T_{\max}).
        \end{aligned}
    \end{equation}
\end{lemma}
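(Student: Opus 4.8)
The plan is to mimic the proof of Lemma~\ref{lem: energy estimate on X} with the roles of $X$ and $Y$ interchanged: test the second equation of \eqref{eq: general model} against $-\Delta Y\left(\frac{1}{Y}+\tau_2\right)$, integrate by parts, and isolate $\frac12\frac{\mathrm{d}}{\mathrm{d}t}\int_\Omega\left(\frac{|\nabla Y|^2}{Y}+\tau_2|\nabla Y|^2\right)$ together with the two correction integrals $\int_\Omega\frac{|\nabla Y|^2}{Y^2}Y_t$ and $\int_\Omega(\nabla\tau_2\cdot\nabla Y)Y_t$ (with the signs as in Lemma~\ref{lem: energy estimate on X}), into which the equation is substituted once more. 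The pure-diffusion part $\delta_Y\Delta Y$ produces, exactly as for $X$, a dissipative term $-\delta_Y\int_\Omega|\Delta Y|^2\left(\frac{1}{Y}+\tau_2\right)$ and, via the identity $\int_\Omega\frac{|\nabla Y|^2}{Y^2}\Delta Y=2\int_\Omega\frac{|\Delta Y|^2}{Y}+\int_{\partial\Omega}\frac{\partial|\nabla Y|^2}{\partial\nu}\frac{1}{Y}\,\mathrm{d}S-2\int_\Omega Y|D^2\ln Y|^2$ (already used in Lemma~\ref{lem: energy estimate on X}), a cancellation of the $\int_\Omega\frac{|\Delta Y|^2}{Y}$ terms; Lemma~\ref{lem: D2 ln phi} then turns the surviving $\int_\Omega Y|D^2\ln Y|^2$ into the wanted $\int_\Omega\left(\frac{|D^2Y|^2}{Y}+\frac{|\nabla Y|^4}{Y^3}\right)$, and the boundary integral is absorbed through Lemma~\ref{lem: boundary term} and the trace inequality verbatim as in Lemma~\ref{lem: energy estimate on X}.

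The genuinely new terms are those carrying the cross-diffusion operator $\nabla\cdot(\chi_1 h(Y)\nabla X)$, which enters all three integrals above. Writing $\nabla\cdot(\chi_1 h(Y)\nabla X)=\big(h(Y)\nabla\chi_1+\chi_1 h'(Y)\nabla Y\big)\cdot\nabla X+\chi_1 h(Y)\Delta X$ and using the bounds already secured in Section~\ref{sec: Global existence} --- $X\le\overline{X}$ (Lemma~\ref{lem: X bounded}), $\|\nabla X\|_{L^\infty}\le C_7$ and $\|Y\|_{L^\infty}\le C_7$ (Corollary~\ref{cor: L^inf boundedness of Y nabla X}) --- together with the boundedness of $h(z)/z$ and $h'(z)$ on $[0,C_7]$ (Table~\ref{table: h properties} and assumption $(A_h)$) and of $\chi_1,\chi_1^{-1},\nabla\chi_1$ (assumption $(A_\chi)$), every resulting term is handled by Young's inequality. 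Those not involving $\Delta X$ collapse to multiples of $\int_\Omega\frac{|\Delta Y|^2}{Y}$, $\int_\Omega\frac{|D^2Y|^2}{Y}$, $\int_\Omega\frac{|\nabla Y|^2}{Y}$, $\int_\Omega\frac{|\nabla Y|^3}{Y^2}$ or $\int_\Omega|\nabla Y|$, all of which are absorbed into the dissipation (using repeatedly $\frac{|\nabla Y|^2}{Y}\le\eta\frac{|\nabla Y|^4}{Y^3}+C$ and $|\nabla Y|\le\eta\frac{|\nabla Y|^4}{Y^3}+C$, with $\|Y\|_{L^\infty}$ swallowing the leftover constants). The terms containing $\Delta X$ are estimated through the splitting $|\Delta X|\,|\Delta Y|=\big(|\Delta X|\,Y^{1/2}\big)\big(|\Delta Y|\,Y^{-1/2}\big)$, which produces $\eta\int_\Omega\frac{|\Delta Y|^2}{Y}+C\int_\Omega|\Delta X|^2$ --- this is exactly the origin of $C_9\int_\Omega|\Delta X|^2$ in \eqref{eq: new energy estimate on Y}. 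Choosing all the $\eta$'s small relative to the constant from Lemma~\ref{lem: D2 ln phi} fixes $\nu>0$.

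It remains to treat the reaction terms $e_1 f-g-d_1 Y$. As $e_1 f(X,Y,x)\le e_1\mu_1\overline{X} Y$ is bounded, its contributions are absorbed or bounded by constants exactly like the logistic term $r(X)$ in Lemma~\ref{lem: energy estimate on X}. The predation loss $-g$ is the source of the gradient cross term: since $g(Y,Z,x)\big(\frac{1}{Y}+\tau_2\big)=\frac{\mu_2 YZ}{1+\tau_2 Y}\cdot\frac{1+\tau_2 Y}{Y}=\mu_2 Z$, the leading piece $\int_\Omega g\,\Delta Y\big(\frac{1}{Y}+\tau_2\big)=\mu_2\int_\Omega Z\,\Delta Y=-\mu_2\int_\Omega\nabla Z\cdot\nabla Y$, which after the normalization that turns $\frac12\frac{\mathrm{d}}{\mathrm{d}t}$ into $\frac{\mathrm{d}}{\mathrm{d}t}$ becomes $-2\mu_2\int_\Omega\nabla Y\nabla Z$; the remaining $g$-contributions are $-\frac12\int_\Omega\frac{|\nabla Y|^2}{Y^2}g\le0$, which is discarded, and $\int_\Omega(\nabla\tau_2\cdot\nabla Y)g$, which --- just like the $\nabla\tau_1$-terms in Lemma~\ref{lem: energy estimate on X} --- is split by Young's inequality into $\frac12\int_\Omega\frac{|\nabla Y|^2}{Y^2}g+\frac12\int_\Omega|\nabla\tau_2|^2 Y^2 g$, whose first half cancels against the discarded nonpositive term and whose second half is bounded by $\|\nabla\tau_2\|_{L^\infty}^2\|Y\|_{L^\infty}^2\mu_2\|Y\|_{L^\infty}\|Z\|_{L^1}\le C$ via Lemma~\ref{lemma: Y L1 bounded, Z L1 bounded} (so no norm of $Z$ stronger than $\|Z\|_{L^1}$ is ever needed). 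Finally $-d_1 Y$ contributes $\frac{d_1}{2}\int_\Omega\frac{|\nabla Y|^2}{Y}$ plus a nonnegative multiple of $\int_\Omega\tau_2|\nabla Y|^2$, so that collecting all terms, multiplying through by $2$ and dropping one nonnegative term yields \eqref{eq: new energy estimate on Y}. The main obstacle is the cross-diffusion bookkeeping: one must check that after the two integrations by parts no $X$-derivative of order higher than $\Delta X$ appears and that every term carrying $|\nabla Y|$ or $|D^2Y|$ is strictly subcritical with respect to $\frac{|D^2Y|^2}{Y}+\frac{|\nabla Y|^4}{Y^3}$, so that the right-hand side depends on $X$ solely through $\int_\Omega|\Delta X|^2$ --- precisely the quantity that Lemma~\ref{lem: Delta X Lp Lq_eps bounded} is built to control in the next step.
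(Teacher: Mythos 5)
Your proposal is correct and follows essentially the same route as the paper: the same multiplier $-\Delta Y\left(\tfrac{1}{Y}+\tau_2\right)$, the same identity for $\int_\Omega \tfrac{|\nabla Y|^2}{Y^2}\Delta Y$ combined with Lemmas \ref{lem: D2 ln phi} and \ref{lem: boundary term}, the same pointwise bound $|\nabla(\chi_1 h(Y)\nabla X)|\lesssim Y^{1/2}+|\nabla Y|+Y^{1/2}|\Delta X|$ feeding into Young's inequality to produce $C_9\int_\Omega|\Delta X|^2$, and the same cancellation $g\left(\tfrac{1}{Y}+\tau_2\right)=\mu_2 Z$ generating $-\mu_2\int_\Omega\nabla Y\nabla Z$. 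The bookkeeping of the reaction and $\nabla\tau_2$ terms also matches the paper's treatment, so no gap remains.
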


\begin{proof}
    We multiply the second equation of \eqref{eq: general model} by $-\Delta Y \left( \frac{1}{Y} + \tau_2 \right)$ and integrate by parts. Using the same calculation as in \eqref{eq: energy estimate on X proof 1}, the left-hand side becomes
    \begin{equation}
        \label{eq: new energy estimate on Y proof 1}
        \begin{aligned}
            & \quad \ - \int_\Omega Y_t \Delta Y \left(  \frac{1}{Y} + \tau_2 \right) = \frac{1}{2} \frac{\mathrm{d}}{\mathrm{d}t} \int_\Omega \left( \frac{|\nabla Y|^2}{Y} + \tau_2 |\nabla Y|^2 \right) - \frac{1}{2} \int_\Omega \frac{|\nabla Y|^2}{Y^2} Y_t  + \int_\Omega \nabla \tau_2 \nabla Y Y_t .
        \end{aligned}
    \end{equation}
    Using Young's inequality and the second equation of \eqref{eq: general model} again, we get
    \begin{align*}
        & \quad \ \frac{1}{2} \int_\Omega \frac{|\nabla Y|^2}{Y^2} Y_t  - \int_\Omega \nabla \tau_2 \nabla Y Y_t  \\
        & = \int_\Omega \left( \frac{|\nabla Y|^2}{2 Y^2} - \nabla \tau_2 \nabla Y \right) \left( \delta_Y \Delta Y - d_1 Y + e_1 f(X, Y, x) + \nabla (\chi_1(x)h(Y)\nabla X) \right)  \\
        & \quad \ - \int_\Omega \left( \frac{|\nabla Y|^2}{2 Y^2} - \nabla \tau_2 \nabla Y \right) g(Y, Z, x)  \\
        & \leq \int_\Omega \left( \frac{|\nabla Y|^2}{2 Y^2} - \nabla \tau_2 \nabla Y \right) \left( \delta_Y \Delta Y - d_1 Y + e_1 f(X, Y, x) + \nabla (\chi_1(x)h(Y)\nabla X) \right) + \frac{1}{2} \int_\Omega |\nabla \tau_2|^2 Y^2 g(Y, Z, x) .
    \end{align*}
    The facts $h(z) = O(z)$, $h'(z) = O(1)$ as $z \to 0$, $\nabla X$, $Y$ are bounded \eqref{eq: L^inf boundedness of Y nabla X} and assumption ($A_\chi$) imply that 
    \begin{equation}
        |\nabla (\chi_1 (x) h(Y) \nabla X)| 
        \leq c_1 \left( Y^{1/2} + |\nabla Y| + Y^{1/2} |\Delta X| \right).
    \end{equation}
    Since $f(X, Y, x) \leq \mu_1 XY$, $g(Y, Z, x) \leq \mu_2 YZ$, $X \leq \overline{X}$ \eqref{eq: X bounded}, $||Y||_{L^\infty} \leq C_7$ \eqref{eq: L^inf boundedness of Y nabla X} and $||Z||_{L^1} \leq C_1$ \eqref{eq: Y, Z L1 bounded}, we obtain
    \begin{equation}
        \begin{aligned}
            & \quad \ \frac{1}{2} \int_\Omega \frac{|\nabla Y|^2}{Y^2} Y_t  - \int_\Omega \nabla \tau_2 \nabla Y Y_t  \\ & \leq \frac{\delta_Y}{2} \int_\Omega \frac{|\nabla Y|^2}{Y^2} \Delta Y  + c_2 \int_\Omega \left( |\nabla Y| |\Delta Y| + \frac{|\nabla Y|^2}{Y} + |\nabla Y| + 1 \right)  \\
            & \quad \ + c_2 \int_\Omega \left( \frac{|\nabla Y|^2}{Y^2} + |\nabla Y| \right) \left( Y^{1/2} + |\nabla Y| + Y^{1/2} |\Delta X| \right) \\
            & \leq \frac{\delta_Y}{2} \int_\Omega \frac{|\nabla Y|^2}{Y^2} \Delta Y  + c_3 \int_\Omega \left( |\nabla Y| |\Delta Y| + \frac{|\nabla Y|^2}{Y^{3/2}} + \frac{|\nabla Y|^3}{Y^2} \right)  + c_3 \int_\Omega \left( \frac{|\nabla Y|^2}{Y^{3/2}} + 1 \right) |\Delta X|  + c_3.
        \end{aligned}
    \end{equation}
    In the meantime, when we multiply the second equation of \eqref{eq: general model} by $-\Delta Y (\frac{1}{Y} + \tau_2)$, the right-hand side becomes
    \begin{equation}
        \label{eq: new energy estimate on Y proof 2}
        \begin{aligned}
            & \quad \ -e_1 \int_\Omega \frac{\mu_1 X}{1 + \tau_1 X} \Delta Y (1 + \tau_2 Y)  - \mu_2 \int_\Omega \nabla Y \nabla Z  - d_1 \int_\Omega \nabla Y \nabla (\tau_2 Y)  \\ 
            & \quad \ + \int_\Omega \nabla (\chi_1 (x) h(Y) \nabla X) \Delta Y \left( \frac{1}{Y} + \tau_2 \right)  - \delta_Y \int_\Omega |\Delta Y|^2 \left( \frac{1}{Y} + \tau_2 \right)  \\
            &\leq c_4 \int_\Omega \big( |\Delta Y| + |\nabla Y| \big)  - \mu_2 \int_\Omega \nabla Y \nabla Z  - d_1 \int_\Omega \tau_2 |\nabla Y|^2  \\
            &\quad \ + c_4 \int_\Omega \left( Y^{1/2} + |\nabla Y| + Y^{1/2} |\Delta X| \right) \frac{|\Delta Y|}{Y}  - \delta_Y \int_\Omega \frac{|\Delta Y|^2}{Y}  \\
            &\leq c_5 \int_\Omega \left( \frac{|\Delta Y|}{Y^{1/2}} + \frac{|\nabla Y| |\Delta Y|}{Y} \right)  + c_5 \int_\Omega \frac{|\Delta Y|}{Y^{1/2}} |\Delta X|  + c_5 \\
            &\quad \ - \mu_2 \int_\Omega \nabla Y \nabla Z  - \frac{d_1}{2} \int_\Omega \tau_2 |\nabla Y|^2  - \delta_Y \int_\Omega \frac{|\Delta Y|^2}{Y} .
        \end{aligned}
    \end{equation}
    Combining equations \eqref{eq: new energy estimate on Y proof 1}-\eqref{eq: new energy estimate on Y proof 2}, we obtain
    \begin{equation}
        \begin{aligned}
            &\quad \ \frac{1}{2} \left( \frac{\mathrm{d}}{\mathrm{d}t} + d_1 \right) \int_\Omega \left( \frac{|\nabla Y|^2}{Y} + \tau_2 |\nabla Y|^2 \right) + \delta_Y \int_\Omega \frac{|\Delta Y|^2}{Y}   \\
            &\leq \frac{\delta_Y}{2} \int_\Omega \frac{|\nabla Y|^2}{Y^2} \Delta Y  + (c_3 + c_5) \int_\Omega \left( \frac{|\nabla Y| |\Delta Y|}{Y} + \frac{|\nabla Y|^2}{Y^{3/2}} + \frac{|\nabla Y|^3}{Y^2} + \frac{|\Delta Y|}{Y^{1/2}} \right)  \\
            & \quad \ + (c_3 + c_5) \int_\Omega \left( \frac{|\nabla Y|^2}{Y^{3/2}} + \frac{|\Delta Y|}{Y^{1/2}} + 1 \right) |\Delta X|  + (c_3 + c_5) - \mu_2 \int_\Omega \nabla Y \nabla Z .
        \end{aligned}
    \end{equation}
    Now we substitute the identity (Equation (3.8), \cite{jin2022global})
    \begin{equation*}
        \int_\Omega \frac{|\nabla Y|^2}{Y^2} \Delta Y  = 2\int_\Omega \frac{|\Delta Y|^2}{Y}  + \int_{\partial \Omega} \frac{\partial |\nabla Y|^2}{\partial \nu} \frac{1}{Y} \mathrm{d}S - 2 \int_\Omega Y |D^2 \ln Y|^2 
    \end{equation*}
    into the previous equation and derive
    \begin{align*}
        &\quad \ \frac{1}{2} \left( \frac{\mathrm{d}}{\mathrm{d}t} + d_1 \right) \int_\Omega \left( \frac{|\nabla Y|^2}{Y} + \tau_2 |\nabla Y|^2 \right) + \delta_Y \int_\Omega Y |D^2 \ln Y|^2  \\
        &\leq \frac{\delta_Y}{2} \int_{\partial \Omega} \frac{\partial |\nabla Y|^2}{\partial \nu} \frac{1}{Y} \mathrm{d}S + (c_3 + c_5) \int_\Omega \left( \frac{|\nabla Y| |\Delta Y|}{Y} + \frac{|\nabla Y|^2}{Y^{3/2}} + \frac{|\nabla Y|^3}{Y^2} + \frac{|\Delta Y|}{Y^{1/2}} \right)  \\
        & \quad \ + (c_3 + c_5) \int_\Omega \left( \frac{|\nabla Y|^2}{Y^{3/2}} + \frac{|\Delta Y|}{Y^{1/2}} + 1 \right) |\Delta X|  + (c_3 + c_5) - \mu_2 \int_\Omega \nabla Y \nabla Z .
    \end{align*}
    By Lemma \ref{lem: D2 ln phi}, there exists a constant $c > 0$ so that
    \begin{align*}
        &\quad \ \frac{1}{2} \left( \frac{\mathrm{d}}{\mathrm{d}t} + d_1 \right) \int_\Omega \left( \frac{|\nabla Y|^2}{Y} + \tau_2 |\nabla Y|^2 \right) + c \int_\Omega \left( \frac{|D^2 Y|^2}{Y} + \frac{|\nabla Y|^4}{Y^3} \right)  \\
        &\leq \frac{\delta_Y}{2} \int_{\partial \Omega} \frac{\partial |\nabla Y|^2}{\partial \nu} \frac{1}{Y} \mathrm{d}S + (c_3 + c_5) \int_\Omega \left( \frac{|\nabla Y| |\Delta Y|}{Y} + \frac{|\nabla Y|^2}{Y^{3/2}} + \frac{|\nabla Y|^3}{Y^2} + \frac{|\Delta Y|}{Y^{1/2}} \right)  \\
        & \quad \ + (c_3 + c_5) \int_\Omega \left( \frac{|\nabla Y|^2}{Y^{3/2}} + \frac{|\Delta Y|}{Y^{1/2}} + 1 \right) |\Delta X|  + (c_3 + c_5) - \mu_2 \int_\Omega \nabla Y \nabla Z .
    \end{align*}
    Now the boundary estimate \eqref{eq: boundary term} implies
    \begin{equation}
        \label{eq: new energy estimate on Y proof 3}
        \begin{aligned}
            &\quad \ \frac{1}{2} \left( \frac{\mathrm{d}}{\mathrm{d}t} + d_1 \right) \int_\Omega \left( \frac{|\nabla Y|^2}{Y} + \tau_2 |\nabla Y|^2 \right) + \frac{c}{2} \int_\Omega \left( \frac{|D^2 Y|^2}{Y} + \frac{|\nabla Y|^4}{Y^3} \right)  \\
            &\leq (c_3 + c_5 + c_6) \int_\Omega \left( \frac{|\nabla Y| |\Delta Y|}{Y} + \frac{|\nabla Y|^2}{Y^{3/2}} + \frac{|\nabla Y|^3}{Y^2} + \frac{|\Delta Y|}{Y^{1/2}} \right)  \\
            & \quad \ + (c_3 + c_5) \int_\Omega \left( \frac{|\nabla Y|^2}{Y^{3/2}} + \frac{|\Delta Y|}{Y^{1/2}} + 1 \right) |\Delta X|  + (c_3 + c_5) - \mu_2 \int_\Omega \nabla Y \nabla Z .
        \end{aligned}
    \end{equation}
    Using Young’s inequality and the boundedness of $||Y||_{L^\infty}$ \eqref{eq: L^inf boundedness of Y nabla X}, one has
    \begin{align*}
        &\quad \ (c_3 + c_5 + c_6) \int_\Omega \left( \frac{|\nabla Y| |\Delta Y|}{Y} + \frac{|\nabla Y|^2}{Y^{3/2}} + \frac{|\nabla Y|^3}{Y^2} + \frac{|\Delta Y|}{Y^{1/2}} \right)  \\
        &\leq \frac{c}{8} \int_\Omega \frac{|\Delta Y|^2}{Y}  + \frac{2 (c_3 + c_5 + c_6)^2}{c} \int_\Omega \frac{|\nabla Y|^2}{Y} + (c_3 + c_5 + c_6) \int_\Omega \left( \frac{|\nabla Y|^2}{Y^{3/2}} + \frac{|\nabla Y|^3}{Y^2} + \frac{|\Delta Y|}{Y^{1/2}} \right)  \\
        &\leq \frac{c}{8} \int_\Omega \frac{|D^2 Y|^2}{Y}  + \frac{c}{8} \int_\Omega \frac{|\nabla Y|^4}{Y^3}  + c_7
    \end{align*}
    and
    \begin{align*}
        &\quad \ (c_3 + c_5) \int_\Omega \left( \frac{|\nabla Y|^2}{Y^{3/2}} + \frac{|\Delta Y|}{Y^{1/2}} + 1 \right) |\Delta X|  \leq \frac{c}{8} \int_\Omega \left( \frac{|\nabla Y|^4}{Y^3} + \frac{|\Delta Y|^2}{Y} + 1 \right)  + \frac{2 (c_3 + c_5)^2}{c} \int_\Omega |\Delta X|^2 .
    \end{align*}
    Therefore,
    \begin{equation}
        \begin{aligned}
            &\quad \ \frac{1}{2} \left( \frac{\mathrm{d}}{\mathrm{d}t} + d_1 \right) \int_\Omega \left( \frac{|\nabla Y|^2}{Y} + \tau_2 |\nabla Y|^2 \right) + \frac{c}{4} \int_\Omega \left( \frac{|D^2 Y|^2}{Y} + \frac{|\nabla Y|^4}{Y^3} \right)  \\
            &\leq \frac{2 (c_3 + c_5)^2}{c} \int_\Omega |\Delta X|^2  - \mu_2 \int_\Omega \nabla Y \nabla Z  + c_7 + \frac{c}{8} |\Omega|,
        \end{aligned}
    \end{equation}
    which immediately implies \eqref{eq: new energy estimate on Y}.
\end{proof}

\begin{lemma}
    \label{lem: energy estimate on Z}
    For any $\epsilon > 0$, the solution for equation \eqref{eq: general model} -- \eqref{eq: bc ic} satisfies
    \begin{equation}
        \label{eq: energy estimate on Z}
        \begin{aligned}
            &\quad \ \left( \frac{\mathrm{d}}{\mathrm{d}t} + d_1 \right) \int_\Omega \frac{1}{\chi_2} \mathcal{H}(Z) + \frac{\delta_Z}{2} \int_\Omega \frac{1}{\chi_2} \frac{|\nabla Z|^2}{h(Z)} \leq \epsilon \int_\Omega \frac{|\nabla Y|^4}{Y^3}  + \int_\Omega \nabla Y \nabla Z  + C_{10}, \quad \forall t \in [0, T_{\max}).
        \end{aligned}
    \end{equation}
\end{lemma}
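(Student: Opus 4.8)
The plan is to imitate the proof of Lemma~\ref{lem: energy estimate on Y} essentially line for line, under the correspondence $X \rightsquigarrow Y$, $Y \rightsquigarrow Z$, $\chi_1 \rightsquigarrow \chi_2$, $\delta_Y \rightsquigarrow \delta_Z$, $f \rightsquigarrow g$. The only structural change is that the role played there by the $L^\infty$-bound on $X$ (used to estimate $f(X,Y,x) \le \mu_1 \overline{X} Y$) is now played by the bound $\|Y\|_{L^\infty} \le C_7$ of Corollary~\ref{cor: L^inf boundedness of Y nabla X}: since $Y$ is the resource for $Z$, the predation-gain term satisfies $e_2 g(Y,Z,x) = \frac{e_2\mu_2 Y Z}{1+\tau_2 Y} \le e_2\mu_2 C_7 Z$, a quantity controlled by $Z$ alone, so it can be absorbed directly through the Gagliardo--Nirenberg-type inequality of Lemma~\ref{lemma: G-N inequality on psi}, without the $\min_{z\ge0} zH(z) > -\infty$ device that was needed for the predation-loss term $-g$ in the $Y$-equation.

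First I would test the third equation of \eqref{eq: general model} against $\frac{1}{\chi_2(x)} H(Z)$ and integrate over $\Omega$. Since $\mathcal{H}' = H$ and $\chi_2$ does not depend on $t$, the left-hand side equals $\frac{\mathrm{d}}{\mathrm{d}t}\int_\Omega \frac{1}{\chi_2}\mathcal{H}(Z)$. Integration by parts of the taxis term, together with $H'(z) = h^{-1}(z)$, yields
\[
    -\int_\Omega \nabla\big(\chi_2 h(Z)\nabla Y\big)\frac{1}{\chi_2}H(Z) = \int_\Omega \chi_2\nabla\!\Big(\tfrac{1}{\chi_2}\Big) h(Z)H(Z)\nabla Y + \int_\Omega \nabla Y\nabla Z,
\]
and integration by parts of the diffusion term yields
\[
    \delta_Z\int_\Omega \Delta Z\,\frac{1}{\chi_2}H(Z) = \delta_Z\int_\Omega \Delta\!\Big(\tfrac{1}{\chi_2}\Big)\mathcal{H}(Z) - \delta_Z\int_\Omega \frac{1}{\chi_2}\frac{|\nabla Z|^2}{h(Z)},
\]
exactly mirroring \eqref{eq: proof: energy estimate on Y}. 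Moving the dissipative term to the left and adding $d_1\int_\Omega \frac{1}{\chi_2}\mathcal{H}(Z)$ to both sides gives an identity for $\big(\frac{\mathrm{d}}{\mathrm{d}t}+d_1\big)\int_\Omega \frac{1}{\chi_2}\mathcal{H}(Z) + \delta_Z\int_\Omega \frac{1}{\chi_2}\frac{|\nabla Z|^2}{h(Z)}$.

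Next I would bound the remaining right-hand side, using that $\chi_2$, $\chi_2^{-1}$, $\nabla\chi_2^{-1}$, $\Delta\chi_2^{-1}$ are all bounded by assumption ($A_\chi$). Since $Z|H(Z)| \ge 0$ and $g(Y,Z,x) \le \mu_2 C_7 Z$, a case split on $\{Z\ge1\}$ and $\{Z<1\}$ shows $\int_\Omega (e_2 g(Y,Z,x) - d_2 Z)\frac{1}{\chi_2}H(Z) \le c\int_\Omega Z|H(Z)|$; the contributions of $\Delta\chi_2^{-1}$ and of the added $d_1\int_\Omega \chi_2^{-1}\mathcal{H}(Z)$ are dominated by $c\int_\Omega \mathcal{H}(Z)$; and the cross term is at most $c\int_\Omega |h(Z)H(Z)|\,|\nabla Y|$. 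On this last term I would use Young's inequality with conjugate exponents $\frac43$ and $4$ and the bound $0 < Y \le C_7$ to obtain, for arbitrary $\epsilon>0$,
\[
    c\int_\Omega |h(Z)H(Z)|\,|\nabla Y| \le \epsilon\int_\Omega \frac{|\nabla Y|^4}{Y^3} + c_\epsilon\int_\Omega |h(Z)H(Z)|^{4/3}.
\]
Collecting everything, the right-hand side is dominated by $\epsilon\int_\Omega \frac{|\nabla Y|^4}{Y^3} + \int_\Omega \nabla Y\nabla Z + C\int_\Omega\big(Z|H(Z)| + |h(Z)H(Z)|^{4/3} + \mathcal{H}(Z)\big)$. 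Now Lemma~\ref{lemma: G-N inequality on psi} applied with $\phi = Z$, together with $\|Z\|_{L^1}\le C_1$ from Lemma~\ref{lemma: Y L1 bounded, Z L1 bounded}, bounds the bracketed integral by $\epsilon'\int_\Omega \frac{|\nabla Z|^2}{h(Z)} + C$; choosing $\epsilon'$ small enough that $C\epsilon' \le \frac{\delta_Z}{2\chi_{2M}}$ lets this be absorbed into the left-hand dissipation, leaving $\frac{\delta_Z}{2}\int_\Omega \frac{1}{\chi_2}\frac{|\nabla Z|^2}{h(Z)}$, and gathering the surviving constants into $C_{10}$ produces \eqref{eq: energy estimate on Z}.

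The one point that needs care is the Young splitting of $|h(Z)H(Z)|\,|\nabla Y|$: it has to be organized so that the gradient factor emerges exactly in the weighted form $\epsilon\,\frac{|\nabla Y|^4}{Y^3}$ demanded by the statement (the bound $Y \le C_7$ is what lets one move the $Y^3$ weight in and out freely) while the companion factor carries precisely the exponent $4/3$ that Lemma~\ref{lemma: G-N inequality on psi} is built to absorb; everything else is the $\chi_2$-weight bookkeeping already performed in Lemma~\ref{lem: energy estimate on Y}.
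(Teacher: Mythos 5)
Your proposal is correct and follows essentially the same route as the paper, which itself just says ``repeat the process as in Lemma \ref{lem: energy estimate on Y} until \eqref{eq: proof: energy estimate on Y}'' and then performs exactly the estimates you describe: bounding the reaction terms by $c\int_\Omega (g(Y,Z,x)+Z)|H(Z)| \le c'\int_\Omega Z|H(Z)|$ via $\|Y\|_{L^\infty}\le C_7$, splitting the cross term with Young's inequality into $\epsilon C_7^{-3}\int_\Omega|\nabla Y|^4 + c\int_\Omega|h(Z)H(Z)|^{4/3}$, and absorbing via Lemma \ref{lemma: G-N inequality on psi} with $\|Z\|_{L^1}\le C_1$. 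Your observation that the $\min_{z\ge 0} zH(z)>-\infty$ device is unnecessary here (since the predation term now enters as a gain controlled by the $L^\infty$ bound on $Y$) is also consistent with the paper's treatment.
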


\begin{proof}
Multiplying the third equation of \eqref{eq: general model} by $\frac{1}{\chi_2(x)} H(Z)$, and repeating the process as in Lemma \ref{lem: energy estimate on Y} until \eqref{eq: proof: energy estimate on Y}, we obtain
\begin{equation*}
    \begin{aligned}
        \frac{\mathrm{d}}{\mathrm{d}t} \int_\Omega \frac{1}{\chi_2} \mathcal{H}(Z)
        & = \int_\Omega (e_2 g(Y,Z,x) - d_2 Z) \frac{1}{\chi_2} H(Z)  + \int_\Omega \chi_2 \nabla\left(\frac{1}{\chi_2}\right) h(Z) H(Z) \nabla Y  \\
        & \quad \ + \int_\Omega \nabla Y \nabla Z  + \delta_Z \int_\Omega \Delta\left(\frac{1}{\chi_2}\right) \mathcal{H}(Z)  - \delta_Z \int_\Omega \frac{|\nabla Z|^2}{h(Z)} \frac{1}{\chi_2} .
    \end{aligned}
\end{equation*}
Using Young's inequality, assumption ($A_\chi$), and boundedness of $Y$ \eqref{eq: L^inf boundedness of Y nabla X}, for any $\epsilon > 0$ we obtain
\begin{equation*}
    \begin{aligned}
        &\quad \left( \frac{\mathrm{d}}{\mathrm{d}t} + d_1 \right)\int_\Omega \frac{1}{\chi_2} \mathcal{H}(Z) + \delta_Z \int_\Omega \frac{1}{\chi_2} \frac{|\nabla Z|^2}{h(Z)}  \\
        & \leq c_1 \int_\Omega (g(Y,Z,x) + Z) |H(Z)|  + c_1 \int_\Omega |h(Z) H(Z) \nabla Y|  + \int_\Omega \nabla Y \nabla Z  + c_1 \int_\Omega \mathcal{H}(Z)  \\
        & \leq \epsilon C_{7}^{-3} \int_\Omega |\nabla Y|^4  + c_2 \int_\Omega \left( Z |H(Z)| + |h(Z) H(Z)|^{4/3} + \mathcal{H} (Z) \right) + \int_\Omega \nabla Y \nabla Z  .
    \end{aligned}
\end{equation*}
Now Lemma \ref{lemma: G-N inequality on psi} and boundedness of $||Z||_{L^1}$ \eqref{eq: Y, Z L1 bounded} imply
\begin{equation*}
    \begin{aligned}
        & \quad \left( \frac{\mathrm{d}}{\mathrm{d}t} + d_1 \right) \int_\Omega \frac{1}{\chi_2} \mathcal{H}(Z) + \delta_Z \int_\Omega \frac{1}{\chi_2} \frac{|\nabla Z|^2}{h(Z)}  \\
        & \leq \epsilon C_7^{-3} \int_\Omega |\nabla Y|^4  + \frac{\delta_Z}{2 \chi_{2M}} \int_\Omega \frac{|\nabla Z|^2}{h(Z)}  + \int_\Omega \nabla Y \nabla Z  + c_3 \\
        & \leq \epsilon \int_\Omega \frac{|\nabla Y|^4}{Y^3}  + \frac{\delta_Z}{2} \int_\Omega \frac{1}{\chi_2} \frac{|\nabla Z|^2}{h(Z)}  + \int_\Omega \nabla Y \nabla Z  + c_3,
    \end{aligned}
\end{equation*}
which implies equation \eqref{eq: energy estimate on Z}.
\end{proof}

\begin{lemma}
    \label{lem: nabla Y L2 Z L2-p2}
    For any $0 < \epsilon \leq \frac{d_1}{2}$, the solution for equation \eqref{eq: general model} -- \eqref{eq: bc ic} satisfies
    \begin{equation}
    \label{eq: nabla Y L2 Z L2-p2}
        ||\nabla Y||_{L^2}, \ ||Z||_{L^{2 - \alpha}}, \ \left\| \int_\Omega \frac{1}{\chi_2} \frac{|\nabla Z|^2}{h(Z)}  \right\|_{L^1_{2\epsilon} (0, t)} \leq C_{11}, \quad \forall t \in [0, T_{\max}).
    \end{equation}
\end{lemma}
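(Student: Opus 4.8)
The plan is to mimic the proof of Lemma~\ref{lem: nabla X L2 Y L2-p2} one trophic level higher, combining the energy estimates \eqref{eq: new energy estimate on Y} and \eqref{eq: energy estimate on Z} so that the indefinite cross terms $\int_\Omega \nabla Y \nabla Z$ cancel. Concretely, multiply \eqref{eq: energy estimate on Z} by $2\mu_2$, add it to \eqref{eq: new energy estimate on Y}, and choose the free parameter of Lemma~\ref{lem: energy estimate on Z} (call it $\epsilon_0$, to distinguish it from the $\epsilon$ of the present statement) so small that $2\mu_2\epsilon_0 \le \nu/2$. Writing
\[
    W(t) := \int_\Omega \Big( \frac{|\nabla Y|^2}{Y} + \tau_2 |\nabla Y|^2 + \frac{2\mu_2}{\chi_2}\mathcal{H}(Z) \Big),
\]
the cross terms cancel and $2\mu_2\epsilon_0 \int_\Omega |\nabla Y|^4/Y^3$ on the right is absorbed into $\nu\int_\Omega |\nabla Y|^4/Y^3$ on the left, leaving the master inequality
\[
    \Big( \frac{\mathrm d}{\mathrm dt} + d_1 \Big) W(t) + \frac{\nu}{2}\int_\Omega \Big(\frac{|D^2 Y|^2}{Y}+\frac{|\nabla Y|^4}{Y^3}\Big) + \mu_2 \delta_Z \int_\Omega \frac{1}{\chi_2}\frac{|\nabla Z|^2}{h(Z)} \le C_9 \int_\Omega |\Delta X|^2 + c,
\]
where $c := C_9 + 2\mu_2 C_{10}$.

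For the first two bounds, drop the two nonnegative terms on the left other than those involving $W$ to get $W'(t) + d_1 W(t) \le C_9\|\Delta X(t)\|_{L^2}^2 + c$, and apply Gr\"onwall with integrating factor $e^{d_1 t}$:
\[
    W(t) \le W(0) + \frac{c}{d_1} + C_9\, e^{-d_1 t}\int_0^t e^{d_1 s}\|\Delta X(s)\|_{L^2}^2\,\mathrm ds.
\]
Here is the one genuine point: since $0 < 2\epsilon \le d_1$, for $0 \le s \le t$ we have $e^{d_1 s} \le e^{(d_1 - 2\epsilon)t}e^{2\epsilon s}$, so the last integral is bounded by $e^{-2\epsilon t}\int_0^t e^{2\epsilon s}\|\Delta X(s)\|_{L^2}^2\,\mathrm ds = \|\Delta X\|_{L^2(\Omega) L^2_\epsilon(0,t)}^2 \le C_8^2$ by Lemma~\ref{lem: Delta X Lp Lq_eps bounded} applied with $p=q=2$ and the present $\epsilon$. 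Hence $W(t) \le c_1$ uniformly in $t$. Then $\|Y\|_{L^\infty}\le C_7$ (Corollary~\ref{cor: L^inf boundedness of Y nabla X}) gives $\|\nabla Y\|_{L^2}^2 \le C_7\int_\Omega |\nabla Y|^2/Y \le C_7 W(t)$, while Lemma~\ref{lemma: norms of h(psi)} with $p=1$ and $\chi_2 \le \chi_{2M}$ gives $\|Z\|_{L^{2-\alpha}}^{2-\alpha} \le c'\int_\Omega \mathcal H(Z) \le c'' W(t)$; both are therefore bounded.

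For the third bound, keep the term $\mu_2\delta_Z\int_\Omega \chi_2^{-1}|\nabla Z|^2/h(Z)$, discard only $\frac{\nu}{2}\int_\Omega(\cdots)\ge 0$, and use $d_1 W(t)\ge 0$ to obtain, pointwise in $t$, $\mu_2\delta_Z\int_\Omega \chi_2^{-1}|\nabla Z|^2/h(Z) \le -W'(t) + C_9\|\Delta X(t)\|_{L^2}^2 + c$. Taking the $L^1_{2\epsilon}(0,t)$ norm (both sides being nonnegative), the $C_9\|\Delta X\|_{L^2}^2$ contribution is exactly $C_9\|\Delta X\|_{L^2(\Omega)L^2_\epsilon(0,t)}^2 \le C_9 C_8^2$, the constant $c$ contributes at most $c/(2\epsilon)$, and for the $-W'$ term an integration by parts gives $e^{-2\epsilon t}\big(W(0) - e^{2\epsilon t}W(t) + 2\epsilon\int_0^t e^{2\epsilon s}W(s)\,\mathrm ds\big) \le W(0) + c_1$, using $W\ge 0$ and $W\le c_1$. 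Summing yields the claimed bound, and taking $C_{11}$ to be the largest of the three constants finishes the proof. The only real obstacle is that the forcing term $C_9\int_\Omega |\Delta X|^2$ is not known to be bounded pointwise in $t$, only in the weighted time-averaged sense of Lemma~\ref{lem: Delta X Lp Lq_eps bounded}; this is exactly why the weighted $L^p_\epsilon$ spaces are introduced and why the hypothesis $\epsilon\le d_1/2$ appears, being the compatibility condition between the Gr\"onwall decay rate $d_1$ and the time weight $2\epsilon$ that converts $L^2_\epsilon(0,t)$ control of $\Delta X$ into a uniform-in-$t$ estimate.
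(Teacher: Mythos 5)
Your proposal is correct and follows essentially the same route as the paper: both combine \eqref{eq: new energy estimate on Y} with $2\mu_2$ times \eqref{eq: energy estimate on Z} to cancel the cross terms $\int_\Omega \nabla Y\nabla Z$, absorb the $2\mu_2\epsilon_0\int_\Omega|\nabla Y|^4/Y^3$ term into the dissipation, and then exploit the hypothesis $2\epsilon\le d_1$ to convert the $L^2_\epsilon$-in-time control of $\|\Delta X\|_{L^2}$ from Lemma \ref{lem: Delta X Lp Lq_eps bounded} into a uniform-in-$t$ bound. The only cosmetic difference is that the paper first weakens the decay rate from $d_1$ to $2\epsilon'$ and runs a single variation-of-constants step that simultaneously yields the bound on $W(t)$ and on the $L^1_{2\epsilon'}$ norm of the $\nabla Z$ dissipation, whereas you run Gr\"onwall at rate $d_1$ and then recover the third bound by a separate integration by parts; both are valid, and your use of $\|Y\|_{L^\infty}\int_\Omega|\nabla Y|^2/Y$ rather than the paper's $\tau_{2m}\|\nabla Y\|_{L^2}^2$ is in fact the more robust choice since $\tau_2$ is only assumed nonnegative.
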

    
\begin{proof}
By adding up \eqref{eq: new energy estimate on Y} and \eqref{eq: energy estimate on Z}, we have
    \begin{equation*}
        \begin{aligned}
            & \left( \frac{\mathrm{d}}{\mathrm{d}t} + d_1 \right) \int_\Omega \left( \frac{|\nabla Y|^2}{Y} + \tau_2 |\nabla Y|^2 + \frac{2 \mu_2}{\chi_2} \mathcal{H}(Z) \right) + \nu \int_\Omega \left( \frac{|D^2 Y|^2}{Y} + \frac{|\nabla Y|^4}{Y^3} \right) \\
            & + 2\mu_2 \epsilon \int_\Omega \frac{|\nabla Y|^4}{Y^3} + \delta_Z \mu_2 \int_\Omega \frac{1}{\chi_2} \frac{|\nabla Z|^2}{h(Z)}  \leq C_{9}\int_\Omega |\Delta X|^2  + C_{9}+2\mu_2C_{10}.
        \end{aligned}
    \end{equation*}
    Note that $\nu > 0$ is a fixed constant and $\epsilon > 0$ is arbitrary. Choosing $0 < \epsilon \leq \frac{\nu}{2 \mu_2}$ and $0 < \epsilon' \leq \frac{d_1}{2}$ yields
    \begin{equation*}
        \begin{aligned}
            & \quad \left( \frac{\mathrm{d}}{\mathrm{d}t} + 2\epsilon' \right) \int_\Omega \left( \frac{|\nabla Y|^2}{Y} + \tau_2 |\nabla Y|^2 + \frac{2 \mu_2}{\chi_2} \mathcal{H}(Z) \right) + \delta_Z \mu_2 \int_\Omega \frac{1}{\chi_2} \frac{|\nabla Z|^2}{h(Z)} \leq C_{9}\int_\Omega |\Delta X|^2  + C_{9}+2\mu_2C_{10}.
        \end{aligned}
    \end{equation*}
    We use variance of constants and the estimate of $||\Delta X||_{L^2(\Omega)L_{\epsilon'}^2(0,t)}$ \eqref{eq: Delta X Lp Lq_eps bounded} to derive that
    \begin{equation*}
        \begin{aligned}
             &\quad \ \int_\Omega \left( \frac{|\nabla Y|^2}{Y} + \tau_2 |\nabla Y|^2 + \frac{2 \mu_2}{\chi_2} \mathcal{H}(Z) \right)  + \delta_Z \mu_2 \int_0^t e^{-2 \epsilon' (t - s)} \left( \int_\Omega \frac{1}{\chi_2} \frac{|\nabla Z|^2}{h(Z)} \right)  \mathrm{d}s \\
             &\leq c_1 e^{-2\epsilon' t} + \int_0^t (C_{9} + 2 \mu_2 C_{10}) e^{-2\epsilon' (t-s)} \mathrm{d}s + C_{9} ||\Delta X||_{L^2(\Omega)L_{\epsilon'}^2(0,t)}^2 \\
             &\leq c_1 + \frac{C_{9} + 2\mu_2 C_{10}}{2\epsilon'} + C_8^2.
        \end{aligned}
    \end{equation*}
    By Lemma \ref{lemma: norms of h(psi)}, we have
    \begin{equation*}
        \int_\Omega \frac{2\mu_2}{\chi_2} \mathcal{H} (Z)  
        \geq \frac{2\mu_2}{\chi_{2M}} \int_\Omega \mathcal{H} (Z)  
        \geq c_2 ||Z||_{L^{2 - \alpha}}^{2 - \alpha}.
    \end{equation*}
    It follows that
    \begin{equation*}
        \tau_{2m} ||\nabla Y||_{L^2}^2 + c_2 ||Z||_{L^{2 - \alpha}}^{2 - \alpha} + \delta_Z \mu_2 \left\| \int_\Omega \frac{1}{\chi_2} \frac{|\nabla Z|^2}{h(Z)}  \right\|_{L^1_{2\epsilon'} (0, t)}
        \leq c_1 + \frac{C_{9} + 2\mu_2 C_{10}}{2\epsilon'} + C_8^2,
    \end{equation*}
    which implies \eqref{eq: nabla Y L2 Z L2-p2}.
\end{proof}

\begin{lemma}
    \label{lemma: L^qk boundedness of nabla Y}
    Let $0 \leq k \leq k^*$ and $p_k \in (1, \infty)$. Suppose for any $p \in (p_k, \infty)$ and $\epsilon \in (0, \delta_Y \lambda + d_1)$, the solution for equation \eqref{eq: general model} -- \eqref{eq: bc ic} satisfies 
    \begin{equation}
    \label{eq: assumption L^qk boundedness of Z for nabla Y} 
        ||\nabla Y||_{L^{q_k}(\Omega) L^p_\epsilon(0,t)}, \ ||Z||_{L^{q_k}(\Omega) L_\epsilon^p(0,t)} \leq C_{12, k}, \quad \forall t \in [0, T_{\max}).
    \end{equation}
    Then for any $\overline{q}_{k + 1} \in [1, \infty]$ satisfying \eqref{eq: bar q_k+1}, any $p \in (p_k, \infty)$ and $\epsilon \in (0, \delta_Y \lambda + d_1)$, we have
    \begin{equation}
        \label{eq: L^qk boundedness of nabla Y}
        ||\nabla Y||_{L^{\overline{q}_{k + 1}}(\Omega) L^p_\epsilon(0,t)} \leq C_{12, k + 1}, \quad \forall t \in [0, T_{\max}).
    \end{equation}
    In particular, we can set $\overline{q}_{k + 1} = q_{k + 1}$.
\end{lemma}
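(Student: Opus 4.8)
The plan is to run the variation-of-constants formula for $Y$ through the weighted mixed-norm machinery of Section~\ref{sec: Preliminaries}, in the same spirit as Lemma~\ref{lemma: L^qk boundedness of nabla X}, but now absorbing the cross-diffusion term. Write the second equation of \eqref{eq: general model} as $\partial_t Y = \delta_Y\Delta Y - d_1 Y + G$ with
\begin{equation*}
    G = e_1 f(X,Y,x) - g(Y,Z,x) - \nabla\!\cdot\!\big(\chi_1(x)h(Y)\nabla X\big),
\end{equation*}
so that, taking the gradient of Duhamel's formula,
\begin{equation*}
    \nabla Y(t) = \nabla e^{(\delta_Y\Delta - d_1)t}Y_0 + \int_0^t \nabla e^{(\delta_Y\Delta - d_1)(t-s)} G(s)\,\mathrm{d}s .
\end{equation*}
The decisive preliminary move is to \emph{expand the divergence}, $\nabla\!\cdot\!(\chi_1 h(Y)\nabla X) = \chi_1 h(Y)\Delta X + \big(\chi_1 h'(Y)\nabla Y + h(Y)\nabla\chi_1\big)\!\cdot\!\nabla X$, so that $G$ becomes a scalar forcing bounded pointwise: using $X\le\overline X$ (Lemma~\ref{lem: X bounded}), $\|Y\|_{L^\infty},\|\nabla X\|_{L^\infty}\le C_7$ (Corollary~\ref{cor: L^inf boundedness of Y nabla X}), assumptions $(A_\chi)$ and $(A_h)$ (so $\chi_1$, $\nabla\chi_1$ are bounded and $h$, $h'$ are bounded on the compact range of $Y$), together with $f\le\mu_1 XY$ and $g\le\mu_2 YZ$, one gets $|G| \le c\,\big(1 + |Z| + |\Delta X| + |\nabla Y|\big)$ pointwise on $\Omega\times(0,T_{\max})$. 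Expanding the divergence is forced on us because Lemma~\ref{lem: L^p-L^q estimate} supplies no bound for $\nabla e^{td\Delta}\nabla\!\cdot\!(\,\cdot\,)$; the price — the appearance of $\Delta X$ and of $\nabla Y$ itself on the right — is exactly what Lemma~\ref{lem: Delta X Lp Lq_eps bounded} and the inductive hypothesis \eqref{eq: assumption L^qk boundedness of Z for nabla Y} are designed to absorb.

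Next I insert the heat-kernel estimates. Since $Y_0\in W^{2,\infty}$ by $(A_I)$, the homogeneous term is controlled uniformly in $t$ by Lemma~\ref{lem: L^p-L^q estimate} exactly as in the proof of Lemma~\ref{lemma: L^qk boundedness of nabla X}, so that $\|\nabla e^{(\delta_Y\Delta-d_1)t}Y_0\|_{L^{\overline q_{k+1}}}\le C e^{-d_1 t}$ for every $\overline q_{k+1}\in[1,\infty]$ (on the bounded domain $\Omega$). For the Duhamel term, assume first $\overline q_{k+1}\ge q_k$ and apply \eqref{eq: lemma Lp Lq estimate 2} with $q=q_k$ and $p=\overline q_{k+1}$; setting $\rho=\tfrac{n}{2}\!\left(\tfrac{1}{q_k}-\tfrac{1}{\overline q_{k+1}}\right)$ and combining with the pointwise bound on $G$ gives
\begin{equation*}
    \|\nabla Y(t)\|_{L^{\overline q_{k+1}}} \le C e^{-d_1 t} + C\,T_{\frac12+\rho,\,\delta_Y\lambda+d_1}\,\phi(t), \qquad \phi(s):=1+\|Z(s)\|_{L^{q_k}}+\|\Delta X(s)\|_{L^{q_k}}+\|\nabla Y(s)\|_{L^{q_k}},
\end{equation*}
where $T_{\rho',\lambda}$ is the operator from Proposition~\ref{prop: Young's inequality on L^p_epsilon}. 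Here $\tfrac12+\rho\in(0,1)$ is \emph{equivalent} to $\tfrac{1}{\overline q_{k+1}}>\tfrac{1}{q_k}-\tfrac{1}{n}$, i.e.\ to condition \eqref{eq: bar q_k+1}; this is the only place at which the hypothesis on $\overline q_{k+1}$ is used.

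Finally, take the $L^p_\epsilon(0,t)$-norm in time. The homogeneous contribution is bounded uniformly in $t$ by Proposition~\ref{prop: comparison of different norms} (using that $\|e^{-d_1\cdot}\|_{L^p_\epsilon(0,t)}\le c\|e^{-d_1\cdot}\|_{L^\infty(0,t)}=c$). For the Duhamel contribution, apply Proposition~\ref{prop: Young's inequality on L^p_epsilon} with $p=q$ (so the admissibility condition collapses to $\tfrac12+\rho<1$) and decay rate $\delta_Y\lambda+d_1>\epsilon$; since $\phi\ge 0$ this yields $\big\|\nabla Y\big\|_{L^{\overline q_{k+1}}(\Omega)L^p_\epsilon(0,t)} \le c + c\,\|\phi\|_{L^p_\epsilon(0,t)}$, and
\begin{equation*}
    \|\phi\|_{L^p_\epsilon(0,t)}\le\|1\|_{L^p_\epsilon(0,t)}+\|Z\|_{L^{q_k}(\Omega)L^p_\epsilon(0,t)}+\|\Delta X\|_{L^{q_k}(\Omega)L^p_\epsilon(0,t)}+\|\nabla Y\|_{L^{q_k}(\Omega)L^p_\epsilon(0,t)},
\end{equation*}
where the first term is $\le(\epsilon p)^{-1/p}$, the second and fourth are $\le C_{12,k}$ by \eqref{eq: assumption L^qk boundedness of Z for nabla Y}, and the third is $\le C_8$ by Lemma~\ref{lem: Delta X Lp Lq_eps bounded}. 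This produces a constant $C_{12,k+1}$ independent of $t$. If $\overline q_{k+1}<q_k$ the estimate follows from the case $\overline q_{k+1}=q_k$ and the inclusion $L^{q_k}(\Omega)\hookrightarrow L^{\overline q_{k+1}}(\Omega)$, and the special choice $\overline q_{k+1}=q_{k+1}$ is admissible because $\eta<\tfrac{1}{n}$ gives $\tfrac{1}{q_{k+1}}=\tfrac{1}{q_k}-\eta>\tfrac{1}{q_k}-\tfrac{1}{n}$. The main obstacle is structural rather than computational: one must recognize that the cross-diffusion term has to be differentiated out into $\Delta X$ plus lower-order pieces, that the resulting self-reference in $\nabla Y$ only closes inside the weighted mixed norm $L^{q_k}(\Omega)L^p_\epsilon(0,t)$, and that the one-step gain in spatial integrability $q_k\to q_{k+1}$ is compatible with a fixed finite time-integrability budget precisely because the exponent $\tfrac12+\rho$ stays below $1$ under \eqref{eq: bar q_k+1} while the weight parameter satisfies $\epsilon<\delta_Y\lambda+d_1$.
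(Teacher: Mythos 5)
Your proposal is correct and follows essentially the same route as the paper: Duhamel's formula for $\nabla Y$, the gradient heat-semigroup estimates of Lemma \ref{lem: L^p-L^q estimate} with the exponent $\tfrac12+\rho<1$ enforced by \eqref{eq: bar q_k+1}, expansion of $\nabla\cdot(\chi_1 h(Y)\nabla X)$ into $\Delta X$ plus lower-order terms controlled by the boundedness of $\nabla X$, $h(Y)$, $h'(Y)$, and closure in the weighted mixed norm via Proposition \ref{prop: Young's inequality on L^p_epsilon}, Lemma \ref{lem: Delta X Lp Lq_eps bounded}, and the inductive hypothesis. The only cosmetic difference is that you bound the forcing pointwise before taking the $L^{q_k}(\Omega)$ norm, whereas the paper estimates each piece's $L^{q_k}$ norm directly, and you explicitly dispatch the case $\overline q_{k+1}<q_k$ by embedding, which the paper leaves implicit.
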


\begin{proof}
    Taking gradient to each side of the first line of \eqref{eq: variation of constants Y}, we get
    \begin{equation*}
        \begin{aligned}
            \nabla Y(t) & = \nabla e^{(\delta_Y  \Delta - d_1) t} Y_0 + \int_0^t \nabla e^{(\delta_Y \Delta -d_1)(t-s)} \left(e_1 f(X,Y,x) - g(Y,Z,x) - \nabla ( \chi_1(x) h(Y) \nabla X) \right) \mathrm{d}s.
        \end{aligned}
    \end{equation*}
    Applying \eqref{eq: lemma Lp Lq estimate 1} and \eqref{eq: lemma Lp Lq estimate 2}, we derive
    \begin{equation*}
    \begin{aligned}
        ||\nabla Y||_{L^{{\overline{q}}_{k + 1}}} 
        & \leq c_1 e^{-(\delta_Y \lambda + d_1) t}||\nabla Y_0||_{L^{{\overline{q}}_{k + 1}}} + c_1 \int_0^t \left( 1 + (t-s)^{-\frac{1}{2} - \rho} \right) e^{- (\delta_Y \lambda + d_1)(t-s)} \\
        & \quad \Big( e_1||f(X,Y,x) ||_{L^{q_k}} + || g(Y,Z,x) ||_{L^{q_k}} + ||\nabla ( \chi_1(x) h(Y) \nabla X)||_{L^{q_k}}\Big) \mathrm{d}s
    \end{aligned}
    \end{equation*}
    for some $c_1 > 0$, and $\rho \in [0, \frac{1}{2})$ is given in \eqref{eq: rho}. Since $\epsilon \in (0, \delta_Y \lambda + d_1)$, it follows from Propositions \ref{prop: Young's inequality on L^p_epsilon} and \ref{prop: comparison of different norms} that
    \begin{equation}
    \label{eq: proof 1 nabla Y}
    \begin{aligned}
        & \quad \ ||\nabla Y||_{L^{{\overline{q}}_{k + 1}}(\Omega) L^p_\epsilon(0,t)}\\ 
        & \leq c_1 ||e^{-(\delta_Y \lambda + d_1) s}||_{L^p_\epsilon (0, t)} ||\nabla Y_0||_{L^{{\overline{q}}_{k + 1}}(\Omega)}  + c_1 \Big\| T_{\frac{1}{2}+ \rho, \delta_Y \lambda + d_1} \big( e_1||f(X,Y,x) ||_{L^{q_k}} \\
        & \quad + || g(Y,Z,x) ||_{L^{q_k}} + ||\nabla ( \chi_1(x) h(Y) \nabla X)||_{L^{q_k}} \big) \Big\|_{L^p_\epsilon(0,t)} \\
        & \leq c_1 ||e^{-(\delta_Y \lambda + d_1) s}||_{L^p_\epsilon (0, t)} ||\nabla Y_0||_{L^{{\overline{q}}_{k + 1}}(\Omega)} + c_2 \Big\| e_1||f(X,Y,x) ||_{L^{q_k}} \\
        & \quad + || g(Y,Z,x) ||_{L^{q_k}} + ||\nabla ( \chi_1(x) h(Y) \nabla X)||_{L^{q_k}} \Big\|_{L^p_\epsilon(0,t)} \\
        & \leq c_1 c_3 ||e^{-(\delta_Y \lambda + d_1) s}||_{L^\infty (0, t)} ||\nabla Y_0||_{L^{{\overline{q}}_{k + 1}}(\Omega)} + c_2 c_3 e_1 ||f(X,Y,x)||_{L^{q_k} (\Omega) L^\infty (0, t)} \\
        & \quad + c_2 || g(Y,Z,x) ||_{L^{q_k} (\Omega) L^p_\epsilon (0,t)} + c_2 || \nabla (\chi_1(x) h(Y) \nabla X) ||_{L^{q_k} (\Omega) L^p_\epsilon (0,t)}.
        \end{aligned}
    \end{equation}
    By boundedness of $\| X \|_{L^{\infty}}$ \eqref{eq: X bounded}, $\| Y \|_{L^{\infty}}$ \eqref{eq: L^inf boundedness of Y nabla X} and $||Z||_{L^{q_k} (\Omega) L^p_\epsilon (0,t)}$ \eqref{eq: assumption L^qk boundedness of Z for nabla Y} we obtain
    \begin{equation*}
    \begin{aligned}
        || f(X,Y,x) ||_{L^{q_k} (\Omega) L^\infty (0, t)} 
        &\leq \mu_1 ||X||_{L^{\infty} (\Omega) L^\infty (0, t)} ||Y||_{L^{\infty} (\Omega) L^\infty (0, t)} |\Omega|^{\frac{1}{q_k}}
      \leq \mu_1 \overline{X} C_{7} |\Omega|^{\frac{1}{q_k}}
       \end{aligned}
    \end{equation*}
    and
    \begin{equation*}
        || g(Y,Z,x) ||_{L^{q_k} (\Omega) L^p_\epsilon (0,t)} 
        \leq \mu_2 ||Y||_{L^{\infty} (\Omega) L^\infty (0,t)} ||Z||_{L^{q_k} (\Omega) L^p_\epsilon (0,t)}
        \leq \mu_2 C_7 C_{12, k}.
    \end{equation*}
    Moreover, since $Y$ is bounded and $h(z) = O(z)$, $h'(z) = O(1)$ as $z \to 0$, we know that $h(Y)$ and $h'(Y)$ are also bounded. Together with Proposition \ref{prop: comparison of different norms} and the boundedness of $||\nabla X||_{L^\infty}$ \eqref{eq: L^inf boundedness of Y nabla X},  $||\Delta X||_{L^{q_k} (\Omega) L^p_\epsilon (0,t)}$ \eqref{eq: Delta X Lp Lq_eps bounded} and $||\nabla Y||_{L^{q_k} (\Omega) L^p_\epsilon (0,t)}$ \eqref{eq: assumption L^qk boundedness of Z for nabla Y}, we have
    \begin{equation}
    \label{eq: proof 2 nabla Y}
        \begin{aligned}
            & \quad \ ||\nabla (\chi_1(x) h(Y) \nabla X) ||_{L^{q_k} (\Omega) L^p_\epsilon (0,t)} \\
            & \leq c_4 ||\nabla X ||_{L^{q_k} (\Omega) L^p_\epsilon (0,t)} + c_4 ||\nabla X \nabla Y||_{L^{q_k} (\Omega) L^p_\epsilon (0,t)} + c_4 ||\Delta X||_{L^{q_k} (\Omega) L^p_\epsilon (0,t)}\\
            & \leq c_3 c_4 ||\nabla X ||_{L^\infty (\Omega) L^\infty (0, t)} |\Omega|^{1/{q_k}} + c_4 ||\nabla X||_{L^\infty (\Omega) L^\infty (0, t)} ||\nabla Y||_{L^{q_k} (\Omega) L^p_\epsilon (0,t)} + c_4 ||\Delta X ||_{L^{q_k} (\Omega) L^p_\epsilon (0,t)} \\
            & \leq c_3 c_4 C_7 |\Omega|^{1/{q_k}} + c_3 c_4 C_7 C_{12, k} + c_4 C_8.
        \end{aligned}
    \end{equation}
    Combining equations \eqref{eq: proof 1 nabla Y}-\eqref{eq: proof 2 nabla Y}, we obtain
    \begin{equation*}
        ||\nabla Y||_{L^{\overline{q}_{k+1}}(\Omega)L^p_\epsilon(0,t)} 
        \leq c_1 c_3 ||\nabla Y_0||_{L^{q_{k+1}}} + c_5 =: C_{12, k+1}.
    \end{equation*}
\end{proof}

\begin{lemma}
    \label{lem: L^qk boundedness of Z}
    Let $0 \leq k \leq k^*$ and $p_k \in (1, \infty)$. Suppose for any $p \in (p_k, \infty)$ and $0 < \epsilon < \min\{ \delta_Y \lambda + d_1, \delta_Z \lambda + d_2 \}$, the solution for equation \eqref{eq: general model} -- \eqref{eq: bc ic} satisfies 
    \begin{equation}
        \label{eq: assumption L^qk boundedness of Z}
        ||\nabla Y||_{L^{q_k}(\Omega) L^p_\epsilon(0,t)}, \ ||Z||_{L^{q_k}(\Omega) L^p_\epsilon(0,t)} \leq C_{13, k}, \quad \forall t \in [0, T_{\max}).
    \end{equation}
    Then there exists $p_{k+1} \in [p_k, \infty)$, so that for any $p \in (p_{k+1}, \infty)$ and $0 < \epsilon < \min\{ \delta_Y \lambda + d_1, \delta_Z \lambda + d_2 \}$, we have
    \begin{equation}
        \label{eq: L^qk boundedness of Z}
        ||Z||_{L^{q_{k + 1}}(\Omega) L^p_\epsilon(0,t)} \leq C_{13, k + 1}, \quad \forall t \in [0, T_{\max}).
    \end{equation}
\end{lemma}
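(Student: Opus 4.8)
The plan is to follow the template of Lemma \ref{lem: L^qk boundedness of Y}, but now working in the space--time norm $L^{q_k}(\Omega)L^p_\epsilon(0,t)$ and using the convolution bound of Proposition \ref{prop: Young's inequality on L^p_epsilon} in place of the scalar integral $\int_0^t(1+(t-s)^{-\rho})e^{-\lambda(t-s)}\,\mathrm{d}s$. First I would feed the present hypothesis \eqref{eq: assumption L^qk boundedness of Z} into Lemma \ref{lemma: L^qk boundedness of nabla Y} to upgrade $\nabla Y$: for every $\overline q_{k+1}$ satisfying \eqref{eq: bar q_k+1}, every large $p$ and every $\epsilon\in(0,\delta_Y\lambda+d_1)$, the quantity $\|\nabla Y\|_{L^{\overline q_{k+1}}(\Omega)L^p_\epsilon(0,t)}$ is bounded. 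Then I apply the variation-of-constants formula to the third equation of \eqref{eq: general model},
\begin{equation*}
Z(t) = e^{(\delta_Z\Delta - d_2)t}Z_0 + \int_0^t e^{(\delta_Z\Delta - d_2)(t-s)}\big(e_2 g(Y,Z,x) - \nabla(\chi_2(x)h(Z)\nabla Y)\big)\,\mathrm{d}s,
\end{equation*}
take $L^{q_{k+1}}(\Omega)$ norms, and invoke \eqref{eq: lemma Lp Lq estimate 3} for the $g$-term and \eqref{eq: lemma Lp Lq estimate 4} for the divergence term, exactly as in \eqref{eq: proof 0 L^qk boundedness of Y}--\eqref{eq: proof 5 L^qk boundedness of Y}, with decay rate $\delta_Z\lambda+d_2$. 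This yields a pointwise-in-$t$ majorant $T_{\rho_1,\delta_Z\lambda+d_2}\big(\|g(Y,Z,x)\|_{L^{q_k}}\big) + T_{\frac12+\rho_2,\delta_Z\lambda+d_2}\big(\|\chi_2(x)h(Z)\nabla Y\|_{L^q}\big)$ plus the decaying initial-data term, where $q\le q_{k+1}$ is chosen as in \eqref{eq: proof 1 q_k+1} so that $\rho_1=\tfrac n2\eta<\tfrac12$ and $\rho_2=\tfrac n2(\tfrac1q-\tfrac1{q_{k+1}})<\tfrac12$, and $\overline q_{k+1}$ defined by $\tfrac1q=\tfrac1{\overline q_{k+1}}+\tfrac{\alpha}{q_k}$ satisfies \eqref{eq: bar q_k+1} by the computation \eqref{eq: proof q_k+1 bar satisfies condition}.

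Next I would take the $L^p_\epsilon(0,t)$ norm in time. By Proposition \ref{prop: Young's inequality on L^p_epsilon} (applicable since $\epsilon<\delta_Z\lambda+d_2$) together with Proposition \ref{prop: comparison of different norms}, each operator $T_{\rho,\delta_Z\lambda+d_2}$ is bounded from $L^{\overline p}_\epsilon(0,t)$ to $L^p_\epsilon(0,t)$ as long as $\tfrac1{\overline p}+\rho<\tfrac1p+1$, so the task reduces to bounding $\|g(Y,Z,x)\|_{L^{q_k}(\Omega)L^{\overline p}_\epsilon(0,t)}$ and $\|\chi_2(x)h(Z)\nabla Y\|_{L^q(\Omega)L^{\overline p}_\epsilon(0,t)}$ for suitable $\overline p$. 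For the first term, $g(Y,Z,x)\le\mu_2 YZ$ and $\|Y\|_{L^\infty}\le C_7$ \eqref{eq: L^inf boundedness of Y nabla X} give $\|g(Y,Z,x)\|_{L^{q_k}(\Omega)L^p_\epsilon(0,t)}\le\mu_2 C_7\|Z\|_{L^{q_k}(\Omega)L^p_\epsilon(0,t)}\le\mu_2 C_7 C_{13,k}$, and since $\rho_1<\tfrac12<1$ one may keep $\overline p=p$ here. For the divergence term I would use H\"older in $\Omega$, $\|\chi_2(x)h(Z)\nabla Y\|_{L^q}\le\chi_{2M}\|h(Z)\|_{L^{q_k/\alpha}}\|\nabla Y\|_{L^{\overline q_{k+1}}}$, then Lemma \ref{lemma: norms of h(psi)} to bound $\|h(Z)\|_{L^{q_k/\alpha}}\le c(\|Z\|_{L^{q_k}}^{q_k}+1)^{\alpha/q_k}$, and finally the H\"older inequality for the $L^p_\epsilon$ norm (Remark \ref{rem: holder for L_p_epsilon}) with the splitting $\epsilon=\alpha\epsilon+(1-\alpha)\epsilon$ and $\tfrac1{\overline p}=\tfrac{\alpha}{p}+\tfrac1{p_2}$ to separate $\|h(Z)\|_{L^{q_k/\alpha}(\Omega)L^{p/\alpha}_{\alpha\epsilon}(0,t)}$ (which a short computation bounds $t$-uniformly in terms of $\|Z\|_{L^{q_k}(\Omega)L^p_\epsilon(0,t)}^\alpha$, $|\Omega|$ and $\epsilon p$) from $\|\nabla Y\|_{L^{\overline q_{k+1}}(\Omega)L^{p_2}_{(1-\alpha)\epsilon}(0,t)}$ (bounded by the upgraded $\nabla Y$ estimate, provided $p_2>p_k$ and $(1-\alpha)\epsilon<\delta_Y\lambda+d_1$).

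The main obstacle --- and the reason the conclusion only holds for $p$ above a \emph{new} threshold $p_{k+1}$ --- is reconciling this loss of time integrability ($\overline p<p$) with the admissibility condition of Proposition \ref{prop: Young's inequality on L^p_epsilon} for the divergence term: one needs $\tfrac1{\overline p}+\tfrac12+\rho_2<\tfrac1p+1$, i.e.\ $\tfrac{\alpha}{p}+\tfrac1{p_2}+\rho_2<\tfrac1p+\tfrac12$. Since $\rho_2$ can be fixed strictly below $\tfrac12$ by taking $q$ near the lower end of \eqref{eq: proof 1 q_k+1}, it suffices to pick $p_2$ large enough that $\tfrac1{p_2}+\rho_2<\tfrac12$ and then $p$ large enough that $\tfrac{\alpha-1}{p}<\tfrac12-\rho_2-\tfrac1{p_2}$; any such $p$ automatically exceeds $p_k$, so setting $p_{k+1}\ge p_k$ to be the infimum of admissible $p$ gives \eqref{eq: L^qk boundedness of Z}. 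Finally, as in Lemma \ref{lem: L^qk boundedness of Y}, the endpoint case $k=k^*$ is handled by choosing $\overline q_{k^*+1}=\infty$ and inserting an extra H\"older factor $|\Omega|^{(1-\alpha)/q_{k^*}}$ when estimating $\|h(Z)\|_{L^{q_{k^*}}}$, noting that then $q>n$ makes $\rho_1$ and $\rho_2$ admissible without further effort.
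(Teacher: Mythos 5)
Your proposal is correct and follows essentially the same route as the paper's proof: variation of constants for $Z$, the heat-semigroup estimates \eqref{eq: lemma Lp Lq estimate 3}--\eqref{eq: lemma Lp Lq estimate 4}, Proposition \ref{prop: Young's inequality on L^p_epsilon} for the time convolution, H\"older in $\Omega$ and in the weighted time norm with the splitting $\epsilon=\alpha\epsilon+(1-\alpha)\epsilon$, and the same origin of the threshold $p_{k+1}$ (the paper fixes the time exponent of the divergence term to $p/(1+\alpha)$ and obtains $p_{k+1}=\max\{p_k,\alpha/(1/2-\rho_2)\}$, which is just the special case $p_2=p$ of your more flexible splitting). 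The endpoint case $k=k^*$ is also handled exactly as in the paper.
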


\begin{proof}
     We consider $k < k^*$ first. Applying the variation of constants formula to the third equation in \eqref{eq: general model}, we obtain
     \begin{equation}
     \begin{aligned}
     \label{eq: variation of formula Z}
         Z(t) & = e^{(\delta_{Z}\Delta -d_2)t}Z_{0} + \int_0^t e^{(\delta_{Z}\Delta - d_2) (t-s)} \big(e_2 g(Y,Z,x)-\nabla \left(\chi_2(x) h(Z)\nabla Y\right)\big) \mathrm{d}s.
     \end{aligned}
     \end{equation}
    Choose a number $q$ satisfying \eqref{eq: proof 1 q_k+1}. By applying inequalities \eqref{eq: lemma Lp Lq estimate 3} and \eqref{eq: lemma Lp Lq estimate 4}, we obtain
    \begin{equation*}
    \begin{aligned}
         ||Z(t)||_{L^{q_{k+1}}} 
         & \leq c_1 e^{-(\delta_{Y} \lambda + d_2) t} ||Z_0||_{L^{q_{k+1}}} \\
         & \quad + c_1 \int_0^t \left( 1 + (t-s)^{-\rho_1} \right)e^{-(\delta_{Z} \lambda + d_2) (t-s)}  ||g(Y,Z,x)||_{L^{q_k}} \mathrm{d}s\\
         & \quad + c_1 \int_0^t \left(1+ (t-s)^{- \frac{1}{2} - \rho_2} \right)e^{-(\delta_{Z} \lambda + d_2) (t-s)}||\chi_2(x) h(Z)\nabla Y||_{L^q} \mathrm{d}s,
    \end{aligned}
    \end{equation*}
    where $\rho_1 \in (0, \frac{1}{2})$ is given in \eqref{eq: rho_1}, and $\rho_2 \in [0, \frac{1}{2})$ is given in \eqref{eq: rho_2}. It follows from Propositions \ref{prop: Young's inequality on L^p_epsilon} and \ref{prop: comparison of different norms} that, if $\epsilon < \delta_Z \lambda + d_2$ and $p > \frac{\alpha}{1/2 - \rho_2}$, then
\begin{equation}
\label{eq: Z q_k+1 L epsilon p}
    \begin{aligned}
         ||Z(t)||_{L^{q_{k+1}}(\Omega)L^p_\epsilon(0,t)}
        & \leq c_1 \left\|e^{-(\delta_{Z} \lambda + d_2) t} \right\|_{L_\epsilon^p(0,t)}||Z_0||_{L^{q_{k+1}}} + c_1 \Big\| T_{\rho_1,\delta_{Z} \lambda + d_2} ||g(Y,Z,x)||_{L^{q_k}} \Big\|_{L^p_\epsilon(0,t)} \\
        & \quad + c_1 \left\|T_{\frac{1}{2}+\rho_2,\delta_Z\lambda+d_2}||\chi_2(x) h(Z)\nabla Y||_{L^{q}} \right\|_{L^p_\epsilon (0, t)}\\
        & \leq c_1 c_3 \left\|e^{-(\delta_{Z} \lambda + d_2) t} \right\|_{L_\epsilon^\infty(0,t)}||Z_0||_{L^{q_{k+1}}} + c_2 ||g(Y,Z,x)||_{L^{q_{k}}(\Omega)L^p_\epsilon(0,t)} \\
        & \quad + c_2 \left\|\chi_2(x) h(Z)\nabla Y \right\|_{L^q(\Omega) L^{\frac{p}{1 + \alpha}}_\epsilon(0,t)}.
    \end{aligned}
\end{equation}
    We choose $p_{k+1} = \max \left\{ p_k, \frac{\alpha}{1/2 - \rho_2} \right\}$ and let $p > p_{k+1}$ afterwards. By the boundedness of $||Y||_{L^\infty}$ \eqref{eq: L^inf boundedness of Y nabla X} and $||Z||_{L^{q_k}(\Omega)L^p_\epsilon(0,t)}$ \eqref{eq: assumption L^qk boundedness of Z}, we have
    \begin{equation*}
        ||g(Y,Z,x)||_{L^{q_k}(\Omega)L^p_\epsilon(0,t)} \leq \mu_2 ||Y||_{L^\infty}||Z||_{L^{q_k}(\Omega)L^p_\epsilon(0,t)} \leq \mu_2 C_7 C_{13,k}.
    \end{equation*}
Let $\overline{q}_{k+1}$ satisfy \eqref{eq: proof 2 bar_q_k+1}, then by the same argument as \eqref{eq: proof q_k+1 bar satisfies condition}, $\overline{q}_{k+1}$ satisfies condition \eqref{eq: bar q_k+1}. Therefore, we obtain by H\"older's inequality and Lemma \ref{lemma: norms of h(psi)} that
    \begin{equation}
    \label{eq: proof 1 L^qk boundedness of Z}
        \begin{aligned}
            ||\chi_2(x) h(Z) \nabla Y||_{L^q} 
            & \leq \chi_{2M} \| \nabla Y \|_{L^{\overline{q}_{k+1}}} \left\| h(Z) \right\|_{L^{q_k / \alpha}} \\
            & \leq c_4 \chi_{2M} \| \nabla Y \|_{L^{\overline{q}_{k+1}}} \left( \| Z \|_{L^{q_k}}^{q_k} + 1 \right)^{\alpha / q_k} \\
            & \leq c_5 \| \nabla Y \|_{L^{\overline{q}_{k+1}}} \left( \| Z \|_{L^{q_k}}^{\alpha} + 1 \right).
        \end{aligned}
    \end{equation}  
    It follows from Remark \ref{rem: holder for L_p_epsilon}, boundedness of $||\nabla Y||_{L^{\overline{q}_{k + 1}}(\Omega)L^p_\epsilon(0,t)}$ (Lemma \ref{lemma: L^qk boundedness of nabla Y}), and assumption \eqref{eq: assumption L^qk boundedness of Z} that
    \begin{equation}
    \label{eq: proof 2 L^qk boundedness of Z}
        \begin{aligned}
            & \quad \ \|\chi_2(x) h(Z) \nabla Y\|_{L^q(\Omega) L^{\frac{p}{1 + \alpha}}_\epsilon (0,t)} \\
            & \leq c_5 \| \nabla Y \|_{L^{\overline{q}_{k+1}} (\Omega) L^p_{(1 - \alpha) \epsilon} (0,t)} \big\| \| Z \|_{L^{q_k}}^{\alpha} + 1 \big\|_{L^{p / \alpha}_{\alpha \epsilon} (0,t)} \\
            & \leq c_5 \| \nabla Y \|_{L^{\overline{q}_{k+1}} (\Omega) L^p_{(1 - \alpha) \epsilon} (0,t)} \left( \| Z \|_{L^{q_k} (\Omega) L^p_{\epsilon} (0,t)}^{\alpha} + \| 1 \|_{L^{p / \alpha}_{\alpha \epsilon} (0, t)} \right) \\
            & \leq c_5 C_{12} \left( C_{13, k}^{\alpha} + c_6 \right)=c_7.
        \end{aligned}
    \end{equation} 
Combining equations \eqref{eq: Z q_k+1 L epsilon p}-\eqref{eq: proof 1 L^qk boundedness of Z}, we obtain
 \begin{equation*}
    ||Z(t)||_{L^{q_{k+1}}(\Omega)L^p_\epsilon(0,t)}
    \leq c_1 c_3 ||Z_0||_{L^{q_{k+1}}} + c_2 \mu_2 C_7 C_{13,k} + c_2 c_7 
    =: C_{13, k+1}.
\end{equation*}
When $k = k^*$, we set $q = q_{k^*} > n$, then the corresponding $\rho_1$ and $\rho_2$ are still less than $\frac{1}{2}$. Besides, we set $\overline{q}_{k^*+1} = \infty$, which satisfies \eqref{eq: bar q_k+1}. Now we can repeat the rest of the proof, replacing only the estimate \eqref{eq: proof 1 L^qk boundedness of Z} by
\begin{align*}
    ||\chi_2(x) h(Z) \nabla Y||_{L^q} 
    & \leq \chi_{2M} \| \nabla Y \|_{L^{\overline{q}_{k^* + 1}}} \left\| h(Z) \right\|_{L^{q_{k^*}}} \\
    & \leq \chi_{2M} |\Omega|^{(1 - \alpha) / q_{k^*}} \| \nabla Y \|_{L^{\overline{q}_{k^* + 1}}} \left\| h(Z) \right\|_{L^{q_{k^*} / \alpha}} \\
    & \leq c_4 \chi_{2M} |\Omega|^{(1 - \alpha) / q_{k^*}} \| \nabla Y \|_{L^{\overline{q}_{k^* + 1}}} \left( \| Z \|_{L^{q_{k^*}}}^{q_{k^*}} + 1 \right)^{\alpha / q_{k^*}} \\
    & \leq c_5' \| \nabla Y \|_{L^{\overline{q}_{k^* + 1}}} \left( \| Z \|_{L^{q_{k^*}}}^{\alpha} + 1 \right).
\end{align*}
\end{proof}

\begin{lemma}
    \label{lem: L^inf boundedness of Z nabla Y}    
    The solution for equation \eqref{eq: general model} -- \eqref{eq: bc ic} satisfies
    \begin{equation}
        \label{eq: L^inf boundedness of Z nabla Y}
        ||Z||_{L^\infty}, \ ||\nabla Y||_{L^\infty} \leq C_{14}, \quad \forall t \in [0, T_{\max}).
    \end{equation}
\end{lemma}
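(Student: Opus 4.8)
The plan is to iterate the bootstrap Lemmas \ref{lemma: L^qk boundedness of nabla Y} and \ref{lem: L^qk boundedness of Z} along the finite chain $q_0 < q_1 < \cdots < q_{k^*} < q_{k^*+1} = \infty$, and then to run one additional variation‑of‑constants estimate that upgrades the resulting time‑integrated bounds to genuine pointwise‑in‑time $L^\infty$ bounds. For the base case $k=0$: by Lemma \ref{lem: nabla Y L2 Z L2-p2}, $\|\nabla Y(\cdot,t)\|_{L^2}$ and $\|Z(\cdot,t)\|_{L^{2-\alpha}}$ are bounded uniformly in $t$; since $q_0 = 2-\alpha \le 2$ and $\Omega$ is bounded, $\|\nabla Y\|_{L^{q_0}(\Omega)L^\infty(0,t)}$ and $\|Z\|_{L^{q_0}(\Omega)L^\infty(0,t)}$ are bounded uniformly in $t$, and Proposition \ref{prop: comparison of different norms} then gives the same for $\|\nabla Y\|_{L^{q_0}(\Omega)L^p_\epsilon(0,t)}$ and $\|Z\|_{L^{q_0}(\Omega)L^p_\epsilon(0,t)}$ for every $p\in[1,\infty)$ and $\epsilon>0$. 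Hence \eqref{eq: assumption L^qk boundedness of Z for nabla Y} and \eqref{eq: assumption L^qk boundedness of Z} hold for $k=0$ with $p_0=1$.

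Next I would run the induction. For each $k=0,1,\dots,k^*$, Lemma \ref{lemma: L^qk boundedness of nabla Y} lifts the bound on $\nabla Y$ from $L^{q_k}$ to $L^{q_{k+1}}$, and Lemma \ref{lem: L^qk boundedness of Z} then lifts the bound on $Z$ from $L^{q_k}$ to $L^{q_{k+1}}$ at the cost of replacing $p_k$ by a larger $p_{k+1}$, so that at level $k+1$ both bounds hold for all $p>p_{k+1}$ and all $0<\epsilon<\min\{\delta_Y\lambda+d_1,\delta_Z\lambda+d_2\}$. After $k^*+1$ steps $q_{k^*+1}=\infty$, so there is a constant $C$ with
\begin{equation*}
    \|\nabla Y\|_{L^\infty(\Omega)L^p_\epsilon(0,t)} + \|Z\|_{L^\infty(\Omega)L^p_\epsilon(0,t)} \le C, \qquad \forall\, t\in[0,T_{\max}),
\end{equation*}
for all $p$ large enough and all admissible $\epsilon$; by $L^\infty(\Omega)\hookrightarrow L^r(\Omega)$ the same holds with $L^\infty(\Omega)$ replaced by any finite $L^r(\Omega)$.

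Finally I would upgrade these to pointwise bounds. Write the variation‑of‑constants formula for $Z$ (equation \eqref{eq: variation of formula Z}) and, after taking the gradient, for $Y$ (via \eqref{eq: variation of constants Y}). Bounding $\|Z(\cdot,t)\|_{L^\infty}$ and $\|\nabla Y(\cdot,t)\|_{L^\infty}$ by the heat‑kernel estimates of Lemma \ref{lem: L^p-L^q estimate}, and using that $X$, $Y$, $\nabla X$ are already bounded (Lemma \ref{lem: X bounded}, Corollary \ref{cor: L^inf boundedness of Y nabla X}) together with the boundedness of $h(Y)$, $h'(Y)$, every term becomes a convolution operator $T_{\rho,\lambda}$ of the form \eqref{eq: operator T_rho,lambda} applied to a scalar function of time controlled in $L^p_\epsilon(0,t)$ --- namely $s\mapsto\|Z(\cdot,s)\|_{L^r}$ and $s\mapsto\|h(Z)\nabla Y(\cdot,s)\|_{L^r}$ for the $Z$‑equation, and $s\mapsto\|\nabla Y(\cdot,s)\|_{L^{2r}}$ and $s\mapsto\|\Delta X(\cdot,s)\|_{L^r}$ for the $\nabla Y$‑equation (after expanding $\nabla(\chi_1(x)h(Y)\nabla X)$ as in Lemma \ref{lemma: L^qk boundedness of nabla Y}), with $r$ finite. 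The product estimate $\|h(Z)\nabla Y\|_{L^r}\le\chi_{2M}\|h(Z)\|_{L^{2r}}\|\nabla Y\|_{L^{2r}}\le c(\|Z\|_{L^\infty}^\alpha+1)\|\nabla Y\|_{L^{2r}}$ is handled by Lemma \ref{lemma: norms of h(psi)}, Hölder, and Remark \ref{rem: holder for L_p_epsilon} exactly as in the proof of Lemma \ref{lem: L^qk boundedness of Z}, and $\|\Delta X\|_{L^r(\Omega)L^p_\epsilon(0,t)}$ is controlled by Lemma \ref{lem: Delta X Lp Lq_eps bounded}. The divergence and gradient terms carry the smoothing exponent $\rho=\frac12+\frac{n}{2r}$; choosing $r$ and $p$ large enough that $\frac1p+\rho<1$, Proposition \ref{prop: Young's inequality on L^p_epsilon} maps these inputs from $L^p_\epsilon(0,t)$ into $L^\infty_\epsilon(0,t)$, whence $\|Z\|_{L^\infty(\Omega)L^\infty_\epsilon(0,t)}+\|\nabla Y\|_{L^\infty(\Omega)L^\infty_\epsilon(0,t)}\le C$ uniformly in $t$; Proposition \ref{prop: L epsilon infinity norm} then turns this into $\sup_t\big(\|Z(\cdot,t)\|_{L^\infty}+\|\nabla Y(\cdot,t)\|_{L^\infty}\big)\le C$, which is \eqref{eq: L^inf boundedness of Z nabla Y}.

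The main obstacle is precisely this last step: one must keep $r$ finite so that the Hölder splitting $\|h(Z)\nabla Y\|_{L^r}\le\|h(Z)\|_{L^{2r}}\|\nabla Y\|_{L^{2r}}$ lands in norms already controlled by the induction, while simultaneously making $\rho=\frac12+\frac{n}{2r}$ and $\frac1p$ small enough that $\frac1p+\rho<1$ holds in Proposition \ref{prop: Young's inequality on L^p_epsilon}, and keeping $\epsilon$ below the exponential decay rate $\min\{\delta_Y\lambda+d_1,\delta_Z\lambda+d_2\}$ --- a balancing act, but one with ample room since the earlier steps deliver uniform control for \emph{every} finite $r$ and \emph{every} large $p$.
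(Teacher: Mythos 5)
Your proposal is correct and follows essentially the same route as the paper: the same base case from Lemma \ref{lem: nabla Y L2 Z L2-p2} and Proposition \ref{prop: comparison of different norms}, the same iteration of Lemmas \ref{lemma: L^qk boundedness of nabla Y} and \ref{lem: L^qk boundedness of Z} up to $q_{k^*+1}=\infty$, and the same final pass through the variation-of-constants formulas with the time norm upgraded to $L^\infty_\epsilon(0,t)$ via Proposition \ref{prop: Young's inequality on L^p_epsilon}, concluded by Proposition \ref{prop: L epsilon infinity norm}. You merely make explicit the exponent bookkeeping ($\tfrac1p+\rho<1$, Hölder splitting of $h(Z)\nabla Y$) that the paper compresses into ``repeat these lemmas again with the norms replaced by $L^\infty$.''
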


\begin{proof}    
    Lemma \ref{lem: nabla Y L2 Z L2-p2} and Proposition \ref{prop: comparison of different norms} imply that $||Z||_{L^{q_0} (\Omega) L_\epsilon^p (0,t)}$ and \\$||\nabla Y||_{L^{q_0} (\Omega) L_\epsilon^p (0,t)}$ are bounded for every $p \in (1, \infty)$ and $\epsilon > 0$. By iterating Lemmas \ref{lemma: L^qk boundedness of nabla Y} and \ref{lem: L^qk boundedness of Z}, eventually we obtain the boundedness of $||Z||_{L^\infty (\Omega) L_\epsilon^p (0,t)}$ and $||\nabla Y||_{L^\infty (\Omega) L_\epsilon^p (0,t)}$ when $p$ is sufficiently large and $\epsilon$ is sufficiently small. Now we can repeat these lemmas again, in which all norms on $\Omega$ are replaced by $L^\infty (\Omega)$, and the norms on the left-hand sides of equations \eqref{eq: proof 1 nabla Y} and \eqref{eq: Z q_k+1 L epsilon p} are replaced by $L^\infty (\Omega) L^\infty_\epsilon (0, t)$. This yields the uniform boundedness of $||\nabla Y||_{L^\infty (\Omega) L^\infty_\epsilon (0, t)}$ and $||Z||_{L^\infty (\Omega) L^\infty_\epsilon (0, t)}$ with respect to $t$. Therefore, Lemma \ref{lem: L^inf boundedness of Z nabla Y} holds as a consequence of Proposition \ref{prop: L epsilon infinity norm}.
\end{proof}

\begin{proof}[Proof of Theorem \ref{th: global existence}]
Lemmas \ref{lem: X bounded}, \ref{lem: L^inf boundedness of Z nabla Y} and Corollary \ref{cor: L^inf boundedness of Y nabla X} guarantee that the solution to equation \eqref{eq: general model} -- \eqref{eq: bc ic} does not blow up under $L^\infty$ norm in finite time. Thanks to Lemma \ref{th: local existence}, we now obtain the global existence and boundedness of the solution.
\end{proof}

\section{Stability}
\label{sec: Stability}

\subsection{$W^{1,\infty}$ boundedness of U}

In this section, we provide some stability results to the model \eqref{eq: general model}. The proof of stability requires further a priori estimates, which are established in the following lemmas.

\begin{lemma}
    \label{lem: Delta Y Lp Lq_eps bounded}
    For all $1 < p, q < \infty$ and $\epsilon > 0$, the solution for equation \eqref{eq: general model} -- \eqref{eq: bc ic} satisfies
    \begin{equation}
        \label{eq: Delta Y Lp Lq_eps bounded}
        ||\Delta Y||_{L^p (\Omega) L^q_\epsilon (0, t)} \leq C_{15}, \quad \forall t \geq 0.
    \end{equation}
\end{lemma}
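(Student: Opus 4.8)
The plan is to replicate, for the second equation of \eqref{eq: general model}, the argument used for $\Delta X$ in Lemma~\ref{lem: Delta X Lp Lq_eps bounded}; the only genuinely new feature is the presence of the taxis term $\nabla(\chi_1(x) h(Y)\nabla X)$, which will be absorbed using the $W^{1,\infty}$-bounds on $X,Y$ from Theorem~\ref{th: global existence} together with the weighted space–time bound on $\Delta X$ already obtained in Lemma~\ref{lem: Delta X Lp Lq_eps bounded}.

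First I would set $\widetilde Y(t)=e^{\epsilon t}(Y(t)-Y_0)$, so that $\widetilde Y(0)=0$ and, from the second equation of \eqref{eq: general model},
$$\widetilde Y_t=\delta_Y\Delta\widetilde Y-\epsilon\widetilde Y+G(x,t),\qquad G(x,t)=e^{\epsilon t}\big(\delta_Y\Delta Y_0+2\epsilon(Y-Y_0)+e_1 f(X,Y,x)-g(Y,Z,x)-d_1 Y-\nabla(\chi_1(x)h(Y)\nabla X)\big).$$
Since the Neumann realization of $\delta_Y\Delta-\epsilon$ has maximal $L^q$-regularity, (Theorem~3.1, \cite{matthias1997heat}) gives $\int_0^t\|(\delta_Y\Delta-\epsilon)\widetilde Y\|_{L^p}^q\,\mathrm{d}s\le c_1\int_0^t\|G(\cdot,s)\|_{L^p}^q\,\mathrm{d}s$ for all $t\ge0$, with no initial-data contribution because $\widetilde Y(0)=0$.

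The second step is to estimate $\|G(\cdot,s)\|_{L^p}$. Expanding $\nabla(\chi_1 h(Y)\nabla X)=h(Y)\,\nabla\chi_1\cdot\nabla X+\chi_1 h'(Y)\,\nabla Y\cdot\nabla X+\chi_1 h(Y)\,\Delta X$ and using ($A_\chi$), the boundedness of $X,\nabla X,Y,\nabla Y,Z$ from Theorem~\ref{th: global existence} (hence of $h(Y)$ and $h'(Y)$, since $h\in C^1$ and $Y$ is bounded), together with $f\le\mu_1 XY$, $g\le\mu_2 YZ$ and $Y_0\in W^{2,\infty}$, one gets $\|G(\cdot,s)\|_{L^p}\le c_2 e^{\epsilon s}(1+\|\Delta X(\cdot,s)\|_{L^p})$. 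Combining this with $\|\Delta\widetilde Y\|_{L^p}\le\delta_Y^{-1}\|(\delta_Y\Delta-\epsilon)\widetilde Y\|_{L^p}+\epsilon\delta_Y^{-1}\|\widetilde Y\|_{L^p}$, $\|\widetilde Y\|_{L^p}\le c_3 e^{\epsilon s}$, $\int_0^t e^{\epsilon q s}\,\mathrm{d}s\le(\epsilon q)^{-1}e^{\epsilon q t}$ and the identity $\int_0^t e^{\epsilon q s}\|\Delta X(\cdot,s)\|_{L^p}^q\,\mathrm{d}s=e^{\epsilon q t}\|\Delta X\|_{L^p(\Omega)L^q_\epsilon(0,t)}^q\le e^{\epsilon q t}C_8^q$ (Lemma~\ref{lem: Delta X Lp Lq_eps bounded}), I obtain $\int_0^t\|\Delta\widetilde Y\|_{L^p}^q\,\mathrm{d}s\le c_4 e^{\epsilon q t}$.

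Finally, writing $e^{\epsilon s}\Delta Y=\Delta\widetilde Y+e^{\epsilon s}\Delta Y_0$ and unwinding the $L^p(\Omega)L^q_\epsilon(0,t)$-norm exactly as at the end of Lemma~\ref{lem: Delta X Lp Lq_eps bounded} yields $\|\Delta Y\|_{L^p(\Omega)L^q_\epsilon(0,t)}^q=e^{-\epsilon q t}\int_0^t\|e^{\epsilon s}\Delta Y\|_{L^p}^q\,\mathrm{d}s\le 2^{q-1}\big(c_4+(\epsilon q)^{-1}\|\Delta Y_0\|_{L^p}^q\big)=:C_{15}^q$, which is the desired bound, uniform in $t$. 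The only delicate point — and the only place the argument could fail — is the taxis term: its top-order piece $\chi_1 h(Y)\Delta X$ is not controlled by the $W^{1,\infty}$-estimates alone, so it is essential to invoke the previously established estimate of $\Delta X$ in the matching weighted space $L^p(\Omega)L^q_\epsilon(0,t)$ (which is why Lemma~\ref{lem: Delta X Lp Lq_eps bounded} was formulated with an arbitrary weight $\epsilon$); given that, the remaining computation is a direct transcription of the scalar case.
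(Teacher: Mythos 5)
Your proposal is correct and follows essentially the same route as the paper's proof: the substitution $\widetilde Y=e^{\epsilon t}(Y-Y_0)$, maximal $L^q$-regularity from \cite{matthias1997heat}, expansion of the taxis term with the $W^{1,\infty}$-bounds on $X,Y$ and the $L^p(\Omega)L^q_\epsilon(0,t)$-bound on $\Delta X$ from Lemma \ref{lem: Delta X Lp Lq_eps bounded}, and the final unwinding of the weighted norm. The only (immaterial) difference is that you shift the operator by $\epsilon$ where the paper shifts by $d_1$, which merely redistributes constants inside the forcing term.
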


\begin{proof}
    Let $\widetilde{Y}(t) = e^{\epsilon t} (Y(t) - Y_0)$, then we have
    \begin{equation*}
        \widetilde{Y}_t = \delta_Y \Delta \widetilde{Y} - d_1 \widetilde{Y} + F(x, t),
    \end{equation*}
    where
    \begin{equation*}
        F(x, t) = e^{\epsilon t} \big( \delta_Y \Delta Y_0 + \epsilon (Y - Y_0) + e_1 f(X,Y,x) - g(Y,Z,x) - d_1 Y_0 - \nabla(\chi_1(x) h(Y) \nabla X) \big).
    \end{equation*}
    By maximal regularity property (Theorem 3.1, \cite{matthias1997heat}), for any $t \geq 0$ we have
    \begin{equation*}
    \begin{aligned}
        \int_0^t \left\| (\delta_Y \Delta - d_1) \widetilde{Y} \right\|_{L^p}^q \mathrm{d}s 
        \leq c_1 \int_0^t \left\| F(\cdot, s) \right\|_{L^p}^q \mathrm{d}s.
    \end{aligned}
    \end{equation*}
    By Young's inequality and boundedness of $Y$, $\nabla X$ \eqref{eq: L^inf boundedness of Y nabla X}, $\nabla Y$ \eqref{eq: L^inf boundedness of Z nabla Y} and $h(Y)$, $h'(Y)$, together with equations \eqref{lemma: G-N inequality on psi}, we have
    \begin{align*}
        & \quad \ ||\nabla (\chi_1(x) h(Y) \nabla X) ||_{L^p} \\
        & \leq ||\nabla \chi_1(x) h(Y) \nabla X ||_{L^p} + ||\chi_1(x) h'(Y)\nabla Y \nabla X ||_{L^p} + ||\chi_1(x) h(Y) \Delta X ||_{L^p}\\
        & \leq c_2 \left(||h(Y)||_{L^\infty}||\nabla X ||_{L^\infty}|\Omega|^{1/p} + \left\|h'(Y)\right\|_{L^\infty}||\nabla X ||_{L^\infty} ||\nabla Y ||_{L^\infty}|\Omega|^{1/p}  + || h(Y)||_{L^\infty} ||\Delta X ||_{L^p}\right)\\
        & \leq c_3 C_7 |\Omega|^{1/p} + c_3 C_7 C_{14} |\Omega|^{1/p} + c_3||\Delta X ||_{L^p}
    \end{align*}
    for some constants $c_2$ and $c_3$.
    Furthermore, the boundedness of $Y$ \eqref{eq: L^inf boundedness of Y nabla X} and $Z$ \eqref{eq: L^inf boundedness of Z nabla Y} implies
    \begin{align*}
        \int_0^t \| \Delta \widetilde{Y} \|_{L^p}^q \mathrm{d}s 
        & \leq c_4 \int_0^t \| \widetilde{Y} \|_{L^p}^q \mathrm{d}s + c_4 \int_0^t \left\| F(\cdot, s) \right\|_{L^p}^q \mathrm{d}s \\
        & \leq c_5 \int_0^t e^{\epsilon q s} \left( ||\Delta Y_0||_{L^p}^q + ||Y||_{L^p}^q + ||Y_0||_{L^p}^q + e_1^q \mu_1^q \overline{X}^q ||Y||_{L^p}^q + \mu_2^q ||Y||_{L^\infty}^q ||Z||_{L^p}^q \right) \mathrm{d}s\\
        & \quad + c_5 \int_0^t e^{\epsilon q s} ||\nabla (\chi_1(x) h(Y) \nabla X) ||_{L^p}^q \mathrm{d}s \\
        & \leq \frac{c_5}{\epsilon q} e^{\epsilon q t} \Big( ||\Delta Y_0||_{L^p}^q + ||Y_0||_{L^p}^q + c_6 \Big) + c_3 c_5 e^{\epsilon q t}||\Delta X||_{L^p(\Omega)L^q_\epsilon(0,t)}^q.
    \end{align*}
    Finally, by the boundedness of $||\Delta X||_{L^p(\Omega)L^q_\epsilon(0,t)}$ \eqref{eq: Delta X Lp Lq_eps bounded}, we have 
    \begin{align*}
        ||\Delta Y||_{L^p (\Omega) L^q_\epsilon (0, t)}^q 
        & = e^{-\epsilon q t} \int_0^t \| e^{\epsilon s} \Delta Y \|_{L^p}^q \mathrm{d}s \\
        & = e^{-\epsilon q t} \int_0^t \left(\| \Delta \widetilde{Y} \|_{L^p}^q + e^{\epsilon q s} \| \Delta Y_0 \|_{L^p}^q \right) \mathrm{d}s\\
        & \leq 2^{q-1} \frac{c_5 + 1}{\epsilon q} \Big( ||\Delta X_0||_{L^p}^q + ||X_0||_{L^p}^q + c_6 \Big) + c_3 c_5 C_8^q =: C_{15}^q.
    \end{align*}
\end{proof}

\begin{lemma}
    \label{lem: nabla Z L2 bounded}
    The solution for equation \eqref{eq: general model} -- \eqref{eq: bc ic} satisfies
    \begin{equation}
        \label{eq: nabla Z L2 bounded}
        ||\nabla Z||_{L^2} \leq C_{16}, \quad \forall t \geq 0.
    \end{equation}
\end{lemma}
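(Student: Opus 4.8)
The plan is to derive a differential inequality for $\Phi(t):=\int_\Omega|\nabla Z|^2$ of the shape $\tfrac{\mathrm d}{\mathrm dt}\Phi+\Phi\le c\int_\Omega|\Delta Y|^2+c$, and then close the argument with Gr\"onwall's inequality together with the already-established space-time estimate on $\Delta Y$ (Lemma \ref{lem: Delta Y Lp Lq_eps bounded}). Concretely, I would multiply the third equation of \eqref{eq: general model} by $-\Delta Z$, integrate over $\Omega$, and use the Neumann condition $\partial_\nu Z=0$ to obtain
\begin{equation*}
    \tfrac12\tfrac{\mathrm d}{\mathrm dt}\int_\Omega|\nabla Z|^2+\delta_Z\int_\Omega|\Delta Z|^2=\int_\Omega\nabla\big(e_2 g(Y,Z,x)-d_2 Z\big)\cdot\nabla Z+\int_\Omega\nabla\big(\chi_2(x)h(Z)\nabla Y\big)\,\Delta Z.
\end{equation*}

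The next step is to control the right-hand side using the uniform $L^\infty$ bounds already in hand: $X$ (Lemma \ref{lem: X bounded}), $Y$ and $\nabla X$ (Corollary \ref{cor: L^inf boundedness of Y nabla X}), and $Z$ and $\nabla Y$ (Lemma \ref{lem: L^inf boundedness of Z nabla Y}) are all bounded; moreover $h(Z)$ and $h'(Z)$ are bounded since $h\in C^1$ and $Z$ is bounded, while $\chi_2,\nabla\chi_2,\nabla\tau_2$ are bounded by $(A_\chi)$ and $(A_\tau)$. Expanding the gradients, these bounds give the pointwise estimates $|\nabla(e_2 g(Y,Z,x)-d_2 Z)|\le c(1+|\nabla Z|)$ and $|\nabla(\chi_2(x)h(Z)\nabla Y)|\le c(1+|\nabla Z|+|\Delta Y|)$. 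Young's inequality then yields
\begin{equation*}
    \tfrac12\tfrac{\mathrm d}{\mathrm dt}\int_\Omega|\nabla Z|^2+\delta_Z\int_\Omega|\Delta Z|^2\le\tfrac{\delta_Z}{2}\int_\Omega|\Delta Z|^2+c\int_\Omega|\nabla Z|^2+c\int_\Omega|\Delta Y|^2+c.
\end{equation*}
To convert the lingering $c\int_\Omega|\nabla Z|^2$ into a dissipative term I would use the interpolation identity $\int_\Omega|\nabla Z|^2=-\int_\Omega Z\,\Delta Z\le\eta\int_\Omega|\Delta Z|^2+\tfrac1{4\eta}\int_\Omega Z^2$, valid for every $\eta>0$ because $\|Z\|_{L^\infty}$ is bounded. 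Choosing $\eta$ small enough to absorb $c\int_\Omega|\nabla Z|^2$ into $\tfrac{\delta_Z}{2}\int_\Omega|\Delta Z|^2$ and still keep a full copy of $\int_\Omega|\nabla Z|^2$ on the left, I arrive at
\begin{equation*}
    \tfrac{\mathrm d}{\mathrm dt}\int_\Omega|\nabla Z|^2+\int_\Omega|\nabla Z|^2\le c\int_\Omega|\Delta Y|^2+c.
\end{equation*}

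Finally I would integrate this inequality to get $\Phi(t)\le e^{-t}\Phi(0)+c\,e^{-t}\int_0^t e^{s}\big(\int_\Omega|\Delta Y(s)|^2\big)\,\mathrm ds+c$. The decisive observation is that $e^{-t}\int_0^t e^{s}\big(\int_\Omega|\Delta Y(s)|^2\big)\,\mathrm ds=\|\Delta Y\|_{L^2(\Omega)L^2_{1/2}(0,t)}^2$, which is bounded uniformly in $t$ by Lemma \ref{lem: Delta Y Lp Lq_eps bounded} (taking $p=q=2$ and $\epsilon=\tfrac12$); and $\Phi(0)=\|\nabla Z_0\|_{L^2}^2\le|\Omega|\,\|\nabla Z_0\|_{L^\infty}^2<\infty$ by $(A_I)$. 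Hence $\Phi(t)$ is bounded uniformly in $t$, which is exactly \eqref{eq: nabla Z L2 bounded}. Since the solution is only $C^{2,1}$ for $t>0$, strictly speaking one carries this out on $(t_0,t)$ and lets $t_0\to 0^+$, using $\nabla Z(\cdot,t_0)\to\nabla Z_0$ in $L^2$. The one genuinely delicate point is the calibration of the exponential weight: the decay rate forced by the $+\int_\Omega|\nabla Z|^2$ term on the left must be matched to the weight $\epsilon$ for which $\|\Delta Y\|_{L^2(\Omega)L^2_\epsilon(0,t)}$ was controlled; everything else is routine Young/interpolation bookkeeping.
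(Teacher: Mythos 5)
Your proposal is correct, but it follows a genuinely different route from the paper. The paper does not test the $Z$-equation with $-\Delta Z$; instead it first extracts $\|\nabla Z\|_{L^2(\Omega)L^2_\epsilon(0,t)}\leq c$ from the space--time dissipation bound on $\int_\Omega \frac{1}{\chi_2}\frac{|\nabla Z|^2}{h(Z)}$ already recorded in Lemma \ref{lem: nabla Y L2 Z L2-p2}, and then runs a Duhamel argument on \eqref{eq: variation of formula Z}: taking the gradient, applying the heat-kernel estimates of Lemma \ref{lem: L^p-L^q estimate} and the convolution bound of Proposition \ref{prop: Young's inequality on L^p_epsilon} to upgrade to $\|\nabla Z\|_{L^2(\Omega)L^p_\epsilon(0,t)}$ for all $p<\infty$ and finally to $L^\infty_\epsilon$ in time, concluding via Proposition \ref{prop: L epsilon infinity norm}. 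Your energy/Gr\"onwall argument is more elementary: it bypasses the semigroup machinery entirely, at the cost of one extra interpolation step ($\int_\Omega|\nabla Z|^2=-\int_\Omega Z\,\Delta Z$) to manufacture the dissipative term, and it consumes exactly the same key input, namely the $\|\Delta Y\|_{L^2(\Omega)L^2_\epsilon(0,t)}$ bound of Lemma \ref{lem: Delta Y Lp Lq_eps bounded}, which both proofs need to control $\nabla\cdot(\chi_2 h(Z)\nabla Y)$. Your identification of $e^{-t}\int_0^t e^s\|\Delta Y(s)\|_{L^2}^2\,\mathrm{d}s$ with $\|\Delta Y\|^2_{L^2(\Omega)L^2_{1/2}(0,t)}$ is exactly right, and the ``calibration'' worry you raise is in fact vacuous: Lemma \ref{lem: Delta Y Lp Lq_eps bounded} holds for every $\epsilon>0$, so whatever decay rate your absorbed inequality produces can be matched. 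The only genuine technicalities are the ones you already flag (working on $(t_0,t)$ and using $U\in C([0,T_{\max});W^{1,p})$ from the local theory to pass $t_0\to0^+$); the pointwise bounds $|\nabla(e_2g-d_2Z)|\leq c(1+|\nabla Z|)$ and $|\nabla\cdot(\chi_2h(Z)\nabla Y)|\leq c(1+|\nabla Z|+|\Delta Y|)$ are justified by the available $L^\infty$ bounds on $Y$, $Z$, $\nabla Y$, $h(Z)$, $h'(Z)$ and assumptions $(A_\chi)$, $(A_\tau)$. One side remark: the paper's detour also yields the intermediate space--time estimate \eqref{eq: nabla Z L2 L2 epsilon bounded}, but nothing downstream requires it beyond this proof, so your shorter argument delivers everything the subsequent lemmas actually use.
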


\begin{proof}
    By the properties of $h$ and the boundedness of $Z$ \eqref{eq: L^inf boundedness of Z nabla Y}, we know that $h(Z)$ is bounded. Therefore, 
    \begin{equation*}
        \int_\Omega \frac{1}{\chi_2} \frac{|\nabla Z|^2}{h(Z)} 
        \geq \frac{1}{\chi_{2M} ||h(Z)||_{L^\infty}} \int_\Omega |\nabla Z|^2 .
    \end{equation*}
    By Lemma \ref{lem: nabla Y L2 Z L2-p2}, for all $0 < \epsilon \leq \frac{d_1}{2}$, we get
    \begin{equation*}
        C_{11} \geq \left\| \int_\Omega \frac{1}{\chi_2} \frac{|\nabla Z|^2}{h(Z)}  \right\|_{L^1_{2\epsilon} (0, t)}
        \geq \frac{ \left\| \| \nabla Z \|^2_{L^2} \right\|_{L^1_{2\epsilon} (0, t)} }{\chi_{2M} ||h(Z)||_{L^\infty}}
        = \frac{ \left\| \nabla Z \right\|_{L^2(\Omega) L^2_{\epsilon} (0, t)}^2 }{\chi_{2M} ||h(Z)||_{L^\infty}}.
    \end{equation*}
Therefore,
\begin{equation}
    \label{eq: nabla Z L2 L2 epsilon bounded}
    ||\nabla Z||_{L^2(\Omega) L^2_\epsilon (0, t)} \leq \left( \chi_{2M} ||h(Z)||_{L^\infty} C_{11} \right)^{1/2} =: c_1.
\end{equation}

Now we prove that $\| \nabla Z \|_{L^2 (\Omega) L^p_\epsilon (0, t)}$ is bounded for all $p \in [2, \infty)$ and sufficiently small $\epsilon$. Taking gradient on equation \eqref{eq: variation of formula Z}, we obtain
\begin{equation}
\label{eq: proof0: nabla Z L2 L epsilon p}
     \begin{aligned}
        \nabla Z(t) &= \nabla e^{(\delta_{Z}\Delta -d_2)t} Z_0   + \int_0^t \nabla e^{(\delta_{Z}\Delta - d_2) (t-s)} \left(e_2g(Y,Z,x) - \nabla (\chi_2(x)h(Z)\nabla Y) \right) \mathrm{d}s.
    \end{aligned}
\end{equation}
By \eqref{eq: lemma Lp Lq estimate 1} and \eqref{eq: lemma Lp Lq estimate 2}, there exists $c_2 > 0$ such that
\begin{equation*}
     \begin{aligned}
        ||\nabla Z||_{L^2} 
        & \leq c_2 e^{-(\delta_{Z}\lambda + d_2) t } || \nabla  Z_0 ||_{L^{2}} \\
        &\quad \ + c_2 e_2 \int_0^t \left( 1+ (t-s)^{-\frac{1}{2}}\right)  e^{-(\delta_{Z}\lambda + d_2) (t-s)}  \left\|g(Y,Z,x)\right\|_{L^2}  \mathrm{d}s \\
        &\quad \ + c_2 \int_0^t \left( 1+ (t-s)^{-\frac{1}{2}}\right)  e^{-(\delta_{Z}\lambda + d_2) (t-s)}\left\| \nabla (\chi_2(x)h(Z)\nabla Y) \right\|_{L^2} \mathrm{d}s.
    \end{aligned}
\end{equation*}
It follows that 
\begin{equation}
\label{eq: proof1: nabla Z L2 L epsilon p}
    \begin{aligned}
        ||\nabla Z||_{L^2(\Omega)L_\epsilon^p(0,t)} 
        \leq & \ c_2 \|e^{-(\delta_{Z}\lambda + d_2) t } \|_{L_\epsilon^p(0,t)}|| \nabla  Z_0 ||_{L^{2}} \\
        & + c_2 e_2 \left\| T_{\frac{1}{2}, \delta_Z \lambda + d_2} \left\| g(Y,Z,x) \right\|_{L^2} \right\|_{L^p_\epsilon(0,t)} \\
        & + c_2 \left\| T_{\frac{1}{2}, \delta_Z \lambda + d_2} \left\| \nabla (\chi_2(x) h(Z) \nabla Y) \right\|_{L^2} \right\|_{L^p_\epsilon(0,t)}.
    \end{aligned}
\end{equation}
By the boundedness of $Y$ \eqref{eq: L^inf boundedness of Y nabla X}, $Z$ \eqref{eq: L^inf boundedness of Z nabla Y} and Proposition \ref{prop: comparison of different norms}, the second term can be evaluated by
\begin{equation}
    \label{eq: proof2: nabla Z L2 L epsilon p}
    \begin{aligned}
        &\quad \ \big\| T_{\frac{1}{2}, \delta_Z \lambda + d_2} \left\| g(Y,Z,x) \right\|_{L^2} \big\|_{L^p_\epsilon(0,t)} \\
        & \leq c_3 \left\| T_{\frac{1}{2}, \delta_Z \lambda + d_2} \left\| g(Y,Z,x) \right\|_{L^2} \right\|_{L^\infty(0,t)}\\
        & \leq c_3 \mu_2 |\Omega|^{1/2} \left\| T_{\frac{1}{2}, \delta_Z \lambda + d_2} \big( \| Y \|_{L^\infty} \| Z \|_{L^\infty} \big) \right\|_{L^\infty(0,t)} \\
        & \leq c_3 c_4 \mu_2 C_7 C_{14} |\Omega|^{1/2} 
    \end{aligned}
\end{equation}
for some $c_3, c_4 > 0$. By Proposition \ref{prop: Young's inequality on L^p_epsilon}, for any $p \in [2, \infty)$ and $0 < \epsilon < \delta_Z \lambda + d_2$, there exists a constant $c_5 > 0$ so that
\begin{equation}
\label{eq: proof3: nabla Z L2 L epsilon p}
    \begin{aligned}
        \left\| T_{\frac{1}{2}, \delta_Z \lambda + d_2} \left\| \nabla (\chi_2(x) h(Z) \nabla Y) \right\|_{L^2} \right\|_{L^p_\epsilon(0,t)} 
        \leq c_5 \left\| \nabla (\chi_2(x)h(Z)\nabla Y) \right \|_{L^2(\Omega) L^2_\epsilon (0,t)}.
    \end{aligned}
\end{equation}
By the estimates of $||\nabla Y||_{L^\infty}$ \eqref{eq: L^inf boundedness of Z nabla Y}, $||\Delta Y||_{L^2(\Omega) L^{2}_\epsilon (0,t)}$ \eqref{eq: Delta Y Lp Lq_eps bounded}, $||\nabla Z||_{L^2(\Omega) L^{2}_\epsilon (0,t)}$ \eqref{eq: nabla Z L2 L2 epsilon bounded}, boundedness of $\chi_2$, $\nabla \chi_2$, $h(Z)$, $h'(Z)$ and Proposition \ref{prop: comparison of different norms}, we obtain that if $0 < \epsilon < \min \left\{ \frac{d_1}{2}, \delta_Z \lambda + d_2 \right\}$, then
\begin{equation}
    \label{eq: proof4: nabla Z L2 L epsilon p}
    \begin{aligned}
        &\quad \ \left\|\nabla (\chi_2(x)h(Z)\nabla Y) \right\|_{L^2(\Omega) L^{2}_\epsilon(0,t)} \\
        & \leq c_6 \left\| \nabla \chi_2(x) h(Z)\nabla Y \right\|_{L^2(\Omega) L^{2}_\epsilon(0,t)} 
        + \left\| \chi_2(x) h'(Z) \nabla Z \nabla Y \right\|_{L^2(\Omega) L^{2}_\epsilon(0,t)}
         + \left\| \chi_2(x) h(Z) \Delta Y \right\|_{L^2(\Omega) L^{2}_\epsilon(0,t)} \\
        & \leq c_7 \left( ||\nabla Y||_{L^2(\Omega) L^\infty (0,t)} + ||\nabla Y||_{L^\infty (\Omega) L^\infty (0, t)} ||\nabla Z||_{L^2(\Omega) L^2_\epsilon (0,t)} + ||\Delta Y||_{L^2(\Omega) L^2_\epsilon(0,t)} \right) \\
        & \leq c_7 \left( |\Omega|^{1/2} C_7 + c_1 C_{14} + C_{15} \right).
    \end{aligned}
\end{equation}
Finally, combing \eqref{eq: proof1: nabla Z L2 L epsilon p}-\eqref{eq: proof4: nabla Z L2 L epsilon p}, we obtain
\begin{equation*}
    \begin{aligned}
        \left\| \nabla Z \right\|_{L^{2}(\Omega) L^{p}_\epsilon (0,t)}
        & \leq c_8 || \nabla  Z_0 ||_{L^{2}} + c_{2}c_{3}c_{4} e_2\mu_2 C_7 C_{14} |\Omega|^{1/2} + c_{2} c_{5} c_7 \left( |\Omega|^{1/2} C_7 + c_1 C_{14} + C_{15} \right).
        \end{aligned}
    \end{equation*}

    If replace the pair of norms $(L^p_\epsilon (0, t), L^2_\epsilon (0, t))$ in \eqref{eq: proof3: nabla Z L2 L epsilon p} by $(L^\infty_\epsilon (0, t), L^p_\epsilon (0, t))$ and repeat the argument, then for $0 < \epsilon < \min \left\{ \frac{d_1}{2}, \delta_Z \lambda + d_2 \right\}$ we can prove that $||\nabla Z||_{L^2(\Omega) L^\infty_\epsilon (0, t)}$ is uniformly bounded for $t \geq 0$. Now Lemma \ref{lem: nabla Z L2 bounded} holds as a consequence of Proposition \ref{prop: L epsilon infinity norm}.
\end{proof}

\begin{lemma}
    \label{lem: nabla Z Lqk bounded}
    Suppose the solution for equation \eqref{eq: general model} -- \eqref{eq: bc ic} satisfies
    \begin{equation}
        \label{eq: assumption L^qk boundedness of nabla Z}
        ||\nabla Z||_{L^{q_k}} \leq C_{17, k}, \quad \forall t \geq 0
    \end{equation}
    for some $0 \leq k \leq k^*$, then
    \begin{equation}
        \label{eq: nabla Z Lqk+1 bounded}
        ||\nabla Z||_{L^{q_{k+1}}} \leq C_{17, k+1}, \quad \forall t \geq 0.
    \end{equation}
\end{lemma}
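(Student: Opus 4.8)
The plan is to iterate the variation‑of‑constants representation of $\nabla Z$ inside the weighted space–time norms $L^{q}(\Omega)L^p_\epsilon(0,t)$, in the same spirit as Lemmas \ref{lemma: L^qk boundedness of nabla Y} and \ref{lem: L^qk boundedness of Z}, and then pass to the pointwise‑in‑$t$ bound via Proposition \ref{prop: L epsilon infinity norm}. First I would note that the hypothesis $\|\nabla Z\|_{L^{q_k}}\le C_{17,k}$ together with Proposition \ref{prop: comparison of different norms} gives $\|\nabla Z\|_{L^{q_k}(\Omega)L^p_\epsilon(0,t)}\le c$ for every $p\in[1,\infty)$ and every small $\epsilon>0$, uniformly in $t$. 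Differentiating \eqref{eq: variation of formula Z} yields
\begin{equation*}
    \nabla Z(t) = \nabla e^{(\delta_Z\Delta-d_2)t}Z_0 + \int_0^t \nabla e^{(\delta_Z\Delta-d_2)(t-s)}\big(e_2 g(Y,Z,x)\big)\,\mathrm{d}s - \int_0^t \nabla e^{(\delta_Z\Delta-d_2)(t-s)}\nabla\big(\chi_2(x)h(Z)\nabla Y\big)\,\mathrm{d}s,
\end{equation*}
which I would estimate term by term in $L^{q_{k+1}}(\Omega)L^p_\epsilon(0,t)$. (If $q_{k+1}<2$ the claim is immediate from $\|\nabla Z\|_{L^2}\le C_{16}$ by H\"older, so I may assume $q_{k+1}\ge 2$; the case $k=k^*$, $q_{k^*+1}=\infty$, is handled exactly as in Lemma \ref{lem: L^qk boundedness of Z}, using that $q_{k^*}>n$ keeps every singular exponent strictly below $1$.)

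For the first term, Lemma \ref{lem: L^p-L^q estimate} gives exponential decay in $t$ of $\|\nabla e^{(\delta_Z\Delta-d_2)t}Z_0\|_{L^{q_{k+1}}}$ (using $\nabla Z_0\in W^{1,\infty}$ from $(A_I)$), and its $L^p_\epsilon(0,t)$‑norm is bounded. For the second term, Lemma \ref{lem: L^p-L^q estimate}(ii) bounds $\|\nabla e^{(\delta_Z\Delta-d_2)(t-s)}g\|_{L^{q_{k+1}}}$ by $C(1+(t-s)^{-\frac12-\rho_1})e^{-(\delta_Z\lambda+d_2)(t-s)}\|g(Y,Z,x)\|_{L^{q_k}}$ with $\rho_1=\tfrac n2\eta<\tfrac12$ as in \eqref{eq: rho_1}; since $g(Y,Z,x)\le\mu_2 YZ$ with $Y,Z$ bounded by Corollary \ref{cor: L^inf boundedness of Y nabla X} and Lemma \ref{lem: L^inf boundedness of Z nabla Y}, this produces the operator $T_{\frac12+\rho_1,\,\delta_Z\lambda+d_2}$ acting on a function bounded in $L^p_\epsilon(0,t)$, hence bounded in $L^p_\epsilon(0,t)$ by Proposition \ref{prop: Young's inequality on L^p_epsilon}.

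The crucial term is the cross‑diffusion one. The key point is to apply Lemma \ref{lem: L^p-L^q estimate}(ii) to the \emph{scalar} $\nabla(\chi_2(x)h(Z)\nabla Y)$ without differentiating the divergence any further (which would raise the heat‑kernel singularity past the integrable threshold); this keeps the exponent at $\tfrac12+\rho_1<1$, so after Proposition \ref{prop: Young's inequality on L^p_epsilon} it suffices to bound $\|\nabla(\chi_2(x)h(Z)\nabla Y)\|_{L^{q_k}(\Omega)L^p_\epsilon(0,t)}$. Expanding by the product rule,
\begin{equation*}
    \nabla\big(\chi_2(x)h(Z)\nabla Y\big) = (\nabla\chi_2)\,h(Z)\!\cdot\!\nabla Y + \chi_2 h'(Z)\,\nabla Z\!\cdot\!\nabla Y + \chi_2 h(Z)\,\Delta Y,
\end{equation*}
and using $(A_\chi)$, $(A_h)$, the boundedness of $Y$, $\nabla Y$, $Z$ (hence of $h(Z)$ and $h'(Z)$ on the compact range of $Z$), the reduction bound on $\|\nabla Z\|_{L^{q_k}(\Omega)L^p_\epsilon(0,t)}$ from the first paragraph, and — for the last summand — the space–time estimate $\|\Delta Y\|_{L^{q_k}(\Omega)L^p_\epsilon(0,t)}\le C_{15}$ of Lemma \ref{lem: Delta Y Lp Lq_eps bounded}, each of the three pieces is controlled uniformly in $t$. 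Combining the three term estimates gives $\|\nabla Z\|_{L^{q_{k+1}}(\Omega)L^p_\epsilon(0,t)}\le c$ uniformly in $t$ for all $p\in[1,\infty)$ and small $\epsilon>0$.

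Finally, I would re‑run the identical computation with the time norm $L^\infty_\epsilon(0,t)$ in place of $L^p_\epsilon(0,t)$ — legitimate once $p$ is chosen large enough that $\tfrac1p+\tfrac12+\rho_1<1$, exactly as in Lemma \ref{lem: L^inf boundedness of Z nabla Y} — to obtain boundedness of $\|\nabla Z\|_{L^{q_{k+1}}(\Omega)L^\infty_\epsilon(0,t)}$, and then Proposition \ref{prop: L epsilon infinity norm} upgrades this to the desired uniform pointwise bound $\|\nabla Z(\cdot,t)\|_{L^{q_{k+1}}}\le C_{17,k+1}$ for all $t\ge0$. I expect the only genuine obstacle to be the cross‑diffusion term: one must keep $\nabla\cdot(\chi_2(x)h(Z)\nabla Y)$ as a single scalar whose $L^{q_k}$‑in‑space norm is absorbed by the already‑established $\Delta Y$ estimate of Lemma \ref{lem: Delta Y Lp Lq_eps bounded}, the weighted space–time norm being precisely what renders the factor $(t-s)^{-\frac12-\rho_1}$ harmless through Proposition \ref{prop: Young's inequality on L^p_epsilon}.
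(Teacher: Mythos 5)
Your proposal is correct and follows essentially the same route as the paper's proof: the same variation-of-constants representation, the same heat-semigroup estimates giving the singular factor $(t-s)^{-\frac12-\rho_1}$, the same product-rule decomposition of $\nabla(\chi_2 h(Z)\nabla Y)$ with the $\Delta Y$ piece absorbed by Lemma \ref{lem: Delta Y Lp Lq_eps bounded}, and the same passage through $T_{\frac12+\rho_1,\delta_Z\lambda+d_2}$ via Proposition \ref{prop: Young's inequality on L^p_epsilon}. The only cosmetic difference is that the paper bounds $\|\nabla Z(t)\|_{L^{q_{k+1}}}$ pointwise in $t$ directly by taking $\sup_{s\in(0,t)}\|\cdot\|_{L^\infty_\epsilon(0,s)}$ of the convolution, whereas you first establish the $L^{q_{k+1}}(\Omega)L^p_\epsilon(0,t)$ bound and then re-run with $L^\infty_\epsilon$ before invoking Proposition \ref{prop: L epsilon infinity norm}; both are valid.
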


\begin{proof}
    Applying \eqref{eq: lemma Lp Lq estimate 1}, \eqref{eq: lemma Lp Lq estimate 2} on \eqref{eq: proof0: nabla Z L2 L epsilon p}, we obtain that
\begin{equation}
     \begin{aligned}
        \label{eq: proof1: nabla Z Lqk+1 evaluation}
        ||\nabla Z||_{L^{q_{k+1}}} 
        \leq & \ c_1 e^{-(\delta_{Z}\lambda + d_2) t } || \nabla  Z_0 ||_{L^{q_{k+1}}} \\
        & + c_1 e_2 \int_0^t \left( 1+ (t-s)^{-\frac{1}{2} - \rho_1 } \right)  e^{-(\delta_{Z}\lambda + d_2) (t-s)} \left\|g(Y,Z,x)\right\|_{L^{q_k}}  \mathrm{d}s \\
        & + c_1 \int_0^t \left( 1+ (t-s)^{-\frac{1}{2} - \rho_1 }\right)  e^{-(\delta_{Z}\lambda + d_2) (t-s)}\left\| \nabla (\chi_2(x)h(Z)\nabla Y) \right\|_{L^{q_k}} \mathrm{d}s,
    \end{aligned}
\end{equation}
where $\rho_1$ is defined in equation \eqref{eq: rho_1}. By the boundedness of $Y$ \eqref{eq: L^inf boundedness of Y nabla X}, $Z$ \eqref{eq: L^inf boundedness of Z nabla Y} and Proposition \ref{prop: comparison of different norms}, the second term can be evaluated by
\begin{equation}
\label{eq: proof2: nabla Z Lqk+1 evaluation}
    \begin{aligned}
        & \quad \ \int_0^t \left( 1+ (t-s)^{-\frac{1}{2} - \rho_1 } \right)  e^{-(\delta_{Z}\lambda + d_2) (t-s)} \left\| g(Y,Z,x) \right\|_{L^{q_k}}  \mathrm{d}s \\
        & \leq \int_0^t \left( 1+ (t-s)^{-\frac{1}{2} - \rho_1 } \right)  e^{-(\delta_{Z}\lambda + d_2) (t-s)} \mu_2 \|Y\|_{L^\infty} \|Z\|_{L^\infty} |\Omega|^\frac{1}{q_k}\mathrm{d}s\\
        & \leq c_2 \mu_2 C_7 C_{14} |\Omega|^\frac{1}{q_k}.
    \end{aligned}
\end{equation}
By Propositions \ref{prop: Young's inequality on L^p_epsilon} and \ref{prop: L epsilon infinity norm}, for $0<\epsilon < \delta_Z\lambda + d_2$, there exist a sufficiently large $p$ and a constant $c_3$, such that
\begin{equation}
\label{eq: proof4: nabla Z Lqk+1 evaluation}
    \begin{aligned}
        & \quad \  \int_0^t \left( 1 + (t-s)^{-\frac{1}{2} - \rho_1}\right) e^{-\left(\delta_{Z} \lambda + d_2\right)(t-s)} \left\| \nabla (\chi_2(x)h(Z)\nabla Y) \right\|_{L^{q_k}}  \mathrm{d}s  \\
        & \leq \sup \limits_{s\in (0,t)} \left\| T_{\frac{1}{2} + \rho_1, \delta_Z \lambda + d_2} \left\| \nabla (\chi_2(x) h(Z)\nabla Y) \right\|_{L^{q_k}} \right\|_{L^\infty_\epsilon (0,s)} \\
        & \leq c_3 \sup\limits_{s\in (0,t)} \left\| \nabla (\chi_2(x)h(Z)\nabla Y) \right \|_{L^{q_k}(\Omega) L^{p}_\epsilon (0,s)}.
    \end{aligned}
\end{equation}
By the estimates of $\left\| \nabla Y \right\|_{L^\infty}$ \eqref{eq: L^inf boundedness of Z nabla Y}, $||\nabla Z||_{L^{q_k}}$ \eqref{eq: assumption L^qk boundedness of nabla Z}, and $||\Delta Y||_{L^{q_k}(\Omega) L^p_\epsilon(0,t)}$ \eqref{eq: Delta Y Lp Lq_eps bounded}, for $0<\epsilon< \min \left\{ \frac{d_1}{2}, \delta_Z\lambda+d_2 \right\}$, there exist constants $c_4$ and $c_{6}$ such that
\begin{equation}
\label{eq: proof5: nabla Z Lqk+1 evaluation}
    \begin{aligned}
        &\quad \ \left\| \nabla (\chi_2(x)h(Z)\nabla Y) \right\|_{L^{q_k}(\Omega) L^p_\epsilon(0,t)} \\
        & \leq c_4 \left\| \nabla \chi_2(x) h(Z)\nabla Y \right\|_{L^{q_k}(\Omega) L^\infty(0,t)} 
        + c_4 \left\| \chi_2(x) h'(Z) \nabla Z \nabla Y \right\|_{L^{q_k}(\Omega) L^\infty(0,t)}
        + \left\| \chi_2(x) h(Z) \Delta Y \right\|_{L^{q_k}(\Omega) L^p_\epsilon(0,t)} \\
        & \leq c_5 \left( ||\nabla Y||_{L^{q_k}(\Omega) L^\infty(0,t)} + ||\nabla Y||_{L^\infty(\Omega) L^\infty(0,t)} ||\nabla Z||_{L^{q_k}(\Omega) L^\infty(0,t)} + ||\Delta Y||_{L^{q_k}(\Omega) L^p_\epsilon(0,t)} \right) \\
        & \leq c_5 \left( |\Omega|^{1/q_k} C_7 + C_{14} C_{17,k} + C_{15} \right).
    \end{aligned}
\end{equation}
It follows from above equations \eqref{eq: proof1: nabla Z Lqk+1 evaluation}-\eqref{eq: proof5: nabla Z Lqk+1 evaluation} that
\begin{equation*}
    \begin{aligned}
        \left\| \nabla Z \right\|_{L^{q_{k+1}}} 
        & \leq c_6 || \nabla Z_{0} ||_{L^{q_{k+1}}} 
        + c_1 c_2 e_2 \mu_2 C_7 C_{14} |\Omega|^{1/q_k}
         + c_1 c_3 c_5 \left( |\Omega|^{1/q_k} C_7 + C_{14} C_{17,k} + C_{15} \right) 
        =: C_{17, k+1}.
    \end{aligned}
\end{equation*}
\end{proof}

Taking $k = k^*$ in the previous lemma, we immediately obtain the following corollary.

\begin{corollary}
    \label{cor: L^inf boundedness of nabla Z}
    The solution for equation \eqref{eq: general model} -- \eqref{eq: bc ic} satisfies
    \begin{equation}
        \label{eq: L^inf boundedness of nabla Z}
        ||\nabla Z||_{L^\infty} \leq C_{18}, \quad \forall t \geq 0.
    \end{equation}
\end{corollary}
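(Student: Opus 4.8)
The plan is to run a finite induction on $k$, bootstrapping the spatial integrability of $\nabla Z$ from $L^{q_0}$ up to $L^{q_{k^*+1}} = L^\infty$ via repeated application of Lemma \ref{lem: nabla Z Lqk bounded}.

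First I would establish the base case $k = 0$. Lemma \ref{lem: nabla Z L2 bounded} gives $\|\nabla Z\|_{L^2} \leq C_{16}$ uniformly in $t$. Since $q_0 = 2 - \alpha \leq 2$ and $\Omega$ is a bounded domain, H\"older's inequality yields $\|\nabla Z\|_{L^{q_0}} \leq |\Omega|^{1/q_0 - 1/2} \|\nabla Z\|_{L^2} \leq C_{17, 0}$ for all $t \geq 0$, which is precisely hypothesis \eqref{eq: assumption L^qk boundedness of nabla Z} with $k = 0$.

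Next, for the inductive step, suppose \eqref{eq: assumption L^qk boundedness of nabla Z} holds for some $0 \leq k \leq k^*$. Then Lemma \ref{lem: nabla Z Lqk bounded} directly delivers \eqref{eq: nabla Z Lqk+1 bounded}, i.e. $\|\nabla Z\|_{L^{q_{k+1}}} \leq C_{17, k+1}$ uniformly in $t$. Iterating from $k = 0$ to $k = k^*$, and recalling that the sequence $\{q_k\}$ is constructed so that $\tfrac{1}{q_k}$ decreases by the fixed amount $\eta > 0$ at each step until $q_k > n$ (at which point one sets $q_{k^*+1} = \infty$), we exhaust the finite chain of $k^* + 1$ steps and reach $q_{k^*+1} = \infty$. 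This gives $\|\nabla Z(\cdot, t)\|_{L^\infty} \leq C_{17, k^*+1} =: C_{18}$ for all $t \geq 0$, which is \eqref{eq: L^inf boundedness of nabla Z}.

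There is no genuine obstacle at this stage beyond bookkeeping: all the analytic difficulty — the heat-semigroup $L^p$--$L^q$ estimates \eqref{eq: lemma Lp Lq estimate 1}--\eqref{eq: lemma Lp Lq estimate 2}, the weighted-norm convolution bound of Proposition \ref{prop: Young's inequality on L^p_epsilon}, and the control of $\|\nabla(\chi_2(x) h(Z) \nabla Y)\|_{L^{q_k}}$ through the maximal-regularity bound \eqref{eq: Delta Y Lp Lq_eps bounded} on $\Delta Y$ — has already been absorbed into Lemma \ref{lem: nabla Z Lqk bounded}. The only two things one must verify are that the base hypothesis is met (done above from Lemma \ref{lem: nabla Z L2 bounded}) and that the induction terminates after finitely many steps, which is immediate from the definition of $\{q_k\}$. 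Hence the corollary follows by taking $k = k^*$ in Lemma \ref{lem: nabla Z Lqk bounded}.
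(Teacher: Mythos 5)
Your proposal is correct and follows exactly the paper's route: the paper obtains the corollary by taking $k = k^*$ in Lemma \ref{lem: nabla Z Lqk bounded}, with the base case supplied by Lemma \ref{lem: nabla Z L2 bounded}, which is precisely the finite bootstrap you describe. Your only addition is to make explicit the H\"older step passing from $\|\nabla Z\|_{L^2}$ to $\|\nabla Z\|_{L^{q_0}}$ (valid since $q_0 = 2-\alpha \leq 2$ and $\Omega$ is bounded), a detail the paper leaves implicit.
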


\begin{remark}
    \label{rem: W^1, infty}
    By Lemmas \ref{lem: X bounded}, \ref{lem: L^inf boundedness of Z nabla Y} and Corollaries \ref{cor: L^inf boundedness of Y nabla X}, \ref{cor: L^inf boundedness of nabla Z}, $||U(t)||_{W^{1, \infty}}$ is bounded by a constant for all $t$. However, this constant depends on the initial value. If we want to obtain uniform boundedness, we need to guarantee that the initial values are also bounded. By carefully going through the proofs above, the boundedness of the following quantities will guarantee the uniform boundedness of the solution:
    \begin{equation}
        \label{eq: initial value dependence}
        \int_\Omega \frac{|\nabla X_0|^2}{X_0} ,
        \quad \int_\Omega \frac{|\nabla Y_0|^2}{Y_0} ,
        \quad \displaystyle \int_\Omega \frac{|\nabla Z_0|^2}{h(Z_0)} ,
        \quad ||U_0||_{W^{2, \infty}}.
    \end{equation}
\end{remark}

\subsection{Proof of stability}

Now we discuss the stability of the model \eqref{eq: general model} when $\tau_1$ and $\tau_2$ are constant functions. In this case, the function $\Phi$ defined in \eqref{eq: vector Phi} only depends on $U$, and we can find constant equilibrium $U^*$ of \eqref{eq: general model} from $\Phi(U^*) = 0$. 

One can easily check that steady state (0,0,0) is linearly unstable. For other steady states, we propose a generalization of LaSalle's invariance principle.

\begin{proposition}[Generalized LaSalle's invariance principle]
\label{prop: lyapunov to L^infty}
Let $U: [0, \infty] \to W^{1, \infty}(\Omega; \mathbb{R}^3)$ satisfy $\sup_{t\geq 0}$ $\|U(t)\|_{W^{1,\infty}}<\infty$. Suppose there is a function \(\mathcal{L} \in C^1([0, \infty); [0, \infty))\) with \(\mathcal{L}(t) = \mathcal{L}^{(0)}(U(t))\) and \(\frac{\mathrm{d}}{\mathrm{d}t} \mathcal{L}(t) = \mathcal{L}^{(1)}(U(t))\). 

\begin{enumerate}
    \item[(a)] Suppose there is a closed subset $S \subset L^\infty(\Omega; \mathbb{R}^3)$ so that $U(t) \in S$ and $\frac{\mathrm{d}}{\mathrm{d}t} \mathcal{L}(t) \leq 0$ for all $t \geq 0$, then we have $\lim\limits_{t \to \infty} d_\infty (U(t), M) = 0$, where $M$ is the largest invariant set in $E = \left\{ V \in S: \mathcal{L}^{(1)} (V) = 0 \right\}$, and $d_\infty$ is the distance under $L^\infty (\Omega; \mathbb{R}^3)$.
    
    \item[(b)] Let $U^* = (u_1^*, u_2^*, u_3^*) \in L^\infty (\Omega; \mathbb{R}^3)$, and let $S$ be a closed neighbourhood of $U^*$ in $L^\infty (\Omega; \mathbb{R}^3)$. Suppose the set $S$ satisfies the following conditions:
    \begin{itemize}
        \item[(i)] If $U(t) \in S$, then $\frac{\mathrm{d}}{\mathrm{d}t} \mathcal{L}(t)  \leq 0$;
        
        \item[(ii)] There exist $c_1$, $c_2 > 0$ and $p_1$, $p_2$, $p_3 \in [1, \infty)$ so that
        \begin{equation*}
            c_1 d_p (V, U^*) \leq \mathcal{L}^{(0)} (V) \leq c_2 d_p (V, U^*), \ \forall V \in S,
        \end{equation*}
        where 
        \begin{equation*}
            d_p (V, U^*) := \sum_{i = 1}^3 \|v_i - u_i^*\|_{L^{p_i}}^{p_i}, \ V = (v_1, v_2, v_3).
        \end{equation*}
    \end{itemize}
    
    Then there exists $\epsilon > 0$, so that if $d_p (U_0, U^*) < \epsilon$, then $\lim_{t \to \infty} d_\infty (U(t), M) = 0$, where $M$ is the same as in part (a). Here $\epsilon$ depends on $S$, $c_1$, $c_2$ and $K := \sup_{t\geq 0} \|U(t)\|_{W^{1,\infty}}$. 
\end{enumerate}
\end{proposition}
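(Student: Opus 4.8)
The plan is to adapt the classical LaSalle invariance argument to the infinite-dimensional setting, using the uniform bound $K=\sup_{t\ge0}\|U(t)\|_{W^{1,\infty}}$ to recover the compactness that is automatic in finite dimensions; wherever invariance of $\omega$-limit sets is invoked, I use that in the applications $U$ solves the autonomous system \eqref{eq: general model} with $\tau_i$ constant, which (by Theorem \ref{th: global existence}, its a priori estimates, and parabolic smoothing) generates an $L^\infty$-continuous semiflow $\{\Psi_s\}_{s\ge0}$ on a suitable $W^{1,\infty}$-bounded set, and ``invariant'' refers to this semiflow. For part (a), I would first note that $\frac{\mathrm d}{\mathrm dt}\mathcal L(t)\le0$ together with $\mathcal L\ge0$ makes $\mathcal L$ nonincreasing and bounded below, so $\mathcal L(t)\to\ell$ for some $\ell\ge0$. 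Since $\sup_t\|U(t)\|_{W^{1,\infty}}<\infty$, the orbit $\gamma=\{U(t):t\ge0\}$ is uniformly bounded and equi-Lipschitz on the bounded domain $\Omega$, hence precompact in $C(\overline\Omega;\mathbb R^3)=L^\infty$ by the Arzel\`a--Ascoli theorem (equivalently $W^{1,\infty}(\Omega)\hookrightarrow C(\overline\Omega)$ compactly). Therefore $\omega(U):=\bigcap_{T\ge0}\overline{\{U(t):t\ge T\}}$ is nonempty and compact in $L^\infty$, and $\omega(U)\subset S$ because $S$ is closed and contains the orbit; for $V\in\omega(U)$, picking $t_n\to\infty$ with $U(t_n)\to V$ in $L^\infty$ and using continuity of $\mathcal L^{(0)}$ gives $\mathcal L^{(0)}(V)=\lim_n\mathcal L(t_n)=\ell$.

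\textbf{Part (a), invariance and conclusion.} The key step is that $\omega(U)$ is invariant: if $V=\lim_n U(t_n)\in\omega(U)$ then $\Psi_sV=\lim_n\Psi_sU(t_n)=\lim_n U(t_n+s)\in\omega(U)$ for every $s\ge0$, using $L^\infty$-continuity of $\Psi_s$ and precompactness. Along the orbit $s\mapsto\Psi_sV\subset\omega(U)$ the functional $\mathcal L^{(0)}$ is constantly $\ell$, so $\mathcal L^{(1)}(V)=\frac{\mathrm d}{\mathrm ds}\big|_{s=0}\mathcal L^{(0)}(\Psi_sV)=0$; hence $\omega(U)\subset E$, and by maximality $\omega(U)\subset M$. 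Finally $d_\infty(U(t),\omega(U))\to0$: otherwise some $t_n\to\infty$ and $\delta>0$ give $d_\infty(U(t_n),\omega(U))\ge\delta$, yet precompactness forces an $L^\infty$-convergent subsequence of $\{U(t_n)\}$ whose limit lies in $\omega(U)$, a contradiction. Since $\omega(U)\subset M$, this yields $\lim_{t\to\infty}d_\infty(U(t),M)=0$.

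\textbf{Part (b).} I would reduce to part (a) by a continuation argument. As $S$ is a closed $L^\infty$-neighbourhood of $U^*$, it contains a ball $\{V:d_\infty(V,U^*)\le\delta_0\}$ for some $\delta_0>0$. Since $\|U(t)\|_{W^{1,\infty}}\le K$ and each $u_i^*$ is constant, the Gagliardo--Nirenberg interpolation inequality gives $d_\infty(V,U^*)\le\Theta\big(d_p(V,U^*)\big)$ for every $V$ with $\|V\|_{W^{1,\infty}}\le K$, where $\Theta$ is increasing, $\Theta(0^+)=0$, and depends only on $K$, $\Omega$, $p_1,p_2,p_3$. Choose $\epsilon>0$ so small that $\Theta\big((c_2/c_1)\epsilon\big)<\delta_0$; then $\epsilon$ depends only on $\delta_0$ (hence $S$), $c_1$, $c_2$ and $K$. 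If $d_p(U_0,U^*)<\epsilon$, set $T^*=\sup\{T\ge0:U(t)\in S\ \text{for all}\ t\in[0,T]\}$. On $[0,T^*)$ condition (i) makes $\mathcal L$ nonincreasing, so by (ii), $c_1 d_p(U(t),U^*)\le\mathcal L^{(0)}(U(t))\le\mathcal L^{(0)}(U_0)\le c_2 d_p(U_0,U^*)<c_2\epsilon$, giving $d_\infty(U(t),U^*)\le\Theta\big((c_2/c_1)\epsilon\big)<\delta_0$, so $U(t)$ stays in the interior of $S$. If $T^*<\infty$, continuity of $t\mapsto U(t)$ in $L^\infty$ (from $U\in C(\overline\Omega\times[0,\infty))$) puts $U(T^*)$ in the interior of $S$ as well, contradicting maximality of $T^*$; hence $T^*=\infty$, condition (i) holds for all $t$, and part (a) applies to conclude.

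\textbf{Main obstacle.} The genuinely new difficulty relative to the finite-dimensional LaSalle principle is producing a compact, invariant $\omega$-limit set in $L^\infty$: compactness comes cheaply from the $W^{1,\infty}$ bound via Arzel\`a--Ascoli, but invariance rests on the $L^\infty$-continuity of the semiflow of \eqref{eq: general model}, which is precisely where the parabolic smoothing and the a priori estimates of Sections \ref{sec: Global existence}--\ref{sec: Stability} enter. In part (b) the delicate technical point is the interpolation bounding $d_\infty$ by a function of $d_p$ that vanishes at $0$, which is what keeps the whole trajectory inside the $L^\infty$-neighbourhood where hypothesis (i) is in force, starting only from $L^{p_i}$-smallness of $U_0-U^*$.
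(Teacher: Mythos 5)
Your proposal is correct and follows essentially the same route as the paper: part (a) via precompactness of the orbit in $L^\infty$ from the uniform $W^{1,\infty}$ bound plus the classical LaSalle $\omega$-limit argument, and part (b) via the Gagliardo--Nirenberg interpolation of $\|u_i-u_i^*\|_{L^\infty}$ between $W^{1,\infty}$ and $L^{p_i}$ combined with a continuation argument keeping $U(t)$ in $S$. Your write-up is in fact slightly more explicit than the paper's about the $L^\infty$-continuity of the semiflow needed for invariance of the $\omega$-limit set, which the paper subsumes in its citation of LaSalle.
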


\begin{proof}
    \textit{(a)} Since $\|U(t)\|_{W^{1, \infty}}$ is uniformly bounded for all $t \geq 0$, the Sobolev Embedding Theorem implies that $\{ U(t): t \geq 0 \}$ is a (pre)compact subset of $L^\infty(\Omega; \mathbb{R}^3)$. We can verify that the proof of LaSalle's invariance principle  (Theorem 1, \cite{lasalle1960some}) is valid under the metric of $L^\infty(\Omega; \mathbb{R}^3)$. Now part (a) follows immediately.

    \textit{(b)} Denote $U(t) = (u_1(t), u_2(t), u_3(t))$. By Gagliardo-Nirenberg inequality, there exists $c_3 > 0$ so that
    \begin{equation*}
        ||u_i(t) - u_i^*||_{L^\infty} 
        \leq c_3 ||u_i(t) - u_i^*||_{W^{1, \infty}}^{\frac{n}{n + p_i}} ||u_i(t) - u_i^*||_{L^{p_i}}^{\frac{p_i}{n + p_i}}
    \end{equation*}
    for all $t \geq 0$. Denote $c_4 = \max\limits_{1 \leq i \leq 3} c_3 (K + ||u_i^*||_{W^{1, \infty}})^{\frac{n}{n + p_i}}$, then we have
    \begin{equation}
        \label{eq: stability GN inequality}
        ||u_i(t) - u_i^*||_{L^\infty} 
        \leq c_4 ||u_i(t) - u_i^*||_{L^{p_i}}^{\frac{p_i}{n + p_i}}
    \end{equation}
    for $1 \leq i \leq 3$ and $t \geq 0$.
    
    Choose $\epsilon_0 > 0$ so that $B(U^*, \epsilon_0) \subset S$. Let $U_0$ be any initial value so that 
    \begin{align*}
        d_p (U_0, U^*) < \min \left\{ 1, \left( \frac{\epsilon_0}{c_4} \right)^{p'}, \, \frac{c_1}{c_2 c_4} \epsilon_0^{p'} \right\} 
        =: \epsilon,
    \end{align*}
    where $p' = \max\limits_{1 \leq i \leq 3} (n + p_i)$. Then by \eqref{eq: stability GN inequality} and $\epsilon \leq \min \left\{ 1, \left( \frac{\epsilon_0}{c_4} \right)^{p'} \right\}$, we have
    \begin{equation*}
        \| U_0 - U^* \|_{L^\infty} < c_4 \max\limits_{1 \leq i \leq 3} \epsilon^{1 / (n + p_i)}
        \leq c_4 \epsilon^{1 / p'} \leq \epsilon_0.
    \end{equation*}
    It follows that $U_0 \in S$. By condition (iii) and $\epsilon \leq \frac{c_1}{c_2 c_4} \epsilon_0^{p'}$, we further get
    \begin{equation*}
        \mathcal{L}(0) \leq c_2 d_p (U_0, S_0)
        < \frac{c_1}{c_4} \epsilon_0^{p'}.
    \end{equation*}
    
    Define
    \begin{equation}
        \label{eq: stability t^*}
        t^* = \sup \big\{ T \in [0, \infty): \| U(t) - U^* \|_{L^\infty} \leq \epsilon_0, \ \forall t \in [0, T] \big\} \in [0, \infty],
    \end{equation}
    then $\frac{\mathrm{d}}{\mathrm{d}t} \mathcal{L}(t) \leq 0$ for all $t \in [0, t^*]$ according to condition (i). Suppose $t^* < \infty$, then we deduce from condition (iii) and inequality \eqref{eq: stability GN inequality} that
    \begin{align}
        \sum_{i = 1}^3 \|u_i(t) - u_i^*\|_{L^\infty}^{n + p_i} 
        \leq c_4 d_p (U(t), U^*) 
        \leq \frac{c_4}{c_1} \mathcal{L}(t)
        \leq \frac{c_4}{c_1} \mathcal{L}(0)
        < \epsilon_0^{p'}
        = \min_{1 \leq i \leq 3} \epsilon_0^{n + p_i}.
        \label{eq: stability comparison}
    \end{align}
    It follows that $\| U(t) - U^* \|_{L^\infty} < \epsilon_0$, which contradicts with the definition \eqref{eq: stability t^*} of $t^*$. Therefore, $t^* = \infty$, that is, $\frac{\mathrm{d}}{\mathrm{d}t} \mathcal{L}(t) \leq 0$ holds for any $t \geq 0$. Finally, we apply part (a) to deduce $\lim\limits_{t \to \infty} d_\infty (U(t), M) = 0$.
\end{proof}

\begin{remark}
    \label{rem: LaSalle}
    Proposition \ref{prop: lyapunov to L^infty} provides a useful tool for addressing the stability of solutions for a fixed initial value. For equations \eqref{eq: general model} -- \eqref{eq: bc ic}, the boundedness of \(\|U(t)\|_{W^{1,\infty}}\) is given in Remark \ref{rem: W^1, infty}. Global stability follows if \(S = L^\infty(\Omega; \mathbb{R}^3)\) in part (a). In the case of local stability, the proximity of the initial value to the equilibrium depends on the bound of \(\|U(t)\|_{W^{1,\infty}}\), which, in turn, is influenced by the terms in \eqref{eq: initial value dependence}. These terms are again related to the initial value, making the boundedness of \eqref{eq: initial value dependence} essential when discussing local stability.
\end{remark}

\begin{lemma}[Stability of prey-only steady state] 
    \label{lem: stability for prey only}
    Let \( U(t) = (X(t), Y(t), Z(t))\) be the solution of \eqref{eq: general model} -- \eqref{eq: bc ic}, and let $U^* = (K, 0, 0)$ be the equilibrium.
    \begin{enumerate}
    \item[(a)]  If \(d_1 > K \left(\mu_1 e_1 - d_1 \tau_1\right)\), then for any $R > 0$, there exists $\epsilon = \epsilon(R) > 0$, if the terms in \eqref{eq: initial value dependence} are bounded by $R$ and \(\|X_0 - K\|_{L^2} + \|Y_0\|_{L^1} + \|Z_0 \|_{L^1} < \epsilon\), then we have
    \begin{equation}
        \label{eq: local stability for prey-only} 
        \lim\limits_{t \to \infty} \left(\|X(t) - K\|_{L^\infty} + \|Y(t)\|_{L^\infty} + \|Z(t)\|_{L^\infty} \right) = 0.
    \end{equation} 
    
    \item[(b)] If \(d_1 \geq K \mu_1 e_1 \), then \eqref{eq: local stability for prey-only} holds for all $(X_0, Y_0, Z_0)$.
    \item[(c)]  If 
    \(d_1 \leq K \left(\mu_1 e_1 - d_1 \tau_1\right)\),
    then for any $\epsilon > 0$, there exists a solution \(U(t)\) with initial value $\| U_0 - U^* \|_{L^\infty} < \epsilon$, such that
    \begin{equation}
        \label{eq: unstable stability for prey-only} 
        \limsup\limits_{t \to \infty} \left(\|X(t) - K\|_{L^\infty} + \|Y(t)\|_{L^\infty} + \|Z(t)\|_{L^\infty} \right) > 0.
    \end{equation}
\end{enumerate}
    
\end{lemma}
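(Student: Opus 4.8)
The plan is to handle parts (a) and (b) together by feeding a single Lyapunov functional into the generalized LaSalle principle (Proposition~\ref{prop: lyapunov to L^infty}), and to handle part (c) by an elementary exponential-growth argument for $\int_\Omega Y$.

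For (a) and (b), set $\phi(s)=s-K-K\ln(s/K)\ge 0$, which vanishes only at $s=K$, and define $\mathcal{L}^{(0)}(V)=\int_\Omega\phi(v_1)+\frac1{e_1}\int_\Omega v_2+\frac1{e_1e_2}\int_\Omega v_3$ for $V=(v_1,v_2,v_3)$. Differentiating along a solution of \eqref{eq: general model}--\eqref{eq: bc ic} and integrating by parts (so that the taxis term and the $\Delta Y,\Delta Z$ terms drop by the Neumann condition), the identities $f(X,Y,x)-\frac{\mu_1(X-K)Y}{1+\tau_1 X}=\frac{\mu_1 KY}{1+\tau_1 X}$ and $\frac1{e_1e_2}e_2=\frac1{e_1}$ (the latter killing all $g$-terms) give
\[
  \frac{\mathrm d}{\mathrm dt}\mathcal{L}(t)=-\frac rK\int_\Omega(X-K)^2-\delta_X K\int_\Omega\frac{|\nabla X|^2}{X^2}+\int_\Omega\Big(\frac{\mu_1 K}{1+\tau_1 X}-\frac{d_1}{e_1}\Big)Y-\frac{d_2}{e_1e_2}\int_\Omega Z .
\]
Under (b), $d_1\ge K\mu_1 e_1$ forces $\frac{\mu_1 K}{1+\tau_1 X}\le\mu_1 K\le\frac{d_1}{e_1}$ for every admissible $X$ (recall $0\le X\le\overline X$, Lemma~\ref{lem: X bounded}), so $\frac{\mathrm d}{\mathrm dt}\mathcal L\le 0$ on the whole cone $S=\{V\ge 0\}$ and Proposition~\ref{prop: lyapunov to L^infty}(a) applies; under (a), the hypothesis $d_1>K(\mu_1 e_1-d_1\tau_1)$ is exactly $\frac{\mu_1 K}{1+\tau_1 K}<\frac{d_1}{e_1}$, which by continuity persists on a small closed $L^\infty$-ball $S$ around $U^*$, where also $\frac1{4K}(v_1-K)^2\le\phi(v_1)\le\frac1K(v_1-K)^2$, so that $\mathcal{L}^{(0)}\asymp d_p(\,\cdot\,,U^*)$ with $p_1=2,\ p_2=p_3=1$ and Proposition~\ref{prop: lyapunov to L^infty}(b) applies (the bound $R$ on \eqref{eq: initial value dependence} supplying, via Remark~\ref{rem: W^1, infty}, the uniform $W^{1,\infty}$-bound on which $\epsilon=\epsilon(R)$ depends). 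In both cases, on the invariant set $E=\{V\in S:\frac{\mathrm d}{\mathrm dt}\mathcal L=0\}$ every nonpositive term must vanish, forcing $v_1\equiv K$, $v_3\equiv 0$ and $\int_\Omega\big(\frac{d_1}{e_1}-\frac{\mu_1 K}{1+\tau_1 K}\big)v_2=0$; as this constant is positive under (a) and under (b) except when $\tau_1=0,\ d_1=K\mu_1 e_1$, one gets $v_2\equiv 0$, so $M=E=\{U^*\}$, while in that borderline subcase $E=\{(K,v_2,0)\}$ but invariance forces $\partial_t X\equiv 0$, i.e.\ $\mu_1 Kv_2\equiv 0$, so again $M=\{U^*\}$. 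Hence $\lim_{t\to\infty}d_\infty(U(t),U^*)=0$, which is \eqref{eq: local stability for prey-only}.

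For (c), write $\sigma:=\frac{e_1\mu_1 K}{1+\tau_1 K}-d_1$, so that the hypothesis means $\sigma\ge 0$; assume first $\sigma>0$. Fix a closed $L^\infty$-ball $B=B_r(U^*)$ so small that $\frac{e_1\mu_1 v_1}{1+\tau_1 v_1}-d_1\ge\frac\sigma2$ and $\mu_2\|v_3\|_{L^\infty}\le\frac\sigma4$ for all $V\in B$. Given $\epsilon>0$, pick initial data with $\|U_0-U^*\|_{L^\infty}<\min\{\epsilon,r\}$ and $Y_0\gneqq 0$ (e.g.\ $X_0\equiv K$, with $Y_0,Z_0$ small positive constants). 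From $\frac{\mathrm d}{\mathrm dt}\int_\Omega Y=\int_\Omega\big(\frac{e_1\mu_1 X}{1+\tau_1 X}-d_1\big)Y-\int_\Omega\frac{\mu_2 YZ}{1+\tau_2 Y}$ it follows, as long as $U(t)\in B$, that $\frac{\mathrm d}{\mathrm dt}\int_\Omega Y\ge\big(\frac\sigma2-\mu_2\|Z\|_{L^\infty}\big)\int_\Omega Y\ge\frac\sigma4\int_\Omega Y$, hence $\int_\Omega Y(t)\ge e^{\sigma t/4}\int_\Omega Y(0)$; since $\int_\Omega Y(0)>0$ by Lemma~\ref{th: positivity} while $\int_\Omega Y(t)\le r|\Omega|$ as long as $U(t)\in B$ (because $Y^*=0$), these are incompatible for large $t$, so $U(t)\notin B$ for arbitrarily large $t$, giving $\limsup_{t\to\infty}\|U(t)-U^*\|_{L^\infty}\ge r>0$, which is \eqref{eq: unstable stability for prey-only}.

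The hard part is the borderline case $\sigma=0$ of (c), i.e.\ $d_1=K(\mu_1 e_1-d_1\tau_1)$: the linear growth rate of $Y$ degenerates and the previous estimate only yields $\frac{\mathrm d}{\mathrm dt}\int_\Omega Y\ge-\mu_2 r\int_\Omega Y$, which is useless; here the prey-only and the semi-coexistence equilibria coalesce (a transcritical bifurcation with $X^*=K$, $Y^*=0$), and instability must be extracted from the quadratic nonlinearity along the slow direction, e.g.\ by a center-manifold reduction of the zero mode or by constructing a lower solution that stays bounded away from $U^*$. The remaining steps are routine; one minor technical point for (a)--(b) is that $\mathcal{L}^{(0)}(U_0)<\infty$ requires $X_0>0$, so if $X_0$ vanishes somewhere one first restarts the argument at a time $t_0>0$ at which $U(t_0)$ is smooth and strictly positive by Lemma~\ref{th: positivity}, which does not affect the $t\to\infty$ limit.
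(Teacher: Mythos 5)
Your parts (a) and (b) are essentially the paper's proof: the same functional $\mathcal{L}_1(t)=\int_\Omega\bigl(X-K-K\ln\frac{X}{K}\bigr)+\frac1{e_1}\int_\Omega Y+\frac1{e_1e_2}\int_\Omega Z$, the same derivative identity, and the same invocation of Proposition \ref{prop: lyapunov to L^infty}(b) resp.\ (a); your identification of the maximal invariant set (including the subcase $\tau_1=0$, $d_1=K\mu_1e_1$ of (b)) is if anything more careful than the paper's one-line treatment. For (c) with strict inequality you take a genuinely different route: the paper tries to exhibit a homogeneous orbit along which $\mathcal{L}_1$ increases (it writes down $U=(X,0,0)$ with $X_0<K$), whereas you show $\int_\Omega Y$ grows like $e^{\sigma t/4}$ with $\sigma=\frac{e_1\mu_1K}{1+\tau_1K}-d_1>0$ as long as the orbit stays in a small ball, contradicting the $L^\infty$ bound there. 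Your version is the more convincing one: along the paper's orbit one actually has $\frac{\mathrm d}{\mathrm dt}\mathcal{L}_1=-\frac rK\int_\Omega(X-K)^2\le 0$, not $>0$, so the paper's verification of (c) only makes sense if one perturbs with $Y_0>0$, which is exactly what your growth argument formalizes.

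The genuine gap is the one you flag yourself: the borderline case $d_1=K(\mu_1e_1-d_1\tau_1)$, i.e.\ $\sigma=0$, which is included in the hypothesis of (c). Your estimate degenerates there, and the difficulty is not merely technical: for the spatially homogeneous reduction, $\dot X<0$ at $X=K$, $Y>0$ forces $X\approx K-\frac{K\mu_1}{r(1+\tau_1K)}Y<K$ on the slow manifold, and since $z\mapsto\frac{e_1\mu_1z}{1+\tau_1z}$ is increasing with value $d_1$ at $z=K$, one gets $\dot Y\approx -cY^2<0$ there; nearby nonnegative homogeneous data are thus \emph{attracted} to $(K,0,0)$, so the center-manifold or subsolution construction you gesture at cannot succeed through homogeneous perturbations, and it is unclear that the claimed instability at equality is even true. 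The paper's own argument does not address this case either (its pointwise inequality $\frac{K\mu_1}{1+\tau_1X}-\frac{d_1}{e_1}>0$ for $0<X<K$ fails at equality when $\tau_1=0$, and its chosen orbit converges to $U^*$), so the gap is shared with the source; but as a proof of the lemma as stated, yours is incomplete at $\sigma=0$.
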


\begin{proof}
Define
\[
\mathcal{L}_1(t) := \mathcal{L}_1 (X, Y, Z) = \int_\Omega \left(X - K - K \ln{\frac{X}{K}}\right) + \frac{1}{e_1} Y + \frac{1}{e_1e_2} Z,
\]
and set $\mathcal{L}_1^{(0)}$ and $\mathcal{L}_1^{(1)}$ the same way as in Proposition \ref{prop: lyapunov to L^infty}. Using Taylor expansion, there exists \(\eta\) between \(X\) and \(K\) such that
\[
X - K - K \ln{\frac{X}{K}} = \frac{K}{2 \eta^2}(X-K)^2 \geq 0,
\]
with equality if and only if \(X = K\). Therefore, \(\mathcal{L}_1(K,0,0) = 0\) and \(\mathcal{L}_1(X,Y,Z) > 0\) for any \((X,Y,Z) \neq (K,0,0)\) since \(Y \geq 0\) and \(Z \geq 0\). 

From system \eqref{eq: general model}, we have:
\begin{equation}
    \label{eq: proof of prey-only}
    \begin{aligned}
        \frac{\mathrm{d}}{\mathrm{d} t}\mathcal{L}_1(t) &=  \int_\Omega \frac{X-K}{X} X_t + \frac{1}{e_1} \int_\Omega Y_t + \frac{1}{e_1 e_2} \int_\Omega Z_t\\
        & = - \delta_X \int_\Omega \frac{|\nabla X|^2}{X^2} - \frac{r}{K} \int_\Omega (X-K)^2 + \int_\Omega \left( \frac{K \mu_1}{1 + \tau_1 X} - \frac{d_1}{e_1} \right) Y - \frac{d_2}{e_1 e_2} \int_\Omega Z,
     \end{aligned}   
    \end{equation}

\textit{(a)} Suppose \(d_1 > K \left(\mu_1 e_1 - d_1 \tau_1\right)\), then $\frac{K \mu_1}{1 + \tau_1 X} - \frac{d_1}{e_1} < 0$ for $X$ in a neighbourhood of $K$. It follows that \( \mathcal{L}_1^{(1)} \leq 0 \) holds for all \( U\) in a closed neighbourhood $S$ of \((K,0,0)\) (in the norm of $L^\infty (\Omega;\mathbb{R}^3)$). Moreover, there exist $c_1$, $c_2>0$, such that
\begin{equation*}
     c_1 d_p (V, U^*) \leq \mathcal{L}_1^{(0)} (V) \leq c_2 d_p (V, U^*), \ \forall V \in S,
\end{equation*}
where
\begin{equation*}
    d_p (V, U^*) = ||X - K||_{L^2}^2 + ||Y||_{L^1} + ||Z||_{L^1}, \ V = (X, Y, Z).
\end{equation*}
By equation \eqref{eq: proof of prey-only}, we have
\begin{equation*}
    E := \{V \in L^\infty(\Omega;\mathbb{R}^3): \mathcal{L}_1^{(1)} (V) = 0\} = \{ (K,0,0)\}.
\end{equation*} 
Then part (a) follows from Proposition \ref{prop: lyapunov to L^infty}(b) and Remark \ref{rem: LaSalle}. 

\textit{(b)} If $d_1 \geq K e_1 \mu_1$, then $\frac{\mathrm{d}}{\mathrm{d}t} \mathcal{L}_1(t) \leq 0$, with equality holds only if $X(t) = K$ and $Z(t) = 0$. The largest invariant set of system \eqref{eq: general model} in $\{ (K, Y, 0): Y \in L^\infty(\Omega) \}$ is $(K, 0, 0)$. Therefore, by Proposition \ref{prop: lyapunov to L^infty}(a), equation \eqref{eq: local stability for prey-only} holds.

\textit{(c)} \(d_1 \leq K \left(\mu_1 e_1 - d_1 \tau_1\right)\) implies $\frac{K \mu_1}{1 + \tau_1 X} - \frac{d_1}{e_1} > 0$ for all $0 < X < K$. For any $\epsilon$, consider a spatially homogeneous solution \(U = (X, 0, 0)\) with $X_0 < K$ and $||X_0 - K||_{L^\infty} < \epsilon$, then \(\frac{\mathrm{d}}{\mathrm{d}t} \mathcal{L}_2(t) > 0\) for all \(t\). Combined with the fact that \(\mathcal{L}_1(t) > 0\) for all \( U \neq U^*\), then $\limsup\limits_{t \to \infty} \mathcal{L}_1(t) \geq 0$, which implies \eqref{eq: unstable stability for prey-only}.
\end{proof}

Now we consider the stability for the semi-coexistence state $U^* = (X^*,Y^*,0)$, where $X^* = \frac{d_1}{ \mu_1e_1 - \tau_1d_1} $, $Y^* = \frac{e_1 r}{d_1} X^*(1-\frac{X^*}{K})$.

\begin{lemma}[Stability of semi-coexistence steady state]
\label{lem: stability of semicoexistence}
Let \( U(t) = (X(t), Y(t), Z(t))\) be the solution of \eqref{eq: general model} -- \eqref{eq: bc ic}, and $\tau_1$, $\tau_2$ are constant functions. Assume \( Y^* < \frac{d_2}{e_2 \mu_2} \) and 
\begin{equation}
\label{eq: stability for semitrivial}
    \left\|\frac{\chi_1(x)h(Y) X}{Y}\right\|_{L^\infty}^2 < \frac{4 e_1 \delta_X \delta_Y X^*}{\left(1+\tau_1 X^*\right) Y^*}.
\end{equation}
\begin{enumerate}
    \item[(a)]  If \( X^* > \frac{1}{2}\left(K - \frac{1}{\tau_1}\right) \), then for any $R > 0$, there exists $\epsilon = \epsilon(R) > 0$, if the terms in \eqref{eq: initial value dependence} are bounded by $R$ and \(\|X_0 - X^*\|_{L^2} + \|Y_0 - Y^*\|_{L^2} + \|Z_0 \|_{L^1} < \epsilon\), then we have
    \begin{equation}
        \label{eq: local stability for semi-coexistence} 
        \lim\limits_{t \to \infty} \left(\|X(t) - X^*\|_{L^\infty} + \|Y(t) - Y^*\|_{L^\infty} + \|Z(t)\|_{L^\infty} \right) = 0.
    \end{equation} 
    
    \item[(b)] If \( X^* > K - \frac{1}{\tau_1} \), then \eqref{eq: local stability for semi-coexistence} holds for all $(X_0, Y_0, Z_0)$.
    \item[(c)]  If 
    \( X^* \leq \frac{1}{2}\left(K - \frac{1}{\tau_1}\right)\),
    then for any $\epsilon > 0$, there exists a solution \(U(t)\) with initial value $\| U_0 - U^* \|_{L^\infty} < \epsilon$, such that
    \begin{equation}
        \label{eq: unstable stability for semi-coexistence} 
        \limsup\limits_{t \to \infty} \left(\|X(t) - X^*\|_{L^\infty} + \|Y(t) - Y^*\|_{L^\infty} + \|Z(t)\|_{L^\infty} \right) > 0.
    \end{equation}
\end{enumerate}
\end{lemma}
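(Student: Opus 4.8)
The plan is to build a Lyapunov functional adapted to the Rosenzweig--MacArthur structure of the $(X,Y)$-subsystem and then invoke Proposition~\ref{prop: lyapunov to L^infty}. I would take
\begin{equation*}
  \mathcal{L}_2(t) = \int_\Omega V(X) + \int_\Omega \Big( Y - Y^* - Y^* \ln\tfrac{Y}{Y^*} \Big) + \frac{1}{e_2}\int_\Omega Z, \qquad V(X) := \int_{X^*}^{X} \Big( e_1 - \tfrac{d_1(1+\tau_1 s)}{\mu_1 s} \Big)\,\mathrm{d}s ,
\end{equation*}
and first check that $V$ is strictly convex with $V(X^*)=V'(X^*)=0$ (the latter being exactly the equilibrium identity $\mu_1 X^*/(1+\tau_1 X^*)=d_1/e_1$), hence $V\ge 0$ with equality only at $X^*$; so $\mathcal{L}_2\ge 0$ and $\mathcal{L}_2=0$ only at $U^*=(X^*,Y^*,0)$. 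The essential point is that the $X$-component is \emph{not} the naive relative entropy $X-X^*-X^*\ln(X/X^*)$: it is precisely this $V$ that makes the computation below collapse.

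Differentiating $\mathcal{L}_2$ along the solution, integrating the diffusion and taxis terms by parts (the Neumann conditions kill all boundary terms, including the $Z$-diffusion and $Z$-taxis), and repeatedly using the equilibrium relations together with $Y^*=\phi(X^*)$, where $\phi(X):=r(1-X/K)(1+\tau_1 X)/\mu_1$ is the prey nullcline, \emph{all} $Y$-dependent reaction contributions cancel and one is left with
\begin{equation*}
  \frac{\mathrm{d}}{\mathrm{d}t}\mathcal{L}_2 = Q_{\mathrm{grad}} + \int_\Omega \big( e_1 p(X) - d_1 \big)\big( \phi(X) - \phi(X^*) \big) + \int_\Omega Z\Big( \frac{\mu_2 Y^*}{1+\tau_2 Y} - \frac{d_2}{e_2} \Big),
\end{equation*}
where $p(X):=\mu_1 X/(1+\tau_1 X)$ and $Q_{\mathrm{grad}} = -\frac{\delta_X d_1}{\mu_1}\int_\Omega \frac{|\nabla X|^2}{X^2} - \delta_Y Y^*\int_\Omega \frac{|\nabla Y|^2}{Y^2} + Y^*\int_\Omega \frac{\chi_1 h(Y)}{Y^2}\nabla X\cdot\nabla Y$. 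Then I would estimate the three pieces: by Cauchy--Schwarz and completing the square, $Q_{\mathrm{grad}}\le 0$ pointwise exactly when $\big(\chi_1 h(Y)X/Y\big)^2\le 4\delta_X\delta_Y d_1/(\mu_1 Y^*)$, and since $d_1/\mu_1 = e_1 X^*/(1+\tau_1 X^*)$ this is precisely hypothesis~\eqref{eq: stability for semitrivial}, strictness forcing $Q_{\mathrm{grad}}<0$ unless $\nabla X=\nabla Y=0$; since $\tau_2\ge 0$ and $Y^*<d_2/(e_2\mu_2)$, the last integral is $\le 0$ with equality only if $Z\equiv 0$; and because $p$ is increasing with $e_1 p(X^*)=d_1$ while $\phi$ increases on $(0,\tfrac{1}{2}(K-\tfrac{1}{\tau_1}))$ and decreases afterwards, the factor $e_1 p(X)-d_1$ has the sign of $X-X^*$, so $\big(e_1 p(X)-d_1\big)\big(\phi(X)-\phi(X^*)\big)\le 0$ for $X$ near $X^*$ iff $\phi'(X^*)<0$, i.e. iff $X^*>\tfrac{1}{2}(K-\tfrac{1}{\tau_1})$ (case (a)), and $\le 0$ for every admissible $X\in(0,\overline{X}\,]$ iff $X^*>K-\tfrac{1}{\tau_1}$ (case (b)), where one checks $\phi(0^+)=r/\mu_1$ and $\phi<0<\phi(X^*)$ for $X>K$.

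For part (a) I would take $S$ to be a small closed $L^\infty(\Omega;\mathbb{R}^3)$-ball about $U^*$; on $S$ the three estimates give $\frac{\mathrm{d}}{\mathrm{d}t}\mathcal{L}_2\le 0$, and the quadratic vanishing of $V$ and of $Y-Y^*-Y^*\ln(Y/Y^*)$ at their minima, together with linearity in $Z$, yield $c_1 d_p(V,U^*)\le\mathcal{L}_2^{(0)}(V)\le c_2 d_p(V,U^*)$ on $S$ with $(p_1,p_2,p_3)=(2,2,1)$, so Proposition~\ref{prop: lyapunov to L^infty}(b) applies (using Remark~\ref{rem: LaSalle} and the boundedness of the quantities in~\eqref{eq: initial value dependence} to control $\|U(t)\|_{W^{1,\infty}}$). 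For part (b) one instead takes $S=L^\infty(\Omega;\mathbb{R}^3)$ and applies Proposition~\ref{prop: lyapunov to L^infty}(a). In both cases the largest invariant set $M$ in $E=\{V\in S:\mathcal{L}_2^{(1)}(V)=0\}$ reduces to $\{U^*\}$: $\mathcal{L}_2^{(1)}=0$ forces $\nabla X\equiv\nabla Y\equiv 0$, $Z\equiv 0$ and $X\equiv X^*$, and substituting $X\equiv X^*$, $Z\equiv 0$ into the first equation of~\eqref{eq: general model} forces the constant $Y$ to satisfy $r(X^*)=f(X^*,Y,x)$, hence $Y\equiv\phi(X^*)=Y^*$; thus $\lim_{t\to\infty}d_\infty(U(t),U^*)=0$, which is~\eqref{eq: local stability for semi-coexistence}.

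For part (c), the linearization of the $Z$-equation at $(Y,Z)=(Y^*,0)$ has the single eigenvalue $e_2\mu_2 Y^*/(1+\tau_2 Y^*)-d_2<0$, so any instability must originate in the $(X,Y)$-block, which is the classical Rosenzweig--MacArthur Jacobian; for $X^*<\tfrac{1}{2}(K-\tfrac{1}{\tau_1})$ its trace is positive, so $(X^*,Y^*)$ is a linearly unstable equilibrium of the reduced planar ODE, and for any $\epsilon>0$ one chooses a spatially homogeneous initial datum with $Z_0\equiv 0$ and $(X_0,Y_0)$ a nonequilibrium constant within $L^\infty$-distance $\epsilon$ of $U^*$; the resulting solution stays spatially homogeneous with $Z\equiv 0$, its $(X,Y)$-part solves the unstable ODE, hence leaves a fixed neighbourhood, giving~\eqref{eq: unstable stability for semi-coexistence}. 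At the borderline $X^*=\tfrac{1}{2}(K-\tfrac{1}{\tau_1})$ the same conclusion follows from the known supercriticality of the Hopf bifurcation in the Rosenzweig--MacArthur model. The only genuinely non-routine step is guessing the right $X$-entropy $V$: it is exactly this choice that cancels all $Y$-dependent reaction terms — leaving a one-variable dissipation governed by the geometry of the prey nullcline — and that produces the dissipation threshold $4\delta_X\delta_Y d_1/(\mu_1 Y^*)=4 e_1\delta_X\delta_Y X^*/((1+\tau_1 X^*)Y^*)$ matching~\eqref{eq: stability for semitrivial} exactly, the $(1+\tau_1 X^*)$-factor included; a secondary technicality is the global sign analysis of $\big(e_1 p(X)-d_1\big)\big(\phi(X)-\phi(X^*)\big)$ on $(0,\overline{X}\,]$ needed for part (b).
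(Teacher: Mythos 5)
Your parts (a) and (b) follow the paper's proof essentially verbatim, despite the emphasis you place on the choice of $V$: since $V'(X)=\frac{\mu_1 e_1-\tau_1 d_1}{\mu_1}\bigl(1-\frac{X^*}{X}\bigr)$, your $V$ \emph{is} the naive relative entropy $X-X^*-X^*\ln(X/X^*)$ up to the constant factor $\frac{\mu_1 e_1-\tau_1 d_1}{\mu_1}$, and your whole functional equals $e_1$ times the paper's $\mathcal{L}_2$. The resulting decomposition — the gradient quadratic form whose negative semidefiniteness threshold reproduces \eqref{eq: stability for semitrivial}, the reaction factorization $(e_1p(X)-d_1)(\phi(X)-\phi(X^*))$ (the paper's $r\,(f_1(X)-f_1(X^*))(f_2(X)-f_2(X^*))$), the sign of the $Z$-term under $Y^*<\frac{d_2}{e_2\mu_2}$, and the application of Proposition \ref{prop: lyapunov to L^infty} with $(p_1,p_2,p_3)=(2,2,1)$ and $M=\{U^*\}$ — is identical to the paper's, and these parts are correct.

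The genuine gap is in part (c) at the borderline $X^*=\frac{1}{2}\bigl(K-\frac{1}{\tau_1}\bigr)$. Your linearization argument is fine for $X^*<\frac{1}{2}\bigl(K-\frac{1}{\tau_1}\bigr)$ (positive trace and positive determinant make the homogeneous equilibrium a source of the reduced planar ODE, so nearby nonequilibrium orbits leave a fixed neighbourhood), but at the borderline the trace vanishes, the linearization is inconclusive, and your appeal to ``the known supercriticality of the Hopf bifurcation'' points in the \emph{wrong} direction: a supercritical Hopf has negative first Lyapunov coefficient, which makes the equilibrium weakly \emph{asymptotically stable} at the critical parameter value; instability at criticality would require a \emph{subcritical} Hopf. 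As written, this step would prove the opposite of \eqref{eq: unstable stability for semi-coexistence} (and in any case would need an explicit Lyapunov-coefficient computation rather than a citation of folklore). The paper handles the borderline without linearization: arguing by contradiction, it notes that for a spatially homogeneous solution $\frac{\mathrm{d}}{\mathrm{d}t}\mathcal{L}_2=r|\Omega|\,J(X,X^*)$ with $J(X,X^*)=(f_1(X)-f_1(X^*))(f_2(X)-f_2(X^*))$, and that at the borderline $X^*$ is the vertex of the downward parabola $f_2$, so $J(X,X^*)>0$ on $\{X<X^*\}$; starting with $X_0<X^*$, the functional increases away from its positive initial value and cannot tend to $0$, contradicting convergence to $U^*$. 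You should replace the Hopf appeal by an argument of this Lyapunov type (or otherwise treat the degenerate case directly).
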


\begin{proof}
Let
\begin{equation*}
\mathcal{L}_2(t) = \int_\Omega \frac{X^*}{1+\tau_1 X^*}\left(\frac{X}{X^*} - 1 - \ln{\frac{X}{X^*}}\right) + \frac{1}{e_1} Y^*\left(\frac{Y}{Y^*} - 1 - \ln{\frac{Y}{Y^*}}\right) + \frac{1}{e_1 e_2} Z = \int_\Omega \mathcal{L}_{2}^{(0)}(U).
\end{equation*}
It's easy to check that \(\mathcal{L}_2^{(0)} (X^*, Y^*,0) = 0\) and \(\mathcal{L}_2^{(0)} (X,Y,Z) > 0\) for all \((X,Y,Z) \neq (X^*,Y^*,0)\). 
Next, we compute the derivative of \(\mathcal{L}_2(t)\):
\begin{equation*}
\frac{\mathrm{d}\mathcal{L}_2}{\mathrm{d}t} = \int_\Omega \frac{X^*}{1 + \tau_1 X^*}\left(\frac{1}{X^*} - \frac{1}{X}\right) X_t + \frac{1}{e_1} Y^*\left(\frac{1}{Y^*} - \frac{1}{Y}\right) Y_t + \frac{1}{e_1 e_2} Z_t .
\end{equation*}
Using the identities
\begin{equation*}
\frac{X^*}{1 + \tau_1 X^*}\left(\frac{1}{X^*} - \frac{1}{X}\right) = \left(\frac{X}{1 + \tau_1 X} - \frac{X^*}{1 + \tau_1 X^*}\right)\left(\frac{1}{X} + \tau_1\right),
\end{equation*}
\begin{equation*}
Y^*\left(\frac{1}{Y^*} - \frac{1}{Y}\right) = \frac{Y - Y^*}{Y},
\end{equation*}
and $(X^*)_t = (Y^*)_t = 0$, we rewrite the derivative as 
\begin{equation*}
\begin{aligned}
\frac{\mathrm{d}\mathcal{L}_2}{\mathrm{d}t} &= \int_\Omega \left(\frac{X}{1 + \tau_1 X} - \frac{X^*}{1 + \tau_1 X^*}\right)\left[\left(\frac{1}{X} + \tau_1\right) X_t - \left(\frac{1}{X^*} + \tau_1\right) \left({X}^*\right)_t\right] \\
&\quad + \frac{1}{e_1} \int_\Omega \left(\frac{Y - Y^*}{Y} Y_t - \frac{Y - Y^*}{Y^*} \left({Y}^*\right)_t\right) + \frac{1}{e_1 e_2} \int_\Omega Z_t.
\end{aligned}
\end{equation*}
From the system \eqref{eq: general model}, we obtain
\begin{equation*}
\begin{aligned}
\frac{\mathrm{d}\mathcal{L}_2}{\mathrm{d}t} & = r \int_\Omega \left(\frac{X}{1 + \tau_1 X} - \frac{X^*}{1 + \tau_1 X^*}\right)\left[\left(1 + \tau_1 X\right)\left(1 - \frac{X}{K}\right) - \left(1 + \tau_1 X^*\right)\left(1 - \frac{X^*}{K}\right)\right] \\
&\quad + \frac{1}{e_1} \left(\frac{\mu_2 Y^*}{1 + \tau_2 Y} - \frac{d_2}{e_2}\right) \int_\Omega Z - \frac{\delta_Y Y^*}{e_1} \int_\Omega \frac{|\nabla Y|^2}{Y^2} - \frac{\delta_X X^*}{1 + \tau_1 X^*} \int_\Omega \frac{|\nabla X|^2}{X^2} + \frac{Y^*}{e_1} \int_\Omega \frac{\chi_1(x) h(Y) \nabla X \nabla Y}{Y^2}\\
& =: \int_\Omega \mathcal{L}_{2}^{(1)}(U).
\end{aligned}
\end{equation*}
Since \( Y^* < \frac{d_2}{e_2 \mu_2} \), we have $\frac{1}{e_1} \left(\frac{\mu_2 Y^*}{1 + \tau_2 Y} - \frac{d_2}{e_2} \right) \int_\Omega Z < 0$.
Let $\widetilde{U}_1 = \left(\frac{\nabla X}{X}, \frac{\nabla Y}{Y}\right)$ and denote the diffusion terms by
\begin{equation*}
    I = - \frac{\delta_Y Y^*}{e_1} \int_\Omega \frac{|\nabla Y|^2}{Y^2} - \frac{\delta_X X^*}{1 + \tau_1 X^*} \int_\Omega \frac{|\nabla X|^2}{X^2} + \frac{Y^*}{e_1} \int_\Omega \frac{\chi_1(x) h(Y) \nabla X \nabla Y}{Y^2}.
\end{equation*}
This can be rewritten as $I = - \int_\Omega \widetilde{U}_1 A \widetilde{U}_1^T$, where
\begin{equation*}
    A = \begin{pmatrix}
        \dfrac{\delta_X X^*}{1 + \tau_1 X^*} &  -\dfrac{Y^*}{2e_1} \dfrac{\chi_1(x) h(Y)X}{Y} \\
        -\dfrac{Y^*}{2e_1} \dfrac{\chi_1(x) h(Y)X}{Y}    & \dfrac{\delta_Y Y^*}{e_1}
    \end{pmatrix}.
\end{equation*}
One can verify that if $\eqref{eq: stability for semitrivial}$ holds, the matrix $A$ is positive definite.  Hence there exists a positive constant $\alpha_1>0$ such that
\begin{equation*}
    I \leq - \alpha_1 \int_\Omega \left(\frac{|\nabla X|^2}{X^2} + \frac{|\nabla Y|^2}{Y^2}\right).
\end{equation*}

On the other hand, define
\begin{equation*}
J(X,X^*) = \left(f_1(X) - f_1(X^*)\right)\cdot \left(f_2(X) - f_2(X^*)\right),
\end{equation*}
where \( f_1(z) = \frac{z}{1 + \tau_1 z} \) and \( f_2(z) = \left(1 + \tau_1 z\right)\left(1 - \frac{z}{K}\right) \). Given that \( Y^* \leq \frac{d_2}{e_2 \mu_2} \), we have \( \frac{Y^*}{1 + \tau_2 Y^*} \mu_2 - \frac{d_2}{e_2} <0 \). It follows that \( \mathcal{L}_2^{(1)} \leq 0 \) if \( J(X,X^*) \leq 0 \). To ensure \( J(X,X^*) \leq 0 \), it is necessary that \( f_1(X) - f_1(X^*) \) and \( f_2(X) - f_2(X^*) \) have opposite signs. Since \( f_1(X) \) is monotonically increasing, \( f_1(X) - f_1(X^*) \) and \( X - X^* \) share the same sign.

\textit{(a)} Suppose \( X^* > \frac{1}{2} \left(K - \frac{1}{\tau_1}\right) \), then \( \mathcal{L}_2^{(1)} \leq 0 \) holds for all \( U\) in a closed neighbourhood $S$ of \(U^* = (X^*,Y^*,0)\) (in the norm of $L^\infty (\Omega;\mathbb{R}^3)$). By a similar argument as in Lemma \ref{lem: stability for prey only}, we can prove from Taylor expansion that there exist $c_1$, $c_2>0$, such that
\begin{equation*}
     c_1 d_p (V, U^*) \leq \mathcal{L}_2^{(0)} (V) \leq c_2 d_p (V, U^*), \ \forall V \in S,
\end{equation*}
where
\begin{equation*}
    d_p (V, U^*) = ||X - X^*||_{L^2}^2 + ||Y - Y^*||_{L^2}^2 + ||Z||_{L^1}, \ V = (X, Y, Z).
\end{equation*}
By equations \eqref{eq: general model}, we have
\begin{equation*}
    E := \{V \in L^\infty(\Omega;\mathbb{R}^3): \mathcal{L}_2^{(1)} (V) = 0\} = \{ (X^*,Y,0) \in L^\infty(\Omega;\mathbb{R}^3): Y \text{ is a nonnegative constant}\}.
\end{equation*} 
The maximal invariant set in $E$ is $M = \{(X^*,Y^*,0)\}$.
Then part (a) follows from Proposition \ref{prop: lyapunov to L^infty}(b) and Remark \ref{rem: LaSalle}. 

\textit{(b)} If \( X^* > K - \frac{1}{\tau_1} \), then \( \frac{\mathrm{d}}{\mathrm{d}t} \mathcal{L}_2(t) \leq 0 \) for all \( t \geq 0 \). Part (b) then follows from Proposition \ref{prop: lyapunov to L^infty}(a).

\textit{(c)} When \( X^* \leq \frac{1}{2}\left(K - \frac{1}{\tau_1}\right) \), we assert that \((c)\) holds. Suppose, for contradiction, that there exists \(\epsilon > 0\) such that for all solutions \(U(t)\) with initial values satisfying \(\|U_0 - U^*\|_{L^\infty} < \epsilon\), equation \eqref{eq: local stability for semi-coexistence} holds. This implies \(\lim_{t \to \infty} \mathcal{L}_2(t) = 0\).

Now, consider a spatially homogeneous solution \((X, Y, 0) \neq (X^*, Y^*, 0)\) with an initial condition in this neighborhood. The Lyapunov function then satisfies
\[
\frac{\mathrm{d}\mathcal{L}_2}{\mathrm{d}t}(t) = r |\Omega| J(X, X^*).
\]
When \( X^* < \frac{1}{2}\left(K - \frac{1}{\tau_1}\right) \), we have \(J(X, X^*) > 0\) in a punctured neighborhood of \(X^*\), implying \(\frac{\mathrm{d}}{\mathrm{d}t} \mathcal{L}_2(t) > 0\) for all \(t\). When \( X^* = \frac{1}{2}\left(K - \frac{1}{\tau_1}\right) \), \(J(X, X^*) > 0\) for all \(X < X^*\). Thus, if \(X_0 < X^*\), then \(\frac{\mathrm{d}}{\mathrm{d}t} \mathcal{L}_2(t) > 0\) for any \(t \geq 0\). Combined with the fact that \(\mathcal{L}_2(t) > 0\) for all \( U \neq U^*\), this contradicts the assumption that \(\lim_{t \to \infty} \mathcal{L}_2(t) = 0\). Therefore, \((c)\) holds.
\end{proof}

\begin{lemma}[Stability of coexistence steady state]
    \label{lem: stability of coexistence}
    Let \( U(t) = (X(t), Y(t), Z(t))\) be the solution of \eqref{eq: general model} -- \eqref{eq: bc ic}, and $\tau_1$ and $\tau_2$ are constant functions.
    \begin{itemize}
        \item[(a)] if 
        \begin{equation}
            \label{eq: A positive definite}
            \frac{1}{\delta_X} \chi_{1M}^2 h^2(Y^*) + \frac{1}{\delta_Z} \chi_{2M}^2 h^2(Z^*) < 4 \delta_Y
        \end{equation}
        and $D\Phi (U^*)$ is negative definite, then for any $R > 0$, there exists $\epsilon = \epsilon(R) > 0$, so that if the terms in \eqref{eq: initial value dependence} are bounded by $R$ and $||U_0 - U^*||_{L^2} < \epsilon$, then $\lim_{t \to \infty} ||U(t) - U^*||_{L^\infty} = 0$.
        \item[(b)] if $D\Phi (U^*)$ has an eigenvalue with positive real part, then there exists $R > 0$, so that for any $\epsilon > 0$, we can choose an initial value $U_0$ with $||U_0 - U^*||_{L^\infty} < \epsilon$, but $||U(\tau) - U^*||_{L^\infty} > R$ for some $\tau > 0$.
    \end{itemize}
\end{lemma}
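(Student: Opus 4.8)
The plan is to treat the two halves by different devices: for (a) I will construct a quadratic Lyapunov functional and feed it into the generalized LaSalle principle, Proposition \ref{prop: lyapunov to L^infty}(b), while for (b) I will reduce to the spatially homogeneous kinetic ODE $\dot U = \Phi(U)$ and invoke instability by linearisation.

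For part (a), take
\begin{equation*}
    \mathcal{L}_3(t) = \tfrac12 \int_\Omega |U(t) - U^*|^2, \qquad \mathcal{L}_3^{(0)}(V) = \tfrac12 \|V - U^*\|_{L^2}^2,
\end{equation*}
so that hypothesis (ii) of Proposition \ref{prop: lyapunov to L^infty}(b) holds trivially with $p_1 = p_2 = p_3 = 2$ and $c_1 = c_2 = \tfrac12$, since $d_p(V, U^*) = \|V - U^*\|_{L^2}^2$ in this case. Write $\widetilde U = U - U^*$ and use the divergence form $\partial_t U = \nabla\cdot(\mathbb A(x,U)\nabla U) + \Phi(U)$ of \eqref{eq: general model} -- \eqref{eq: bc ic} (with $\mathbb A$ as in \eqref{eq: matrix mathbb A} and $\Phi$ as in \eqref{eq: vector Phi}; since $\tau_1,\tau_2$ are constants, $\Phi$ depends only on $U$). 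Testing the equation against $\widetilde U$ and integrating by parts, with the boundary terms vanishing by the Neumann condition, splits $\tfrac{\mathrm d}{\mathrm dt}\mathcal L_3$ into a diffusion part
\begin{equation*}
    -\int_\Omega \Big( \delta_X|\nabla\widetilde X|^2 + \delta_Y|\nabla\widetilde Y|^2 + \delta_Z|\nabla\widetilde Z|^2 - \chi_1(x)h(Y)\nabla\widetilde X\cdot\nabla\widetilde Y - \chi_2(x)h(Z)\nabla\widetilde Y\cdot\nabla\widetilde Z \Big)
\end{equation*}
and a reaction part $\int_\Omega \widetilde U\cdot\Phi(U^*+\widetilde U)$. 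To control the diffusion part I will bound the two cross terms by Young's inequality in the sharp form $\chi_1(x)h(Y)|\nabla\widetilde X||\nabla\widetilde Y|\leq \delta_X|\nabla\widetilde X|^2 + \tfrac1{4\delta_X}\chi_1(x)^2 h(Y)^2|\nabla\widetilde Y|^2$ and analogously with $\delta_Z$, which leaves exactly the coefficient $\delta_Y - \tfrac1{4\delta_X}\chi_1(x)^2 h(Y)^2 - \tfrac1{4\delta_Z}\chi_2(x)^2 h(Z)^2$ multiplying $|\nabla\widetilde Y|^2$; this is nonnegative precisely when $\tfrac1{\delta_X}\chi_1(x)^2 h(Y)^2 + \tfrac1{\delta_Z}\chi_2(x)^2 h(Z)^2 \leq 4\delta_Y$, which holds at $U^*$ by \eqref{eq: A positive definite} together with $\chi_i \leq \chi_{iM}$, hence, by continuity of $h$ and uniform closeness of $Y,Z$ to $Y^*,Z^*$, on a small closed $L^\infty$-ball $S$ about $U^*$. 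So the diffusion part is $\leq 0$ on $S$.

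For the reaction part, Taylor-expanding around $U^*$ (which lies in the open positive octant, where $\Phi$ is smooth) gives $\Phi(U^*+\widetilde U) = D\Phi(U^*)\widetilde U + O(|\widetilde U|^2)$; negative definiteness of $D\Phi(U^*)$ yields $\widetilde U^T D\Phi(U^*)\widetilde U \leq -\mu_0|\widetilde U|^2$ for some $\mu_0 > 0$, and shrinking $S$ so that $\|\widetilde U\|_{L^\infty}$ is small absorbs the quadratic remainder, giving $\int_\Omega \widetilde U\cdot\Phi(U^*+\widetilde U) \leq -\tfrac{\mu_0}{2}\int_\Omega|\widetilde U|^2$. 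Hence $\tfrac{\mathrm d}{\mathrm dt}\mathcal L_3 \leq -\tfrac{\mu_0}{2}\int_\Omega|\widetilde U|^2 \leq 0$ on $S$, which is hypothesis (i) of Proposition \ref{prop: lyapunov to L^infty}(b), and the same bound shows $E = \{V\in S:\mathcal L_3^{(1)}(V)=0\} = \{U^*\}$, so $M = \{U^*\}$. Applying Proposition \ref{prop: lyapunov to L^infty}(b) together with the $W^{1,\infty}$-bound of Remark \ref{rem: W^1, infty} — this is where boundedness of the quantities in \eqref{eq: initial value dependence} by $R$ is used, fixing the constant $K = \sup_{t\geq0}\|U(t)\|_{W^{1,\infty}}$ on which the admissible $\epsilon$ depends — then delivers $\lim_{t\to\infty}\|U(t)-U^*\|_{L^\infty} = \lim_{t\to\infty} d_\infty(U(t),M) = 0$.

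For part (b), I will use spatially homogeneous solutions: if $U_0$ is constant in $x$, then by uniqueness the solution of \eqref{eq: general model} -- \eqref{eq: bc ic} agrees with that of the kinetic ODE $\dot U = \Phi(U)$ with the same datum, stays spatially constant, satisfies $\|U(t) - U^*\|_{L^\infty} = |U(t) - U^*|$, and — since $U^*$ is interior to the positive octant — stays positive and bounded for data near $U^*$ on the relevant time interval. Because $D\Phi(U^*)$ has an eigenvalue with positive real part, $U^*$ is an unstable equilibrium of this ODE by the instability-by-linearisation (unstable manifold) theorem, so there are $R>0$ and, for each $\epsilon>0$, a datum $U_0$ with $|U_0 - U^*| < \epsilon$ whose orbit reaches $|U(\tau) - U^*| > R$ at some finite $\tau$; viewed as a PDE solution this is exactly the asserted instability. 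The main obstacle is the diffusion estimate in part (a): the Young constant must be chosen optimally so that the threshold appears as $4\delta_Y$ and not a smaller multiple, and — since \eqref{eq: A positive definite} is phrased with $h(Y^*), h(Z^*), \chi_{iM}$ while the true coefficients are $h(Y(x,t)), h(Z(x,t)), \chi_i(x)$ — a careful (though routine) continuity argument is needed to pass from the pointwise inequality at $U^*$ to a genuine $L^\infty$-neighbourhood $S$; the remaining verifications (the hypotheses of Proposition \ref{prop: lyapunov to L^infty}(b), the identity $E = M = \{U^*\}$, and the uniform $W^{1,\infty}$ bound via Remark \ref{rem: W^1, infty}) are bookkeeping.
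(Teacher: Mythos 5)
Your proposal matches the paper's proof essentially step for step: the same quadratic Lyapunov functional $\mathcal{L}_3(t)=\tfrac12\|U(t)-U^*\|_{L^2}^2$, the same splitting of $\tfrac{\mathrm d}{\mathrm dt}\mathcal{L}_3$ into a diffusion quadratic form (which the paper handles by checking that the symmetrized matrix $\mathbb A(x,U^*)+\mathbb A(x,U^*)^T$ is positive definite exactly under \eqref{eq: A positive definite}, equivalent to your sharp Young's inequality computation) plus a reaction term controlled by Taylor expansion and the negative definiteness of $D\Phi(U^*)$, the same invocation of Proposition \ref{prop: lyapunov to L^infty}(b) with $E=M=\{U^*\}$ and Remark \ref{rem: W^1, infty}, and the same reduction of part (b) to the spatially homogeneous ODE $\dot U=\Phi(U)$. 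The argument is correct and no genuinely different route is taken.
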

\begin{proof}
    \textit{(a)} Define a Lyapunov function by
    \begin{equation}
        \label{eq: stability proof 1}
        \mathcal{L}_3(t) = \frac{1}{2} ||U(t) - U^*||_{L^2}^2.
    \end{equation}
    Since $U^*$ is a constant function and $\Phi(U^*) = 0$, we have
    \begin{equation}
    \label{eq: stability proof 2}
    \begin{aligned}
        \frac{\mathrm d}{\mathrm d t} \mathcal{L}_3(t)
        &= \int_\Omega (U - U^*)^T \big( \nabla (\mathbb A(x, U) \nabla U) + \Phi(U) \big) \, \mathrm d x \\
        &= \int_\Omega (U - U^*)^T \nabla (\mathbb A(x, U) \nabla U) \, \mathrm d x + \int_\Omega (U - U^*)^T (\Phi(U) - \Phi(U^*)) \, \mathrm d x \\
        &= -\int_\Omega \nabla U^T \mathbb A(x, U) \nabla U \, \mathrm d x + \int_\Omega (U - U^*)^T (\Phi(U) - \Phi(U^*)) \, \mathrm d x.
    \end{aligned}
    \end{equation}
    Denote
    \begin{equation*}
        \mathbb{B}(x) := \mathbb{A} (x, U^*) + \mathbb{A} (x, U^*)^T = 
        \begin{pmatrix}
            2\delta_X & -\chi_1(x) h(Y^*) & 0 \\
            -\chi_1(x) h(Y^*) & 2\delta_Y & -\chi_2(x) h(Z^*) \\
            0 & -\chi_2(x) h(Z^*) & 2\delta_Z
        \end{pmatrix}.
    \end{equation*}
    We can check that $\mathbb{B}(x)$ has positive eigenvalues if and only if
    \begin{equation}
        \frac{1}{\delta_X} \chi_1^2 (x) h^2(Y^*) + \frac{1}{\delta_Z} \chi_2^2 (x) h^2(Z^*) < 4 \delta_Y.
    \end{equation}
    Therefore, \eqref{eq: A positive definite} implies the matrix $\mathbb{A} (x, U^*)$ is positive definite for all $x \in \overline{\Omega}$. It follows that, there exists $\epsilon_1 > 0$, such that for all $z \in \mathbb{R}^3$ with $|z - U^*|_\infty \leq \epsilon_1$, the matrix $\mathbb{A} (x, z)$ is positive definite. Here $|\cdot|_p$ is the $l^p$ norm of vectors. Besides, we denote the largest eigenvalue of $\big( D\Phi (U^*) + D\Phi (U^*)^T \big) / 2$ to be $-\lambda_0 < 0$. We can also choose $\epsilon_2 > 0$ such that for all $z \in \mathbb{R}^3$ with $|z - U^*|_\infty \leq \epsilon_2$, we have
    \begin{equation*}
        \big| \Phi(z) - \Phi(U^*) - D\Phi (U^*) (z - U^*) \big| \leq \frac{1}{2} \lambda_0 |z - U^*|_2.
    \end{equation*}
    It follows that
    \begin{equation}
    \label{eq: stability proof 4}
    \begin{aligned}
        &\quad \ (z - U^*)^T (\Phi(z) - \Phi(U^*))\\
        &= (z - U^*)^T \big( \Phi(z) - \Phi(U^*) - D\Phi (U^*) (z - U^*) \big) + (z - U^*)^T D\Phi (U^*) (z - U^*) \\
        & \leq -\frac{1}{2} \lambda_0 |z - U^*|_2^2.
    \end{aligned}
    \end{equation}
    Combining \eqref{eq: stability proof 1}-\eqref{eq: stability proof 4}, we obtain
    \begin{equation}
        \label{eq: stability proof 5}
        \frac{\mathrm d}{\mathrm d t} \mathcal{L}_3(t)
        \leq -\lambda_0 \mathcal{L}_3(t)
    \end{equation}
    as long as $||U(t) - U^*||_{L^\infty} \leq \min \{ \epsilon_1, \epsilon_2 \}$. Now we can prove part (a) by applying Proposition \ref{prop: lyapunov to L^infty} and Remark \ref{rem: LaSalle} with $S = \{V \in L^\infty(\Omega; \mathbb{R}^3): ||V - U^*||_{L^\infty} \leq \min \{ \epsilon_1, \epsilon_2 \} \}$ and $E = M = \{U^*\}$.

    \textit{(b)} We consider any spatially homogeneous solution \( U \) with an initial value \( U_0 \). In this case, the equation \eqref{eq: general model} reduces to the ODE \( \frac{\mathrm{d} U}{\mathrm{d} t} = \Phi(U) \). 
    If $D\Phi (U^*)$ has an eigenvalue with positive real part, then $U^*$ is an unstable equilibrium regarding the ODE, that is, there exists $R > 0$ so that for any $\epsilon > 0$, we can choose an initial value $U_0$ with $|U_0 - U^*|_\infty < \epsilon$, but $|U(\tau) - U^*|_\infty > R$ for some $\tau > 0$. Now we see that the definition of instability as described in item (b) is satisfied.
\end{proof}

\begin{proof}[Proof of Theorem \ref{th: stability}]
Theorem \ref{th: stability} follows directly from Lemmas \ref{lem: stability for prey only}, \ref{lem: stability of semicoexistence}, and \ref{lem: stability of coexistence}.
\end{proof}
\section*{Acknowledgments}
    The research of Hao Wang was partially supported by the Natural Sciences and Engineering Research Council of Canada (Individual Discovery Grant RGPIN-2020-03911 and Discovery Accelerator Supplement Award RGPAS-2020-00090) and the CRC program (Tier 1 Canada Research Chair Award).

\bibliographystyle{plainnat}
\bibliography{main}

\end{document}